\definecolor{imayou}{RGB}{154, 154, 235}
\definecolor{usuai}{RGB}{9, 150, 126}
\definecolor{persred}{RGB}{154,63,63}
\definecolor{sand}{RGB}{201, 177, 60}
\pgfplotsset{compat=1.18}
\title[Decay via non-polynomial derivative bound condition]{Sharp energy decay rates for the damped wave equation on the torus via non-polynomial derivative bound conditions}
\author{Perry Kleinhenz}
\date{}							% Activate to display a given date or no date
\theoremstyle{definition}
\newtheorem{definition}{Definition}
\newtheorem{example}{Example}[subsection]
\newtheorem{assumption}{Assumption}
\newtheorem{remark}{Remark}
\theoremstyle{theorem}
\newtheorem{theorem}{Theorem}[section]
\newtheorem{lemma}[theorem]{Lemma}
\newtheorem{proposition}[theorem]{Proposition}
\numberwithin{equation}{section}
\newcommand{\Qb}{\mathbb{Q}}
\newcommand{\Rb}{\mathbb{R}}
\newcommand{\Zb}{\mathbb{Z}}
\newcommand{\Dc}{\mathcal{D}}
\newcommand{\F}{\mathcal{F}}
\newcommand{\Nb}{\mathbb{N}}
\newcommand{\Tb}{\mathbb{T}}
\newcommand{\essinf}{\mathop {\rm ess\,inf}}
\newcommand{\ra}{\rightarrow}
\newcommand{\<}{\left\langle}
\renewcommand{\>}{\right\rangle}
\newcommand{\e}{\varepsilon}
\renewcommand{\d}{\delta}
\newcommand{\limn}{\lim_{n \ra \infty}}
\newcommand{\nm}[1]{\left\| #1 \right\|}
\newcommand{\lp}[2]{ \nm{#1}_{L^{#2}}}
\newcommand{\hp}[2]{\nm{#1}_{H^{#2}}}
\newcommand{\ltwo}[1]{\lp{#1}{2}}
\newcommand{\ltwom}[1]{\nm{#1}_{L^2(M)}}
\newcommand{\Cs}{C^{\infty}_0}
\newcommand{\p}{\partial}
\newcommand{\Ci}{C^{\infty}}
\newcommand{\supp}{\text{supp }}
\newcommand{\T}{\mathbb{T}}
\newcommand{\Ac}{\mathcal{A}}
\newcommand{\Lc}{\mathcal{L}}
\newcommand{\Hc}{\mathcal{H}}
\newcommand{\ti}{\widetilde}
\newcommand{\Rc}{\mathcal{R}}
\newcommand{\Op}{\text{Op}}
\newcommand{\Sb}{\mathbb{S}}
\renewcommand{\Re}{\operatorname{Re }}
\newcommand{\abs}[1]{|#1|}
\newcommand{\limh}{\lim_{h \ra 0}}
\newcommand{\tiroh}{\ti{R}_{1}^{-1}(h)}
\newcommand{\ch}{\chi_h}
\newcommand{\Vidz}{\digamma(\d z)}
\newcommand{\Vidg}{U(g)}
\newcommand{\Vidgh}{U(g(h))}
\newcommand{\Udg}{U(g(h))}
\newcommand{\Qc}{\mathcal{Q}}
\newcommand{\vht}{v_h^{(2)}}
\newcommand{\vhe}{v_{h,E}}
\newcommand{\bvhe}{\overline{v}_{h,E}}
\newcommand{\hvhe}{\hat{v}_{h,E}(0)}
\newcommand{\rhe}{r_{h,E}}
\renewcommand{\dh}{\rho(h)}
\newcommand{\tix}{X}
\newcommand{\tiy}{Y}
\newcommand{\dist}{\operatorname{dist}}
\newcommand{\Pc}{\mathcal{P}}
\newcommand{\g}{\mathfrak{g}}
\newcommand{\ltwomo}[1]{\nm{#1}_{L^2(M_1 \times B(0,1))}}
\newcommand{\ed}[1]{{#1}}
\newcommand{\orange}[1]{{#1}}
\def\XXint#1#2#3{{\setbox0=\hbox{$#1{#2#3}{\int}$ }
\vcenter{\hbox{$#2#3$ }}\kern-.6\wd0}}
\begin{document}
\maketitle
\begin{abstract}
For the damped wave equation on the torus, when some geodesics never meet the positive set of the damping, energy decay rates are known to depend on derivative bounds and growth properties of the damping near the boundary of its support, as well as the geometry of the support of the damping. In this paper we obtain, sometimes sharp, energy decay rates for damping which satisfy more general non-polynomial derivative bounds and growth properties. We also show how these rates can depend on the geometry of the support of the damping. We prove general results and apply them to examples of damping growing exponentially slowly, polynomial-logarithmically, or logarithmically. The decay rates found for these examples interpolate between known sharp rates for purely polynomial damping. The proof relies on estimating the solution at very fine, non-polynomial, semiclassical scales to obtain resolvent estimates, which are then converted to energy decay rates via semigroup theory. 
\end{abstract}

\tableofcontents
\section{Introduction}

\ed{For a Riemannian manifold $(M,g)$}, let $W \in L^{\infty}(\ed{M})$ \ed{be a nonnegative function. Then} consider the damped wave equation 
\begin{equation}\label{DWE}
\begin{cases}
\ed{(}\p_t^2 - \Delta_g + W \p_t) u =0, \quad (z,t) \in \ed{M}\times\Rb \\
(u, \p_t u)|_{t=0} = (u_0, u_1) \in H^1(\ed{M}) \times L^2(\ed{M}).
\end{cases}
\end{equation}
The quantity of particular interest is the energy
\begin{equation}
E(u,t) = \frac{1}{2} \int_{M} |\nabla u(z,t) |^2 + |\p_t u(z,t) |^2 dz.
\end{equation}
It is straightforward to compute $\p_t E(u,t) = -\int_{M} W |\p_t u|^2 dz \leq 0$, so the energy is non-increasing. It is natural then to ask for a rate $r(t) \ra 0$, as $t \ra \infty$, such that
\begin{equation}
E(u,t) \leq r(t) E(u,0), \quad \text{ for all initial data } (u_0, u_1) \in H^1 \times L^2.
\end{equation}
When $W \in C^0(\ed{M})$, this holds with an exponential rate $r(t)=Ce^{-ct}$, if and only if the damping satisfies the geometric control condition (GCC)  \cite{RauchTaylor1975, Ralston1969}, that is every geodesic eventually intersects $\{W>0\}$. 

\ed{In this paper we will mostly focus on the case $M=\Tb^2$. Using $\T^2 \simeq (\Rb/2\pi \Zb)^2$, we will write a point on $\T^2$ as either $(x,y)$ or $z$. On the torus}, when the GCC is not satisfied, there is still energy decay, but it is polynomial. In particular, if $\{W>0\}$ is open and nonempty then 
\begin{equation}\label{eq:stable}
E(u,t)^{\frac{1}{2}} \leq C r(t) \left( \hp{u_0}{2} + \hp{u_1}{1} \right), \quad \text{ for all initial data } (u_0, u_1) \in H^2 \times H^1 \ed{,}
\end{equation}
holds with $r(t)=t^{-\frac{1}{2}}$ \cite[Theorem 2.3]{AL14}. On the other hand, when $\overline{\{W>0\}}$ does not satisfy the GCC there are solutions decaying no faster than $r(t)=t^{-1}$ \cite[Theorem 2.5]{AL14}.
Making additional assumptions on $W$ refines this rate. If for some $C, \e>0,$ $W$ satisfies the derivative bound condition
\begin{equation}\label{DBC}
|\nabla W|\leq C W^{1-\e},
\end{equation}
then \eqref{eq:stable} holds with $r(t)= t^{-\frac{1}{1+4\e}}$ \cite[Theorem 2.6]{AL14}. 
%Note this condition is meaningful only near where $W=0$. Where $W$ is bounded from below, $C$ can always be taken large enough to ensure \eqref{DBC} is satisfied. 

%\begin{equation}
%E(u,t) \lesssim t^{-\zeta} \iff \nm{u} \lesssim |\lambda|^{\frac{1}{\zeta}-1} \nm{(-\Delta+i\lambda W - \lambda^2) u}.
%\end{equation}
%Before stating our results in their full generality, we will highlight three of their consequences. 

The derivative bound condition \eqref{DBC} of \cite{AL14} is unable \ed{to} distinguish different values of $\alpha>0$ in $W(x,y)=\exp(-(|x|-\sigma)_+^{-\alpha})$. In particular, regardless of $\alpha$, \eqref{DBC} holds for any $\e>0$, and \cite{AL14} provides energy decay at $t^{-1+\d}$, for any $\d>0$. We improve this rate to only a $\ln(\ed{2}+t)$ loss from $t^{-1}$, where the size of the loss depends on $\alpha$. 
\begin{theorem}
If $W(x,y) = \exp(-(|x|-\sigma)_+^{-\alpha})$ for any $\alpha>0$, then \eqref{eq:stable} holds with rate 
\begin{equation}
r(t)=t^{-1} \ln(\ed{2}+t)^{\frac{2\alpha+1}{\alpha}}.
\end{equation}
\end{theorem}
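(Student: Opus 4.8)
The plan is to follow the standard strategy of converting a resolvent estimate on the stationary operator into an energy decay rate via the Borichev--Tomilov theorem, so the real work is producing the correct quantitative resolvent bound for $W(x,y)=\exp(-(|x|-\sigma)_+^{-\alpha})$. Writing the damped wave generator as $\mathcal{A}$ on $\mathcal{H}=H^1\times L^2$, one needs to control $\|(\mathcal{A}-is)^{-1}\|_{\mathcal{H}\to\mathcal{H}}$ as $|s|\to\infty$; by the general machinery one reduces, via the Fourier decomposition in $y$ on the torus, to a family of one-dimensional stationary Schrödinger-type problems
\begin{equation}
-u'' + (n^2 - s^2)u - isW(x)u = f
\end{equation}
and the quasimode/contradiction argument localizes attention to the regime $n^2 \approx s^2$, i.e. the tangential frequency nearly equal to $s$, which is exactly where geodesics fail to meet $\{W>0\}$. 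The task is then a careful semiclassical estimate, with $h \sim 1/|s|$, of how much the solution can concentrate on the interval $\{|x|<\sigma\}$ where the damping vanishes to infinite order.

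The key steps, in order. First, record the abstract reduction: by \cite[Theorem 2.4]{AL14} or the Borichev--Tomilov / Anantharaman--Léautaud semigroup framework, \eqref{eq:stable} with rate $r(t)=t^{-1}\ell(t)$ (for a slowly varying $\ell$) follows from a resolvent bound of the form $\|(\mathcal{A}-is)^{-1}\| \lesssim |s|\,\kappa(|s|)$ for a suitable increasing $\kappa$ with the appropriate relation between $\kappa$ and $\ell$; for the clean rate $t^{-1}\ln(t)^{(2\alpha+1)/\alpha}$ the target is essentially $\|(\mathcal{A}-is)^{-1}\| \lesssim |s|\, (\ln|s|)^{(2\alpha+1)/\alpha}$ up to constants. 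Second, set up the one-dimensional problem and the contradiction argument: assume a sequence $s_k\to\infty$, $h_k=1/s_k$, quasimodes $u_k$ with $\|u_k\|=1$ and $\|(P(h_k)-1)u_k\|=o(h_k\kappa(s_k)^{-1})$ where $P(h)=-h^2\Delta - ih W\partial_t$-type operator, and derive a contradiction by showing the mass of $u_k$ must escape through the damped region. Third --- and this is the crux --- prove the quantitative unique-continuation / Carleman-type estimate at the non-polynomial scale: the solution restricted to $\{|x|<\sigma\}$ is controlled by its values near $|x|=\sigma$ plus the damping term, and because $W(\sigma+\eta)\sim\exp(-\eta^{-\alpha})$ one must choose the semiclassical concentration scale $\eta = \eta(h)$ optimally; balancing the cost of propagating across the undamped interval (polynomial in $1/h$) against the smallness of $W$ at distance $\eta$ (which forces $\eta^{-\alpha}\sim \ln(1/h)$, hence $\eta \sim (\ln(1/h))^{-1/\alpha}$) produces precisely the $(\ln|s|)^{(2\alpha+1)/\alpha}$ loss. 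This is where one invokes the paper's general theorem specialized to the growth function; concretely $W$ satisfies a non-polynomial derivative bound $|\nabla W| \le \Phi(W)$ with $\Phi$ determined by $\alpha$, and the general result translates $\Phi$ into the rate.

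I expect the main obstacle to be the sharp calibration of the semiclassical scale and the bookkeeping in the Carleman estimate: one needs the weight function and the cutoff scales ($\delta_\rho$, $\varepsilon_\rho$, $T_\rho$ in the paper's notation) chosen so that the exponential weight's contribution matches the exponential smallness of $W$ without losing powers, and then one must check that the resulting loss is exactly $(2\alpha+1)/\alpha$ in the exponent of $\ln t$ and not merely $O(\ln t)$ to some unspecified power. A secondary technical point is handling the $y$-dependence: although $W$ here is $y$-invariant so the Fourier-in-$y$ separation is exact, one still must sum the one-dimensional estimates uniformly in $n$, and treat the transition regimes $n^2 \ll s^2$ and $n^2 \gg s^2$ (where ellipticity or standard GCC-type propagation gives stronger bounds) to confirm they do not dominate. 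Once the resolvent bound $\|(\mathcal{A}-is)^{-1}\| \lesssim |s|(\ln|s|)^{(2\alpha+1)/\alpha}$ is in hand, the passage to \eqref{eq:stable} with $r(t)=t^{-1}\ln(t)^{(2\alpha+1)/\alpha}$ is routine Borichev--Tomilov, and the proof concludes by simply verifying that $\exp(-(|x|-\sigma)_+^{-\alpha})$ falls under the hypotheses of the general theorem with the stated parameters.
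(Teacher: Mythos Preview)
Your overall architecture matches the paper: reduce to a one-dimensional resolvent estimate via Fourier in $y$, localize at a non-polynomial scale, and convert to a decay rate by semigroup theory. However, there are two genuine gaps and one mischaracterization worth flagging.

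First, your balancing heuristic does not actually produce the exponent $\frac{2\alpha+1}{\alpha}$. You correctly identify the spatial scale $\eta\sim(\ln h^{-1})^{-1/\alpha}$, but the paper's exponent arises from a more structured computation: one verifies that $V(x)=\exp(-(|x|-\sigma)_+^{-\alpha})$ satisfies the derivative bound $|V'|\le V/q(V)$ with $q(z)\simeq\ln(z^{-1})^{-(\alpha+1)/\alpha}$, the second-order bound $|V''|\le V/p(V)$ with $p=q^2$, and computes the sublevel-set measure $U(z)=\digamma(\delta z)\simeq\ln(z^{-1})^{-1/\alpha}$. The resolvent scale is then $M_2(h)=\tilde R_2^{-1}(h)\,U(\tilde R_2^{-1}(h))/h^2$ with $R_2(z)=z\,q(z)^2$, and the exponent $\frac{2\alpha+1}{\alpha}=\frac{2(\alpha+1)}{\alpha}-\frac{1}{\alpha}$ is precisely the contribution of $q^2$ minus the gain from $U$. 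Your proposal mentions only the first-derivative bound $|\nabla W|\le\Phi(W)$ and never the second-derivative control or the measure $\digamma$; both are essential, and indeed the paper remarks that the first-derivative route alone (its ``part 1'') fails for exponential damping because $q(g(h))$ decays too slowly.

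Second, Borichev--Tomilov handles only polynomial resolvent growth and would lose a logarithm here. Since $m(\lambda)=\lambda\ln(\lambda)^{(2\alpha+1)/\alpha}$ is not polynomial, the paper invokes the positive-increase framework of Rozendaal--Seifert--Stahn to obtain the sharp conversion without log loss; you should replace the Borichev--Tomilov citation accordingly.

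Finally, a terminological point: the paper's one-dimensional estimate is not a Carleman or unique-continuation argument but a Morawetz multiplier method --- a piecewise-linear weight $b(x)$ with $b'$ large on the transition region $\{0<V\le\digamma^{-1}(U(g(h)))\}$, combined with cutoff estimates whose size is governed by $q$ and $p$. This distinction matters because the multiplier captures the measure improvement $U$ directly, which a generic Carleman weight would not.
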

\noindent For a more detailed and general statement, see Theorem \ref{wideresolvethm}. For the case $\sigma=0$, see Theorems \ref{thinresolvethm} and \ref{thinqmthm}.

Despite our focus on non-polynomial derivative bound conditions, we also improve the energy decay rate of $r(t)=t^{-\frac{1}{1+4\e}}$ from \cite[Theorem 2.6]{AL14} for $W$ satisfying the derivative bound condition \eqref{DBC}, at the cost of an additional derivative bound condition assumption. 
\begin{theorem}\label{prelimgendbc}
If $W \in W^{9,\infty}(\T^2)$ and for some $\e \in (0,1/4], C>0$, we have $|\nabla W| \leq CW^{1-\e}, |\nabla^2 W| \leq CW^{\frac{1}{2}}$, then \eqref{eq:stable} holds with rate 
\begin{equation}
r(t)=t^{-\frac{1+\e}{1+2\e}}.
\end{equation}
\end{theorem}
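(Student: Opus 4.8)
The plan is the familiar two‑step route: first reduce the energy decay statement to a high‑frequency resolvent estimate for the stationary operator $P(\lambda):=-\Delta-\lambda^2+i\lambda W$, and then prove that estimate by running the second‑microlocalization argument behind \cite[Theorem 2.6]{AL14}, the new ingredient being that the second derivative bound $|\nabla^2 W|\le CW^{1/2}$ disposes of an error term that the single bound $|\nabla W|\le CW^{1-\e}$ cannot absorb without loss.

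For the reduction, let $\A$ generate the semigroup of \eqref{DWE} on $\Hc:=H^1(\T^2)\times L^2(\T^2)$, so that $\Dc(\A)=H^2\times H^1$ and $E(u,t)^{1/2}\simeq\|e^{t\A}(u_0,u_1)\|_{\Hc}$. Since $\{W>0\}$ is open and nonempty, $i\Rb\subset\rho(\A)$ and the semigroup is bounded (this is contained in \cite[Theorem 2.3]{AL14}). By the Borichev--Tomilov characterization of polynomial semigroup decay (as used for this problem in \cite{AL14}), together with the standard reduction of $\|(i\lambda-\A)^{-1}\|_{\Hc\to\Hc}$ to an $L^2$ bound on $P(\lambda)^{-1}$ (solve $(i\lambda-\A)(u,v)=(F,G)$ and exploit the extra power of $\lambda$ that the $H^1$ regularity of $F$ provides), the conclusion of the theorem --- i.e.\ \eqref{eq:stable} with $r(t)=t^{-\frac{1+\e}{1+2\e}}$ --- follows once I establish the stationary bound
\begin{equation}\label{eq:plan-res}
\big\|P(\lambda)^{-1}\big\|_{L^2(\T^2)\to L^2(\T^2)}=O\!\left(\lambda^{\frac{\e}{1+\e}}\right),\qquad\lambda\to+\infty .
\end{equation}
This is \cite[Theorem 2.6]{AL14} with the loss $\lambda^{4\e}$ improved to $\lambda^{\e/(1+\e)}$, and $\tfrac{\e}{1+\e}<4\e$; the hypothesis $\e\le\tfrac14$ is the natural range in which $|\nabla W|\le CW^{1-\e}$ and $|\nabla^2 W|\le CW^{1/2}$ are simultaneously meaningful, and a smaller $\e$ only sharpens the rate.

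For \eqref{eq:plan-res} I would set $h=\lambda^{-1}$ and take $u$ solving $(-h^2\Delta-1+ihW)u=g$, so that the target reads $\|u\|\lesssim h^{-\frac{2+3\e}{1+\e}}\|g\|$. Pairing with $u$ and taking imaginary parts gives the damping identity $h\langle Wu,u\rangle=\Im\langle g,u\rangle$, hence $\|W^{1/2}u\|^2\le h^{-1}\|g\|\,\|u\|$; the real part together with semiclassical ellipticity localizes $u$, modulo $O(\|g\|)$, onto $\{|\xi|=1\}$ and onto the geodesics that avoid $\{W>0\}$, which on $\T^2$ lie in finitely many rational strips abutting $\partial\{W>0\}$. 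Following \cite{AL14}, a second microlocalization near each such rational direction --- after rotating it to vertical, $u$ only feels $W$ through the transverse average $\widetilde W(x)=\int_{\T}W(x,y)\,dy$, which by Jensen's inequality still obeys $|\widetilde W'|\le C\widetilde W^{1-\e}$ and $|\widetilde W''|\le C\widetilde W^{1/2}$ --- reduces the problem to a one‑dimensional resolvent bound for $-\partial_x^2+\mu^2-\lambda^2+i\lambda\widetilde W(x)$, which is resonant when the transverse frequency $\mu$ is close to $\lambda$. In that model I would split off the region $\{\widetilde W\ge h^{2a}\}$, where the damping identity gives $\|\mathbbm{1}_{\{\widetilde W\ge h^{2a}\}}u\|^2\le h^{-2a}\langle\widetilde Wu,u\rangle$, from the thin collar $\{\widetilde W<h^{2a}\}$ around $\partial\{\widetilde W>0\}$, where I would test the equation against a multiplier adapted to the collar: its commutator with $-h^2\partial_x^2$ supplies positivity, while the leftover terms are $\int(\widetilde W')(\cdots)$ and $\int(\widetilde W'')|u|^2$. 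Inserting $|\widetilde W'|\le C\widetilde W^{1-\e}$ and $|\widetilde W''|\le C\widetilde W^{1/2}$, the latter becomes $\int(\widetilde W'')|u|^2\le C\|\widetilde W^{1/2}u\|\,\|u\|$, exactly the size produced by the damping identity; choosing the scale $a$ (and the companion microlocal scale) so that the two regions balance yields the exponent $\tfrac{\e}{1+\e}$ in \eqref{eq:plan-res}.

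The hard part is the estimate on the collar $\{\widetilde W<h^{2a}\}$. Because $-\partial_x^2$ is second order, getting a genuine gain there --- rather than a bare absorption --- forces the multiplier to be differentiated twice, and that is precisely why the single bound $|\nabla W|\le CW^{1-\e}$ used in \cite{AL14} stops at the loss $\lambda^{4\e}$: one is then stuck with the term $\int(\widetilde W'')|u|^2$, and $|\widetilde W''|\le C\widetilde W^{1/2}$ is exactly what makes it summable against the damping identity instead of a lossy obstruction. The second difficulty is that everything has to be done \emph{microlocally on $\T^2$} rather than on the one‑dimensional model: performing the second microlocalization near each undamped rational direction, and checking that the errors generated by the $h$‑pseudodifferential multipliers and the Egorov step remain on the right scale, is where the finite regularity hypothesis $W\in W^{9,\infty}(\T^2)$ enters --- it provides enough derivatives for the symbol calculus and for the repeated integrations by parts.
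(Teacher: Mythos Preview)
Your high-level architecture --- resolvent estimate, reduction to rational directions, averaging to a one-dimensional damping, and a multiplier/cutoff argument at a well-chosen scale --- is the paper's route as well. The resolvent target $\|P(\lambda)^{-1}\|=O(\lambda^{\e/(1+\e)})$ is correct, and the averaging preserves the derivative bounds by Jensen exactly as you say.

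Where you diverge from the paper is in the \emph{mechanism} behind the improvement. You attribute the gain over \cite{AL14} to the bound $|\nabla^2 W|\le CW^{1/2}$ entering the one-dimensional multiplier step and killing an $\int\widetilde W''|u|^2$ obstruction. That is not what happens here. In the paper the one-dimensional estimate that delivers the exponent $\e/(1+\e)$ (Proposition~\ref{wideresolveprop}, case~1) uses only the \emph{first} derivative bound $|\widetilde W'|\le C\widetilde W^{1-\e}$; no $\widetilde W''$ term appears there. The improvement over \cite{AL14} comes instead from a sharper treatment of the residual cross term $h^{-1/2}(\int|fu|)^{1/2}(\int V(1-\chi_h)|fu|)^{1/2}$ left after the Morawetz multiplier: it is handled by a finite iterative splitting of $f$ according to the level sets $\{V\simeq b_i\}$ with $b_i=hq(g)^{-t_i}$, which your sketch does not anticipate.

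The second derivative bound $|\nabla^2 W|\le CW^{1/2}$ is used, but elsewhere: it is part of the regularity package $\mathcal D^{9,1/4}$ needed for the \emph{normal form} reduction (Theorem~\ref{averagedresolventthm}, following \cite{Sun23}) that replaces $W$ by $A_v(W)$. There, symbol expansions produce $\nabla W$ and $\nabla^2 W$, which must be recontrolled by $W$ via sharp G{\aa}rding (cf.\ Remark~\ref{Gardingrmk}); this is also where $W^{9,\infty}$ is consumed. So your diagnosis of ``why \cite{AL14} stops at $\lambda^{4\e}$'' is misplaced, and the one-dimensional step is more intricate than a single multiplier identity.
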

\noindent For a more general statement see Theorem \ref{widegenresolvethm}.

Sharp energy decay rates have also been obtained for $y$-invariant polynomially growing damping. If $W(x,y)=(|x|-\sigma)_+^{\beta}$, $\beta>-1$, then \eqref{eq:stable} holds with $r(t)=t^{-\frac{\beta+2}{\beta+3}}$, and there are solutions decaying no faster than this rate \cite{Kleinhenz2019, DatchevKleinhenz2020, KleinhenzWang2026}. When $W$ has the same growth, but is supported on a strictly convex set with positive curvature, the sharp energy decay rate is faster, $\beta$ is replaced by $\beta+\frac{1}{2}$ \cite{Sun23}, that is \eqref{eq:stable} holds with $r(t)= t^{-\frac{\beta+\frac{1}{2}+2}{\beta+\frac{1}{2}+3}}$.
When the damping is assumed to be supported on a strictly convex set, we generalize the improvement \cite{Sun23} made for polynomial damping, to apply to poly-log damping.
\begin{theorem}
Fix $\beta \geq 9$, if for $z$ near $\p B(0,1)$, $W(z) = (1-|z|)_+^{\beta} \ln((1-|z|)_+^{-1})^{-\gamma}$ then \eqref{eq:stable} holds with rate
	\begin{equation}
	r(t) = t^{-\frac{\beta+\frac{1}{2}+2}{\beta+\frac{1}{2}+3}} \ln(\ed{2}+t)^{\frac{\gamma}{\beta+\frac{1}{2}+3}}.
	\end{equation}
\end{theorem}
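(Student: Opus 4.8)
The plan is to follow the general resolvent-estimate-to-energy-decay paradigm used throughout the paper (and going back to \cite{AL14, Kleinhenz2019, Sun23}), specializing the general convex-support theorem to the poly-log damping $W(z) = (1-|z|)_+^{\beta} \ln((1-|z|)_+^{-1})^{-\gamma}$. The key point is that this $W$ satisfies a non-polynomial derivative bound condition of the type analyzed abstractly elsewhere in the paper: writing $s=(1-|z|)_+$, one has $W \sim s^\beta \ln(1/s)^{-\gamma}$, and differentiating gives $|\nabla W| \lesssim s^{\beta-1}\ln(1/s)^{-\gamma}$ and similarly for higher derivatives, up to logarithmic corrections. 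Thus $W$ behaves, to leading order, like the purely polynomial damping $s^{\beta+\frac12}$ after the curvature-induced gain from strict convexity (the ``$\beta \mapsto \beta+\frac12$'' mechanism of \cite{Sun23}) is incorporated, but with an extra slowly varying logarithmic factor that must be tracked through the semiclassical estimates.

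First I would set up the stationary/semiclassical reduction: energy decay at rate $r(t)$ for \eqref{eq:stable} is equivalent (via the Borichev–Volkov / Batty–Duyckaerts type correspondence, as invoked earlier in the paper) to a resolvent bound $\|(P(\lambda))^{-1}\| \lesssim \mathcal{G}(\lambda)$ on the real axis, where $P(\lambda) = -\Delta - \lambda^2 - i\lambda W$ and $\mathcal{G}$ is the inverse function (up to constants) of $t \mapsto r(t)^{-1}$. For the target rate $r(t) = t^{-\frac{\beta+5/2}{\beta+7/2}}\ln(t)^{\frac{\gamma}{\beta+7/2}}$, the required resolvent bound is of order $\lambda^{\frac{\beta+7/2}{\beta+5/2}}$ with a matching negative power of $\ln\lambda$, i.e. an improvement over the $\lambda^{2}$ worst case by a $\ln$-corrected power determined by $\beta+\frac12$. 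I would then Fourier-decompose in the $y$-variable (the damping being a function of $|z|$, one reduces to a family of one-dimensional Schrödinger-type operators on the transversal ray, parametrized by the tangential frequency), and on each fiber run the convexity-adapted Agmon/Carleman or multiplier estimate that produces the curvature gain. The slowly varying log factor is handled by choosing the semiclassical localization scale $h$-dependently but \emph{non-polynomially} — precisely the ``very fine, non-polynomial, semiclassical scales'' advertised in the abstract — so that on the localization region $W$ is comparable to a constant times $h^{\text{(power)}}(\ln h^{-1})^{-\gamma}$, and the error terms from treating $W$ as locally polynomial are lower order.

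Concretely the steps are: (i) verify the derivative bound hypotheses (the $W^{9,\infty}$ regularity forces $\beta \geq 9$, exactly the stated hypothesis, and the chain of bounds $|\nabla^k W| \lesssim W^{1-k/(\beta+\ldots)}\cdot(\text{log corrections})$ that the general theorem requires); (ii) invoke the general convex-support resolvent theorem (the ``Theorem \ref{widegenresolvethm}''-style result the excerpt points to, or its convex analogue generalizing \cite{Sun23}) with the profile $f(s) = s^\beta \ln(1/s)^{-\gamma}$, checking that $f$ lies in the admissible class of growth functions — it is increasing, vanishes to finite order, and its associated ``inverse profile'' has controlled variation; (iii) read off the resolvent bound $\mathcal{G}(\lambda)$ from the general theorem, which for this $f$ will come out as $\lambda^{\frac{\beta+7/2}{\beta+5/2}}(\ln\lambda)^{-\frac{\gamma}{\beta+5/2}}$ or equivalent; (iv) feed $\mathcal{G}$ into the semigroup/Borichev–Volkov step to obtain $r(t)$ and simplify the exponents. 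The arithmetic in (iv) is the routine ``invert a power-times-log'' computation that reproduces the $\frac{\gamma}{\beta+1/2+3}$ exponent.

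The main obstacle I expect is step (ii)–(iii): ensuring that the curvature gain of \cite{Sun23} genuinely survives the replacement of the polynomial damping by the poly-log damping at the level of the fiberwise one-dimensional estimates. The delicate issue is that the gain comes from the geometry of the \emph{classical flow} grazing a strictly convex boundary, and one must show the Airy-type / boundary-layer analysis is stable under multiplying the potential by a slowly varying logarithmic weight — in particular that the relevant semiclassical width and the threshold between the ``elliptic'' and ``glancing'' regimes shift only by logarithmic factors, not by a power of $h$. Quantitatively this means the log factor must be ``frozen'' on the right scale and the induced commutator/remainder terms controlled; the abstract non-polynomial derivative bound framework developed earlier in the paper is precisely what makes this bookkeeping tractable, so the proof is mostly a matter of checking the poly-log profile satisfies the hypotheses of that framework and the convex-support refinement of it.
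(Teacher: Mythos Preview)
Your overall architecture (derivative-bound check $\Rightarrow$ resolvent estimate $\Rightarrow$ semigroup theory) matches the paper, and the arithmetic in your step (iv) is exactly what is done. But the heart of the argument --- how one actually obtains the curvature gain and reduces to a one-dimensional problem --- is misidentified, and this is a genuine gap.

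You propose to ``Fourier-decompose in the $y$-variable'' and then run a fiberwise ``Airy-type / boundary-layer analysis'' tracking the glancing set. Neither step is what happens. The damping $W(z)=(1-|z|)_+^{\beta}\ln((1-|z|)_+^{-1})^{-\gamma}$ is radial, not $y$-invariant, so a direct Fourier expansion in $y$ does not decouple the problem. The paper instead uses the \emph{averaging / normal form} machinery (Theorem~\ref{averagedresolventthm}, following \cite{Sun23}): for each periodic direction $v\in\Sb^1$ one replaces $W$ by its average $A_v(W)$ along closed orbits, which \emph{is} invariant in the $v$-direction, and shows that uniform 1-d resolvent estimates for all the $A_v(W)$ imply the 2-d resolvent estimate for $W$. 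The curvature gain is then a purely geometric computation (Lemma~\ref{avgconveximproveprop}): integrating $d^{\beta}\ln(d^{-1})^{-\gamma}$ along a chord near a tangent point of a strictly convex curve produces $s^{\beta+\frac12}\ln(s^{-1})^{-\gamma}$, where $s$ is the distance from the chord to the tangent line. No Airy functions or glancing-ray microlocal analysis enters; the $\frac12$ comes from the square-root scaling of chord length near a point of positive curvature. Once $A_v(W)$ has the improved vanishing order, the 1-d resolvent estimate (Theorem~\ref{wideresolvethm}, Example~\ref{polylog1dex}) gives the required bound, uniformly in $v$, and Theorem~\ref{averagedresolventthm} closes the loop.

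Two smaller points: the semigroup input is Borichev--Tomilov / Rozendaal--Seifert--Stahn \cite{BorichevTomilov2010, rss19}, not ``Borichev--Volkov''; and the hypothesis $\beta\ge 9$ is there to guarantee $W\in\mathcal{D}^{9,\frac14}(\T^2)$, which is needed for the normal form argument (sharp G\aa rding applied to symbols built from $\nabla^2 W$), not merely for the derivative-bound inequalities you list.
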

\noindent
For a more detailed and general statement of this result, see Theorem \ref{polylogconveximprovethm}. For a further adjustment that provides improvements of any size between $0$ and $\frac{1}{2}$, see Theorem \ref{ellipsethm}.

Now we will provide some additional comments on the approach used to prove our results. To begin, the common strategy in \cite{AL14, Kleinhenz2019, DatchevKleinhenz2020, KleinhenzWang2026, Sun23}  is to obtain resolvent estimates for the stationary equation, then convert these to energy decay rates using semigroup theory \cite{BorichevTomilov2010}. These resolvent estimates are obtained by using a cutoff $\ch = \chi(h^{-\d} W)$ to separately estimate $u$ on $\{W \leq h^{\d}\}$ and $\{W \geq h^{\d}\}$. The constant $\d>0$ controls the semiclassical scale and is chosen to balance the size of $u$ on the two regions. Using a pairing argument, it is straightforward to estimate $u$ on $\{W \geq h^{\d}\}$, so the main challenge is to estimate $u$ on $\{W \leq h^{\d}\}$. This is where the derivative bound condition \eqref{DBC}, growth properties of $W$, and the geometry of the support of $W$ are used. %typically to control derivatives of $\ch$. 

In this paper, we take $\ch = \chi(g(h)W)$ for $g(h)$ non-polynomial and consider non-polynomial  derivative bound conditions $|\nabla W| \leq W q(W)^{-1}$. We choose our semiclassical scale $g(h)$ to balance contributions from $u$ on $\{W \leq g(h)\}$ and $\{W \geq g(h)\}$. Two of the major technical novelties of this work are: generalizing the approach on $\{W \leq g(h)\}$ for non-polynomial $W$ and $g$, and balancing the non-polynomial sizes of $u$ on the two regions. Ultimately, this produces non-polynomial resolvent estimates, which we convert to non-polynomial energy decay rates via a refinement to the semigroup theory \cite{rss19}.

%Writing $\T^2=\Rb^2/(A \Zb \times B \Zb)$, this condition provides different decay rates for damping $W(x,y)=(|x|-\sigma)_+^{\beta}$ depending on the  value of $\beta$, however it is unable to distinguish between different values of $\alpha>0$ in $W=\exp(-(|x|-\sigma)_+^{-\alpha})$. In particular, regardless of the value of $\alpha$, any $\e>0$ can be chosen in $\eqref{DBC}$ and so for any $\d>0$ the energy decays at rate $r(t)=t^{\d-1}$. 

\subsection{$y$-invariant damping with general derivative bound conditions}
To state the results of this paper for $y$-invariant damping in their full generality, we begin by noting that the damped wave equation can be restated as a first order equation
\begin{equation}
\p_t \begin{pmatrix} u \\ \p_t u \end{pmatrix} = \Ac \begin{pmatrix} u \\ \p_t u \end{pmatrix}, \qquad \Ac = \begin{pmatrix} 0 & Id \\ \Delta & -W \end{pmatrix}.
\end{equation}
Then $\Ac$ generates a strongly continuous semigroup on $\Hc = H^1 \times L^2$ and energy decay rates can be obtained by proving estimates on the resolvent $(i \lambda -\Ac)^{-1}$. 
%Previous work \cite{AL14, BurqHitrik2007} improves the a priori energy decay rates by imposing a derivative bound condition on the damping, for example requiring $|\nabla W| \leq C W^{1-\e}$, and then obtain an $\e$ dependent polynomial energy decay rate. 
We now state some definitions and assumptions necessary for the statement of the theorem. %When working with $y$-invariant damping we can weaken the derivative bound condition assumptions in two ways: we impose the derivative bound condition on an envelope function $V$, rather than $W$ directly, and we do not require any concavity assumptions. 
\begin{assumption}\label{wideassumption}
Assume $W(x,y) = W(x) \geq 0$ and there exists $V(x) \in C^1(\Sb^1)$ with $V(x)\geq 0$ and $C\geq 1,$ such that 
\begin{enumerate}
	\item $\frac{1}{C}  V(x) \leq W(x) \leq C V(x),$
	%\item There exists $\sigma>0$ such that $V(x) = 0$ on $[-\sigma, \sigma]$
	\item There exists an increasing and continuous function $q$, such that $q(0)=0$, and 
	\begin{equation}
	|\p_x V| \leq \frac{V}{q(V)}, \quad \text{ near } V=0.
	\end{equation}
\end{enumerate}
\end{assumption}
This assumption is a generalization of the derivative bound condition $|\p_x W| \leq C W^{1-\e}$, as can be seen by taking $q(z)=z^{\e}$.

\begin{assumption}\label{widebadassumption}
Assume that $V$ from Assumption \ref{wideassumption} also satisfies $V \in C^2(\Sb^1)$ and there exists an increasing and continuous function $p,$ such that $p(0)=0$, and 
\begin{equation}
|\p_x^2 V| \leq \frac{V}{p(V)},  \quad \text{ near } V=0.
\end{equation}
\end{assumption}
This assumption is a generalization of the derivative bound condition $|\p_x^2 W| \leq W^{1-2\e}$, as can be seen by taking $p(z)=z^{2\e}$. Note that we frequently have $q(z)^2=p(z)$, but there are $V$ for which this is not true, such as $V(x)=\ln((|x|-\sigma)_+^{-1})^{-\gamma}$.

A standard definition we need to apply the semigroup theory of \cite{rss19} is that of a function of positive increase. 
\begin{definition}\label{posincdef}
	A function $M: [a, \infty) \ra (0,\infty)$ has positive increase if there exists $\alpha>0, c \in (0,1), s_0 \geq a$ such that 
	\begin{equation}
		\frac{M(\lambda s)}{M(s)} \geq c \lambda^{\alpha}, \quad \lambda \geq 1, s \geq s_0.
	\end{equation}
\end{definition}

It will be necessary for us to invert quantities involving \ed{the non-polynomial} $q$ and $p$. When $q$ and $p$ are polynomial it is straightforward to take inverses, even after multiplying by constants or absorbing lower order error terms. To allow ourselves the same flexibility for non-polynomial $q$ and $p$ we make the following definition. 
\begin{definition}\label{envinvdef}
For a continuous and strictly increasing function $f: (0, a) \ra (0,b)$ we define an envelope inverse $\ti{f}^{-1}:(0,b) \ra (0,a)$ to be a continuous function, such that there exists $C,h_0 > 0,$ and for all $h \in (0,h_0)$
\begin{equation}
\frac{h}{C} \leq f \circ \ti{f}^{-1}(h) \leq C h.
\end{equation}
\ed{We make a similar definition for $g:(a,\infty) \ra (b,\infty)$ continuous and strictly increasing. We define an envelope inverse $\ti{g}^{-1}:(b,\infty) \ra (a,\infty)$ to be a continuous function such that there exist $C,t_0>0$, and for all $t>t_0$
	\begin{equation}
		\frac{t}{C} \leq g \circ \ti{g}^{-1}(t) \leq Ct.
	\end{equation}}
\end{definition}

We now make a definition and technical assumption related to the size of sets where $V$ is small. This provides information about the growth behavior of $V$ and can be used to improve the energy decay rate. 
\begin{definition}\label{vidzdef}
\ed{Let $\nu$ be the Lebesgue measure on $\Sb^1$.} Define 
\begin{equation}
\digamma(\zeta) = \ed{\nu}\left(\{x \in \Sb^1; 0<V(x) \leq \zeta \}\right).
\end{equation}
Fix $\d>1$ and let $U(z)$ be a continuous increasing function, such that 
\begin{equation}
	\Vidz \leq U(z)< \ed{\nu((0,\max V))}.
\end{equation}
When applying our result we only take $U(z)=\ed{C}$ or $U(z)=\Vidz$, but this more flexible definition is permitted. \ed{The size of $\d>1$ is usually not important, although we must always take $\d>1$. In particular, $\d$ can be neglected when $\digamma(C h) \simeq \digamma(h)$ as $h \ra 0$.}
\end{definition}
\begin{assumption}\label{wideinversebassumption}
There exists $C, z_0>0$ such that $U(z) \geq C q(z)$ for $z \leq z_0$. 
\end{assumption}

We now define quantities in terms of $q, p$ and $U$ which specify the energy decay rate. 

\begin{definition}\label{RMdefs}

\begin{equation}
R_{1}(z) = z q(z) U(z), \quad R_2(z) = z \min(q^2(z), p(z)).
\end{equation}
\begin{equation}
M_j(h) = \frac{\ti{R}_j^{-1}(h) U(\ti{R}_j^{-1}(h))}{h^2}.
\end{equation}
We also require for technical reasons, that there exist $\e>0$, such that $M_1(h) \geq h^{-1-\e}$, and that $M_2(h) \geq h^{-1}$.
\end{definition}
%\begin{assumption}\label{wideinversegassumption}
%For some $\e>0$, $M_1(h) \geq h^{-1+\e}$.
%\end{assumption}
We can now state our result, which gives energy decay rates in terms of the $q$ and $p$ from the derivative bound conditions, as well as improvements due to the growth behavior of $V$.
\begin{theorem}\label{wideresolvethm}
\begin{enumerate}Suppose $W \in L^{\infty}(\T^2)$. 
\item If $W$ satisfies Assumption \ref{wideassumption}, then the following holds with $j=1$.
\item If $W$ satisfies Assumptions \ref{wideassumption}, \ref{widebadassumption}, and \ref{wideinversebassumption}, then the following holds with $j=2$.
\end{enumerate}
There exists $C, h_0>0$, such that for all $h \in (0,h_0)$ 
\begin{equation}
\nm{\left( \frac{i}{h} - \Ac \right)^{-1} }_{\Lc(\Hc)} \leq C M_{j}(h).
\end{equation}
Furthermore, if $m_{j}(\lambda):=M_j\left(\frac{1}{\lambda}\right)$ has positive increase, then there exists $C>0$ such that for all $u$ solving \eqref{DWE}
$$
E(u,t)^{\frac{1}{2}} \leq \frac{C}{\ed{\widetilde{m}_{j}^{-1}(t)}} \left( \hp{u_0}{2} + \hp{u_1}{1}\right).
$$
\end{theorem}
\begin{remark}
\begin{enumerate}
	%\item As mentioned above, the energy decay rates for specific damping in Theorem \ref{wideexthm} are a corollary of this result. See the examples in Section \ref{wideexsec} for a proof of Theorem \ref{wideexthm} via this result. 
	%\item  %Thus, it can be dropped in the first two cases of Theorem \ref{wideexthm}, but not when $V$ is purely logarithmic. This is what produces the loss in Theorem \ref{wideexthm} case 3.
	\item Note that replacing the damping $W$ with the envelope function $V$ in the derivative bound condition of Assumption \ref{wideassumption} is a meaningful generalization. If $W(x) = \left(\sin\left( \frac{1}{|x|-\sigma} \right) + \ed{2} \right) (|x|-\sigma)_+^{\beta}$ then $V=(|x|-\sigma)_+^{\beta}$, satisfies the derivative bound condition with $q(z)=z^{1/\beta}$ and produces energy decay at rate $t^{-\frac{\beta+2}{\beta+3}}$. However,  $W$ only satisfies the derivative bound condition with $q(z)=z^{\frac{1}{2}}$, when $\beta \geq 2$, which would produce a slower decay rate, and when $\beta<2$ it satisfies no derivative bound condition and working with $W$ directly would not give an energy decay rate. %To see this note that $W'$ has a $\cos(1/x)$ term and although every zero of $\sin(1/x)+1$ is also a zero of $\cos(1/x)$, if we expand the two via taylor series near zero, with $z=0$ being the zero, then $\cos(1/x) \simeq z$ near its zero, while $\sin(1/x) +1 \simeq z^2$.  
	%\item When $U(z)=C,$ part 1) of the Theorem always produces a better resolvent estimate than part 2) of the Theorem, and has weaker assumptions. To see the resolvent estimate is better for part 1), note that near $z=0$, $q(z) \geq Q(z) = \min(q^2(z), p(z))$, so $R_{1}(z)= zq(z) \geq z Q(z) = R_2(z)$. Therefore $\tiroh \lesssim C \ti{R}^{-1}_2(h),$ $\tiroh/h^2=M_1(h)$ and indeed $M_1(h) \leq C M_2(h)$. Part 2) of the Theorem is included, because its proof is very similar to the proof of part 3) and to point out that the approach used to prove it provides weaker decay than the approach for part 1). 
	\item It is always possible to choose $U(z)=1$, but choosing a smaller $U(z)$ will improve the decay rate. When $\digamma(\d z)=q(z)$, cases 1 and 2 provide the same energy decay rate. However we do not always have this, see Example \ref{polyexp1dex}.
	%Similarly, one can interpret $M_3(h)$ as being an improvement of $M_2(h)$. That is $M_3(h)$ is always smaller than $M_2(h)$) and this improvement is exactly of size $\left|V^{-1}_{\d}\left(\ti{R}_2^{-1}(h)\right)\right|$. 
	\item There are fewer assumptions to check in case 1, but the strength of the resolvent estimate is limited, so energy decay cannot be faster than $t^{-1+\d}$ for some $\d>0$. In both cases, due to our lower bounds on $M_j$, we cannot have decay faster than $t^{-1}$, but this is already forbidden for $V$ vanishing on a strip by \cite[Theorem 2.5]{AL14}.
	%\item Although the cases in this theorem are similar, they cover complimentary cases. For example, case 1 provides a better energy decay rate than case 3 for poly-log damping, but case 3 provides a better energy decay rate than case 1 for poly-exponential damping. 
\end{enumerate}
%Although all of the examples in Theorem \ref{wideexthm} satisfy both Assumption \ref{wideassumption}}, the two assumptions are not equivalent. For example $W(x)= \exp\left(-\frac{1}{(|x|-\sigma)_+^{\alpha}}\right) \sin\left(\frac{1}{x}\right)^2$ satisfies Assumption \ref{wideassumption} with $q(z)=z^{\frac{1}{2}}, p(z) = z$, but $W$ is not invertible near $|x|>\sigma$.
\end{remark}

Note that we are only interested in the top order asymptotic behavior of $M_j(h)$ for $h$ small, and so we typically will not track the size of constants multiplying it or its constituent pieces, as well as lower order terms that can be absorbed into the top order term by adjusting the size of a constant. \ed{As a piece of notation,} we will write $f(h) \simeq g(h)$ when there exists $C,h_0>0$ such that for $h \in (0,h_0)$ 
\begin{equation}
	C^{-1} f(h) \leq g(h) \leq C f(h).
\end{equation}
\ed{Similarly, we will write $f(t) \simeq g(t)$ when there exists $C,t_0>0$ such that for $t \in (t_0,\infty)$ 
\begin{equation}
	C^{-1} f(t) \leq g(t) \leq C f(t).
\end{equation}}

\subsection{Damping depending on $x$ and $y$ with general derivative bound conditions}\label{sec:xydamp}

Before stating the next result, we must introduce \ed{a technical definition and another version of} our generalized derivative bound conditions. 

\begin{definition}\label{genregassumption}
	We write $W \in \mathcal{D}^{k,\e}(\T^2)$, if $W \in W^{k,\infty}(\T^2)$, and there exists $C>0$, such that 
	\begin{equation}\label{ddbc}
		|\nabla W| \leq C W^{1-\e}, \qquad |\nabla^2 W| \leq CW^{1-2\e}.
	\end{equation}
\end{definition}
Note that for $\e_1 <\e_2$ we have $\mathcal{D}^{k,\e_1} \subset \mathcal{D}^{k, \e_2}$. Note also that \eqref{ddbc} is only non-trivial near where $W=0$. In general we will ask that $W \in \mathcal{D}^{9,\frac{1}{4}}(\T^2)$.

\begin{assumption}\label{noninvarassumption}
	Assume there exist $\e_1>0$ and an increasing, continuous function $q$, such that $q(0)=0$, and 
	\begin{equation}
		|\nabla W| \leq \frac{W}{q(W)}, \text{ when } W \leq \e_1,
	\end{equation}
	and the function $r_1(z) = \frac{z}{q(z)}$ is concave on $[0,\e_1]$.
\end{assumption}
This assumption is a generalization of the derivative bound condition, $|\nabla W| \leq C W^{1-\e}$, as can be seen by taking $q(z)=z^{\e}$.

%There are some additional assumptions that provide improvements in certain cases. 
%\begin{assumption}\label{wideinversegassumption}
%With $\tiroh$ as defined in Definition \ref{RMdefs}, assume that for some $h_0, \e>0$, $q(\tiroh) \leq C h^{\e}$ for $h \in (0,h_0)$.
%\end{assumption}

\begin{assumption}\label{widegenbadassumption}
	For the same $\e_1$ as in Assumption \ref{noninvarassumption}, assume there exists an increasing, continuous function $p,$ such that $p(0)=0$, and 
	\begin{equation}
		|\nabla^2 W| \leq \frac{W}{p(W)} , \text{ when } W \leq \e_1,
	\end{equation}
	and the function $r_2(z) = \frac{z}{p(z)}$ is concave on $[0,\e_1]$.
\end{assumption}
This assumption is a generalization of the derivative bound condition $|\nabla^2 W| \leq C W^{1-2\e}$, as can be seen by taking $p(z)=z^{2\e}$. \ed{As before we frequently take $p(z)=q(z)^2$, but this is not always true.}

We can now state a general resolvent estimate result for damping depending on $x$ and $y$.

%\begin{assumption}\label{noninvarassumption}
%Assume that there exist $\e_1>0$, and increasing functions $q, p$ such that $q(0)=p(0)=0$ with $q(z) \leq C z^{\frac{1}{4}}, p(z) \leq Cz^{\frac{1}{2}}$ and 
%$$
%|\nabla W| \leq \frac{W}{q(W)}, \text{ and } |\nabla^2 W| \leq \frac{W}{p(W)}, \text{ when } W \leq \e_1,
%$$
%and  $r_1(z) = \frac{z}{q(z)}, r_2(z)=\frac{z}{p(z)}$ are concave functions on $z \in [0, \e_1]$. 
%\end{assumption}

%\begin{assumption}\label{qnoninvarassumption}
%With $\ti{R}_{\d}^{-1}$ defined as in Definition \ref{RMdefs}, assume that for some $h_0, \e>0$, $q(\tiroh) \leq C h^{\e}$ for $h \in (0,h_0)$.
%\end{assumption}

%\begin{definition}
%For $\d>1$, define
%$$
%|W^{-1}_{\d}(z)| = \mu |\{(x,y) \in \T^2; 0<W(x,y) < \d z \}.
%$$
%\end{definition}

%\begin{assumption}\label{inversenoninvarassumption}
%Assume $\Widz \geq \Avwidz$ for all $v \in \Sb^1$ periodic. 
%\end{assumption}
%
%\begin{assumption}\label{inversebadnoninvarassumption}
%For each $v \in \Sb^1$ periodic, there exists $C,k>0$ such that for $z \leq k$ 
%$$
%\Widz \geq \Avwidz \geq C q(z), \text{ for } z \leq k.
%$$
%\end{assumption}

\begin{theorem}\label{widegenresolvethm}
Assume $W \in \mathcal{D}^{9,\frac{1}{4}}(\T^2)$. Recall the definitions of $M_j$ in Definition \ref{RMdefs}.
\begin{enumerate}
	\item If $W$ satisfies Assumption \ref{noninvarassumption}, then the following holds with $j=1$ and $U(z)=1$.
	\item If $W$ satisfies Assumptions \ref{noninvarassumption} and \ref{widegenbadassumption}, then the following holds with $j=2$ and $U(z)=1$.
%	\item If $W$ satisfies Assumption \ref{wideinversegassumption} and \ref{inversenoninvarassumption}, then the following holds with $j=1$ and $U(z)=|W^{-1}_{\d}(z)|$.
%	\item If $W$ satisfies Assumption \ref{inversebadnoninvarassumption} then the following holds with $j=3$.
\end{enumerate}
\ed{There exists $C, h_0>0$, such that for all $h \in (0,h_0)$ }
$$
\nm{\left( \frac{i}{h} - \Ac\right)^{-1}}_{\Lc(\Hc)} \leq \ed{C} M_j(h).
$$
Furthermore if $m_j(\lambda)=M_j(1/\lambda)$ has positive increase, then there exists $C>0$ such that for all $u$ solving \eqref{DWE}
$$
E(u,t)^{\frac{1}{2}} \leq \frac{C}{\ed{\widetilde{m}_j^{-1}(t)}} \left( \hp{u_0}{2} + \hp{u_1}{1} \right).
$$
\end{theorem}
\begin{remark}
\begin{enumerate}
	%\item As mentioned above, the improved energy decay for the polynomial derivative bound condition in Theorem \ref{dbctheorem} is a corollary of this result. See Section \ref{widegenexsec} for a proof of the theorem via this result.
	\item This result generalizes \cite[Theorem 2.6]{AL14} to allow for non-polynomial derivative bound conditions. Furthermore, this frequently produces improved energy decay rates for damping which satisfy both sets of hypotheses. For example consider damping smooth and vanishing like $\exp(-x^{-\alpha})$ in local coordinates, with no additional assumptions on the geometry of the support. Then \eqref{DBC} is satisfied for any $\e>0$, so \cite[Theorem 2.6]{AL14} provides energy decay at rate $t^{-1+\d}$, for any $\d>0$. On the other hand, such a damping satisfies Assumptions \ref{noninvarassumption} and \ref{widegenbadassumption} with $q(z) =\ln(z^{-1})^{-\frac{\alpha+1}{\alpha}}, p(z)=q(z)^2$ and Theorem \ref{widegenresolvethm} provides energy decay at rate $t^{-1} \ln(\ed{2}+t)^{2\left(\frac{\alpha+1}{\alpha}\right)}$. 
	\item In this result we do not obtain improvements provided by $\Vidz$, as in Theorem \ref{wideresolvethm}. This is because Theorem \ref{widegenresolvethm} relies on averaging the original damping along periodic trajectories $v$ on the torus. In general, it is not straightforward to relate $\Vidz$ to the corresponding quantity after averaging,
	\begin{equation}
		\digamma_v(\d z) = \ed{\nu}( \{x \in \Sb^1: 0<A_v(W) < \d z\})
	\end{equation}
	 where $A_v(W)$ is the average of $W$ along $v$, which is defined more carefully below. However, when the damping has some structure to how it turns on and the geometry of its support, it becomes possible to insert these improvements using Theorem \ref{averagedresolventthm}. This is exactly how Theorems \ref{rectthm}, \ref{polylogconveximprovethm} and \ref{ellipsethm} are proved.
\end{enumerate}
\end{remark}

The resolvent estimates for non-$y$-invariant damping will follow from sufficiently good resolvent estimates for $y$-invariant damping via a normal form argument. To state the underlying result, we first introduce averaging along periodic directions on the torus.

Considering $\T^2= \Rb^2/(2\pi \Zb)^2$, we think of $\Sb^1$ as directions for geodesic trajectories on $\T^2$ and decompose $\Sb^1$ as: rational directions
$$
\Qc:= \left\{\zeta \in \Sb^1; \zeta = \frac{(p,q)}{\sqrt{p^2+q^2}}, (p,q) \in \Zb^2, \gcd(p,q)=1\right\},
$$ 
and irrational directions $\Rc:= \Sb^1 \backslash \Qc$. Put another way, $v \in \Qc$ implies $v=(\xi, \eta)$, where $(\xi, \eta)$ are $\Qb$ linearly dependent. 

For $v \in \Qc$ we define the averaging operator along $v$:
\begin{equation}
f \mapsto A(f)_v(z) = \lim_{T \ra \infty} \frac{1}{T} \int_0^T f(z+tv) dt.
\end{equation}
Since $v \in \Qc$, the orbit $z \mapsto z +tv$ is periodic with some period $T_v$. Because of this, we call such a $v$ a periodic direction. Furthermore, when $f \in C^0(\T^2)$ 
\begin{equation}
A(f)_v(z) = \frac{1}{T_v} \int_0^{T_v} f(z+tv) dt.
\end{equation}
Finally, let 
\begin{equation}
\Ac_v= \begin{pmatrix} 0 & Id \\ \Delta & -A_v(W) \end{pmatrix}.
\end{equation}
Now we can state the averaging result that relates resolvent estimates for $y$-invariant damping and general damping. Roughly, it says that for sufficiently regular damping $W$, if the average of $W$ along every periodic direction produces a resolvent estimate, then $W$ does as well. Note that after averaging along a periodic direction $v$, the resultant function $A_v(W)$ is invariant in the direction $v$. 
\begin{theorem}\label{averagedresolventthm}
Suppose $W \in \Dc^{9,\frac{1}{4}}(\T^2)$. If there exists a function $\dh:(0,1) \ra (0,1]$ with $\dh^{-1} = o(h^{-\frac{1}{3}})$, such that  for all $v \in \Sb^1$ periodic, there exists $h_v, C_v>0,$ such that for $h \in (0,h_v)$ 
\begin{equation}
\nm{\left(\frac{i}{h} -\Ac_v\right)^{-1}}_{\Lc(\Hc)} \leq \frac{C_v}{h \dh},
\end{equation}
then there exists $C, h_0>0$ such that for $h \in (0,h_0)$
\begin{equation}
\nm{\left(\frac{i}{h}-\Ac\right)^{-1}}_{\Lc(\Hc)} \leq \frac{C}{h \dh}.
\end{equation}
The hypotheses can be slightly weakened by re-writing the statement using the stationary equation. If $\ed{\rho(h)^{-1}=o(h^{-\frac{1}{3}})}$, and for all $v \in \Sb^1$ periodic, there exists $h_v, C_v>0$, such that for $h \in (0,h_v)$ and all $\mathcal{E} \in (0, \frac{\dh}{h})$
\begin{equation}\label{assumed1dresolventeq}
\nm{\left(-\p_x^2 + \frac{i}{h}A_v(W)(x) -\mathcal{E} \right)^{-1} }_{\Lc(\ed{L^2(\Sb^1)})} \leq C_v \dh^{-1}, %h^{-2},
\end{equation}
then there exists $C, h_0>0$ such that for $h \in (0,h_0)$ 
\begin{equation}
\nm{\left(-\Delta+ \frac{i}{h} W - \frac{1}{h^2}\right)^{-1}}_{\Lc(\ed{L^2(\T^2)})} \leq C \dh^{-1}. %h^{-2}. 
\end{equation}
%It would be equivalent to write that
%\begin{equation}
%\nm{(-h^2\p_z^2 + i h A_v(W)(z) -E)^{-1} }_{\Lc(L^2)} \leq C_v \dh^{-1}h^{-2},
%\end{equation}
%implies
%\begin{equation}
%\nm{\left(-h^2\Delta+ i h W -1 \right)^{-1}}_{\Lc(L^2)} \leq C_2 \dh^{-1} h^{-2}. 
%\end{equation}
\end{theorem}
\begin{remark}
	\begin{enumerate}
		\item This result is what allows us to convert the energy decay results for $y$-invariant  damping of Theorem \ref{wideresolvethm}, into the non $y$-invariant result of Theorem \ref{widegenresolvethm}.
		\item The hypotheses involving the semigroup generator version are stronger than the hypotheses for the stationary equation version. That is, if $\nm{(\frac{i}{h} - \Ac)^{-1}} \leq \frac{C}{h \dh}$  then $\nm{(-\p_x^2+\frac{i}{h} A_v(W)(x)-\mathcal{E})^{-1}} \leq \frac{C}{\dh}$ for all $\mathcal{E} \in \Rb$, but as can be seen from inspecting the proof of this Theorem, and the proof of Proposition \ref{1dresolveprop}, we only use the stationary equation resolvent estimate for $\mathcal{E} \in (0, \frac{\dh}{h})$.
		\item This result generalizes the averaging approach of \cite{Sun23} to non-polynomial $\rho(h)$ and provides a more portable and general statement of Theorem 1.2 of that paper. 
		\item We also point out that the stated regularity required of $W$ can be relaxed from $W\in W^{10,\infty}$ to $W^{9,\infty}$, but the details of this are already in \cite{Sun23}. See Remark \ref{Gardingrmk} for more details on the required regularity of $W$.
	\end{enumerate}
\end{remark}

\subsection{Damping is $0$ only along geodesics}
We again introduce some necessary definitions, following the conventions of \cite{BurqZuily2015}.

Let $(M,g)$ be a Riemannian manifold of dimension $d$ and consider \eqref{DWE} on it. Define the domain of effective damping 
\begin{equation}
\Omega = \bigcup_{\{U \text{open}: \essinf_U W >0\}} U.
\end{equation}
Write $S^* M$ for the cosphere bundle on $M$ and $S^* \Omega$ for the cosphere bundle on $\Omega$. Then write $\phi(s): S^* M \ra S^*M$ for the geodesic flow and define the set of geometrically controlled points
$$
\mathcal{GC} = \{ (z,\zeta) \in S^*M: \exists s \in \Rb \text{ such that } \phi(s)(z,\zeta) \in S^* \Omega \},
$$
to be the set of all points in the cosphere bundle that reach the effective damping in finite time. When $\mathcal{GC}=S^*M$ energy decay is guaranteed to be exponential, so we will assume this is not the case. Define the trapped set and its projection to $M$
$$
\mathcal{T} = S^*M \backslash \mathcal{GC}, \quad T = \Pi_x \mathcal{T},
$$
to be those points in the cosphere bundle that generate geodesic flows which never encounter the damping. 

\begin{theorem}\label{thinresolvethm}
Assume that 
\begin{enumerate}
	\item There exists a neighborhood $\mathcal{U}$ of $T$ in $M$, a compact Riemannian manifold $(M_1, g_1), g_1 \in L^{\infty}$ of dimension $k$, a Lipschitz metric $g_2$ on the unit ball $B(0,1) \subset \Rb^{d-k}$ and a $W^{2, \infty}$ 
	\ed{isometry} 
	\begin{equation}
	\Psi: (\mathcal{U}, g) \ra (M_1 \times B(0,1), \tilde{g} = g_1 \otimes g_2).
	\end{equation}
	\item There exists an increasing and continuous function $V:[0,1] \ra [0,\infty)$, with $V(0)=0$ and $C,c>0$ such that 
	\begin{equation}
	cV(|x|) \leq W(\Psi^{-1}(p,x)) \leq C V(|x|), \quad \text{ for all } (p,x) \in M_1 \times B(0,1).
	\end{equation}
\end{enumerate}
For any $\e>1$, let $R_{\e}(z) = z^2 \sqrt{V(z)V(\e z)}$. Then, there exists $C>0$ such that 
\begin{equation}
\nm{\left(\frac{i}{h} - \Ac\right)^{-1}}_{\Lc(\Hc)} \leq C M_{\e}(h) := \max\left( \frac{1}{V(\ti{R}_{\e}^{-1}(h))}, \frac{\ti{R}_{\e}^{-1}(h)^2}{h} \right).
\end{equation}
Furthermore, if $m_{\e}(\lambda) = M_{\e}\left(\frac{1}{\lambda}\right)$ is of positive increase, then there exists $C>0$, such that for all solutions of \eqref{DWE}
\begin{equation}
E(u,t) \leq \frac{C}{\ed{\ti{m}_{\e}^{-1}(t)}} \left( \hp{u_0}{2} + \hp{u_1}{1} \right).
\end{equation}
If $m_{\e}(\lambda)$ does not have positive increase, define $m_{\e,\log}(\lambda) = m_{\e}(\lambda)( \ln(1+\lambda) + \ln(1+m_{\e}(\lambda)))$, then there exists $C>0$, such that for all $u$ solving \eqref{DWE} 
\begin{equation}
E(u,t)^{\frac{1}{2}} \leq \frac{C}{\ed{\ti{m}_{\e,\log}^{-1}(t/C)}} \left( \hp{u_0}{2} + \hp{u_1}{1} \right).
\end{equation}
\end{theorem}
\begin{remark}\label{xydampremark}
\begin{enumerate}
	%\item As mentioned above, the energy decay rates in Theorem \ref{thinexthm} are a consequence of this Theorem. See Section \ref{thinexsec} for the proof.
	\item As in \cite{BurqZuily2015}, a more straightforward, but weaker, statement would be to assume that $M$ is a product. That is 
\begin{enumerate}
	\item $(M,g)= (M_1 \times M_2, g_1 \otimes g_2), q_0 \in M_2, T= \Sigma = M_1 \times \{q_0\}$,
	\item $cV(d_g(m,\Sigma)) \leq W(m) \leq C V(d_g(m,\Sigma))$ for all $m \in M_1 \times B(q_0, 1)$.
\end{enumerate}
	\item The size of $\e>1$ is usually not important, although we must always take $\e>1$. In particular, $\e$ can be neglected when $V(\e z) \simeq V(z)$ for $z$ near $0$. %Thus it can be dropped in cases 2 and 3 of Theorem \ref{thinexthm}, but not when $V$ has an exponential component. This is what produces the loss in Theorem \ref{thinexthm} case 1. 
	\item This Theorem can be combined with Theorem \ref{averagedresolventthm}, to obtain energy decay rates for damping on $\T^2$ vanishing along multiple intersecting geodesics. For example if $W \simeq C|x|^{\beta_1} |y|^{\beta_2}$, the energy decays at rate $t^{-\frac{\beta+2}{\beta}}$ where $\beta=\ed{\max}(\beta_1,\beta_2)$. See Example \ref{ex:xybeta} for details of the proof and a further discussion.
\end{enumerate}
\end{remark}
%\begin{definition}
%For $\e>1$ let $R_{\e}(z) = z^2 \sqrt{ V(z)V(\e z)}$ and define
%$$
%M_{\e}(h) = \max\left( \frac{1}{V(\ti{R}_{\e}^{-1}(h))}, \frac{\ti{R}_{\e}^{-1}(h)^2}{h} \right), \quad m_{\e}(\lambda) = M_{\e}\left(\frac{1}{\lambda} \right).
%$$
%\end{definition}
%
%\begin{theorem}\label{thinresolvethm}
%If there exists $C>0,$ and aan increasing function $V(|x|)$, such that $\frac{1}{C}V(|x|) \leq W(x,y) \leq C V(|x|)$, then for all $\e>1$ there exists $C>0$ such that
%$$
%\nm{ \left(\frac{i}{h} - \Ac\right)^{-1}}_{\Lc(\Hc)} \leq C M_{\e}(h).
%$$
%Furthermore if $m_{\e}(\lambda)$ is of positive increase, then there exists $C>0$, such that for all solutions of \eqref{DWE}
%$$
%E(u,t) \leq \frac{C}{m_{\e}^{-1}(t)}.
%$$
%\end{theorem}
We also have a result providing a lower bound on the energy decay rate, which only requires the damping to be bounded from above by $V$. 
\begin{definition}
Let $R(z) =z^2 V(z)$ and define 
\begin{equation}
M(h) = \frac{1}{V(\ti{R}^{-1}(\ed{h}))}, \quad m(\lambda) = M\left( \frac{1}{\lambda}\right).
\end{equation}
\end{definition}
\begin{theorem}\label{thinqmthm} 
Under the same geometric assumptions as in Theorem \ref{thinresolvethm}, if there exists an increasing and continuous function $V:[0,1] \ra [0,\infty)$ with $V(0)=0$ and $C>0,$ such that 
\begin{equation}
W(\Psi^{-1}(p,x)) \leq C V(|x|), \quad \forall (p,x) \in M_1 \times B(0,1),
\end{equation}
then there exist a sequence $h \ra 0,$ and quasimodes $\ed{(u_{0,h}, u_{1,h}) \in H^2 \times H^1}$, such that 
\begin{equation}
\nm{ \left( \frac{i}{h} -\Ac \right) \ed{ \begin{pmatrix} u_{0,h} \\ u_{1,h} \end{pmatrix}}}_{\Lc(\Hc)} \leq \frac{1}{M(h)}, \quad \nm{ \ed{\begin{pmatrix} u_{0,h} \\ u_{1,h} \end{pmatrix} }}_{\Hc} = 1. 
\end{equation}
Therefore 
\begin{equation}
\ed{\limsup_{h \ra 0} \frac{\nm{ \left(\frac{i}{h} - \Ac\right)^{-1}}_{\Lc(\Hc)}}{M(h)} >0.}
\end{equation}
Furthermore, for all $C>0$, there exists a solution of \eqref{DWE} decaying no faster than
\begin{equation}
E(u,t) \leq \frac{C}{\ti{m}^{-1}(t)} \left( \hp{u_0}{2} + \hp{u_1}{1} \right).
\end{equation}
\end{theorem}

\begin{remark}
\begin{enumerate}
	\item Both Theorem \ref{thinresolvethm} and \ref{thinqmthm} allow $W$ to depend on $x$ and $p$, although because $W$ is bounded by multiples of $V$ its dependence on $p$ is not completely free. 
	\item If $V(\e z) \simeq V(z)$ for $z$ near $0$, then $\ed{\ti{R}_{\e}^{-1}(h) \simeq \ti{R}^{-1}(h)}$. In this case, as is seen in the proof of Theorem \ref{thinqmthm}, $\frac{1}{V(\ti{R}^{-1}(h))} \simeq \frac{\ti{R}^{-1}(h)^2}{h}$ and the resolvent bounds from the two theorems coincide. That is, the resolvent bound in Theorem \ref{thinresolvethm} is sharp. So long as $m(\lambda)$ is of positive increase, the energy decay rate is sharp as well. 
	\item Theorem \ref{thinqmthm} does not require $m(\lambda)$ to be of positive increase in order to get the lower bound. This is due \ed{to} a general feature of lower bounds on energy decay rates from the underlying semigroup theory. See Proposition \ref{bdlemma}. %The fact that Theorem \ref{thinresolvethm} and \ref{thinqmthm} provide different energy decay rates when $m$ is not of positive increase is the reason for the gap between the upper and lower bounds in the decay rates of Theorem \ref{thinexthm} case 3. 
\end{enumerate}
\end{remark}

\subsection{Literature Review}\label{litreview}
The equivalence of exponential uniform stabilization and the geometric control condition is due to \cite{RauchTaylor1975} and \cite{Ralston1969}. When $M$ has a boundary, this is due to \cite{BardosLebeauRauch1992} and \cite{BurqGerard1997}. See also \cite{Kleinhenz2026Time} for a generalization when the damping is allowed to depend on time and \cite{KeelerKleinhenz2023} when the damping is a $0$th order pseudodifferential operator. 

Decay rates without the geometric control condition go back to \cite{Lebeau1996}. When $W \in \ed{C^0}(M)$ is nonnegative and $\{W>0\}$ is open and nonempty, then \eqref{eq:stable} holds with $r(t)=\log(2+t)^{-1}$ \cite{Lebeau1996, Burq1998}. Furthermore, this rate cannot be improved in general, as it is sharp on spheres and surfaces of revolution \cite{Lebeau1996}. 

When $M$ is a torus and $W \in L^{\infty}$ this a priori rate can be improved to $r(t)= t^{-\frac{1}{2}}$ \cite{AL14}, as a consequence of Schr\"odinger observability. \ed{For earlier results, in the setting of the square and partially rectangular domains, see respectively \cite{LiuRao2005} and \cite{BurqHitrik2007}, and for a related result in the setting of a degenerately hyperbolic undamped set, see \cite{csvw}.} See \cite{KleinhenzWang2023} for the case when $W$ is unbounded, but $\{W>0\}$ still satisfies Schr\"odinger observability. There is a complimentary lower bound when $\overline{\{W>0\}}$ does not satisfy the geometric control condition, in particular the energy cannot decay faster than $r(t)=t^{-1}$ \cite{AL14}. 

The gap between this upper and lower bound is genuine. For example, when $W(x,y) = (|x|-\sigma)_+^{\beta}, \beta \geq 0$, the energy decays at the sharp rate $r(t)=t^{-\frac{\beta+2}{\beta+3}}$ \cite{Kleinhenz2019, DatchevKleinhenz2020}, the case $\beta=0$ is due to \cite{AL14, Stahn2017}. Note that $\frac{\beta+2}{\beta+3}$ ranges from $\frac{2}{3}$ to $1$. This sharp rate was extended to $\beta \in (-1,0)$ in \cite{KleinhenzWang2026}, although $W$ becomes unbounded for negative $\beta$. See also \cite{ArnaizSun23}.

Damping on the torus depending on both $x$ and $y$ have been previously studied as well. In \cite{AL14} it is shown that a damping $W \in W^{8, \infty}(\T^2)$ satisfying $|\nabla W| \leq C W^{1-\e}$ for some $C>0$ and $\e \in (0, 1/29)$ produces energy decay at rate $r(t) = t^{-\frac{1}{1+4\e}}$. When the support of the damping is assumed to be strictly convex with positive curvature there can be an improvement to the energy decay rate. In \cite{Sun23} it is shown that damping of the form $W(x,y) = (\sigma-|x^2+y^2|^{1/2})^{\beta}_+$ with $\beta \geq 9$ produces energy decay at the sharp rate $r(t) = t^{-\frac{\beta+\frac{1}{2}+2}{\beta+\frac{1}{2}+3}}$. That is, the energy decays as if $\beta$ was increased by $\frac{1}{2}$ in \cite{Kleinhenz2019, DatchevKleinhenz2020}. \ed{See \cite{DKP25} for a further generalization of this result.}

When the derivative bound condition is only satisfied with $|\nabla W| \leq C W^{1/2}$, for example $W$ vanishes like $\exp(-x^{-1}) \sin(x^{-1})^2$, \cite{AL14} does not provide a decay rate. For such a damping, if it is $y$-invariant, \cite{Kleinhenz2022} provides a decay rate of $t^{-\frac{4}{5}+\e}$ which improves on the a priori rate of $t^{-\frac{1}{2}}$. 

When $W=0$, only along a positive codimension, invariant torus and $W$ turns on like $x^{\beta}$ \cite{LeautaudLerner2017} provide the sharp rate of $r(t)=t^{-\frac{\beta+2}{\beta}}$ This was extended to a more general geometric setup, matching that of our Theorem \ref{thinresolvethm} in \cite{BurqZuily2015}. See also \cite{BurqZuily2016}.

All of the above polynomial energy decay rates rely on the semigroup theory result of \cite{BorichevTomilov2010}, which provides an equivalence between polynomial energy decay rates and polynomial resolvent estimates for the generator of the semigroup. This eliminated a log loss in decay rates, from the more general \cite{BattyDuyckaerts2008}. The equivalence of \cite{BorichevTomilov2010} was then generalized to allow for sharp non-polynomial energy decay rates in \cite{rss19}. Although the resolvent estimates proved in this paper are independent of these semigroup results, \cite{rss19} is essential for the energy decay rates we obtain. 

\subsection{Proof Outline}
\ed{To demonstrate the power of the main results, in Section \ref{s:applications} we prove  energy decay rates for a variety of non-polynomial dampings.}
Section \ref{wideexsec} contains the proof of decay rates for explicit examples which are $y$-invariant and $0$ on a strip, Theorem \ref{wideexthm}, using the more general Theorem \ref{wideresolvethm}. Section \ref{thinexsec} contains the proof of decay rates for explicit examples which are $y$-invariant and $0$ on a single trajectory, Theorem \ref{thinexthm} using the more general Theorems \ref{thinresolvethm} and \ref{thinqmthm}.  
Section \ref{widegenexsec} contains the proof of decay rates for damping depending on $x$ and $y$ and satisfying a polynomial derivative bound condition, Theorem \ref{dbctheorem}, using the more general Theorem \ref{widegenresolvethm}, as well as the proof of decay rates for damping vanishing along multiple geodesics, Remark \ref{xydampremark}, using Theorems \ref{averagedresolventthm} and \ref{thinresolvethm}.

Section \ref{semigroupsec} contains statements of the abstract results that relate energy decay rates to resolvent estimates for the semigroup generator and the stationary equation. 

Section \ref{wideresolventsec} contains the proof of the 1-d resolvent estimates of  Theorem \ref{wideresolvethm}. The proof uses a Morawetz multiplier method and a cutoff localized at a non-polynomial scale $g(h)$ to separately estimate $u$ on $\{W \leq g(h)\}$ and $\{W \geq g(h)\}$. 

Section \ref{widegensec} contains the proof of energy decay rates for non $y$-invariant damping satisfying non-polynomial derivative bound conditions, Theorem \ref{widegenresolvethm}, and the relation between resolvent estimates and averaged damping resolvent estimates, Theorem \ref{averagedresolventthm}. The proof of Theorem \ref{widegenresolvethm} follows from the 1-d resolvent estimates of Theorem \ref{wideresolvethm} and Theorem \ref{averagedresolventthm}. The proof of Theorem \ref{averagedresolventthm} uses a normal form argument to replace the damping by an averaged version of it.

\ed{Section \ref{s:geometryApplications} contains another set of applications of our main results, to dampings which have structured geometry to their supports. Specifically we prove improved energy decay rates when the damping is supported on a rectangle, strictly convex set, or superellipse, Theorems \ref{rectthm}, \ref{polylogconveximprovethm} and \ref{ellipsethm} respectively.} Section  \ref{avggrowthsec} contains proofs of improvements to the growth order of $W$ under averaging based on the geometry of the support of $W$. \ed{Section \ref{proofconvexrate} then uses these improvements to complete the proofs of the Theorems}. Those proofs rely on the 1-d resolvent estimates of Theorem \ref{wideresolvethm} and use Theorem \ref{averagedresolventthm} to convert them into the necessary resolvent estimates.

Section \ref{eigsec} contains the proof of a result describing how eigenfunctions of the Laplacian can concentrate on neighborhoods of geodesics at non-polynomial scales.

Section \ref{thinsec} contains the proofs of energy decay rates for damping $0$ along geodesics, Theorems \ref{thinresolvethm} and  \ref{thinqmthm}, via a resolvent estimate and quasimode construction respectively. The resolvent estimate relies on the eigenfunction non-concentration result from Section \ref{eigsec}

Appendix \ref{appendix} contains some basic inequalities and estimates used throughout the paper. 

\textbf{Acknowledgments} I am thankful to David Seifert for pointing out the problem of exponential damping to me and for some initial conversations on the problem. I am also thankful to Chenmin Sun, who originally pointed out this problem to David Seifert. I am also thankful to Jared Wunsch for recommending to me to consider damping with strictly convex support and for helpful comments on the draft. I am also thankful to Matthieu L\'eautaud for pointing out that decay for $x$ and $y$ dependent damping vanishing on geodesics, should be a consequence of the $y$-invariant and averaging results. I am also thankful to Nicolas Burq, and Ruoyu P.T. Wang for helpful correspondence and conversations. \ed{I am thankful to the referee for their thoughtful comments which improved the paper.}

\ed{\textbf{Funding} This work was supported by the National Science Foundation DMS-2530465. The initial conversations with David Seifert which started this paper took place at the 2023 International Workshop on Operator Theory and Applications in Helsinki, Finland. The author's travel to that conference was partially supported by the National Science Foundation grant DMS-1953940, administered by Ra\'ul Curto. }

\section{Applications}\label{s:applications}

\subsection{Examples for $y$-invariant $W$ with $W=0$ on an interval}\label{wideexsec}
We begin by stating new energy decay results for non-polynomial damping depending only on $x$.
\begin{theorem}\label{wideexthm}
	Suppose $W(x,y)=W(x),$ and there exists $C \geq 1, V(x),$ such that  when $W$ is near $0,$ $C^{-1} V(x) \leq W(x) \leq C V(x)$.
	\begin{enumerate}
		\item If $V(x)= \ed{(|x|-\sigma)_+^{\beta}} \exp(-c(|x|-\sigma)_+^{-\alpha})$ for $\alpha,c>0, \beta \in \Rb$, then \eqref{eq:stable} holds with rate 
		$$
		r(t) = t^{-1} \ln(\ed{2+}t)^{\frac{2\alpha+1}{\alpha}}.
		$$
		%	\item If $W(x,y) = (|x|-\sigma)_+^{\beta}$ for $\beta>0$, then \eqref{eq:stable} holds with rate 
		%	$$
		%	r(t) = t^{-\frac{\beta+2}{\beta+3}}.
		%	$$
		\item If $V(x)= (|x|-\sigma)_+^{\beta} \ln((|x|-\sigma)_+^{-1})^{-\gamma}$ for $\beta>0, \gamma \in \Rb$, then \eqref{eq:stable} holds with rate
		$$
		r(t) = t^{-\frac{\beta+2}{\beta+3}} \ln(\ed{2+}t)^{\frac{-\gamma}{\beta+3}}.
		$$
		\item If $V(x)= \ln((|x|-\sigma)_+^{-1})^{-\gamma}$ for $\gamma>0$, then for all $\e>0,$ \eqref{eq:stable} holds with rate
		$$
		t^{-\frac{2}{3}+\e} \ln(\ed{2+}t)^{\frac{-\gamma-1}{3}+\e(2-\gamma)}.
		$$
	\end{enumerate}
	
\end{theorem}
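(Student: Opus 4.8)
\emph{Proof proposal.} The plan is to reduce each part to a high-frequency resolvent estimate for the stationary operator and then invoke semigroup theory, with the bulk of the work in the resolvent estimate. Since $W=W(x)$, the first step is to Fourier-decompose in $y$: writing $u=\sum_{k\in\Zb}u_k(x)e^{iky}$, the stationary problem $(-\Del-\tau^2+i\tau W)u=f$ on $\T^2$ decouples into the one-dimensional family
\begin{equation*}
P_{\tau,k}u_k:=\big(-\p_x^2+k^2-\tau^2+i\tau W(x)\big)u_k=f_k,\qquad x\in\T^1 .
\end{equation*}
The elliptic regime $|k|\gtrsim|\tau|$ (and the regime where the $i\tau W$ term dominates) is handled by routine integration by parts; the analysis concentrates on $|k|\lesssim|\tau|$, most delicately near $|k|\approx|\tau|$, where one introduces a semiclassical parameter $h$ tied to $|\tau|^{-1}$. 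Summing the one-dimensional bounds over $k$ will give a resolvent estimate $\|(-\Del-\tau^2+i\tau W)^{-1}\|_{L^2\to L^2}\le\mathcal G(|\tau|)$ for $\tau\in\Rb$, $|\tau|$ large, with $\mathcal G$ a non-polynomial function read off from the profile $V$. Note that for all three profiles $\overline{\{W>0\}}=\{|x|\ge\sigma\}$ fails GCC (the vertical geodesics in $\{|x|<\sigma\}$), so $\mathcal G$ must be unbounded.

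\emph{The two-region estimate.} For the one-dimensional problem I would adapt the scheme of \cite{AL14,Kleinhenz2019,DatchevKleinhenz2020,Sun23} but with the \emph{non-polynomial} cutoff $\ch=\chi(g(h)W)$, $g(h)\to\infty$ to be chosen, splitting $\T^1_x$ into the well-damped set $\{g(h)W\gtrsim1\}$ and the weakly-damped set $\{g(h)W\lesssim1\}$. On the well-damped set, pairing $P_{\tau,k}u_k$ against $\overline{u_k}$ and using $\Im\langle P_{\tau,k}u_k,u_k\rangle=-\tau\langle Wu_k,u_k\rangle$ with $W\gtrsim g(h)^{-1}$ there bounds $\|u_k\|_{L^2(\{g(h)W\gtrsim1\})}$ by $g(h)|\tau|^{-1}\|f_k\|\,\|u_k\|$ plus lower-order terms. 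On the weakly-damped set, where $W$ is tiny, the estimate must be propagated across the degeneracy; this is where the derivative bound on $W$ and the growth of $W$ away from $\p\{W>0\}$ enter, through a commutator/second-microlocal argument at scale $g(h)$. For the three profiles one computes explicitly (with $s=(|x|-\sigma)_+$): in (1), $|W'|\lesssim W\,\ln(1/W)^{(\alpha+1)/\alpha}$, so \eqref{DBC} holds for every $\e>0$ but now with the sharp logarithmic correction tracked; in (2), $|W'|\lesssim W^{1-1/\beta}\ln(1/W)^{-\gamma/\beta}$; in (3), $|W'|\lesssim W^{1+1/\gamma}\exp(W^{-1/\gamma})$, which barely satisfies any derivative bound because $W$ leaves $0$ so slowly — but then the weakly-damped set is exponentially thin, $|\{W\le\lambda\}|\lesssim\exp(-\lambda^{-1/\gamma})$, and one exploits that instead.

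\emph{Balancing and conversion.} Choosing $g(h)$ to equalize the well-damped and weakly-damped contributions, optimizing, and summing over $k$ yields the explicit $\mathcal G$: a power of $\log|\tau|$ in (1), $|\tau|^{1/(\beta+2)}$ times a log power in (2), and $|\tau|^{1/2+\e}$ in (3). One then feeds the associated bound $\widetilde{\mathcal G}(|\tau|):=|\tau|\,\mathcal G(|\tau|)$ for $\|(\mathcal A-i\tau)^{-1}\|_{\Hc\to\Hc}$ into the non-polynomial Batty--Duyckaerts/Borichev--Tomilov refinement of \cite{rss19,BorichevTomilov2010}, obtaining $E(u,t)^{1/2}\le r(t)\big(\hp{u_0}{2}+\hp{u_1}{1}\big)$ with $r(t)$ essentially the inverse function of $\widetilde{\mathcal G}$ up to the logarithmic corrections built into that theorem. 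Carrying the log factors through this inversion produces the three stated rates; in (3) the extra $\e$-dependent terms are the price of the lossy balancing forced by the near-trivial derivative bound.

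\emph{Main obstacle.} The crux is the weakly-damped estimate with genuinely non-polynomial $W$ and cutoff scale $g(h)$: the commutator/normal-form arguments of the polynomial theory must be re-run when the relevant small parameters are powers of $\ln(1/h)$ (case (1)), when $g(h)$ is sub-polynomial, or when the weakly-damped set is exponentially thin (case (3)), and then the resulting non-polynomial quantities must be balanced against the well-damped contribution \emph{without} degrading the sharp exponent or logarithm. A secondary difficulty is verifying that the precise $\widetilde{\mathcal G}$ obtained satisfies the monotonicity and regularity hypotheses needed to apply \cite{rss19}, and extracting from its conclusion the clean power-of-$\log$ rates stated in Theorem \ref{wideexthm}.
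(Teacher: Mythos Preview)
Your architecture is right — Fourier-decompose in $y$, reduce to a one-parameter family of 1-d problems, split at a non-polynomial scale, balance, and convert via \cite{rss19} — and this is exactly the route the paper takes (see Theorem \ref{wideresolvethm} and its proof in Section \ref{wideresolventsec}, then the explicit computations in Examples \ref{polyexp1dex}–\ref{lnthinex}). But there is a genuine gap in your treatment of case (1), and the mechanism on the weakly-damped set is not what you describe.

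For case (1) you only invoke the first-derivative bound $|V'|\le V/q(V)$ with $q(z)\simeq\ln(z^{-1})^{-(\alpha+1)/\alpha}$. The paper shows (see the remark immediately after the proof of Proposition \ref{wideresolveprop} part 1) that the first-derivative-only argument \emph{cannot} reach the stated rate $t^{-1}\ln(t)^{(2\alpha+1)/\alpha}$: the case-splitting on $\supp f$ needed to close that estimate requires $q(g(h))\le h^{\e}$ for some $\e>0$, which fails for exponential $V$, and without it one only gets $t^{-1+\e}$. To obtain the claimed rate one must also use the second-derivative bound $|V''|\le V/p(V)$ with $p=q^2$ (Assumption \ref{widebadassumption}); this permits a different balancing via $R_2(z)=zq^2(z)$ and a distinct argument (Lemmas \ref{uonemulargelemma}–\ref{uonemusmall}) that avoids the obstruction. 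Separately, the weakly-damped estimate is not a commutator/second-microlocal argument but a Morawetz multiplier identity: one pairs the energy functional $G(x)=|u'|^2+E|u|^2$ against a piecewise-linear weight $b(x)$ whose slope jumps by $U(g(h))^{-1}$ on the sublevel set $\{0<V\le \digamma^{-1}(U(g(h)))\}$ (Lemma \ref{multiplierlem}). This is where the sublevel-set measure $\digamma(\d z)=\mu\{0<V\le\d z\}$ enters in \emph{all three} cases, not only case (3); in case (1) the discrepancy between $\digamma(\d z)\simeq\ln(z^{-1})^{-1/\alpha}$ and $q(z)$ is precisely what gives $(2\alpha+1)/\alpha$ rather than $2(\alpha+1)/\alpha$ on the logarithm. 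Finally, the $\e$-loss in case (3) is not generic ``lossy balancing'': it comes from the fact that the cutoff must live on $\{V\le\d g(h)\}$ for some $\d>1$, and for purely logarithmic $V$ one has $\digamma(\d z)\not\simeq\digamma(z)$; the constant $\d>1$ propagates to $C_1=(1+\d^{1/\gamma})/\d^{1/\gamma}<2$, which can only be pushed toward $2$, never to it. Your remark about log losses from \cite{rss19} is also off: in all three cases $m_j(\lambda)$ has positive increase, so the conversion is sharp and the log factors in $r(t)$ are already present in $M_j$.
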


\begin{remark}
	\begin{enumerate}
		%\item As mentioned above, this theorem is actually a consequence of a more general result that gives energy decay rates for any damping satisfying a derivative bound condition like $|\nabla V| \leq \frac{V}{q(V)}$.  See Theorem \ref{wideresolvethm} below. 
		\item When $\overline{\{W>0\}}$ does not satisfy the GCC, based on past results we anticipate that $W$ turning on more gradually will produce faster energy decay. Theorem \ref{wideexthm} bears this out. In all cases, a \emph{larger} $\alpha, \beta,$ or $\gamma$ means $W$ turns on more \emph{slowly} and a \emph{faster} decay rate is guaranteed. 
		\item As mentioned above, case 1 improves on the $t^{-1+\d}$ decay guaranteed by \cite[Remark 2.7]{AL14}. Now the decay is only slower than $t^{-1}$ by a $\ln(\ed{2}+t)$ term. Because $\overline{\{W>0\}}$ does not satisfy the GCC the energy cannot decay faster than $t^{-1}$ \cite[Theorem 2.5]{AL14}, and so the polynomial rate is sharp. 
		\item Case 2 is a generalization of the rate from \cite{Kleinhenz2019, DatchevKleinhenz2020}. It recovers the  sharp rate for $\gamma=0$, and when $\gamma$ is non-zero there is a $\gamma$-dependent $\ln(\ed{2}+t)$ change in the energy decay rate.  Case 2 is nearly an interpolation between the polynomial decay rates of \cite{DatchevKleinhenz2020} and case 3 as $\beta \ra 0$. However, the powers on the $\ln(\ed{2}+t)$ terms do not match. On the other hand as $\beta \ra \infty$, the rate saturates the $t^{-1}$ lower bound with a log gain or loss. 
		\item In case 3 as $\gamma \ra 0,$ $W$ approaches an indicator function and  the energy decay rate approaches $t^{-2/3}$, which is the sharp decay rate for indicator functions \cite{Stahn2017} \cite[Corollary 2.9]{AL14}. If we were able to set $\e=0$, the rate would have a $\gamma$-dependent $\ln(\ed{2}+t)$ gain relative to the indicator function rate. 
		\item Based on existing results it is reasonable to expect that the powers of polynomial terms cannot be improved, but it is unclear what the sharp powers on the $\ln(\ed{2}+t)$ terms should be.  
	\end{enumerate}
\end{remark}

%\begin{example}
%If 
%$$
%V(x) =(|x|-\sigma)_+^{\beta} , \text{ then } q(z) =z^{1/\beta}.
%$$
%For some $C>0$, $|V^{-1}_\d(z)| = C z^{1/\beta}$. Then 
%$$
%R_{\d}(z) =z^{\frac{\beta+2}{\beta}} , \qquad \tiroh = h^\frac{\beta}{\beta+2}.
%$$
%Therefore  
%$$
%M_{\d}(h) = h^{-\frac{\beta+3}{\beta+2}}, \qquad m_{\d}(\lambda) = \lambda^{\frac{\beta+3}{\beta+2}}.
%$$
%This $m_{\d}(\lambda)$ is of positive increase, and note $\limh \tiroh =0$ and $q(\tiroh)= h^{\frac{1}{\beta+2}}$, so applying Theorem \ref{wideresolvethm} gives energy decay at rate
%$$
%r(t) = t^{-\frac{\beta+2}{\beta+3}}.
%$$
%\end{example}
In this subsection, we apply Theorem \ref{wideresolvethm} to prove Theorem \ref{wideexthm}.
%\begin{example}
%If $V(x) \in W^{1,\infty}(\Sb^1)$ and there exist $C, \e>0$ such that $|\nabla V| \leq C V^{1-\e}$, then $q(z)= z^{\e}$. We do not have control over $\Vidz$ and so we just take $U(z)=C$. Then 
%\begin{equation}
%R_{\d}(z) = z^{1+\e}, \qquad \tiroh = h^{\frac{1}{1+\e}}, \qquad q(\tiroh)=h^{\frac{\e}{1+\e}}.
%\end{equation}
%Therefore
%\begin{equation}
%M_{\d}(h) = h^{-\frac{1+2\e}{1+\e}}, \qquad m_{\d}(\lambda) = \lambda^{\frac{1+2\e}{1+\e}}.
%\end{equation}
%Assumption \ref{wideinversegassumption} is satisfied and this $m_{\d}(\lambda)$ is of positive increase, so applying Theorem \ref{wideresolvethm} part 1), gives energy decay at rate 
%\begin{equation}
%r(t) = t^{-\frac{1+\e}{1+2\e}}.
%\end{equation}
%This reproduces Theorem 1.9 of \cite{KleinhenzThesis}.
%\end{example}

\begin{example} \label{polyexp1dex}
For $\alpha, c >0, \beta \in \Rb$, suppose $V(x) = (|x|-\sigma)_+^{\beta} \exp\left( -c (|x|-\sigma)_+^{-\alpha} \right)$.
\begin{figure}[h]
\centering
%\draw [persred, domain=1:4, smooth, variable=\x] plot [domain=1:2, smooth, variable=\x] ({2*\x},{3*(\x-1)^2});
\begin{tikzpicture}
\node at (0.3, 2) {$W$};
\node at (3, -0.2) {$x$};
\node at (0, -0.2) {$0$};
\draw [->] (0,0) -- (0,1.9);
\draw [->] (-3,0) -- (3,0);
\draw [usuai, domain=1:1.5, smooth, variable=\x, thick] plot [domain=1.001:2, smooth, variable=\x] ({2.5*(\x-.75)},{5*exp(-1/(\x-1)^.5)});
\draw [usuai, domain=1:1.5, smooth, variable=\x, thick] plot [domain=-2:-1, smooth, variable=\x] ({2.5*(\x+.75)},{5*exp(-1/(-\x-1)^.5)});
\draw [imayou, domain=1:1.5, smooth, variable=\x, thick] plot [domain=1.001:2, smooth, variable=\x] ({2.5*(\x-.75)},{5*exp(-1/(\x-1))});
\draw [imayou, domain=1:1.5, smooth, variable=\x, thick] plot [domain=-2:-1, smooth, variable=\x] ({2.5*(\x+.75)},{5*exp(-1/(-\x-1))});
\draw [persred, domain=1:1.5, smooth, variable=\x, thick] plot [domain=1.01:2, smooth, variable=\x] ({2.5*(\x-.75)}, {5*exp(-1/(\x-1)^2)});
\draw [persred, domain=1:1.5, smooth, variable=\x, thick] plot [domain=-2:-1.01, smooth, variable=\x] ({2.5*(\x+.75)}, {5*exp(-1/(-\x-1)^2)});

\draw [sand, domain=1:1.5, smooth, variable=\x, thick] plot [domain=1.1:2, smooth, variable=\x] ({2.5*(\x-.75)}, {5*exp(-1/(\x-1)^3)});
\draw [sand, domain=1:1.5, smooth, variable=\x, thick] plot [domain=-2:-1.1, smooth, variable=\x] ({2.5*(\x+.75)}, {5*exp(-1/(-\x-1)^3)});
\node[text=persred] at (4, .75) {$\alpha=2$};
\node[text=imayou] at (4,1.25) {$\alpha=1$};
\node[text=usuai] at (4,1.75) {$\alpha=.5$};
\node[text=sand] at (4,.25) {$\alpha=3$};
\end{tikzpicture}
\caption{$W=\exp(-(|x|-\sigma)_+^{-\alpha})$. As $\alpha$ increases, $W$ turns on more gradually.}
\end{figure}

 Then $q(z) \simeq \ln(z^{-1})^{-\frac{\alpha+1}{\alpha}}$, $p(z)  \simeq q^2(z)$ and $\Vidz \simeq \ln(z^{-1})^{-\frac{1}{\alpha}}$. So taking $U(z)=\Vidz$, Assumption \ref{wideinversebassumption} is satisfied. 
Then, using Lemma \ref{envinvlemma} 
\begin{equation}
R_2(z) \simeq z \ln (z^{-1})^{\frac{-2(\alpha+1)}{\alpha}}, \qquad \ti{R}_2^{-1}(h) \simeq h \ln ( h^{-1} )^{\frac{2(\alpha+1)}{\alpha}}. %\qquad V^{-1}_{\d}\left(\ti{R}_{2}(h)\right) \simeq \ln(h^{-1})^{-\frac{1}{\alpha}}.
\end{equation}
Then 
\begin{equation}
M_2(h) \simeq \frac{1}{h} \ln(h^{-1})^{\frac{2\alpha+1}{\alpha}}, \quad m_2(\lambda) \simeq \lambda \ln(\lambda)^{\frac{2\alpha+1}{\alpha}}. 
\end{equation}
This $m_2(\lambda)$ is of positive increase, so applying Theorem \ref{wideresolvethm} part 2), gives energy decay at rate
\begin{equation}
r(t) = t^{-1} \ln(\ed{2}+t)^{\frac{2\alpha+1}{\alpha}}.
\end{equation}
Note that the values of $\beta$ and $c$ do not \ed{affect} the decay rate. 
\end{example}

%\begin{example} 
%If $V(x) = \exp\left( - (|x|-\sigma)_+^{-\alpha} \right)$,  then $q(z) =\ln(z^{-1})^{-\frac{(\alpha+1)}{\alpha}}$.
%For some $C>0$, $|V^{-1}_{\d}(z)| = C \ln(z^{-1})^{-\frac{1}{\alpha}}$. \Ped{need to fix this to use the Chenmin approach}
%Then using Lemma \ref{envinvlemma} 
%$$
%R_{\d}(z) = z \ln \left( \frac{1}{z} \right)^{-\frac{(\alpha+2)}{\alpha}}, \qquad \tiroh = h \ln\left( \frac{1}{h} \right)^{\frac{\alpha+2}{\alpha}}.
%$$
%Then \Ped{
%$$
%M_1(h) = h^{-1} \ln \left( \frac{1}{h} \right)^{\frac{\alpha+1}{\alpha}}, \qquad m_1(\lambda) = \lambda \ln(\lambda)^{\frac{\alpha+1}{\alpha}}. 
%$$
%This $m_1(\lambda)$ is of positive increase, and note $\limh \tiroh =0$ and $\tiroh \geq h$, so applying Theorem \ref{wideresolvethm} gives energy decay at rate
%$$
%r(t) = t^{-1} \ln(t)^{\frac{\alpha+2}{\alpha}}.
%$$
%}
%\end{example}

\begin{example}\label{polylog1dex}
Suppose $V(x) =(|x|-\sigma)_+^{\beta} \ln((|x|-\sigma)_+^{-1})^{-\gamma}$, for $\beta>0, \gamma \in \Rb$.
\begin{figure}[h]
\centering
%\draw [persred, domain=1:4, smooth, variable=\x] plot [domain=1:2, smooth, variable=\x] ({2*\x},{3*(\x-1)^2});
\begin{tikzpicture}
\node at (0.3, 2) {$W$};
\node at (3, -0.2) {$x$};
\node at (0, -0.2) {$0$};
\draw [->] (0,0) -- (0,1.9);
\draw [->] (-3,0) -- (3,0);
\draw [persred, domain=1:1.5, smooth, variable=\x, thick] plot [domain=1:1.3, smooth, variable=\x] ({7*(\x-.9)},{8*(\x-1)^2});
\draw [persred, domain=1:1.5, smooth, variable=\x, thick] plot [domain=-1.3:-1, smooth, variable=\x] ({7*(\x+.9)},{8*(-\x-1)^2});
\draw [imayou, domain=1:1.5, smooth, variable=\x, thick] plot [domain=1.001:1.3, smooth, variable=\x] ({7*(\x-.9)},{8*ln(1/(\x-1))^2*(\x-1)^2});
\draw [imayou, domain=1:2, smooth, variable=\x, thick] plot [domain=-1.001:-1.3, smooth, variable=\x] ({7*(\x+.9)},{8*ln(1/(-\x-1))^2*(-\x-1)^2});
\draw [usuai, domain=1:2, smooth, variable=\x, thick] plot [domain=1:1.3, smooth, variable=\x] ({7*(\x-.9)},{6*(\x-1)});
\draw [usuai, domain=1:2, smooth, variable=\x, thick] plot [domain=-1:-1.3, smooth, variable=\x] ({7*(\x+.9)},{6*(-\x-1)});
\draw [sand, domain=1:1.5, smooth, variable=\x, thick] plot [domain=1.001:1.3, smooth, variable=\x] ({7*(\x-.9)},{8/ln(1/(\x-1))*(\x-1)^2});
\draw [sand, domain=1:2, smooth, variable=\x, thick] plot [domain=-1.001:-1.3, smooth, variable=\x] ({7*(\x+.9)},{8/ln(1/(-\x-1))*(-\x-1)^2});
\node[text=persred] at (4, .75) {$\beta=2, \, \gamma=0$};
\node[text=imayou] at (4,1.25) {$\beta=2, \, \gamma=-2$};
\node[text=usuai] at (4,1.75) {$\beta=1, \, \gamma=0$};
\node[text=sand] at (4,.25) {$\beta=2, \,\gamma=2$};
\end{tikzpicture}
\caption{$W=(|x|-\sigma)_+^{\beta} \ln( (|x|-\sigma)^{-1})^{-\gamma}$. As $\beta$ and $\gamma$ increase, $W$ turns on more gradually.}
\end{figure}

 Then  $q(z) \simeq z^{\frac{1}{\beta}} \ln(z^{-1})^{\frac{\gamma}{\beta}}$, and we can take $U(z)=\Vidz \simeq C z^{\frac{1}{\beta}} \ln(z^{-1})^{\frac{\gamma}{\beta}}$.
Then, using Lemma \ref{envinvlemma} 
\begin{equation}
R_{1}(z) \simeq z^{\frac{\beta+2}{\beta}} \ln(z^{-1})^{\frac{2\gamma}{\beta}} , \quad \tiroh \simeq h^{\frac{\beta}{\beta+2}} \ln\left(h^{-1} \right)^{-\frac{2\gamma}{\beta+2}}. %\quad q(\tiroh) \simeq C h^{\frac{1}{\beta+2}} \ln(h^{-1})^{-\frac{\gamma}{\beta+2}}.
\end{equation}
Then 
\begin{equation}
M_{1}(h) \simeq h^{-\frac{\beta+3}{\beta+2}} \ln(h^{-1})^{\frac{-\gamma}{\beta+2}} , \qquad m_{1}(\lambda)\simeq \lambda^{\frac{\beta+3}{\beta+2}} \ln(\lambda)^{\frac{-\gamma}{\beta+2}}. 
\end{equation}
So $M_1(h) \geq h^{-1-\e}$ for some $\e>0$, and $m_{1}(\lambda)$ is of positive increase, so applying Theorem \ref{wideresolvethm} part 1), gives energy decay at rate
\begin{equation}
r(t) = t^{-\frac{\beta+2}{\beta+3}} \ln(\ed{2}+t)^{\frac{-\gamma}{\beta+3}}.
\end{equation}
\end{example}
\begin{example}\label{lnthinex}
If $V(x) =\ln\left( (|x|-\sigma)_+^{-1} \right)^{-\gamma}$, for $\gamma>0$,  then $q(z) \simeq z^{-\frac{1}{\gamma}} \exp(-z^{-\frac{1}{\gamma}})$.
We can take $U(z)=\Vidz \simeq \exp(-(\d z)^{-\frac{1}{\gamma}})$. Therefore, letting $C_1 = \frac{1+\d^{1/\gamma}}{\d^{1/\gamma}}\in (1,2)$ we have by Lemma \ref{envinvlemma}
\begin{align}
R_{1}(z) \simeq z^{\frac{\gamma-1}{\gamma}} \exp\left(-C_1 z^{\ed{-\frac{1}{\gamma}}} \right), & \quad  \tiroh \simeq \ln\left( h^{-\frac{1}{C_1}} \ln(h^{-\frac{1}{C_1}})^{\frac{(1-\gamma)}{C_1}} \right)^{-\gamma}.
%q(\tiroh) &\simeq h^{\frac{1}{C_1}} \ln(h^{-1})^{\frac{\gamma-1+C_1}{C_1}}.
\end{align}
Then 
\begin{equation}
M_{1}(h) \simeq h^{-\frac{C_1+1}{C_1}} \ln(h^{-1})^{\frac{1-\gamma-C_1}{C_1}} , \qquad m_{1}(\lambda) \simeq \lambda^{\frac{C_1+1}{C_1}} \ln(\lambda)^{\frac{1-\gamma-C_1}{C_1}} . 
\end{equation}
This $M_1(h) \geq h^{-1-\e}$ for some $\e>0$, and $m_{1}(\lambda)$ is of positive increase, so applying Theorem \ref{wideresolvethm} part 1)  gives energy decay at rate
\begin{equation}
r(t)=t^{-\frac{C_1}{C_1+1}} \ln(\ed{2}+t)^{\frac{1-\gamma-C_1}{C_1+1}}.
\end{equation}
Since $C_1= \frac{1+\d^{1/\gamma}}{\d^{1/\gamma}}$ and we can take $\d$ arbitrarily close to 1 from above, $C_1$ can be taken arbitrarily close to $2$ from below. Then for any $\e>0$, setting $\ed{C_1=\frac{2-3\e}{1+3\e}<2}$ \ed{we have} $\frac{C_1}{1+C_1}=\frac{2}{3}-\e$ and $\frac{1-\gamma-C_1}{C_1+1}=\frac{-\gamma-1}{3}+\e(2-\gamma)$. This gives the decay rate of the desired form in Theorem \ref{wideexthm} part 4).
\end{example}

%\begin{example}
%If 
%$$
%V(x) =, \text{ then } q(z) =, p(z) = q^2(z), Q(z) = .
%$$
%Then, using Lemma \ref{envinvlemma} 
%$$
%R(z) = , \qquad \ti{R}^{-1}(h) =.
%$$
%Then 
%$$
%M_1(h) = , \qquad m_1(\lambda) = . 
%$$
%This $m_1(\lambda)$ is of positive increase, so applying Theorem \ref{wideresolvethm} with $j=1$ gives energy decay at rate
%$$
%r(t) = .
%$$
%We can further check that 
%$$
%V^{-1}(z) - \sigma =  \geq q(z) = .
%$$
%Then
%$$
%M_2(h) = \max\left( ,  \right), \qquad m_2(\lambda) = .
%$$
%This $m_2(\lambda)$ is of positive increase, so applying Theorem \ref{wideresolvethm} with $j=2$, gives energy decay at rate 
%$$
%r(t) =.
%$$
%\end{example}

\subsection{Examples for $y$-invariant $W$ with $W=0$ only along $x=0$}
\label{thinexsec}
When $\overline{\{W>0\}}$ satisfies the GCC, but $\{W>0\}$ does not, the behavior of $W$ as it turns on is still important. The most complete results in this area \cite{LeautaudLerner2017,BurqZuily2015} give sharp energy decay for $W(x,y)=|x|^{\beta}$ at rate $t^{-\frac{\beta+2}{\beta}}$. Another contribution of this paper is to provide sometimes sharp energy decay rates for a variety of non-polynomial $W$ in this setup.

\begin{theorem}\label{thinexthm}
	\begin{enumerate}
		\item If $W(x,y) = x^{\beta} \exp\left(-c|x|^{-\alpha}\right)$, for $\alpha,c >0, \beta \in \Rb,$  then for any $\e>0$, \eqref{eq:stable} holds at rate 
		$$
		r(t) = \ed{t^{-1} \ln(2+t)^{\frac{2\alpha+1}{\alpha}}},
		$$
		and cannot hold at rate faster than 
		$$
		r(t) = t^{-1} \ln(\ed{2}+t)^{-\frac{2}{\alpha}}.
		$$
		%	\item If $W(x,y) = |x|^{\beta}$ for $\beta>0$, then \eqref{eq:stable} holds at rate
		%	$$
		%	r(t) = t^{-\frac{\beta+2}{\beta}},
		%	$$
		%	and cannot hold at a faster rate. 
		\item If $W(x,y) = |x|^{\beta}\ln(|x|^{-1})^{-\gamma}$ for $\beta>0, \gamma \in \Rb$, then \eqref{eq:stable} holds at rate
		$$
		r(t) = t^{- \frac{\beta+2}{\beta}}\ln(\ed{2}+t)^{\frac{2\gamma}{\beta}},
		$$
		and cannot hold at a faster rate. 
		\item If $W(x,y) =\ln( |x|^{-1})^{-\gamma}$ for $\gamma > 0$, then for some $C>0$, \eqref{eq:stable} holds at rate
		$$
		r(t) = \exp(-C t^{\frac{1}{\gamma+1}}),
		$$
		and  cannot hold at rate faster than
		$$
		r(t) = \exp(-C t^{\frac{1}{\gamma}}).
		$$
	\end{enumerate}
\end{theorem}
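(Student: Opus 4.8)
The plan is to derive all three cases from resolvent estimates for the stationary operator $P(\tau) = -\Del - \tau^2 + i\tau W$ on $\T^2$, which are then converted into the decay statement \eqref{eq:stable} by semigroup theory: the Borichev--Tomilov theorem \cite{BorichevTomilov2010} handles the polynomial rate of case 2, while the refinement of \cite{rss19} handles the super-polynomial rates of cases 1 and 3. Concretely, I would aim to prove $\nm{P(\tau)^{-1}}_{L^2 \ra L^2} \lesssim M(\tau)$ with $M(\tau)$ essentially $\tau(\ln\tau)^{-2/\alpha}$ (up to an arbitrarily small power of $\tau$) in case 1, $M(\tau) \lesssim \tau^{\frac{\beta}{\beta+2}}(\ln\tau)^{\frac{2\gamma}{\beta+2}}$ in case 2 --- which already recovers the Leautaud--Lerner \cite{LeautaudLerner2017} rate when $\gamma = 0$ --- and $M(\tau) \lesssim (\ln\tau)^{\gamma+1}$ in case 3; inverting each $M$ via the semigroup theorems reproduces the stated rates $t^{-1+\e}\ln(t)^{(-2+\beta\e)/\alpha}$, $t^{-(\beta+2)/\beta}\ln(t)^{2\gamma/\beta}$, and $\exp(-Ct^{1/(\gamma+1)})$. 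Since $W$ depends only on $x$, the first move is to expand in the Fourier variable $k \in \Zb$ dual to $y$ and estimate the one-dimensional family $(-\p_x^2 + k^2 - \tau^2 + i\tau W(x))u_k = f_k$ on $\Sb^1_x$ uniformly in $k$; only $k$ with $k^2 - \tau^2$ in a window of size $O(\tau^{1+\delta})$ contribute, and after rescaling these become semiclassical Schr\"odinger-type problems at a scale dictated by the vanishing of $W$.

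The core of the work is the one-dimensional estimate near $x = 0$. Following the strategy of \cite{AL14, Kleinhenz2019, DatchevKleinhenz2020}, I would insert $\ch = \chi(g(h)W)$ with $h \sim \tau^{-1}$ and $g(h)$ a non-polynomial cutoff weight to be optimized, splitting $u$ into its pieces where $g(h)W$ is large (damping effectively on) and where it is small (damping effectively off). On the first region a pairing argument against $u$ and the damping term controls $\nm{(1-\ch)u}$ in terms of $\nm{f}$ and $\nm{Wu}$. The second region is an interval $\{|x| \leq \rho(h)\}$, where $\rho(h)$ is read off from the inverse profile of $W$: $\rho(h) \approx (\ln g(h))^{-1/\alpha}$ in case 1, $\rho(h)$ solving $\rho^\beta(\ln\rho^{-1})^{-\gamma} \approx 1/g(h)$ in case 2, and $\rho(h) \approx \exp(-g(h)^{1/\gamma})$ in case 3; on this shrinking interval $W$ is a small perturbation of the free operator, and one runs a one-dimensional propagation/observability estimate controlling $\ch u$ there by its traces at $|x| \approx \rho(h)$ (absorbed into the first region) together with $\nm{f}$, while tracking all constants as explicit powers of $\rho(h)^{-1}$ and $g(h)$. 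Optimizing $g(h)$ to equalize the two contributions yields the resolvent bounds above, and this non-polynomial bookkeeping is precisely where the $\ln(t)$ exponents of the final rates are generated.

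For the lower bounds I would construct quasimodes --- families $(u_h,f_h)$ with $P(\tau)u_h = f_h$, $\nm{u_h}_{L^2} = 1$ and $\nm{f_h}_{L^2}$ as small as the claimed resolvent lower bound ($\gtrsim \tau(\ln\tau)^{-2/\alpha}$ in case 1, the matching power in case 2, $\gtrsim (\ln\tau)^{\gamma}$ in case 3) --- using a Gaussian/WKB ansatz concentrated at $x = 0$ on the scale $\rho(h)$ with a suitably chosen transverse frequency $k$, the profile equation being a one-dimensional Schr\"odinger operator with potential $W$ whose slow vanishing at $0$ is what forces the resolvent to be large; feeding these into the converse semigroup estimates (cf.\ \cite{rss19, AL14, KleinhenzWang2022}) gives solutions decaying no faster than the stated rate. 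The main obstacle --- matching the technical novelties flagged in the introduction --- is twofold, both parts stemming from the loss of the scaling self-similarity that trivializes the polynomial case: first, the one-dimensional estimate on $\{|x|\leq\rho(h)\}$ must be run with neither $W$ nor the cutoff weight $g$ a power, replacing rescaling arguments by direct manipulation of the inverse profile of $W$ and its logarithmic corrections; and second, the powers of $\ln(t)$ produced by the resolvent estimate, the quasimode bound, and the semigroup conversion must be tracked against one another tightly enough to make cases 2 and 3 sharp and case 1 sharp up to the stated $\ln(t)$ exponent --- in particular \cite{rss19} must be applied carefully enough that poly-logarithmic resolvent growth in case 3 yields the correct stretched-exponential exponent $1/(\gamma+1)$.
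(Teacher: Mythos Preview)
Your overall strategy---reduce to one dimension via Fourier in $y$, split into $\{|x|\le\rho(h)\}$ and its complement, control the complement by the damping estimate $\|W^{1/2}u\|^2\le h\|u\|\|f\|$, control the small interval by a concentration bound, optimize $\rho(h)$, and build quasimodes for the lower bound---is precisely the skeleton of the paper's proof (Propositions~\ref{thinresolventprop} and~\ref{thinqmprop}). Two places need correction.

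First, the ``one-dimensional propagation/observability'' on $\{|x|\le\rho(h)\}$ is the crux, and the paper makes it precise not via a Morawetz argument but via the Burq--Zuily non-concentration estimate (Proposition~\ref{bzprop}, which after Fourier in the $M_1$ variable becomes Proposition~\ref{mtwoconcprop}): for every $\tau\in\Rb$ and $U\in H^2$,
\[
\|U\|_{L^2(|x|\le\rho)}\ \lesssim\ \|U\|_{L^2(\rho\le|x|\le\e\rho)}+\rho^2\|(-\p_x^2-\tau)U\|_{L^2(|x|\le\e\rho)}.
\]
This is uniform in $\tau$, so your restriction to Fourier modes with $k^2-\tau^2$ in a window is unnecessary and would leave the remaining modes uncontrolled. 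The paper then globalizes from the neighborhood $\mathcal{U}$ of $\{x=0\}$ to all of $M$ by a contradiction argument with semiclassical defect measures on the GCC region; you do not mention this step. Note also that the requirement $\e>1$ in the non-concentration estimate is exactly what produces the $t^{\e}$ loss in case~1, since for poly-exponential $V$ one has $V(\e\rho)/V(\rho)\to\infty$; your sketch does not identify this mechanism.

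Second, your treatment of case~3 has a genuine gap. The sharp resolvent growth is $m(\lambda)\simeq(\ln\lambda)^\gamma$, not $(\ln\lambda)^{\gamma+1}$, and since logarithmic growth is \emph{not} of positive increase, \cite{rss19} does not apply. The upper bound $\exp(-Ct^{1/(\gamma+1)})$ comes instead from Batty--Duyckaerts (Proposition~\ref{bdlemma}), whose built-in $\log$ loss turns $m$ into $m_{\log}(\lambda)\simeq(\ln\lambda)^{\gamma+1}$ before inversion; the lower bound $\exp(-Ct^{1/\gamma})$ uses $m$ itself. This distinction between the two semigroup results is the entire source of the gap between upper and lower bounds in case~3.
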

\begin{remark}
	\begin{enumerate}
		%\item This theorem is a consequence of a more general result that gives energy decay rates for the damped wave equation on a Riemannian manifold, when the damping is only 0 along geodesic trajectories and turns on like an increasing function in a neighborhood of these geodesics. See Theorem \ref{thinresolvethm} below.
		\item When $\overline{\{W>0\}}$ satisfies the GCC, we expect and see the exact \emph{opposite} trend from Theorem \ref{wideexthm}. That is we expect that the slower $W$ turns on, the slower the energy decays. Theorem \ref{thinexthm} bears this out. In all cases a \emph{larger} $\alpha, \beta$, or $\gamma$ means $W$ turns on more \emph{slowly} which produces \emph{slower} energy decay. 
		\item Case 2 with $\gamma=0$ exactly reproduces the sharp rate from \cite{LeautaudLerner2017, BurqZuily2015}. Case 2 can be thought of as a modification of \cite{LeautaudLerner2017, BurqZuily2015} by multiplying by $\ln(x)^{\gamma}$, which in turn produces a $\gamma$ dependent $\ln(\ed{2}+t)$ change in the energy decay rate.
		\item In case 3, as $\gamma \ra 0$, $W$ approaches the identity function, which satisfies the GCC, and the energy decay rate approaches, \ed{but is never faster than,} $\exp(-t)$, which is the expected energy decay rate when the GCC is satisfied.
		
		%\item It is not clear which, if any, of the rates in cases 1) or 3) should be the sharp one. However, when $\gamma \in (0,1)$ the rate $\exp(-t^{1/\gamma})$ is super-exponential, but for $L^{\infty}$ damping, decay faster than $\exp(-C t)$ is already forbidden. 
	\end{enumerate}
\end{remark}
\begin{example}\label{ex:expthin}
If $V(x)= x^{\beta} \exp(-c x^{-\alpha})$, for $\alpha,c >0, \beta \in \Rb$. \ed{To get the upper bound we actually apply Theorem \ref{wideexthm}(1) with $\sigma=0$. If instead we used Theorem \ref{thinresolvethm} the rate we obtain is worse. To expand on this,} for $\e>1$ letting $C_1= \frac{\e^{-\alpha}+1}{2}$ we have
\begin{align}
R_{\e}(z) &\ed{\simeq} z^{2+\beta} \exp(-C_1 c \ed{z}^{-\alpha}), \quad \ed{\ti{R}_{\e}^{-1}}(h) \simeq \ln\left(h^{-\frac{1}{C_1c}} \ln(h^{-\frac{1}{C_1 c}})^{-\frac{\beta+2}{\alpha C_1 c}}\right)^{-\frac{1}{\alpha}},\\
V(\ti{R}_{\e}^{-1}(h))&\simeq h^{\frac{1}{C_1}} \ln(h^{-1})^{\frac{\ed{2}+\beta(1-C_1)}{\alpha C_1}}, \quad \frac{\ti{R}^{-1}_{\e}(h)^2}{h}\simeq h^{-1} \ln(h^{-1})^{-\frac{2}{\alpha}}.
\end{align}
Therefore 
\begin{align}
M_{\e}(h) \simeq h^{-\frac{1}{C_1}} \ln(h^{-1})^{\frac{-2\ed{-\beta}(1-C_1)}{\alpha C_1}}, \qquad m_{\e}(\lambda) \simeq \lambda^{\frac{1}{C_1}} \ln(\lambda)^{\frac{-2\ed{-\beta}(1-C_1)}{\alpha C_1}}.
\end{align}
This $m_{\e}(\lambda)$ is of positive increase, so by Theorem \ref{thinresolvethm}, the energy decays at rate 
\begin{equation}
r(t) = t^{-C_1} \ln(\ed{2}+t)^{\frac{-2\ed{-\beta}(1-C_1)}{\alpha}}.
\end{equation}
Since $\e$ can be taken arbitrarily close to $1$ from above, $C_1= \frac{\e^{-\alpha}+1}{2}$ can be taken arbitrarily close to $1$ from below, and this gives the rate 
	\begin{equation}
	r(t)=t^{-1+\e} \ln(t)^{\frac{-2\ed{-}\beta \e}{\alpha}},
\end{equation}
\ed{which we note is worse than the rate provided by Theorem \ref{wideexthm}.}

On the other hand
\begin{align}
&R(z) = z^{2+\beta} \exp(-c\ed{z}^{-\alpha}), \quad \ti{R}^{-1}(h)\simeq \ln(h^{-1/c} \ln(h^{-1/c})^{-\frac{\beta+2}{\alpha c}} )^{-\frac{1}{\alpha}}\\
&V(\ti{R}^{-1}(h)) \simeq h \ln(h^{-1})^{\ed{\frac{2}{\alpha}}}.
\end{align}
Then
\begin{equation}
M(h) \simeq h^{-1} \ln(h^{-1})^{-\frac{2}{\alpha}}, \quad m(\lambda) \simeq \lambda \ln(\lambda)^{-\frac{2}{\alpha}}.
\end{equation}
So by Theorem \ref{thinqmthm}, there are solutions decaying no faster than 
\begin{equation}
r(t) = t^{-1} \ln(\ed{2}+t)^{-\frac{2}{\alpha}}.
\end{equation}
\end{example}
%
%\begin{example}
%If $V(x)=x^{\beta}$, then $R_2(z) \simeq R_{\e}(z) = C z^{\beta+2}$ and $\ti{R}^{-1}(h) = h^{\frac{1}{\beta+2}}$. Thus the sharp semigroup resolvent estimate from Theorems \ref{thinresolvethm} and \ref{thinqmthm} is 
%$$
%\nm{\left(\frac{i}{h} - \Ac \right)^{-1} }\leq C h^{-\frac{\gamma}{\gamma+2}}.
%$$
%Then $m_2(\lambda) = \lambda^{\frac{\gamma}{\gamma+2}}$ and so the sharp energy decay rate is $t^{-\frac{\gamma+2}{\gamma}}$. 
%\end{example}

\begin{example}\label{ex:xbetathin}
If $V(x) = x^{\beta} \ln(x^{-1})^{-\gamma}$ for $\beta>0, \gamma \in \Rb$, then 
\begin{align}
&R(z)\simeq R_{\e}(z) \simeq z^{\beta+2} \ln(z^{-1})^{-\gamma}, \qquad \ti{R}^{-1}(h)\simeq h^{\frac{1}{\beta+2}} \ln(h^{-1})^{\frac{\gamma}{\beta+2}}\\
&V(\ti{R}^{-1}(h)) \simeq h^{\frac{\beta}{\beta+2}} \ln(h^{-1})^{\frac{-2\gamma}{\beta+2}} \simeq \frac{h}{\ti{R}^{-1}(h)^2}.
\end{align}
Then
\begin{equation}
M(h) \simeq  h^{-\frac{\beta}{\beta+2}} \ln(h^{-1})^{\frac{2\gamma}{\beta+2}}, \qquad m(\lambda)\simeq \lambda^{\frac{\beta}{\beta+2}} \ln(\lambda)^{\frac{2\gamma}{\beta+2}}.
\end{equation}
This $m(\lambda)$ is of positive increase and so by Theorems \ref{thinresolvethm} and \ref{thinqmthm}  the sharp energy decay rate is 
\begin{equation}
r(t) = t^{-\frac{\beta+2}{\beta}} \ln(\ed{2}+t)^{\frac{2\gamma}{\beta}}.
\end{equation} 
\end{example}

\begin{example}\label{ex:logthin}
Suppose $V(x) = \ln(x^{-1})^{-\gamma}$ for $\gamma>0$. 
\begin{figure}[h]
\centering
\begin{tikzpicture}
\node at (0.3, 2) {$W$};
\node at (1, -0.2) {$x$};
\node at (0, -0.2) {$0$};
\draw [->] (0,0) -- (0,1.9);
\draw [->] (-2,0) -- (2,0);
\draw [usuai, domain=0:1, smooth, variable=\x, thick] plot [domain=0.0001:.37, smooth, variable=\x] ({5*\x},{2/ln(1/\x)});
\draw [usuai, domain=0:1, smooth, variable=\x, thick] plot [domain=-0.0001:-.37, smooth, variable=\x] ({5*\x},{2/ln(-1/\x)});
\draw [imayou, domain=0:1, smooth, variable=\x, thick] plot [domain=0.0001:.37, smooth, variable=\x] ({5*\x},{2/ln(1/\x)^2});
\draw [imayou, domain=0:1, smooth, variable=\x, thick] plot [domain=-0.0001:-.37, smooth, variable=\x] ({5*\x},{2/ln(-1/\x)^2});
\draw [persred, domain=0:1, smooth, variable=\x, thick] plot [domain=0.0001:.37, smooth, variable=\x] ({5*\x},{2/ln(1/\x)^3});
\draw [persred, domain=0:1, smooth, variable=\x, thick] plot [domain=-0.0001:-.37, smooth, variable=\x] ({5*\x},{2/ln(-1/\x)^3});
\draw [sand, domain=0:1, smooth, variable=\x, thick] plot [domain=0.0001:.37, smooth, variable=\x] ({5*\x},{2/ln(1/\x)^4});
\draw [sand, domain=0:1, smooth, variable=\x, thick] plot [domain=-0.0001:-.37, smooth, variable=\x] ({5*\x},{2/ln(-1/\x)^4});
\node[text=sand] at (3, .25) {$\gamma=4$};
\node[text=persred] at (3,.75) {$\gamma=3$};
\node[text=imayou] at (3,1.25) {$\gamma=2$};
\node[text=usuai] at (3,1.75) {$ \gamma=1$};
\end{tikzpicture}
\caption{$W=\ln(|x|^{-1})^{-\gamma}$. As $\gamma$ increases, $W$ turns on more gradually.}
\end{figure}
Then we have
\begin{align}
&R(z) \simeq R_{\e}(z) \simeq z^2 \ln(z^{-1})^{-\gamma} \qquad  \ti{R}^{-1}(h) \simeq h^{\frac{1}{2}} \ln(h^{-1})^{\frac{\gamma}{2}}. \\
&V(\ti{R}^{-1}(h)) = \ln(h^{-1})^{-\gamma} \simeq \frac{h}{\ti{R}^{-1}(h)^2}.
\end{align}
So $M(h) \simeq \ln(h^{-1})^{\gamma}$ and $m(\lambda)\simeq\ln(\lambda)^{\gamma}$. That is by Theorems \ref{thinresolvethm} and \ref{thinqmthm}, the sharp resolvent estimate is 
\begin{equation}
\nm{(i\lambda - \Ac)^{-1}}_{\Lc(\Hc)} \lesssim \ln(\lambda)^{\gamma}.
\end{equation}
This $m(\lambda)$ is not of positive increase, so by Theorem \ref{thinresolvethm}, there exists $C>0$, such that the energy decays at rate 
\begin{equation}
r(t) = \exp(-Ct^{\frac{1}{\gamma+1}}).
\end{equation}
On the other hand, Theorem \ref{thinqmthm} ensures that for all $C>0$, there are solutions decaying no faster than
\begin{equation}
r(t) = \exp(-Ct^{\frac{1}{\gamma}}).
\end{equation}
\end{example}

\subsection{Example for damping depending on $x$ and $y$.}
\label{widegenexsec}
In this subsection we prove \ed{Theorem \ref{prelimgendbc}} using Theorem \ref{widegenresolvethm}. We also show how to combine Theorems \ref{averagedresolventthm} and Theorem \ref{thinresolvethm} \ed{to obtain energy decay rates for damping on the torus which vanish on finitely many intersecting geodesics}.

%In this paper, we also prove energy decay rates for damping which are not $y$-invariant. In this section we state results for some specific classes of the damping, which are consequences of more general results, Theorems \ref{widegenresolvethm} and \ref{averagedresolventthm}, which are stated in Section \ref{sec:xydamp}.

As mentioned above, if $W \in W^{8, \infty}(\T^2)$ and $W$ satisfies $|\nabla W| \leq CW^{1-\e}$ for $\e \in (0, \frac{1}{29})$, then \cite[Theorem 2.6]{AL14} proves energy decay at  rate $r(t)=t^{-\frac{1}{1+4\e}}$. \ed{We now recall our Theorem \ref{prelimgendbc} and provide some discussion of it.}
%Another contribution of this paper is to improve this energy decay rate, at the cost of some additional regularity assumptions.
\begin{theorem}\label{dbctheorem}
	If $W \in \mathcal{D}^{9,\frac{1}{4}}(\T^2)$ and for some $C>0, \e \in (0,\frac{1}{4}]$, $|\nabla W| \leq C W^{1-\e}$, then \eqref{eq:stable} holds at rate 
	\begin{equation}
		r(t) = t^{-\frac{1+\e}{1+2\e}}.
	\end{equation}
\end{theorem}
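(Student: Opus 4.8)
The plan is to run the three-step strategy common to \cite{AL14, Kleinhenz2019, DatchevKleinhenz2020, Sun23}: reduce \eqref{eq:stable} to a resolvent bound for the stationary damped operator, prove that bound by a semiclassical cutoff placed at the scale where $W$ is small, and exploit the two inequalities in \eqref{ddbc} to control the cutoff region. The one structural novelty is that the hypothesis $|\nabla^2 W|\le CW^{1-2\e}$ sharpens the cutoff commutators, and this is precisely what upgrades the exponent of \cite[Theorem 2.6]{AL14}. In the body of the paper the statement will be obtained as a special case of the general resolvent estimate, Theorem \ref{widegenresolvethm}; what follows is the mechanism behind that estimate.

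First I would reduce to a resolvent bound. Let $A$ be the generator of the damped wave group on the energy space. Since $\{W>0\}$ is open and nonempty, unique continuation shows $i\Rb$ is free of spectrum of $A$ (after quotienting out the constants), so by \cite{BorichevTomilov2010} the bound \eqref{eq:stable} with $r(t)=t^{-\frac{1+\e}{1+2\e}}$ is equivalent to $\|(i\tau-A)^{-1}\| = O(|\tau|^{\frac{1+2\e}{1+\e}})$ as $|\tau|\to\infty$. By the standard reduction to the stationary operator (see \cite{AL14}), it then suffices to prove, for $|\tau|\ge 1$,
\[
\big\|(-\Delta-\tau^2+i\tau W)^{-1}\big\|_{L^2\to L^2} \le C\,|\tau|^{\frac{\e}{1+\e}}.
\]
Since $\frac{\e}{1+\e}<4\e$, this genuinely improves the bound $O(|\tau|^{4\e})$ underlying \cite[Theorem 2.6]{AL14}. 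Writing $h=|\tau|^{-1}$ and $P_h=-h^2\Delta-1+ihW$, the goal becomes the semiclassical resolvent estimate $\|u\|_{L^2}\le C\,h^{-2-\frac{\e}{1+\e}}\|P_h u\|_{L^2}$, to be compared with $h^{-2-4\e}$ in \cite{AL14}.

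Next I would place the cutoff. Fix $\delta>0$, to be optimized, choose $\chi\in C^\infty_c(\Rb)$ equal to $1$ near $0$ and supported in $[-1,1]$, and set $\chi_h=\chi(h^{-\delta}W)$, which is $\equiv 1$ on $\{W\le h^\delta\}$ and supported in $\{W\le 2h^\delta\}$. Pairing $P_h u$ with $u$: the imaginary part yields the damping estimate $\int W|u|^2\le h^{-1}\|P_h u\|\,\|u\|$, hence $\|(1-\chi_h)u\|_{L^2}^2\le h^{-\delta}\int W|u|^2\le h^{-1-\delta}\|P_h u\|\,\|u\|$, and the real part yields $\|h\nabla u\|_{L^2}^2\lesssim \|u\|^2+\|P_h u\|\,\|u\|$; these control $u$ away from the region where $W$ is small. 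For $v:=\chi_h u$ one has $P_h v=\chi_h P_h u+[P_h,\chi_h]u$, where $[P_h,\chi_h]=-h^2(\Delta\chi_h)-2h^2(\nabla\chi_h)\cdot\nabla$ is supported where $W\sim h^\delta$; there $W^{1-\e}\sim h^{\delta(1-\e)}$ and $W^{1-2\e}\sim h^{\delta(1-2\e)}$, so \eqref{ddbc} gives $|\nabla\chi_h|\lesssim h^{-\delta\e}$ and $|\Delta\chi_h|\lesssim h^{-2\delta}|\nabla W|^2+h^{-\delta}|\nabla^2 W|\lesssim h^{-2\delta\e}$. The crucial point, and the only place $|\nabla^2 W|\le CW^{1-2\e}$ is used, is that it removes the $h^{-\delta}$-size term in $\Delta\chi_h$ that $W\in W^{2,\infty}$ alone would leave and that dominates whenever $2\e<1$; combined with the damping estimate, a local energy estimate near $\{W\sim h^\delta\}$, and a bootstrap for the part of $v$ supported there, this lets $[P_h,\chi_h]u$ be absorbed with the smaller powers $h^{-\delta\e}$ and $h^{-2\delta\e}$ in place of $h^{-\delta}$.

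The crux, and the step I expect to be the main obstacle, is estimating $v$ on the thin set $\Omega_h=\{W\le 2h^\delta\}$, where $hW\le 2h^{1+\delta}$ is negligible, so up to an $L^2$ error of size $h^{1+\delta}\|u\|$, $v$ is a solution of the non-invertible equation $(-h^2\Delta-1)v=P_h v$ concentrated in $\Omega_h$. Off the characteristic sphere $\{|\xi|=1\}$, $P_h$ is elliptic and $\|v\|$ is controlled by $h^{-2}\|P_h v\|$ up to $O(h^\infty)$; near $\{|\xi|=1\}$ I would run a second-microlocal normal-form reduction (as in \cite{AL14, DatchevKleinhenz2020, Sun23}) that, transverse to the undamped direction, turns $P_h$ into a one-dimensional semiclassical operator and converts the narrowness of $\Omega_h$ -- which is exactly what the derivative bounds control -- into a power gain over the elliptic bound. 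The delicate part is carrying this out when $W$ is \emph{not} $y$-invariant: the normal form and the ensuing estimate must be produced uniformly over all rational geodesic directions on $\T^2$ (irrational directions being harmless by equidistribution) and over the whole characteristic sphere, and the final exponent hinges on extracting the sharp power from the $h^\delta$-sublevel geometry of $W$. Assembling the pieces, the resolvent estimate takes the form $\|u\|_{L^2}\lesssim h^{-1-\delta}\|P_h u\|_{L^2}$ plus the bound for $v$, the first term increasing and the $v$-term decreasing in $\delta$; the improved commutator shifts the optimal $\delta$ below its value in \cite{AL14}, and choosing $\delta$ there produces the exponent $2+\frac{\e}{1+\e}$, i.e. the rate $t^{-\frac{1+\e}{1+2\e}}$. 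Finally I would verify that $W\in W^{9,\infty}(\T^2)$ supplies enough regularity for the pseudodifferential calculus, the normal form, and the higher cutoff commutators used above.
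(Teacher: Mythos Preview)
Your overall architecture---reduce \eqref{eq:stable} to a stationary resolvent bound, pass via a normal form to one-dimensional estimates over periodic directions, and apply a sharp $1$-d estimate---matches the paper. But you have misidentified the mechanism that produces the improved exponent, and in doing so you have also misread the hypothesis.

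First, the hypothesis $W\in\mathcal{D}^{9,\frac14}(\T^2)$ gives only $|\nabla^2 W|\le CW^{1/2}$, not $|\nabla^2 W|\le CW^{1-2\e}$ with the same $\e$ as in $|\nabla W|\le CW^{1-\e}$. For $\e<\tfrac14$ the latter is strictly stronger near $\{W=0\}$, so your claimed bound $|\Delta\chi_h|\lesssim h^{-2\delta\e}$ is not justified: the $h^{-\delta}|\nabla^2 W|$ term contributes $h^{-\delta/2}$, not $h^{-2\delta\e}$. More importantly, the paper does \emph{not} use the second-derivative bound to sharpen the cutoff commutator and thereby improve the exponent. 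The condition $|\nabla^2 W|\le CW^{1/2}$ (and the $W^{9,\infty}$ regularity) enters only as a technical requirement for the normal form construction of Proposition~\ref{normalformquasiprop}---it is what allows one to replace $W$ by its periodic-direction average $A_v(W)$ with acceptable errors (via Sharp G{\aa}rding, cf.\ Remark~\ref{Gardingrmk}). It plays no role in the size of the final exponent.

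The actual source of the improvement over \cite[Theorem 2.6]{AL14} is the \emph{one-dimensional} resolvent estimate, Proposition~\ref{wideresolveprop} part~1). After the normal form reduces matters to $(-\partial_x^2+\tfrac{i}{h}A_v(W)-E)$ on $\Sb^1$, the paper runs a Morawetz multiplier argument (Lemma~\ref{multiplierlem}) together with the cutoff $\chi_h=\chi(g(h)^{-1}A_v(W))$, and then---this is the new ingredient---a case decomposition on the support of $f$ relative to level sets $\{b_{i-1}\le V\le b_i\}$ with $b_i=h\,q(g(h))^{-t_i}$, which extracts the extra gain needed to reach $M_1(h)=h^{-\frac{1+2\e}{1+\e}}$. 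That argument uses only the \emph{first} derivative bound $|\nabla W|\le CW^{1-\e}$ (equivalently $q(z)=z^{\e}$); the second-derivative bound never appears in it. Concretely, with $q(z)=z^{\e}$ one has $R_1(z)=z^{1+\e}$, $\ti R_1^{-1}(h)=h^{\frac1{1+\e}}$, and hence $M_1(h)=h^{-\frac{1+2\e}{1+\e}}$; invoking Theorem~\ref{widegenresolvethm} part~1) (which packages the averaging, the $1$-d estimate, and the normal form) yields the decay rate $t^{-\frac{1+\e}{1+2\e}}$. Your ``crux'' paragraph is therefore not merely vague but points at the wrong place: the balancing that produces the exponent happens inside the $1$-d multiplier estimate, not in the $2$-d commutator.
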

\begin{remark}
	\begin{enumerate}
		\item  As a point of comparison at $\e=\frac{1}{4}$, \cite[Theorem 2.6]{AL14} would give decay at $t^{-\frac{1}{1+4\e}}=t^{-\frac{1}{2}}$, which does not improve over the a priori energy decay rate, while \ed{our} result gives energy decay at $t^{-\frac{5}{6}}$. Both results give energy decay rates approaching $t^{-1}$ as $\e\ra0$ although $\frac{1+\e}{1+2\e} > \frac{1}{1+4\e}$ for all $\e \in (0,\frac{1}{4}]$, so the decay from Theorem \ref{dbctheorem} is always faster than that of \cite[Theorem 2.6]{AL14}.
		%\item As mentioned above this result follows from a more general result that accommodates non-polynomial derivative bound conditions. See Theorem \ref{widegenresolvethm}.
		\item This improvement ultimately comes from an improvement in the 1-d resolvent estimate, which originally appeared in \cite{KleinhenzThesis} and was due to joint work with Kiril Datchev.  It is perhaps possible to adapt the approach used in this 1-d resolvent estimate to the creation of the key cutoff function in \cite[Proposition 7.3]{AL14} to remove the additional regularity and derivative bound condition assumptions on $W$ in Theorem \ref{dbctheorem}.
	\end{enumerate}
\end{remark}
For damping that fit the hypotheses of both Theorem \ref{wideexthm} and \ref{dbctheorem}, the energy decay rate provided by Theorem \ref{dbctheorem} is never as good as the decay rate provided by Theorem \ref{wideexthm}. This improvement comes from estimating the size of $\{W \leq g(h)\}$ using the growth behavior of $W$, which cannot be done directly from the derivative bound condition. The same strategy can be used for $W$ depending on $y$, so long as there is some structure to the growth behavior of $W$.
\begin{example}[Proof of Theorem \ref{dbctheorem}]
Suppose $W \in \Dc^{9,\frac{1}{4}}(\T^2)$, and there exists $C>0, \e \in (0,\frac{1}{4}]$ such that $|\nabla W| \leq C W^{1-\e}$. Thus $W$ satisfies Assumption \ref{noninvarassumption} with $q(z)=z^{\e}$, where $r_1(z)=z^{1-\e}$ is concave. Then
\begin{equation}
R_{1}(z) = z^{1+\e}, \quad \tiroh= h^{\frac{1}{1+\e}}. %\quad q(\tiroh) = h^{\frac{\e}{1+\e}}.
\end{equation}
Therefore $M_{1}(h) = h^{-\frac{1+2\e}{1+\e}}, m_{1}(\lambda) = \lambda^{\frac{1+2\e}{1+\e}}$. This $M_1(h) \geq h^{-1-\d}$ for some $\d>0$, and $m_{1}(\lambda)$ is of positive increase, so applying Theorem \ref{widegenresolvethm} part 1) gives energy decay at rate
\begin{equation}
r(t)=t^{-\frac{1+\e}{1+2\e}}.
\end{equation} 
Note that this is faster decay then obtained by applying Theorem \ref{widegenresolvethm} part 2), which would require a stronger assumption on $|\nabla^2 W|$ and would only give energy decay at rate $t^{-\frac{1+2\e}{1+4\e}}$. \qed
%This is also faster decay than obtained by \cite{AnantharamanLeautaud2014} which gives energy decay at rate $t^{-\frac{1}{1+4\e}}$. Note however that \cite{AnantharamanLeautaud2014} does not require $|\nabla^2 W| \leq C W^{1-2\e}$ and only requires $W \in W^{8, \infty}$ rather than our $W \in W^{9,\infty}$. 
\end{example}

Now we show how Theorems \ref{averagedresolventthm} and Theorem \ref{thinresolvethm} can be combined to provide energy decay rates for damping $W$ on the torus which vanish along finitely many intersecting geodesics. First, we consider $W(x,y) \simeq |x|^{\beta_1} |y|^{\beta_2}$, which is the most straightforward example.
\begin{example}\label{ex:xybeta}
	Consider $W(x,y)=|x|^{\beta_1}|y|^{\beta_2}$ for $\beta_1, \beta_2>0$. So $W=0$ along $x=0$ and $y=0$ and it does not satisfy the hypotheses of Theorem \ref{thinresolvethm} directly. 
	
	Instead, for $v \in \Sb^1$ periodic consider $A_v(W)$. If $v \neq (1,0), (0,1)$ then $A_v(W) \geq c >0$ for some $c>0$, and so by classical geometric control condition results (c.f. \cite{RauchTaylor1975} or \cite[Section 2.3]{KleinhenzThesis}) \ed{there exists $h_0>0$ such that for $h \in (0,h_0)$ we have }
	\begin{equation}
		\nm{\left(\frac{i}{h}-\Ac_v\right)^{-1}}_{\Lc(\Hc)} \leq C. 
	\end{equation}
	On the other hand, when $v=(1,0)$, then $\Ac_v(W) \simeq y^{\beta_2}$ and so by Theorem \ref{thinresolvethm} and Example \ref{ex:xbetathin} \ed{there exists $h_y>0$ such that for $h \in (0,h_y)$ we have }
	\begin{equation}
		\nm{\left(\frac{i}{h}-\Ac_v\right)^{-1}}_{\Lc(\Hc)} \leq C h^{-\frac{\beta_2}{\beta_2+2}}.
	\end{equation}
	Similarly, when $v=(0,1)$, then $\Ac_v(W) \simeq x^{\beta_1}$ and so by Theorem \ref{thinresolvethm} and Example \ref{ex:xbetathin} \ed{there exists $h_x>0$ such that for $h \in (0,h_x)$ we have }
	\begin{equation}
		\nm{\left(\frac{i}{h}-\Ac_v\right)^{-1}}_{\Lc(\Hc)} \leq C h^{-\frac{\beta_1}{\beta_1+2}}.
	\end{equation}
	So now if $\beta=\ed{\max}(\beta_1, \beta_2)$ \ed{then for $h \in (0,h_*)$ with $h_*=\min(h_0,h_y,h_x)$ we have}
	\begin{equation}
		\nm{\left(\frac{i}{h}-\Ac_v\right)^{-1}}_{\Lc(\Hc)} \leq C h^{-\frac{\beta}{\beta+2}},
	\end{equation}
	for all $v \in \Sb^1$ periodic. Then by Theorem \ref{averagedresolventthm} and writing $h^{-1}=\lambda$, \ed{there exists $\lambda_0>0$ such that for $\lambda \in \Rb$ with $|\lambda| \geq \lambda_0$ we have}
	\begin{equation}
		\nm{\left(i\lambda - \Ac\right)^{-1}}_{\Lc(\Hc)} \leq C \lambda^{\frac{\beta}{\beta+2}}.
	\end{equation}
	Since this right hand side is of positive increase, energy decays at rate 
	\begin{equation}
		t^{-\frac{\beta+2}{\beta}}.
	\end{equation}
	The same argument can be used, possibly invoking Examples \ref{ex:expthin} and \ref{ex:logthin}, to give explicit energy decay rates for $W(x,y)=V_1(x) V_2(y)$ with $V_j(z) \in \{z^{\beta}\ln(z^{-1})^{-\gamma}, z^{\beta}\exp(-cz^{-\alpha}), \ln(z^{-1})^{-\gamma}\}.$ The decay rate will be the slower of the rates coming from $W(x,y)= V_1(x)$ and $W(x,y)=V_2(y)$. 
	
	Now consider $W$ vanishing along finitely many periodic geodesics $\gamma_j, j=1, \ldots, n$ and growing like $V_j(d_j)$ near $\gamma_j$, where $d_j$ is the distance to $\gamma_j$ and $V_j$ satisfies Hypothesis 2 of Theorem \ref{thinresolvethm}. For example $V_j(d_j)=d_j^{\beta_j}$ and $W(x,y)=\prod_{j=1}^n d_j^{\beta_j}$. In this general setup, we can follow the same argument, replacing $(1,0)$ and $(0,1)$ by the $v_j \in \Sb^1$ which generate $\gamma_j$, and applying the full abstract version of Theorem \ref{thinresolvethm}, to obtain energy decay at rate given by the slowest energy decay rate of $W(x,y)=V_j(d_j)$. In the explicit example here, that rate would be $t^{-\frac{\beta+2}{\beta}}$ for $\beta = \ed{\max}_{1 \leq j \leq n} \beta_j$. 
\end{example}

\section{Semigroup Theory}
\label{semigroupsec}
\ed{In this section we discuss results from abstract semigroup theory, which apply for general Riemannian manifolds $(M,g)$.} There is a standard equivalence between resolvent estimates for the semigroup generator of the damped wave equation and its stationary equation. The following is Lemma 3.7 of \cite{KleinhenzWang2023}.
\begin{lemma}\label{resolvequivlem}
Let $\lambda_0 >0$ and $K(\lambda) \geq C$ uniformly for $|\lambda| \geq \lambda_0$. The following are equivalent: 
\begin{enumerate}
\item There exists $C,\lambda_0>0$ such that
\begin{equation}\label{resolveequiv}
\|(-\Delta+i\lambda W -\lambda^2)^{-1} \|_{\mathcal{L}(L^2)}\le C  \frac{K(\abs{\lambda})}{\abs{\lambda}},
\end{equation}
for all $\lambda\in\mathbb{R}$ with $\abs{\lambda}\ge \lambda_0$.
\item There exists $C,\lambda_0>0$ such that
\begin{equation}\label{semigroupequiv}
\|(i\lambda-\Ac)^{-1}\|_{\mathcal{L}(\mathcal{H})}\le CK(\abs{\lambda}),
\end{equation}
for all $\lambda\in\mathbb{R}$ with $\abs{\lambda}\ge \lambda_0$.
\end{enumerate}
\end{lemma}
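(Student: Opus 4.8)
The plan is to make everything explicit and translate between the two estimates. Writing $R(\lambda):=(-\Delta+i\lambda W-\lambda^2)^{-1}$, the equation $(i\lambda-\Ac)(u,v)=(f,g)$ for $(f,g)\in\Hc$ is equivalent to $v=i\lambda u-f$ together with $(-\Delta+i\lambda W-\lambda^2)u=g+(i\lambda+W)f$, i.e.\ $u=R(\lambda)\big(g+(i\lambda+W)f\big)$. For the implication $(2)\Rightarrow(1)$ one feeds in data of the form $(0,g)$: then $v=i\lambda u$, the stationary equation reads $(-\Delta+i\lambda W-\lambda^2)u=g$, and $(2)$ gives $\|u\|_{H^1}+|\lambda|\,\|u\|_{L^2}=\|(u,i\lambda u)\|_{\Hc}\lesssim K(|\lambda|)\|g\|_{L^2}$; the term $|\lambda|\,\|u\|_{L^2}$ alone already yields $\|R(\lambda)g\|_{L^2}\lesssim \frac{K(|\lambda|)}{|\lambda|}\|g\|_{L^2}$, which is $(1)$.

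The substance is $(1)\Rightarrow(2)$. First I would upgrade the $L^2\to L^2$ bound in $(1)$ to Sobolev mapping bounds for $R(\lambda)$. Pairing $(-\Delta+i\lambda W-\lambda^2)\psi=\phi$ with $\bar\psi$ and taking the real part gives the energy identity $\|\nabla\psi\|_{L^2}^2=\lambda^2\|\psi\|_{L^2}^2+\Re\langle\phi,\psi\rangle$; combining this with $(1)$ (used at $\lambda$ and, after taking adjoints, at $-\lambda$), with $\|W\|_{L^\infty}<\infty$, and with a Young-inequality absorption when $\phi$ lies only in $H^{-1}$, produces
\begin{equation}
\|R(\lambda)\|_{L^2\to H^1}\lesssim K(|\lambda|),\qquad \|R(\lambda)\|_{H^{-1}\to L^2}\lesssim K(|\lambda|),\qquad \|R(\lambda)\|_{H^{-1}\to H^1}\lesssim |\lambda|\,K(|\lambda|),
\end{equation}
the middle estimate being the dual of the first.

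Then I would split the source: $u=R(\lambda)(g+Wf)+R(\lambda)(i\lambda f)=:u_0+u_1$. The term $u_0$ is harmless, since $\|g+Wf\|_{L^2}\lesssim\|(f,g)\|_{\Hc}$ gives $\|u_0\|_{H^1}\lesssim K\|(f,g)\|_{\Hc}$ and $|\lambda|\,\|u_0\|_{L^2}\lesssim K\|(f,g)\|_{\Hc}$. The dangerous term is $u_1$, because estimating $i\lambda f$ crudely in $L^2$ costs a spurious factor $|\lambda|$. To avoid this, use the algebraic identity
\begin{equation}
i\lambda f=(-\Delta+i\lambda W-\lambda^2)\Big(\tfrac{f}{i\lambda}\Big)+\tfrac{1}{i\lambda}\Delta f-Wf,
\end{equation}
so that $u_1=\tfrac{f}{i\lambda}+R(\lambda)\big(\tfrac{1}{i\lambda}\Delta f\big)-R(\lambda)(Wf)$. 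Now estimate the two resolvent terms \emph{separately}, not by combining their $H^{-1}$ norms: since $\big\|\tfrac{1}{i\lambda}\Delta f\big\|_{H^{-1}}\lesssim\tfrac{1}{|\lambda|}\|f\|_{H^1}$, the $H^{-1}\to H^1$ bound makes $R(\lambda)\big(\tfrac{1}{i\lambda}\Delta f\big)$ of size $K\|f\|_{H^1}$ in $H^1$ (the powers of $|\lambda|$ cancel), while $\|Wf\|_{L^2}\lesssim\|f\|_{L^2}$ and the $L^2\to H^1$ bound make $R(\lambda)(Wf)$ of size $K\|f\|_{L^2}$ in $H^1$; and $\tfrac{f}{i\lambda}$ is $O(|\lambda|^{-1})$ in $H^1$. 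This gives $\|u\|_{H^1}\lesssim K(|\lambda|)\|(f,g)\|_{\Hc}$. For the second component, $v=i\lambda u-f=i\lambda u_0+R(\lambda)(\Delta f)-i\lambda R(\lambda)(Wf)$ once the explicit $\tfrac{f}{i\lambda}$ cancels the $-f$; here $\|R(\lambda)(\Delta f)\|_{L^2}\lesssim K\|f\|_{H^1}$ by the $H^{-1}\to L^2$ bound and $|\lambda|\,\|R(\lambda)(Wf)\|_{L^2}\lesssim K\|f\|_{L^2}$ by the $L^2\to L^2$ bound of $(1)$, so $\|v\|_{L^2}\lesssim K(|\lambda|)\|(f,g)\|_{\Hc}$, and $(2)$ follows.

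I expect the main obstacle to be precisely this bookkeeping: the first component of the data enters the stationary equation multiplied by $i\lambda$, and naive norm estimates lose a power of $\lambda$ in the top-order quantities $\nabla u$ and $v$; one must use the algebraic structure above — peeling off $\tfrac{f}{i\lambda}$ so that the leftover source splits into an $H^{-1}$ piece of size $|\lambda|^{-1}\|f\|_{H^1}$ and an $L^2$ piece of size $\|f\|_{L^2}$ — rather than merely the sizes of the norms involved. A minor additional point is the zero Fourier mode on $\T^2$, where $-\Delta$ fails to be invertible and $\|\nabla\cdot\|_{L^2}$ degenerates; this contributes only a finite-dimensional piece handled directly (or one works on the orthogonal complement of the constants, as is standard for this equation).
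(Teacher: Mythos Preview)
Your argument is correct and is the standard proof of this equivalence. The paper itself does not give a proof: it simply quotes the statement as Lemma~3.7 of \cite{KleinhenzWang2023} and moves on, so there is no ``paper's own proof'' to compare against beyond that citation.

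A couple of minor remarks. First, your derivation of the three mapping bounds $L^2\to H^1$, $H^{-1}\to L^2$, $H^{-1}\to H^1$ from the energy identity and duality is exactly what is done in the cited reference, and your use of the algebraic identity $i\lambda f=(-\Delta+i\lambda W-\lambda^2)\bigl(\tfrac{f}{i\lambda}\bigr)+\tfrac{1}{i\lambda}\Delta f-Wf$ to avoid the spurious $|\lambda|$ loss is the key step there as well. Second, since the paper takes $\Hc=H^1\times L^2$ with the full (inhomogeneous) $H^1$ norm, your caveat about the zero Fourier mode is not actually needed here; it would only matter if one worked with $\dot H^1\times L^2$.
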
 
When $K(\lambda)$ is a function of positive increase, there is an equivalence between these resolvent estimates and energy decay rates, due to \cite[Theorem 3.2]{rss19} (see also Sections 3.2 and 3.3 of \cite{KleinhenzWang2023} for technical details of the equivalence). Note that, by elliptic unique continuation, if $\{W>0\}$ is open and nonempty, then $(-\Delta-\lambda^2)u =0$ implies that $Wu \neq 0.$
\begin{proposition}\label{rsslemma}
Let $K(\lambda):[0,\infty) \ra (0,\infty)$ be a \ed{continuous} function of positive increase. The following are equivalent:
\begin{enumerate}
\item There exists $C,\lambda_0>0$ such that
\begin{equation}\label{resolveequiveq}
\|(-\Delta+i\lambda W -\lambda^2)^{-1} \|_{\mathcal{L}(L^2)}\le C  \frac{K(\abs{\lambda})}{\abs{\lambda}},
\end{equation}
for all $\lambda\in\mathbb{R}$ with $\abs{\lambda}\ge \lambda_0$.
\item There exists $C,\lambda_0>0$ such that
\begin{equation}\label{semigroupequiveq}
\|(i\lambda-\Ac)^{-1}\|_{\mathcal{L}(\mathcal{H})}\le CK(\abs{\lambda}),
\end{equation}
for all $\lambda\in\mathbb{R}$ with $\abs{\lambda}\ge \lambda_0$.
\item There exists $C>0$ such that for all $t \geq 0$ we have 
\begin{equation}\label{energyequiveq}
E(u,t)^{\frac{1}{2}} \le \frac{C}{\ed{\ti{K}^{-1}(t)}} \left( \nm{u_0}_{H^2} + \nm{u_1}_{H^1} \right).
\end{equation}
\end{enumerate}
\end{proposition}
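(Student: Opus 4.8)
The plan is to deduce (1)$\iff$(2) from Lemma~\ref{resolvequivlem} and (2)$\iff$(3) from the abstract semigroup theory of \cite{rss19}, with the only genuine work being a spectral inclusion on the imaginary axis. First, a function $K$ of positive increase satisfies, by taking $s=s_0$ in Definition~\ref{posincdef}, $K(\lambda)\geq c\,\lambda^{\alpha} K(s_0)$ for $\lambda\geq s_0$, so $K$ is bounded below by a positive constant for $|\lambda|$ large; hence $K$ meets the standing hypothesis of Lemma~\ref{resolvequivlem}, and the equivalence of \eqref{resolveequiveq} and \eqref{semigroupequiveq} follows at once. It remains to prove (2)$\iff$(3).

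For this I would first record the standard facts about $\Ac$. The energy identity $\partial_t E(u,t)=-\int_M W|\partial_t u|^2\leq 0$ shows $\Ac$ is dissipative, and a direct check that $\mathrm{Id}-\Ac$ is surjective shows $\Ac$ is maximal dissipative, so it generates a contraction $C_0$-semigroup $(e^{t\Ac})_{t\geq 0}$ on $\Hc$, with $D(\Ac)=H^2(\T^2)\times H^1(\T^2)$ and graph norm equivalent to $\|\cdot\|_{H^2\times H^1}$. One then needs $i\Rb\subset\rho(\Ac)$ on the energy space, i.e. after quotienting out $\ker\Ac$, which consists of the pairs $(c,0)$ with $c$ constant and on which $E$ vanishes. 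For $\lambda\neq 0$, if $(i\lambda-\Ac)(u,v)=0$ then $v=i\lambda u$ and $(-\Delta-\lambda^2)u=-i\lambda W u$; pairing with $u$ and taking imaginary parts gives $\lambda\int_M W|u|^2=0$, so $u\equiv 0$ on the open set $\{W>0\}$, and then $(-\Delta-\lambda^2)u=0$ everywhere forces $u\equiv 0$ by elliptic unique continuation (this is exactly the remark preceding the proposition). Since $\Ac$ has compact resolvent, injectivity of $i\lambda-\Ac$ gives $i\lambda\in\rho(\Ac)$ for every $\lambda\neq 0$, while for $|\lambda|\geq\lambda_0$ the quantitative bound is precisely (2). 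These spectral considerations are carried out in detail in Sections 3.2--3.3 of \cite{KleinhenzWang2023}, and I would cite them rather than reproduce them.

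With the semigroup generated and $i\Rb\subset\rho(\Ac)$ on the energy space, \cite[Theorem 3.2]{rss19} applies: for $K$ of positive increase, the resolvent bound $\|(i\lambda-\Ac)^{-1}\|_{\Lc(\Hc)}\leq CK(|\lambda|)$ for $|\lambda|\geq\lambda_0$ is equivalent to $\|e^{t\Ac}\Ac^{-1}\|_{\Lc(\Hc)}\leq C/K^{-1}(t)$ for $t$ large. To pass to \eqref{energyequiveq}, unwind the latter estimate on $D(\Ac)=H^2\times H^1$:
\[
E(u,t)^{1/2}\leq \|e^{t\Ac}(u_0,u_1)\|_{\Hc}=\|e^{t\Ac}\Ac^{-1}\Ac(u_0,u_1)\|_{\Hc}\leq \frac{C}{K^{-1}(t)}\|\Ac(u_0,u_1)\|_{\Hc}\leq \frac{C'}{K^{-1}(t)}\left(\|u_0\|_{H^2}+\|u_1\|_{H^1}\right),
\]
which is (3); conversely, (3) together with the density of $D(\Ac)$ and the boundedness of the semigroup feeds back into the same equivalence of \cite{rss19} to recover $\|e^{t\Ac}\Ac^{-1}\|_{\Lc(\Hc)}\lesssim 1/K^{-1}(t)$ and hence (2). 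The main obstacle here is not analytic but organizational: correctly setting up the ``energy space'' by quotienting out the constant-function kernel of $\Ac$ so that $\|\cdot\|_{\Hc}$ controls $E^{1/2}$ and so that $0$ does not obstruct the spectral inclusion $i\Rb\subset\rho(\Ac)$ required by \cite{rss19}; once that reduction is in place, both implications are immediate from Lemma~\ref{resolvequivlem} and \cite[Theorem 3.2]{rss19}.
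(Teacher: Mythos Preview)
Your proposal is correct and matches the paper's treatment: the paper does not give its own proof of this proposition but simply cites \cite[Theorem 3.2]{rss19} for the equivalence with energy decay, Lemma~\ref{resolvequivlem} (from \cite{KleinhenzWang2023}) for (1)$\iff$(2), and notes the elliptic unique continuation argument for the spectral inclusion in the sentence preceding the statement. Your write-up in fact supplies more detail than the paper does, but the route is identical.
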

\ed{ Strictly speaking, \cite{rss19} has $K^{-1}(t)$ in the denominator in the energy decay rate in (3). However, by \cite[Proposition 2.2]{rss19} and the definition of the envelope inverse, if $K$ is continuous and of positive increase then $K^{-1}(t) \simeq \ti{K}^{-1}(t)$}

If $K(\lambda)$ is not of positive increase, as in Example \ref{lnthinex}, then \cite[Proposition 3, Theorem 5]{BattyDuyckaerts2008} are used to provide a relation between resolvent estimates and energy decay rates with a $\log$ loss in one direction.
\begin{proposition}\label{bdlemma}
Let $K(\lambda)$ be a continuous non-decreasing function \ed{and $\lambda_0>0$}, such that \ed{for all $\lambda \in \Rb$ with $|\lambda| \geq \lambda_0$ we have}
$$
\|(-\Delta+i\lambda W -\lambda^2)^{-1} \|_{\mathcal{L}(L^2)}\le C \frac{K(\abs{\lambda})}{\abs{\lambda}},
$$
or equivalently 
$$
\|(i\lambda-\Ac)^{-1}\|_{\mathcal{L}(\mathcal{H})}\le CK(\abs{\lambda}).
$$
Define $K_{\log}(\lambda) = K(\lambda)\left(\ln(1+\lambda)+\ln(1+K(\lambda)) \right)$. Then, there exists $C,c>0,$ such that for all $t\ge 0$
\begin{equation}\label{logenergyuppereq}
E(u,t)^{\frac{1}{2}} \le \frac{C}{\ed{\ti{K}}_{\log}^{-1}(ct)} \left( \nm{u_0}_{H^2} + \nm{u_1}_{H^1} \right).
\end{equation}
On the other hand, if the above resolvent estimates are sharp, that is, there exists sequences $\lambda_n \in \Rb, u_n \in L^2(M)$ with $\ltwo{u_n}=1$ and 
\begin{equation}
\ltwo{(-\Delta+i\lambda_n W-\lambda_n^2) u_n} \leq \frac{C |\lambda_n|}{K(|\lambda_n|)},
\end{equation}
then, \ed{there exists} $C,c>0,$ \ed{and} $(u_0, u_1) \in H^2(M) \times H^1(M)$, such that for all $t \ge 0$ 
\begin{equation}\label{logenergylowereq}
E(u,t)^{\frac{1}{2}} \geq \frac{C}{\ed{\ti{K}}^{-1}(ct)} \left(\nm{u_0}_{H^2} + \nm{u_1}_{H^1} \right).
\end{equation}
\end{proposition}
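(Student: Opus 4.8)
The plan is to recast both halves of the statement as assertions about the $C_0$-semigroup $(e^{t\Acd})_{t\geq 0}$ generated by $\Acd$ on the homogeneous energy space $\Hcd = \dot H^1(\T^2)\times L^2(\T^2)$, on which $E(u,t)^{\frac12}\simeq\hcdnorm{e^{t\Acd}(u_0,u_1)}$ and, for $(u_0,u_1)\in\Dc(\Acd)$, $\hcdnorm{\Acd(u_0,u_1)}\simeq\hp{u_0}{2}+\hp{u_1}{1}$ — the latter comparison using $W\in L^\infty$ together with elliptic regularity on the torus, and the passage from $\Hc$ to $\Hcd$ (which removes the constant functions) being carried out exactly as in Sections 3.2--3.3 of \cite{KleinhenzWang2023}. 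First I would record the structural facts needed to invoke the abstract theory: the semigroup is bounded because the energy is non-increasing, and $i\Rb\subset\rho(\Acd)$. Indeed $\Acd$ has compact resolvent on the torus, so its spectrum is discrete, and an eigenvalue $i\lambda$ with eigenvector $(u,i\lambda u)$ would give $(-\Del-\lambda^2)u+i\lambda Wu=0$; pairing with $u$ and taking imaginary parts forces $\lambda\int_{\T^2}W|u|^2=0$, hence $Wu\equiv 0$, so elliptic unique continuation excludes $\lambda\neq 0$ while $\lambda=0$ is ruled out on $\Hcd$. Finally, Lemma \ref{resolvequivlem} shows the two resolvent hypotheses are equivalent, so in either case $\nm{(i\lambda-\Acd)^{-1}}_{\Lc(\Hcd)}\leq CK(|\lambda|)$ for $|\lambda|$ large; since this quantity is continuous (hence bounded on compact subsets of $i\Rb$) and $K$ is non-decreasing and bounded below, the running maximum $M(s):=\max_{|\lambda|\leq s}\nm{(i\lambda-\Acd)^{-1}}_{\Lc(\Hcd)}$ satisfies $M(s)\lesssim K(s)$ for $s$ large.

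For the upper bound I would apply the non-uniform stability theorem of \cite[Theorem 5]{BattyDuyckaerts2008}: a bounded $C_0$-semigroup whose generator has $i\Rb\subset\rho(\Acd)$ and running-maximum resolvent bound $M$ satisfies $\nm{e^{t\Acd}\Acd^{-1}}_{\Lc(\Hcd)}\leq C/M_{\log}^{-1}(ct)$, where $M_{\log}(s)=M(s)(\ln(1+M(s))+\ln(1+s))$. Since $M\lesssim K$ gives $M_{\log}\lesssim K_{\log}$ and hence $M_{\log}^{-1}(t)\gtrsim K_{\log}^{-1}(t/C)$, feeding in the vector $\Acd(u_0,u_1)$ for $(u_0,u_1)\in\Dc(\Acd)=H^2\times H^1$ and using the two norm comparisons above produces exactly \eqref{logenergyuppereq}.

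For the lower bound I would lift the stationary quasimodes to semigroup quasimodes. Given $\lambda_n$ and $u_n$ with $\ltwo{u_n}=1$ and $\ltwo{(-\Del+i\lambda_nW-\lambda_n^2)u_n}\leq C|\lambda_n|/K(|\lambda_n|)$, set $v_n=(u_n,i\lambda_n u_n)\in\Dc(\Acd)$; a one-line computation gives $(i\lambda_n-\Acd)v_n=\bigl(0,\,(-\Del+i\lambda_nW-\lambda_n^2)u_n\bigr)$, so $\hcdnorm{(i\lambda_n-\Acd)v_n}\leq C|\lambda_n|/K(|\lambda_n|)$, while pairing $(-\Del-\lambda_n^2)u_n=-i\lambda_nWu_n+O(|\lambda_n|/K(|\lambda_n|))$ with $u_n$ bounds $\ltwo{\nab u_n}^2\leq\lambda_n^2+\lp{W}{\infty}|\lambda_n|+C|\lambda_n|/K(|\lambda_n|)\lesssim\lambda_n^2$, so $\hcdnorm{v_n}\simeq|\lambda_n|$. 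Hence $\nm{(i\lambda_n-\Acd)^{-1}}_{\Lc(\Hcd)}\geq\hcdnorm{v_n}/\hcdnorm{(i\lambda_n-\Acd)v_n}\geq cK(|\lambda_n|)$ along $\lambda_n\to\infty$, and the converse (sharpness) part of \cite[Proposition 3]{BattyDuyckaerts2008} then yields, for any prescribed constants, initial data $(u_0,u_1)\in\Dc(\Acd)$ whose orbit decays no faster than $C/K^{-1}(ct)$; translating back through the norm comparisons gives \eqref{logenergylowereq}.

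The routine parts — the norm comparisons, the dictionary of Lemma \ref{resolvequivlem}, and the pairing estimate on $\ltwo{\nab u_n}$ — are standard. The point needing the most care is the lower bound: the abstract result of \cite{BattyDuyckaerts2008} is naturally phrased as an obstruction to a decay \emph{rate} rather than as a pointwise bound valid for every $t\geq 0$, so some attention is needed to put it into the stated form, and it is exactly this asymmetry — together with the logarithmic correction present in $K_{\log}$ but absent from $K$ — that accounts for the mismatch between \eqref{logenergyuppereq} and \eqref{logenergylowereq}, and hence for the gap between the upper and lower energy decay rates in examples such as Theorem \ref{thinexthm} case 3.
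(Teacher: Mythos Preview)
Your proposal is correct and matches the paper's approach: the paper does not give a detailed proof of this proposition but simply cites \cite[Proposition 3, Theorem 5]{BattyDuyckaerts2008}, and what you have written is precisely the standard verification of hypotheses (bounded semigroup, $i\Rb\subset\rho(\Acd)$, translation of the resolvent bound via Lemma \ref{resolvequivlem}, and lifting the $L^2$ quasimodes to $\Hcd$) needed to invoke those results. Your closing remark about the asymmetry between the upper and lower bounds is also exactly the point the paper makes in Remark 4(3) following Theorem \ref{thinqmthm}.
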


\section{\ed{Proof for $y$-invariant $W$, Theorem \ref{wideresolvethm}}}
\label{wideresolventsec}
%Although the title of this section includes $W=0$ on an interval, we do not require this as an assumption, but it is permitted. 

Starting from \eqref{resolveequiveq} in Proposition \ref{rsslemma} with $K(\lambda)=m_1(\lambda)$, then expanding in a Fourier series in the $y$ variable, and setting $\lambda=1/h$, we see that to prove Theorem \ref{wideresolvethm} it is sufficient to prove the following proposition
\begin{proposition}\label{wideresolveprop}
\begin{enumerate}
\,
	\item If $W$ satisfies Assumption \ref{wideassumption}, then the following holds with $j=1$.
	\item If $W$ satisfies Assumptions \ref{wideassumption}, \ref{widebadassumption}, and \ref{wideinversebassumption}, then the following holds with $j=2$.
\end{enumerate}
	There exists $C, h_0 >0$ such that for all $h \in (0,h_0), \ed{E \leq h_0^{-2}}, u \in H^2(\Sb^1)$ and 
\begin{equation}\label{fueq}
f:= \left(-\p_x^2 + \frac{i}{h} W - E\right)u,
\end{equation}
then 
\begin{equation}
\nm{u}_{L^2(\Sb^1)} \leq  h M_j(h) \nm{f}_{L^2(\Sb^1)}.
%\frac{C}{\<\mu\> q(\ti{R}_{\d}^{-1}(h))} \ltwo{f} \leq \frac{C}{q(\ti{R}_{\d}^{-1}(h))} \ltwo{f} =
\end{equation}
\end{proposition}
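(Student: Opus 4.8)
The plan is to carry out the cutoff strategy from the introduction, reducing everything to estimates on the two regions $\{W \leq g(h)\}$ and $\{W \geq g(h)\}$, balanced at the non-polynomial scale $g(h) := \widetilde{R}_j^{-1}(h)$ (adjusted by constants using Definition \ref{envinvdef}). Normalize $\nm{u}_{L^2(\Sb^1)} = 1$, so the goal is $\nm{f}_{L^2(\Sb^1)} \gtrsim (hM_j(h))^{-1}$. First I would run the pairing argument: testing \eqref{fueq} against $u$ and taking imaginary parts gives $\tfrac{1}{h}\int_{\Sb^1} W|u|^2 \leq \nm{f}_{L^2}\nm{u}_{L^2}$, hence $\int W|u|^2 \leq h\nm{f}_{L^2}$. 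Introduce the cutoff $\ch = \chi(W/g(h))$ with $\chi \in C^\infty_c$ equal to $1$ near $0$; on $\supp(1-\ch)$ one has $W \gtrsim g(h)$, so $\nm{(1-\ch)u}_{L^2}^2 \lesssim \frac{h}{g(h)}\nm{f}_{L^2}$. Because $q(0)=0$ forces $g(h) \gg h$ and $M_j(h) \gtrsim h^{-1}$ (with $M_1 \geq h^{-1-\e}$, via Definition \ref{RMdefs} and Assumption \ref{wideinversebassumption}), this easy-region contribution is negligible relative to the target bound, so it remains to control $v := \ch u$, supported where $0 \leq W \leq g(h)$.

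The heart of the matter is the estimate of $v$ on the hard region $\{W \leq g(h)\}$, which near the zero set of the envelope $V$ is a union of intervals of total measure $\lesssim \digamma(Cg(h))$. There $v$ solves $(-\p_x^2 - E)v = \ch f - \tfrac{i}{h}W\ch u - [\p_x^2, \ch]u$, in which $\tfrac1h W\ch$ is bounded by $\tfrac{g(h)}{h}$ and the commutator is supported on $\supp \ch'$ (where $W \simeq g(h)$) and has size $\tfrac{|\p_x W|}{h\, g(h)}$, resp.\ $\tfrac{|\p_x W|^2 + |\p_x^2 W|}{h\, g(h)^2}$, all controlled by Assumption \ref{wideassumption} and, for $j=2$, Assumption \ref{widebadassumption}. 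To convert this into an $L^2$ bound for $v$, I would use a Morawetz/Rellich multiplier of the form $\psi\p_x v + \tfrac12\psi' v$ with a weight $\psi$ adapted to the geometry of $\{V=0\}$ and to the profile of $V$ across the transition layer: taking real parts produces an identity whose leading terms control $\int \psi'|\p_x v|^2$ together with a term in $|v|^2$, while the error terms are exactly the $\tfrac1h$-weighted integrals against $\p_x W$ and $\p_x^2 W$ that the derivative bound conditions tame, plus boundary contributions at $\partial\{W \leq g(h)\}$ absorbed by the already-established control of $u$ on $\{W \gtrsim g(h)\}$. Fourier modes $k$ of $u$ with $k^2$ near $E$ are split off and handled separately, using the spectral gap away from them together with the positivity of $\int_{\{W \gtrsim g(h)\}} W|u|^2$ and the smallness of the oscillatory integrals $\int_{\{W \gtrsim g(h)\}} e^{2ikx}\,dx$, uniformly in $E \in \Rb$. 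The $j=2$ case additionally exploits a more delicate multiplier that uses $|\p_x^2 V| \leq V/p(V)$ to replace $q^2$ by $\min(q^2,p)$ in $R_2$; this is the one-dimensional improvement from \cite{thesis} due to joint work with K.\ Datchev.

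The last step is bookkeeping with the non-polynomial scales: using the envelope inverses of Definition \ref{envinvdef} and the concavity of $r_1(z)=z/q(z)$ and $r_2(z)=z/p(z)$ from Assumptions \ref{wideassumption}--\ref{widebadassumption}, one inverts and composes $R_j(z) = zq(z)U(z)$, resp.\ $z\min(q^2(z),p(z))$, absorbing constants and lower-order terms into the top-order asymptotics (working throughout up to $\simeq$), to see that the hard-region estimate reads $\nm{v}_{L^2} \lesssim hM_j(h)\nm{f}_{L^2}$. Combining with the easy-region bound gives $1 = \nm{u}_{L^2}^2 \lesssim (hM_j(h))^2\nm{f}_{L^2}^2$, which is Proposition \ref{wideresolveprop}; Theorem \ref{wideresolvethm} then follows by expanding in a Fourier series in $y$, setting $\lambda = 1/h$, and invoking Proposition \ref{rsslemma} (or Proposition \ref{bdlemma} when $m_j$ is not of positive increase).

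I expect the main obstacle to be the hard-region estimate: designing the weight $\psi$ so that the error terms of the multiplier identity are matched precisely to the available derivative bounds at the non-polynomial scale $g(h)$, handling the near-resonant modes uniformly in $E$, and, for $j=2$, extracting the extra gain from the second derivative bound. Arranging constants and lower-order terms so the whole argument can be phrased cleanly through envelope inverses and $\simeq$ is a pervasive secondary difficulty.
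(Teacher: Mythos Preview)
Your outline captures the high-level two-region strategy, but several concrete steps do not match the paper and contain genuine gaps.

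First, a structural point: you build $\ch = \chi(W/g(h))$, but Assumption~\ref{wideassumption} places the derivative bound on the envelope $V$, not on $W$ (this is exactly the point of allowing $W$ to be rough while $V \in C^1$). With your choice $\ch' = \chi'(W/g)\,\tfrac{W'}{g}$ is uncontrolled. The paper takes $\ch = \chi(V/g(h))$, so that Lemma~\ref{cutoffsizelemma} gives $|\ch'| \lesssim q(g(h))^{-1}$ from $|V'| \le V/q(V)$. Relatedly, concavity of $r_1, r_2$ is \emph{not} part of Assumptions~\ref{wideassumption}--\ref{wideinversebassumption}; those concavity hypotheses belong to the non-$y$-invariant Assumptions~\ref{noninvarassumption}, \ref{widegenbadassumption} and are used only in Lemma~\ref{averagingdbclemma}. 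You cannot invoke them here.

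Second, the paper's two cases are structurally different from each other and from your sketch. For part~1 the paper does \emph{not} split $u$; it applies the Morawetz multiplier (Lemma~\ref{multiplierlem}, with the weight $\Theta$ that carries the $U(g(h))$ improvement) directly to $u$, producing a cross term $\tfrac{1}{h}\int W|uu'|$ that is estimated in Lemma~\ref{wuestimatelem} by inserting $\ch$ and $(1-\ch)$ into $\int V|u'|^2$. The residual $\int V(1-\ch)|fu|$ is then handled by decomposing $f$ (not $u$) according to the size of $V$ on $\supp f$, in an iterated $(N{+}2)$-case argument with thresholds $b_i = h\,q(g)^{-t_i}$. Your proposed ``split off Fourier modes $k$ with $k^2$ near $E$'' is not used and it is unclear how you would make it uniform in $E$; the paper never separates near-resonant modes.

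For part~2 the paper does split $u = \ch u + (1-\ch)u$, but then the hard piece $u_1 = \ch u$ is handled by a dichotomy in $E$: when $\sqrt{E} \ge q(g(h))^{-1}$ one invokes the $1$-d GCC estimate (Lemma~\ref{1dgcclemma}) treating $ihW u_1$ and the commutator as sources; when $\sqrt{E} \le q(g(h))^{-1}$ one runs the multiplier on $u_1$ (Lemma~\ref{uonemusmall}), where Assumption~\ref{wideinversebassumption} ($U \ge Cq$) is used to absorb several error terms. This $E$-dichotomy, and the specific role of $p$ via $|\ch''| \lesssim q^{-2}+p^{-1}$, are the mechanisms that upgrade $R_1$ to $R_2$; your sketch does not identify them.
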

The core idea of the proof is to separately estimate $u$ where $V$ is small, and where $V$ is large. The key is to select the location of this split correctly, so that the contributions from these regions are balanced, and thereby minimized. The optimal location of the split is related to the growth of $V$ and a novelty of this work is doing this for non-polynomial $V$.

Let $g(h)$ be a function such that $\limh g(h)=0$. We will eventually take $g(h) \simeq \ed{\ti{R}_j^{-1}(h)}$ to obtain our desired estimate. The definition of $\tiroh$ is needed to balance terms in \eqref{whyrd} and the definition of $\ti{R}_2^{-1}(h)$ is needed to balance terms in \eqref{whyr2}. Note that indeed $\limh \tiroh=0=\limh \ti{R}_2^{-1}(h)$. 

As a matter of exposition, note that by their construction and definition the key quantities $h, g(h), q(g(h)),$ and  $p(g(h))$ all go to zero as $h \ra 0$. $U(g(h))$ is bounded as $h \ra 0$ and sometimes goes to $0$ with $h$. 

%as $R_{\d}(0)=0=R_2(0)$ and $R_{\d}(z)$ and $R_{2}(z)$ are increasing and continuous as a product of increasing and continuous functions. Note we will frequently write $q(g)$ to mean $q(g(h))$

Now, let $\chi \in \Ci(\Rb; [0,1])$ have $\chi(x) \equiv 1$ on $|x|\leq1$, and $\chi(x)\equiv 0$ on $|x|\geq \d$. Note that this $\d$ is the same $\d$ as in the definition of $U(z)$ in Definition \ref{vidzdef}. Note also, $\d>1$ is needed to ensure that $\chi$ is continuous. Then define $\chi_h(x)= \chi\left(\frac{V(x)}{g(h)}\right)$, so $\chi_h(x)\equiv 1$ on $V(x) \leq g(h)$, and $\chi_h(x) \equiv 0$ on $V(x) \geq \d g(h)$. 

We will split $u$ as $u(x) = \chi_h(x) u(x) + (1-\chi_h(x)) u(x)$. An important feature of this cutoff is that there is control of the size of its first derivative in terms of $q$, and control of its second derivative in terms of $q$ and $p$.

\begin{lemma}\label{cutoffsizelemma}
Let $\mathbbm{1}_{\ch'}(x)=1$ on $\supp \ch'$ and be $0$ elsewhere. 
\begin{enumerate}
\item If $W$ satisfies Assumption \ref{wideassumption}, then there exists $C>0$ such that 
$$
|\ch'(x)| \leq C \mathbbm{1}_{\ch'}(x) \frac{1}{q(g(h))} \leq \ed{\frac{C}{q(g(h))} \min  \left\{\frac{W}{g(h)}, \frac{W^{\frac{1}{2}}}{g(h)^{\frac{1}{2}}} \right\} },
$$
%2) If $q$ is decreasing, then there exists $C>0$ such that 
%$$
%|\ch'(x)| \leq C \mathbbm{1}_{\ch'}(x) \frac{1}{q(\d g(h))} \leq \frac{CW}{g(h) q(\d g(h))} \text{ or } \frac{C W^{\frac{1}{2}}}{g^{\frac{1}{2}} q(\d g(h))},
%$$
	\item If $W$ satisfies Assumptions \ref{wideassumption} and \ref{widebadassumption}, then \ed{there exists $C>0$ such that}
\begin{align*}
|\ch''(x)| &\leq C \mathbbm{1}_{\ch'}(x) \left( \frac{1}{q^2(g(h))} + \frac{1}{p(g(h))} \right) \\
&\leq \ed{C\left( \frac{1}{q^2(g(h))} + \frac{1}{p(g(h))} \right) \min \left\{\frac{W}{g(h)}, \frac{W^{\frac{1}{2}}}{g(h)^{\frac{1}{2}}} \right\}}.
\end{align*}
\end{enumerate}
\end{lemma}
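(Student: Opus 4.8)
The plan is to differentiate $\ch(x) = \chi(V(x)/g(h))$ directly and use the derivative bound conditions from Assumptions \ref{wideassumption} and \ref{widebadassumption} to bound the resulting expressions, taking advantage of the fact that the support of $\ch'$ is contained in the set $\{g(h) \le V(x) \le \d g(h)\}$, where $V$ is comparable to $g(h)$. First I would compute
\[
\ch'(x) = \frac{V'(x)}{g(h)} \chi'\!\left(\frac{V(x)}{g(h)}\right).
\]
On $\supp \ch'$ we have $V(x) \simeq g(h)$, so by Assumption \ref{wideassumption}, $|V'(x)| \le V(x)/q(V(x)) \lesssim g(h)/q(g(h))$, using that $q$ is increasing and $V(x) \in [g(h), \d g(h)]$ so that $q(V(x)) \ge q(g(h))$. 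Dividing by $g(h)$ and using $\|\chi'\|_{L^\infty} \le C$ gives the first bound $|\ch'(x)| \le C \mathbbm{1}_{\ch'}(x) q(g(h))^{-1}$. The two alternative forms then follow immediately: on $\supp\ch'$, $\mathbbm{1}_{\ch'}(x) \le W(x)/(c\, g(h))$ since $W \simeq V \ge g(h)$ there, and likewise $\mathbbm{1}_{\ch'}(x) \le (W(x)/(c\,g(h)))^{1/2}$ by the same comparison, so multiplying through yields the $W/(g(h) q(g(h)))$ and $W^{1/2}/(g(h)^{1/2} q(g(h)))$ versions.

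For part (2), I would differentiate once more:
\[
\ch''(x) = \frac{V''(x)}{g(h)} \chi'\!\left(\frac{V(x)}{g(h)}\right) + \frac{V'(x)^2}{g(h)^2} \chi''\!\left(\frac{V(x)}{g(h)}\right).
\]
The first term is controlled using Assumption \ref{widebadassumption}: $|V''(x)| \le V(x)/p(V(x)) \lesssim g(h)/p(g(h))$ on $\supp\ch'$, giving a contribution bounded by $C \mathbbm{1}_{\ch'}(x) p(g(h))^{-1}$. The second term uses the pointwise bound on $|V'|$ twice: $|V'(x)|^2 \le (V(x)/q(V(x)))^2 \lesssim g(h)^2/q(g(h))^2$, so dividing by $g(h)^2$ and using $\|\chi''\|_{L^\infty} \le C$ gives a contribution bounded by $C\mathbbm{1}_{\ch'}(x) q(g(h))^{-2}$. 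Adding and noting $\supp\chi'' \subset \supp\chi' = \supp\ch'$ (suitably interpreted for the composite cutoff) yields $|\ch''(x)| \le C\mathbbm{1}_{\ch'}(x)(q^{-2}(g(h)) + p^{-1}(g(h)))$, and the $W$ and $W^{1/2}$ versions follow exactly as before by bounding $\mathbbm{1}_{\ch'}(x)$ by $W/(c\,g(h))$ or its square root.

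The only mild subtlety — and the one point I would be careful about — is the interplay between the monotonicity of $q$ (and $p$) and the fact that on $\supp\ch'$ the value $V(x)$ ranges over $[g(h), \d g(h)]$ rather than being exactly $g(h)$; since $q$ is increasing and continuous with $q(0)=0$, we get $q(g(h)) \le q(V(x)) \le q(\d g(h))$, and to conclude $q(V(x)) \simeq q(g(h))$ one either absorbs the factor $q(\d g(h))/q(g(h))$ into the constant $C$ (which is harmless for the stated inequalities since we only need lower bounds on $q(V(x))$ to get upper bounds on $|V'|/q$) or notes that this is exactly the kind of top-order comparison encoded by $\simeq$. There is no real obstacle here; the lemma is a direct computation, and the genuine work of the paper lies in the subsequent Morawetz multiplier estimates that use these bounds. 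I would also remark that the hypothesis $\d > 1$ (needed for $\chi$ to be continuous) is what makes $\supp\ch'$ a genuine annular region $\{g(h) \le V \le \d g(h)\}$ on which all these comparisons are available.
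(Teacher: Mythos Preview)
Your proposal is correct and follows essentially the same approach as the paper: compute $\ch'$ and $\ch''$ by the chain rule, use the derivative bound conditions together with the monotonicity of $q$ (and $p$) and the fact that $g(h)\le V(x)\le \d g(h)$ on $\supp\ch'$, and then pass to the $W$ and $W^{1/2}$ versions via $W\simeq V\simeq g(h)$ there. Your discussion of the role of $\d>1$ and the monotonicity of $q$ is in fact more explicit than the paper's brief treatment.
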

\begin{proof}
Well
$$
|\ch'(x)| = \left| \chi'\left(\frac{V(x)}{g(h)}\right) \frac{V'(x)}{g(h)} \right| \leq \mathbbm{1}_{\ch'}(x) \frac{CV(x)}{g(h) q(V(x))},
$$
by Assumption \ref{wideassumption}.

1) Since $q$ is increasing, and $g(h) \leq V(x)$ on $\supp(\ch')$, then $\frac{1}{q(V(x))} \leq \frac{1}{q(g(h))}$. Plugging this in and using that $\frac{V(x)}{g(h)} \leq \d$ on $\supp \ch'$ gives the desired conclusion. The second inequality follows since $\ed{ \frac{W}{g(h)} \simeq \frac{W^{\frac{1}{2}}}{g(h)^{\frac{1}{2}}} \simeq C}$ on $\supp \ch'$.

%2) When $q$ is decreasing, since $V(x) \leq \d g(h)$ on $\supp(\ch'), q(\d g(h)) \leq q(V(x))$ and $\frac{1}{q(V(x))} \leq \frac{1}{q(\d g(h)}$. Plugging this in and using that $\frac{V(x)}{g(h)} \leq \d$ on $\supp \ch'$ gives the desired conclusion. The second inequality follows since $\frac{W}{g(h)} \simeq \frac{W^{\frac{1}{2}}}{g(h)^{\frac{1}{2}}}\leq C$ on $\supp \ch'$

\ed{2)} For 
$$
|\ch''(x)| \leq \left| \ed{\chi}''\left(\frac{V(x)}{g(h)}\right) \left( \frac{V'(x)}{g(h)}\right)^2 \right| + \left| \ed{\chi}'\left(\frac{V(x)}{g(h)}\right) \frac{V''(x)}{g(h)}\right|,
$$
an analogous argument gives the desired conclusion.
\end{proof}

We now record a useful lemma, which is exactly Lemma 1 from \cite{DatchevKleinhenz2020}. The integrals below and the remaining integrals in this section are all over $\Sb^1$ unless otherwise specified.
\begin{lemma}
Let $u \in H^2(\Sb^1), E \in \Rb$ and $f=(-\p_x^2 +\frac{i}{h}W-E) u$, then for all $h>0$ 
\begin{enumerate}
\item
\begin{equation}\label{wuestimate}
\int W|u|^2 dx \leq h \int |fu| dx.
\end{equation}
\item For any $\psi \in \Ci(\Sb^1)$ vanishing on an open neighborhood of $\{W>0\}$ there exists $C>0$ such that 
\begin{equation}\label{wupestimate}
\int \psi |u'|^2 dx \leq C(1+\max(0,E) h) \int |fu| dx.
\end{equation}
\item There exists $\ed{E_0}>0, C>0$ such that 
\begin{equation}\label{umusmall}
\int |u|^2 + |u'|^2 dx \leq C \int |f|^2 dx, \text{ when } E \leq \ed{E_0}.
\end{equation}
\end{enumerate}
\end{lemma}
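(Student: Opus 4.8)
The plan is to prove all three assertions by pairing the equation $f=\left(-\p_x^2+\tfrac{i}{h}W-E\right)u$ with suitable test functions and integrating by parts over $\Sb^1$, where no boundary terms appear; this is exactly Lemma~1 of \cite{DatchevKleinhenz2020}, so I only indicate the structure and the one delicate point.

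For \eqref{wuestimate} I would multiply \eqref{fueq} by $\bar u$ and integrate, obtaining $\int|u'|^2+\tfrac{i}{h}\int W|u|^2-E\int|u|^2=\int f\bar u$; since $\int|u'|^2$ and $E\int|u|^2$ are real, taking imaginary parts gives $\tfrac1h\int W|u|^2=\Im\int f\bar u\le\int|fu|$, which is \eqref{wuestimate}.

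For \eqref{wupestimate} the key structural observation is that, since $\psi$ vanishes on a neighborhood of $\{W>0\}$ and $W\ge 0$, we have $W\equiv 0$ on $\supp\psi$, so the damping term drops out of any pairing against a multiplier supported in $\supp\psi$. Pairing \eqref{fueq} against $\psi\bar u$, integrating by parts twice, and taking real parts gives the identity $\int\psi|u'|^2=\Re\int f\psi\bar u+\tfrac12\int\psi''|u|^2+E\int\psi|u|^2$, whose first term is $\le\|\psi\|_{L^\infty}\int|fu|$. The main obstacle is then to control the two remaining $|u|^2$-terms; I would do this by supplementing the $\psi\bar u$ pairing with a Morawetz/Rellich-type multiplier $\beta\bar u'$ with $\beta'\ge\psi$, adapted on each interval of $\{W=0\}$ to the one-dimensional operator $-\p_x^2-E$, and by invoking \eqref{wuestimate} in the region where $W$ is small, so that the $|u|^2$-contributions are re-expressed through $\int|fu|$ and absorbable multiples of $\int\psi|u'|^2$, the surviving multiplicative factor being the advertised $1+\max(0,E)h$. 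This bookkeeping is the technical heart of the statement and is carried out in \cite[Lemma~1]{DatchevKleinhenz2020}.

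For \eqref{umusmall} I would argue by compactness, uniformly in $h$. First, since $\{W>0\}$ is open and nonempty, the Poincaré-type inequality $\int_{\Sb^1}|u|^2\le C_W\!\left(\int_{\Sb^1}|u'|^2+\int_{\Sb^1}W|u|^2\right)$ holds with $C_W$ depending only on $W$: otherwise a normalized sequence with $\int|u_n'|^2+\int W|u_n|^2\to 0$ would, by Rellich--Kondrachov, converge in $L^2$ to a constant vanishing on $\{W>0\}$, hence to $0$, contradicting $\ltwo{u_n}=1$. Taking real and imaginary parts of $\int f\bar u$ as in the proof of \eqref{wuestimate} gives $\int|u'|^2=\Re\int f\bar u+E\int|u|^2$ and $\int W|u|^2=h\,\Im\int f\bar u$; substituting these into the Poincaré inequality yields $\ltwo{u}^2\le C_W(1+h)\ltwo{f}\,\ltwo{u}+C_W\max(0,E)\ltwo{u}^2$, so choosing $E_0^2<1/(2C_W)$ absorbs the last term and gives $\ltwo{u}\le C\ltwo{f}$; feeding this back into the first identity bounds $\int|u'|^2$ as well, with $C$ independent of $h\in(0,h_0)$ and of $E\le E_0^2$. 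This completes the plan.
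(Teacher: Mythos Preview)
Your proposal is correct and matches the paper's treatment: the paper does not prove this lemma at all but simply records it as ``exactly Lemma~1 from \cite{DatchevKleinhenz2020}'', and your sketch recovers the standard multiplier/energy arguments from that reference. Your observation that the constant in \eqref{umusmall} is only uniform for $h\in(0,h_0)$ is in fact a minor sharpening of the paper's phrasing (the counterexample $u\equiv 1$, $E=0$ shows uniformity in all $h>0$ is impossible), and is consistent with how the lemma is actually used.
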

The first two parts of this Lemma indicate that it is ``easy" to get a ``good" estimate of $u$ localized to where $W$ (and thus $V$) is large, in terms of $f$. Note that based on Definition \ref{RMdefs} we have $h M_j(h)  \geq C$ for $h$ small enough. Because of this, the third part of this Lemma proves our desired resolvent estimate for $E$ small enough, and it remains to show our desired estimate when $E \geq E_0$. 

Another useful tool is the following estimate, proved via the Morawetz multiplier method, which is arranged via the energy functional
\begin{equation}\label{multdef}
G(x) = |u'(x)|^2 + E |u(x)|^2.
\end{equation}
Before stating the estimate, recall $\digamma(\zeta)=\ed{\nu}\left(\{x: 0<V(x) \leq \zeta\}\right)$. 
	%By its construction $\digamma(\zeta)$ is left continuous for $\zeta \in (0, \max V)$. Furthermore, 
	\ed{Since $V$ is continuous, $\digamma(\zeta)$ is strictly increasing for $\zeta \in (0, \max V)$. Thus $\digamma$ has an inverse, and since $U(z)<\nu((0,\max V))$, $\digamma^{-1}(U(z))$ is always well defined.}
	%but it might not be defined on all of $(0,\ed{\nu}(\Sb^1))$. We extend $\digamma^{-1}$ to any value $U(z)$ takes by setting
%\begin{equation}
%\digamma^{-1}( U(z)): = \begin{cases}
%\digamma^{-1}(U(z)) & \text{ if } U(z) \in \digamma((0,\max V)) \\
%\max\{ \zeta(z): \digamma(\zeta(z)) \leq U(z)\} & \text{ otherwise.}
%\end{cases}  
%\end{equation}
%Note the second case is well-defined by the left continuity of $\digamma$. 
Since $\digamma$ is strictly increasing and $\digamma(\d z) \leq U(z) < \ed{\nu}\ed{((0,\max V))}$, then $\d z \leq \digamma^{-1}(U(z)) <\max V$, and so $\{x: 0<V(x) \leq \d z\} \subseteq \{x: 0 < V(x) \leq \digamma^{-1}(U(z))\}$. Furthermore $\digamma(\digamma^{-1}(U(z))) \frac{1}{U(z)} \ed{=} 1$. %If $V$ was increasing we could take $\digamma(z)=V^{-1}(z)$ and $\digamma^{-1}(U(z)) = V(U(z))$. 

As in \cite{DatchevKleinhenz2020} we adapt the construction of our multiplier to the growth properties of $V$.
\begin{lemma}\label{multiplierlem}
For $U(z)$ an increasing function, such that $\Vidz \leq U(z) < \ed{\nu}(\ed((0,\max V)))$,  define
\begin{equation}
\Theta(x) = \begin{cases} \frac{1}{U(g(h))} & \{x: 0<V(x)<\digamma^{-1}(U(g(h)))\} \\
1 & \text{ elsewhere},
\end{cases}
\end{equation}
then, there exists $C, h_0>0$, such that for all $\ed{E \in (0,h_0^{-2})},  h \in (0,h_0), u \in H^2(\Sb^1)$ and $f=(-\p_x^2 + \frac{i}{h} W - E) u$
\begin{equation}
\int \Theta (|u'|^2 + E |u|^2 ) dx \leq  C \int |f|^2 dx + \frac{C}{h} \int W|uu'| dx.
\end{equation}
\end{lemma}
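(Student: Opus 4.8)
The plan is to use the Morawetz multiplier method. We pair the equation \eqref{fueq} against $\beta\bar u'$, where $\beta\in W^{1,\infty}(\Sb^1)$ is a real-valued multiplier to be constructed, integrate over $\Sb^1$, and take real parts. Since $\Sb^1$ has no boundary, integration by parts gives $\Re\int(-u'')\beta\bar u'\,dx=\tfrac12\int\beta'|u'|^2\,dx$ and $\Re\int(-Eu)\beta\bar u'\,dx=\tfrac{E}{2}\int\beta'|u|^2\,dx$, while the damping term $\tfrac{i}{h}\int W\beta u\bar u'\,dx$ contributes, after taking real parts, a quantity bounded in absolute value by $\tfrac{\|\beta\|_{L^\infty}}{h}\int W|uu'|\,dx$. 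This yields the energy identity
\begin{equation}
\tfrac12\int\beta'\big(|u'|^2+E|u|^2\big)\,dx\le\int|f|\,|\beta|\,|u'|\,dx+\frac{\|\beta\|_{L^\infty}}{h}\int W|uu'|\,dx ,
\end{equation}
so everything comes down to constructing $\beta$ with $\beta'$ comparable to $\Theta$ while $\|\beta\|_{L^\infty}$ stays bounded uniformly in $h$, up to error terms localized where we have other control of $u$.

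The construction of $\beta$ follows \cite{DatchevKleinhenz2020}, with the polynomial scale there replaced by the non-polynomial scale $g(h)$. The key observation is that the set $A:=\{x:0<V(x)<\digamma^{-1}(U(g(h)))\}$, on which $\Theta$ equals its large value $1/U(g(h))$, has measure $\mu(A)=\digamma(\digamma^{-1}(U(g(h))))\le U(g(h))$; hence any primitive of $\Theta\,\mathbbm{1}_A$ increases by at most $\mu(A)/U(g(h))\le 1$ across $A$, which is exactly what keeps $\|\beta\|_{L^\infty}$ bounded independently of $h$. Since in addition $\int_{\Sb^1}\Theta\,dx\le 1+\mu(\Sb^1)$ is bounded, one obtains a periodic $\beta$ by subtracting from a primitive of $\Theta$ a nonnegative, uniformly bounded correction $R$ supported on a fixed set $\mathcal O$ of positive measure, so that $\beta'=\Theta-R$ has zero mean; $\mathcal O$ is chosen either inside $\{W\ge c_0\}$ for a fixed $c_0>0$ or, when it has nonempty interior, inside $\{W=0\}$.

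For the right-hand side, since $U(g(h))<2\mu(\Sb^1)$ we have $\Theta\ge c>0$ and $1/\Theta\le C$ uniformly in $h$, so by Cauchy--Schwarz $\int|f|\,|\beta|\,|u'|\,dx\le\tfrac{\varepsilon}{2}\int\Theta|u'|^2\,dx+C_\varepsilon\int|f|^2\,dx$; after writing $\int\beta'G\,dx=\int\Theta G\,dx-\int RG\,dx$, the first term is absorbed on the left, and the term $\tfrac{\|\beta\|_{L^\infty}}{h}\int W|uu'|\,dx$ is precisely the one kept in the statement. The correction $\int RG\,dx\lesssim\int_{\mathcal O}(|u'|^2+E|u|^2)\,dx$ is controlled on $\mathcal O$ using \eqref{wupestimate} (when $\mathcal O\Subset\{W=0\}$) for the $|u'|^2$ part and \eqref{wuestimate} for the $W|u|^2$ part; combining these with the equation restricted to $\mathcal O$ (and nested cutoffs) handles the $E|u|^2$ contribution, producing only further $\int|f|^2\,dx$ and $\int W|uu'|\,dx$ terms, which are re-absorbed or kept.

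The principal difficulty is the multiplier construction together with the bookkeeping of its error terms: one must simultaneously realize the sharp lower bound $\beta'\gtrsim\Theta$ and the uniform bound $\|\beta\|_{L^\infty}\le C$ — which is exactly where the non-polynomial scale and the inequality $\digamma(\digamma^{-1}(U(g(h))))\le U(g(h))$ enter essentially — and then confirm that every error term generated by forcing $\beta$ to be periodic lives on a region controlled by \eqref{wuestimate} or \eqref{wupestimate}; the $E\int|u|^2$ part of that error, which is invisible to the damping, is the most delicate piece to close.
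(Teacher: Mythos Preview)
Your approach is essentially the paper's: construct a piecewise-linear multiplier $b$ with $b'=\Theta$ on $\{V\le c\}$ and $b'=-M$ on $\{V>c\}$ (with $M$ chosen for periodicity), use $\digamma(\digamma^{-1}(U(g(h))))\le U(g(h))$ to keep $\|b\|_{L^\infty}$ uniformly bounded, and then handle the correction via \eqref{wuestimate} and \eqref{wupestimate} together with Young's inequality on the $\int bf\bar u'$ term. The paper places the correction on $\{V>c\}$, i.e.\ your option $\mathcal O\subset\{W\ge c_0\}$; your alternative $\mathcal O\subset\{W=0\}$ is not a good choice, since \eqref{wuestimate} then gives no leverage on $E\int_{\mathcal O}|u|^2$, and the nested-cutoff argument you sketch for that piece does not obviously close without reintroducing uncontrolled global $\|u\|$ terms.
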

\begin{proof}
 %and so has an inverse $\digamma^{-1}$ which is strictly increasing on $(0,2\pi)$. For $U(z)$ an increasing function, such that $\Vidz \leq U(z) <2\pi$, we have $\d z \leq \digamma^{-1}(U(z)) <\max(V)$. 
%Note that $\digamma^{-1}(U(z))$ picks out the value $\zeta(z)$ at which $\mu\{x:0<V(x)<\zeta(z)\}=U(z)$. 
Fix $c\in(\digamma^{-1}(U(z)), \max V)$, so  $\{V(x) > c\}$ is open and nonempty. We now define a continuous and piecewise linear function $b$ such that 
\begin{equation}\label{bdef}
b'(x)= \begin{cases}
1 & x \in \{V(x)=0\} \\
\frac{1}{\Vidgh} & x \in \{0<V(x) < \digamma^{-1}(\Vidgh)\} \\
1 & x \in \{\digamma^{-1}(\Vidgh) \leq V(x) \leq c\}\\
-M & x\in\{V(x) > c\},
\end{cases}
\end{equation}
with $M>0$ chosen, so that $b$ is $2\pi$ periodic. Note such an $M$ can be chosen by our construction of $\digamma^{-1}$.  %We assume $h_0$ is small enough so that $\d g(h) <c$ for $h\leq h_0$.  Now recalling $F$ in \eqref{multdef}
\ed{Then computing directly using $G$ and the equation for $f$ and $u$ we have }
$$
0 = \int (bG)'dx  = \int b' |u'|^2 + E b'|u|^2 dx  - 2 \Re \int b f \bar{u}' dx +  2 \Re \int \frac{b}{h} i W u \bar{u}' dx.
$$
Rearranging 
\begin{equation}\label{multipliereq}
\int b' |u'|^2 + E b'|u|^2 dx \leq 2  \left| \Re \int  b f \bar{u}' \right| dx  + 2\int \frac{b}{h} W |u\bar{u}'| dx.
\end{equation}
Now add a multiple of \eqref{wuestimate} and \eqref{wupestimate}, then apply Young's inequality for products to the $\int f u' dx$ term, absorbing the $u'$ back into the left hand side, to obtain the desired inequality. %noting that $U(g(h))^{-1} \leq \Vidg^{-1} $.
\end{proof}

\subsection{Proof of Proposition \ref{wideresolveprop} part 1}
To obtain the desired resolvent estimate the final term on the right hand side of Lemma \ref{multiplierlem} must be further estimated. To do so we separately estimate $\ed{\ch} W|uu'|$ and $\ed{(1-\ch)} W |uu'|$. 
\begin{lemma}\label{wuestimatelem}
For all $\e>0$ there exists $C, \ed{h_0} >0,$ such that for all $h \ed{\in (0,h_0)}, E  \ed{\in (0,h_0^{-2})}, u \in H^2(\Sb^1)$, $f=(-\p_x^2 + \frac{i}{h}\ed{W} - \ed{E})u,$ then
\begin{align*}
\frac{1}{h} \int W|uu'| dx \leq& C \left( \ed{E^{\frac{1}{2}}} + \frac{1}{q(g(h))} + \frac{g(h) \Udg}{h} \right) \int |f u|dx \\
&+ h^{-\frac{1}{2}} \left( \int |fu| dx \right)^{\frac{1}{2}} \left( \int V(1-\chi) f u dx \right)^{\frac{1}{2}} + \e \int \Theta |u'|^2 dx.
\end{align*}
\end{lemma}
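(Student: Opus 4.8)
\textbf{Proof plan for Lemma \ref{wuestimatelem}.} The goal is to control $\frac{1}{h}\int W|uu'|\,dx$ by splitting $u' = \chi_h u' + (1-\chi_h)u'$ and treating the two pieces separately, mirroring the strategy outlined just before the statement. For the $\chi_h W|uu'|$ piece, where $W \leq \d g(h)$, the plan is to apply Cauchy--Schwarz to write $\frac{1}{h}\int \chi_h W|uu'| \leq \frac{1}{h}\left(\int \chi_h W|u|^2\right)^{1/2}\left(\int \chi_h W|u'|^2\right)^{1/2}$, then estimate $\int \chi_h W|u|^2 \leq \int W|u|^2 \leq h\int|fu|$ by \eqref{wuestimate}, and for $\int \chi_h W|u'|^2$ use that $\chi_h$ is supported where $W \leq \d g(h)$ together with the multiplier estimate from Lemma \ref{multiplierlem}: on $\supp \chi_h$ one has $W \lesssim g(h)$, but more precisely we want $W|u'|^2 \lesssim g(h)\Udg \cdot \Theta|u'|^2$ on the relevant set, since $\Theta = 1/\Udg$ exactly where $0<V<\digamma^{-1}(\Udg)$ and $\chi_h$ only sees $V \leq \d g(h) \leq \digamma^{-1}(\Udg)$ (using $\digamma(\d g(h)) \leq \Udg$). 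This is where the $\frac{g(h)\Udg}{h}\int|fu|$ term is born, after inserting Lemma \ref{multiplierlem} and absorbing: the $\frac{C}{h}\int W|uu'|$ on the right of that lemma must itself be re-fed through this argument, so one should set up the estimate as an inequality to be closed, or apply Lemma \ref{multiplierlem} only after the $\Theta|u'|^2$ has been extracted with a small constant $\e$.

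For the $(1-\chi_h)W|uu'|$ piece, where $V \geq g(h)$ so that $W \gtrsim g(h)$ and hence $W \simeq W q(g(h))^{-1} q(g(h)) \lesssim \ldots$ — more to the point, here the idea is to integrate by parts or to use that $1-\chi_h$ is supported away from $\{W=0\}$. The natural move is Cauchy--Schwarz splitting the $W$ asymmetrically: $\frac{1}{h}\int (1-\chi_h)W|uu'| \leq \frac{1}{h}\left(\int (1-\chi_h)W|u|^2 \cdot \frac{1}{\text{something}}\right)^{1/2}\left(\int (1-\chi_h)W|u'|^2 \cdot \text{something}\right)^{1/2}$ — but the cleaner route, given the target term $h^{-1/2}\left(\int|fu|\right)^{1/2}\left(\int V(1-\chi_h)fu\right)^{1/2}$, is to bound $\int(1-\chi_h)W|u'|^2$ by something involving $\int V(1-\chi_h)|fu|$. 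I expect this comes from a pairing/integration-by-parts identity: testing the equation $f = (-\p_x^2 + \frac{i}{h}W - E)u$ against $(1-\chi_h)\bar u$ or against a primitive, one gets $\int (1-\chi_h)|u'|^2$ controlled by $\int (1-\chi_h) f\bar u$, plus commutator terms involving $\ch'$ and $\ch''$ which are handled by Lemma \ref{cutoffsizelemma} (this is exactly where the $\frac{1}{q(g(h))}\int|fu|$ term and the $E\int|fu|$ term enter, the latter from the $E|u|^2$ in $G$ or from the $-E$ in the operator). Multiplying through by $W$ versus not, and keeping track of where the weight $V$ lands, should produce the $\sqrt{\int|fu|}\sqrt{\int V(1-\chi_h)fu}$ structure after a final Cauchy--Schwarz.

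The main obstacle I anticipate is bookkeeping the cutoff derivative terms correctly: when one integrates by parts against $(1-\chi_h)\bar u$, the terms $\int \ch' u'\bar u$ and $\int \ch'' |u|^2$-type contributions appear, and Lemma \ref{cutoffsizelemma} gives $|\ch'| \lesssim \onel \, q(g(h))^{-1}$ and also the bounds of the form $|\ch'| \lesssim W g(h)^{-1}q(g(h))^{-1}$. One must choose which form to use so that the resulting term is absorbable: the bound $|\ch'| W^{-?}$ should combine with a factor of $W$ already present to leave either $\onel$ (giving a term like $\int_{\supp\ch'}|u u'|$, handled by Young plus \eqref{wuestimate} and the $\Theta|u'|^2$ absorption) or $W^{1/2}g(h)^{-1/2}q(g(h))^{-1}$ times $W^{1/2}$, i.e.\ $W g(h)^{-1} q(g(h))^{-1}$, and then $\int W|u|^2 \leq h\int|fu|$ converts the $g(h)^{-1}$... actually here one sees $q(g(h))^{-1}$ wants to survive to match the stated $\frac{1}{q(g(h))}\int|fu|$ term, so the $g(h)^{-1}$ must cancel against a $g(h)$ from $W \lesssim g(h)$ on that support — a delicate point since $\supp\ch' \subset \{g(h) \leq V \leq \d g(h)\}$ where indeed $W \simeq g(h)$. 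The other subtlety is the order of operations with the small parameter $\e$: since Lemma \ref{multiplierlem} already contains a $\frac{C}{h}\int W|uu'|$ on its right side, one cannot naively substitute it into itself; the resolution is that in the present lemma we are estimating $\frac{1}{h}\int W|uu'|$ directly and only the $\e\int\Theta|u'|^2$ term (not the full multiplier estimate) appears on the right, so the multiplier lemma is invoked once, later, in the proof of Proposition \ref{wideresolveprop}, and here we just need the raw bound $\int\chi_h W|u'|^2 \lesssim g(h)\Udg\int\Theta|u'|^2$ plus $\int\chi_h W|u'|^2$'s own control — I would double-check that $\int\chi_h W|u'|^2$ is genuinely bounded by $g(h)\Udg\int\Theta|u'|^2$ pointwise on the support and that no circularity remains.
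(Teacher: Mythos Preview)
Your plan is essentially the paper's approach, with one ordering swap and one imprecise step that needs fixing.

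\textbf{Order of operations.} The paper applies Cauchy--Schwarz \emph{first} to the whole integral,
\[
\int W|uu'|\,dx \leq \Big(\int W|u|^2\Big)^{1/2}\Big(\int W|u'|^2\Big)^{1/2} \leq Ch^{1/2}\Big(\int|fu|\Big)^{1/2}\Big(\int V|u'|^2\Big)^{1/2},
\]
and only then splits $\int V|u'|^2 = \int V\chi_h|u'|^2 + \int V(1-\chi_h)|u'|^2$. You split $W|uu'|$ first and then CS each piece. Both orderings work; the paper's is slightly cleaner because the common factor $h^{1/2}(\int|fu|)^{1/2}$ is extracted once.

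\textbf{The $(1-\chi_h)$ piece.} Your suggestion to ``test the equation against $(1-\chi_h)\bar u$'' produces $\int(1-\chi_h)|u'|^2$, not the weighted quantity $\int V(1-\chi_h)|u'|^2$ you actually need, and the correct $\int V(1-\chi_h)|fu|$ term will not appear. The paper instead integrates by parts directly on $\int V(1-\chi_h)|u'|^2$:
\[
\int V(1-\chi_h)|u'|^2 = -\Re\int \partial_x\big(V(1-\chi_h)\big)u'\bar u - \Re\int V(1-\chi_h)u''\bar u .
\]
The first term is controlled using $|\partial_x(V(1-\chi_h))| \leq CV/q(g(h))$ (combining Assumption~\ref{wideassumption}, Lemma~\ref{cutoffsizelemma}, and $V\geq g(h)$ on $\supp(1-\chi_h)$), yielding $\tfrac{C}{q(g(h))}\int V|uu'|$, which is reabsorbed into the left side via Young after the outer square root. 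For the second term one substitutes $u'' = Eu - \tfrac{i}{h}Wu - f$: the $E$-part gives $Eh\int|fu|$ via \eqref{wuestimate}, the $W$-part drops under the real part, and the $f$-part gives exactly $\int V(1-\chi_h)|fu|$. Your commutator bookkeeping with $\chi_h'$, $\chi_h''$ is not needed here; the weight $V$ does all the work.

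Your handling of the $\chi_h$ piece, namely $\int V\chi_h|u'|^2 \leq Cg(h)\,U(g(h))\int\Theta|u'|^2$ on $\supp\chi_h$ (since there $\Theta U(g(h))=1$), is exactly what the paper does and is correct.
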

\begin{proof}
Well by \eqref{wuestimate}
\begin{equation}\label{wuu1}
\int W|uu'| dx \leq \left( \int W|u|^2dx  \right)^{\frac{1}{2}} \left( \int W |u'|^{\ed{2}} dx \right)^{\frac{1}{2}} \leq C h^{\frac{1}{2}} \left( \int |fu| dx \right)^{\frac{1}{2}} \left( \int V |u'|^{\ed{2}} dx \right)^{\frac{1}{2}}.
\end{equation}
We now separately estimate the final term with $\ch$ and $(1-\ch)$.  First, on the support of $V(x) \ch(x)$, we have  $V(x)\leq \d g(h) \leq \digamma^{-1}(\Udg)$. Thus $\Theta(x) \Udg =1$ there. Therefore
\begin{equation}\label{vchiu}
\int V \ch |u'|^2dx  \leq C g(h) \Udg \int \Theta |u'|^2 dx. 
\end{equation}
Now note that by Assumption \ref{wideassumption} 
\begin{equation}
|\p_x (V(1-\ch))| = \left|V' (1-\ch) - V \ch' \right|\leq \left|\frac{CV}{q(V)} (1-\ch)\right| +\left| V \mathbbm{1}_{\ch'} \frac{1}{q(g)} \right|\leq \frac{CV}{q(g(h))},
\end{equation}
using that $V(x) \geq g(h)$ on $\supp(1-\ch)$, $q$ is increasing, and Lemma \ref{cutoffsizelemma}. Therefore, integrating by parts and applying \eqref{fueq}
\begin{align}
\int V(1-\ch) |u'|^2 dx &= -\Re \int \p_x (V(1-\ch)) u' \bar{u}dx  - \Re \int V u'' \bar{u} (1-\ch) dx  \\
&\leq \frac{C}{q(g(h))} \int V|uu'| dx+ E \int V(1-\ch) |u|^2dx + \int V (1-\ch) |fu| dx \\
& \leq \frac{C}{q(g(h))} \int V|uu'| dx + E h \int |fu|dx  + \int V(1-\ch) |fu|, \label{vonechiu}
\end{align}
where the last inequality followed from \eqref{wuestimate}. 

So now combining \eqref{vchiu} and \eqref{vonechiu} 
\begin{align}
\int V |u'|^2dx  \leq & \ed{C} g(h) \Udg \int \Theta |u'|^2 dx + E h \int |fu|dx + \int V(1-\ch) |fu|dx  \\
&+ \frac{C}{q(g(h))} \int V|uu'| dx.
\end{align}
Plugging this back into \eqref{wuu1}
\begin{align}
\int W|uu'| dx \leq \ed{E^{\frac{1}{2}}} h \int |fu| dx &+ h^{\frac{1}{2}} \left(\int |fu|dx \right)^{\frac{1}{2}} \bigg(\ed{ C g(h)} \Udg \int \Theta |u'|^2dx  \\
&\qquad \qquad+ \int V(1-\ch) |fu| dx + \frac{C}{q(g(h))} \int V|uu'|dx \bigg)^{\frac{1}{2}}.
\end{align}
Use Young's inequality for products and that $V \leq CW$ to absorb the final term back to obtain 
\begin{align}
\int W|uu'|dx \leq& \left( \ed{E^{\frac{1}{2}}} h + \frac{\ed{C}h}{q(g(h))} \right) \int |fu|dx \\&
+ h^{\frac{1}{2}} \left( \int |fu|dx \right)^{\frac{1}{2}} \bigg( \ed{C} g(h) \Udg \int \Theta |u'|^2dx  + \int V (1-\ch) |fu| dx \bigg)^{\frac{1}{2}}.
\end{align}
Then divide by $h$ and use Young's inequality for products on the $\ed{g(h)}\Udg \int \Theta |u'|^2 dx$ term to obtain the desired inequality. 
\end{proof}

To complete the proof of Proposition \ref{wideresolveprop} part 1) the second term on the right hand side of Lemma \ref{wuestimatelem} must be further estimated. 
\begin{proof}[Proof of Proposition \ref{wideresolveprop} part 1)]
Combining Lemmas \ref{multiplierlem} and \ref{wuestimatelem}, for all $\e>0$ there exists $C>0$ such that 
\begin{align*}
\int \Theta(|u'|^2+E |u|^2) dx &\leq C \int |f|^2 dx + C \left( \ed{E^{\frac{1}{2}}} + \frac{1}{q(g(h))} + \frac{g(h) \Udg}{h} \right) \int |fu|dx   \\
&+ h^{-\frac{1}{2}} \left( \int |fu|dx \right)^{\frac{1}{2}} \left( \int V (1-\chi) |fu| dx \right)^{\frac{1}{2}} + \e \int \Theta |u'|^2.
\end{align*}
For $\e$ small enough $\int \Theta |u'|^2 dx$ can be absorbed into the left hand side. Similarly using Young's inequality for products on $\int |fu|dx$ and absorbing the resultant $\int \ed{E} |u|^2dx$ terms back into the left hand side leaves 
\begin{align}
\int \Theta(|u'|^2 + E |u|^2 ) dx \leq &C \left( 1+\frac{1}{\ed{q(g(h))^2} E} + \frac{\ed{g(h)}^2\Udg^2}{h^2 E} \right) \int |f|^2 dx \\
& + \ed{C}h^{-\frac{1}{2}} \left( \int |fu|dx \right)^{\frac{1}{2}} \left( \int V (1-\ch) |fu| dx\right)^{\frac{1}{2}}.
\end{align}
Now note that when setting $g(h)= \tiroh$ for $R_{1}(z) = z q(z) U(z)$, as in Definition \ref{RMdefs} then
\begin{equation}\label{whyrd}
g(h) q(g(h)) U(g(h)) \simeq h,
\end{equation}
and so 
\begin{equation}\label{m1equiv}
\frac{1}{q(g(h))} \simeq \frac{g(h) \Udg}{h} \simeq h M_1(h).
\end{equation}
Therefore 
\begin{align}\label{finalwideeq}
\int \Theta (|u'|^2 + E |u|^2)dx  \leq &\left(1+ \frac{\ed{h^2 M_1(h)^2}}{E} \right) \int |f|^2 dx \\
& + h^{-\frac{1}{2}} \left( \int |fu| dx \right)^{\frac{1}{2}} \left( \int V (1-\ch) |fu| dx \right)^{\frac{1}{2}},
\end{align}
and to conclude it is enough to show that
\begin{equation}
h^{-\frac{1}{2}} \left( \int |fu| dx \right)^{\frac{1}{2}} \left( \int V (1-\ch) |fu| dx\right)^{\frac{1}{2}} \leq \left(\frac{C}{\ed{q(g(h))^2}}+1 \right) \int |f|^2 dx.
\end{equation}
Where we can work with the $q(g(h))^{-2}$ expression on the right hand side by \eqref{m1equiv}, and we do so to simplify the following calculations. We also drop the $E$ from the right hand side because it does not improve the estimate, and again to simplify calculations.

By linearity, we can split into different cases based on the location of the support of $f$. That is, if we have $N+2$ cases and $h$ dependent constants $\ed{b_0\leq b_1 \leq \cdots \leq b_N}$ to be specified later
\begin{description}
	\item[$0)$] $f_0 = f \mathbbm{1}_{V \leq b_0}$
	\item[$i)$] $f_i = f \mathbbm{1}_{b_{i-1} \leq V \leq b_i}$ for $1 \leq i \leq N$ 
	\item[$N+1)$] $f_{N+1} = f \mathbbm{1}_{b_N \leq V}$.
\end{description}
Then there exist $u_0, u_i, \ed{u_{N+1}} \in H^2(\Sb^1)$ such that $(-\p_x^2 + \frac{i}{h} W - E) u_k(x) = f_k(x)$ for $k=0, \ldots, \ed{N+1}$ \ed{see} \cite{KleinhenzWang2023}[Lemma 3.4] \ed{and $\sum_k u_k=u, \sum f_k = f$}. Now, if we have the estimate 
\begin{equation}
 \ltwo{u_k'}^2 + E \ltwo{u_k}^2 \leq C \left( \frac{1}{q^2(g(h))} + 1 \right) \ltwo{f_k}^2.
\end{equation}
Then, by the triangle inequality and since the $f_k$ have non-overlapping support
\begin{align}
\ltwo{u'}^2 + E \ltwo{u}^2 &\leq C \sum_k \ltwo{u'_k} + E \ltwo{u_k}^2 \leq C\left( \frac{1}{q^2(g(h))} + 1 \right) \sum_k \ltwo{f_k}^2 \\
&= C \left( \frac{1}{q^2(g(h))} + 1 \right) \ltwo{f}^2,
\end{align}
that is the desired estimate holds, and so it is enough to proceed by these cases. 

Now, define $t_i=3^{i+2}-4$, and set $b_i=h q(g(h))^{-t_i}$. We can now further estimate the quantities involved in the different cases. \\
%$b_{N-i} = \left( h^{2/3} q(g)^{8/3} \right)^{1+\frac{i}{3}}$
\textbf{Case 0} Well, when $V(x) \leq b_0$ using \ed{Cauchy-Schwarz} and \eqref{wuestimate} \ed{we have}
\begin{align*}
\int |V(1-\ch) fu|dx  &\leq b_0^{\frac{1}{2}} \int V^{\frac{1}{2}} |fu|dx  \leq b_0^{\frac{1}{2}} \left( \int V|u|^2 dx\right)^{\frac{1}{2}} \left( \int |f|^2 dx \right)^{\frac{1}{2}} \\
&\leq b_0^{\frac{1}{2}} h^{\frac{1}{2}} \left( \int |fu| dx \right)^{\frac{1}{2}} \left( \int |f|^2 dx \right)^{\frac{1}{2}}.
\end{align*}
So then, applying Cauchy-Schwarz and Young's inequality for products, \ed{for any $\e>0$ we have}
\begin{align*}
h^{-\frac{1}{2}} \left( \int |fu| dx \right)^{\frac{1}{2}} \left( \int V (1-\ch) |fu|dx \right)^{\frac{1}{2}} &\leq \frac{b_0^{\frac{1}{4}}}{h^{\frac{1}{4}}} \left(\int |fu| dx \right)^{3/4} \left( \int |f|^2 dx \right)^{\ed{\frac{1}{4}}} \\
&\leq \frac{b_0^{\frac{1}{4}}}{h^{\frac{1}{4}}} \left( \int |f|^2 dx \right)^{5/8} \left( \int |u|^2 dx \right)^{3/8} \\
&\leq C \frac{b_0^{2/5}}{h^{2/5}} \int |f|^2 dx + \e \int |u|^2dx. 
\end{align*}
Plug this back into \eqref{finalwideeq}, absorbing the second term on the right hand side into the left hand, to obtain
$$
\int \Theta \left( |u'|^2 + E |u|^2 \right)dx \leq C \left( 1 + \frac{C}{q^2(g)} + \frac{b_0^{2/5}}{h^{2/5}} \right) \int |f|^2 dx.
$$
Now since $b_0 = h q^{-5}$ then $\frac{b_0^{2/5}}{h^{2/5}} = \frac{1}{q^2}$, and the desired estimate holds in this case. 

Note, we could also start case $0$ with $b_0 \leq g(h)$ and use that $(1-\ch) \equiv 0$ where $V \leq g$ to immediately see this term is 0. This would simplify this step, but would make determining how large $N$ should be taken more complicated. 

\textbf{Case $i$} When $b_{i-1} \leq V(x)$, \ed{applying Cauchy-Schwarz and \eqref{wuestimate}}
\begin{align*}
\int |f_i u | dx&\leq \frac{1}{\sqrt{b_{i-1}}} \int |f_i V^{\frac{1}{2}} u|dx \leq \frac{1}{\sqrt{b_{i-1}}} \left( \int |f_i |^2 dx \right)^{\frac{1}{2}} \left( \int V|u|^2 dx \right)^{\frac{1}{2}}\\
& \leq \frac{h^{\frac{1}{2}}}{\sqrt{b_{i-1}}} \left( \int |f_i|^2 dx\right)^{\frac{1}{2}} \left(\int |f_i u| dx\right)^{\frac{1}{2}}.
\end{align*}
So 
\begin{equation}\label{bim1lessv}
\ed{\left(\int |f_i u|dx \right)^{\frac{1}{2}} \leq \frac{C h^{\frac{1}{2}}}{b_{i-1}^{\frac{1}{2}}} \left( \int |f|^2 dx\right)^{\frac{1}{2}}.}
\end{equation}

When $b_{i-1} \leq V(x) \leq b_i$ then, \ed{applying Cauchy-Schwarz, \eqref{wuestimate}, and \eqref{bim1lessv} we have}
\begin{align}
\int V(1-\ch) |f_iu| dx &\leq b_i^{\frac{1}{2}} \int V^{\frac{1}{2}} |f_iu| dx \leq b^{\frac{1}{2}} h^{\frac{1}{2}} \left( \int |f_i |^2 dx \right)^{\frac{1}{2}} \left( \int |f_i u|dx  \right)^{\frac{1}{2}} \\
&\leq \frac{b_i^{\frac{1}{2}}h}{b_{i-1}^{\frac{1}{2}}} \int |f_i |^2 dx. \label{vlessbi}
\end{align}
So then combining \eqref{bim1lessv} and \eqref{vlessbi} 
\begin{equation}
h^{-\frac{1}{2}} \left( \int |f_i u|dx \right)^{\frac{1}{2}} \left( \int |V (1-\ch) fu|dx \right)^{\frac{1}{2}} \leq \frac{h^{\frac{1}{2}} b_i^{\frac{1}{4}}}{b_{i-1}^{3/4}} \int |f|^2 dx. 
\end{equation}
Now note that $b_i = b_{i-1}^3 h^{-2} q^{-8}$. Therefore $\frac{h^{\frac{1}{2}} b_i^{\frac{1}{4}}}{b_{i-1}^{3/4}} \simeq \frac{1}{q^2(g(h))}$,  so the desired estimate holds.

\textbf{Case $N+1$} Since $b_N \leq V(x)$ by the same argument as in case i) with $b_N$ replacing $b_{i-1}$ and a large constant $C$ replacing $b_i$ we have 
\begin{equation}
h^{-\frac{1}{2}} \left( \int |fu| \right)^{\frac{1}{2}} \left( \int V (1-\ch) fu \right)^{\frac{1}{2}} \leq \frac{h^{\frac{1}{2}}}{b_N^{3/4}} \int |f|^2.
\end{equation}
Now, by Definition \ref{RMdefs}, $M_1(h) \geq h^{1-\e}$ for some $\e>0$, and so by \eqref{m1equiv} we have $q(g(h)) \leq h^{\e}$. This along with $\lim_{N \ra \infty} t_N = \infty$, guarantees that there exists $N$ large enough so that $h \geq q^{3t_N+8}$. For such an $N$, because $b_N = h q(g(h))^{-t_N} \geq h^{2/3} q^{8/3}$, we have $\frac{h^{\frac{1}{2}}}{b_N^{3/4}} \leq \frac{1}{q^2(g(h))}$, and the desired estimate holds. 

\end{proof}
\begin{remark}
It is this final step of arguing in cases based on the support of $f_i$ where we require that $M_1(h) \geq h^{-1-\e}$ and that the argument fails for $W(x)=\exp(-(|x|-\sigma)_+^{\alpha})$. This is because $q(\tiroh) = \ln(h^{-1})^{-\frac{\alpha+1}{\alpha}}>h^{\e}$ for all $\e>0$. It is still possible to prove a resolvent estimate for this damping using this approach, but the resultant energy decay rate would be of the form $\ln(\ed{2}+t)^{p(\alpha)} t^{\e-1}$, for any $\e>0$ and some power $p(\alpha)$. This is slower decay than the $\ln(\ed{2}+t)^{p(\alpha)} t^{-1}$ rate achieved using part 2).
\end{remark}

\subsection{Proof of Proposition \ref{wideresolveprop} part 2 with Assumptions \ref{widebadassumption} and \ref{wideinversebassumption}}
We use the same approach of separately estimating $u$ in two regions defined in terms of the size of $W$ and then combining them, but the techniques used to obtain the estimates are slightly different.

For $g(h):(0,\infty) \ra (0,\infty)$, such that $\limh g(h)=0$, recall $\ch(x) = \chi\left( \frac{V(x)}{g(h)} \right)$ and that $u_1(x) = \ch(x) u(x), u_2(x) = (1-\ch(x)) u(x),$ so $u=u_1+u_2$. 

It is straightforward to estimate $u_2$, because it is supported where $V(x) \geq g(h)$ and so we can use the damping estimate \eqref{wuestimate}. 

Below and throughout the rest of this section integrals and $L^2$ norms are taken over $\Sb^1$ unless otherwise specified.
\begin{lemma}\label{utwolemma}
\begin{equation}
\ltwo{u_2} \leq \frac{C h^{\frac{1}{2}}}{g(h)^{\frac{1}{2}}} \left( \int |fu| dx \right)^{\frac{1}{2}}.
\end{equation}
\end{lemma}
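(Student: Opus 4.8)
The plan is to exploit the fact that $u_2 = (1-\ch)u$ lives on the region where the damping is bounded below, so that the basic pairing estimate \eqref{wuestimate} can be applied directly with essentially no extra work.

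First I would record the support property: since $\ch(x) = \chi\!\left(V(x)/g(h)\right)$ with $\chi \equiv 1$ on $[-1,1]$, the factor $1-\ch$ vanishes on $\{V(x) \leq g(h)\}$, so $u_2$ is supported on $\{V(x) \geq g(h)\}$. On this set $1 \leq V(x)/g(h)$; combining this with $0 \leq 1-\ch \leq 1$ and the envelope inequality $V \leq CW$ from Assumption \ref{wideassumption}(1) gives the pointwise bound
\[
|u_2(x)|^2 \leq \frac{V(x)}{g(h)}\,|u(x)|^2 \leq \frac{C}{g(h)}\,W(x)\,|u(x)|^2 .
\]

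Next I would integrate this over $\Sb^1$ and invoke \eqref{wuestimate}, which gives $\int W|u|^2\,dx \leq h\int |fu|\,dx$; this yields $\ltwo{u_2}^2 \leq \frac{Ch}{g(h)}\int |fu|\,dx$, and taking square roots completes the argument. This region is deliberately the ``easy'' half of the splitting $u = u_1 + u_2$: the only point that requires a moment's care is the conversion of the plain $L^2$ norm of $u_2$ into the $W$-weighted quantity, which is exactly where the support of $1-\ch$ and the envelope inequality $V\leq CW$ enter. I do not expect any genuine obstacle in this lemma; the substance of Proposition \ref{wideresolveprop} part 2 lies in controlling $u_1$, supported where $V$ is small, and not in this straightforward bound on $u_2$.
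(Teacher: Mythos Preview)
Your proof is correct and is essentially identical to the paper's argument: both use that on $\supp(1-\ch)$ one has $V(x)\geq g(h)$, convert this via $V\leq CW$ into a pointwise bound $|u_2|\leq C g(h)^{-1/2}W^{1/2}|u|$, and then apply \eqref{wuestimate}. The only cosmetic difference is that the paper phrases the pointwise inequality at the level of $|u_2|$ rather than $|u_2|^2$.
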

\begin{proof}
Since $\sqrt{\frac{V(x)}{g(h)}} \geq 1$ on $\supp(1-\ch)$ and $\frac{V(x)}{C} \leq W(x)$ then $\frac{C^{\frac{1}{2}} W(x)^{\frac{1}{2}}}{g(h)^{\frac{1}{2}}} \geq 1$ on $\supp(1-\ch)$. \ed{Combining these facts with \eqref{wuestimate}, we obtain}
\begin{equation}
\ltwo{u_2} = \ltwo{(1-\ch) u} \leq \frac{C}{g(h)^{\frac{1}{2}}} \ltwo{W^{\frac{1}{2}} u} \leq \frac{C h^{\frac{1}{2}}}{g(h)^{\frac{1}{2}}} \left( \int |fu| dx \right)^{\frac{1}{2}}.
\end{equation}
\end{proof}

So it remains to estimate $u_1$. Note that $u_1\ed{=\ch u}$ solves an equation involving $u$
\begin{equation}\label{voneeq}
-\p_x^2 u_1+ \frac{i}{h} W u_1 - E u_1  = \ch f - 2 \ch' u' - \ch'' u = \ch f -2\p_x(\ch' u) + \ch''u.
\end{equation}
We will estimate $u_1$ in two separate cases. Consider $\sqrt{E} \geq \frac{1}{q(g(h))}$ and $ \sqrt{E} \leq \frac{1}{q(g(h))}$. When $E$ is large, the estimate follows from a standard ``black box" control argument. This is one of the pieces that requires us to have control over the second derivative of $V$, as well as the first. 

For ease of notation, in the remainder of this section we will frequently write $g(h)=g$, $q(g(h))=q$ and $p(g(h))=p$.
\begin{lemma}\label{uonemulargelemma}
If $\sqrt{E} \geq \frac{1}{q(g(h))}$ and $W$ satisfies Assumption \ref{widebadassumption},  then
\begin{align}
\ltwo{u_1} \leq& C \ed{q} \ltwo{f} \\
&+ C \left( \frac{\ed{q} g^{\frac{1}{2}}}{h} + g^{\frac{1}{2}} + \frac{\ed{q}}{g^{\frac{1}{2}}} \left(\frac{1}{q^2} + \frac{1}{p}\right) + \frac{1}{g^{\frac{1}{2}} q} \right) h^{\frac{1}{2}} \left( \int |fu| dx \right)^{\frac{1}{2}}.
\end{align}
\end{lemma}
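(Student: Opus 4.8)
The plan is to exploit the equation \eqref{voneeq} for $u_1 = \chi_h u$, namely $(-\p_x^2 + \frac i h W - E)u_1 = \chi_h f - 2\chi_h' u' - \chi_h'' u$, and to estimate $u_1$ by estimating the three source terms separately. The point of the standing hypothesis $\sqrt E \ge q(g(h))^{-1}$ is that in this range $E$ dominates the scale $q(g(h))^{-2}$ at which the first derivative of the cutoff lives, so the operator $-\p_x^2 + \frac i h W - E$ is invertible and, since $\{W>0\}$ is open, a standard black-box control argument is available; this is exactly the step where Assumption \ref{widebadassumption} (the bound $|\p_x^2 V| \le V/p(V)$) is used, to build the requisite propagation estimate for $V$. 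I will take as given from this argument a resolvent bound of the form $\ltwo{v} \le C\eta(h)\ltwo{F}$ for $(-\p_x^2 + \frac i h W - E)v = F$, together with the companion bound $\ltwo{v'}^2 \le \ltwo{F}\ltwo{v} + E\ltwo{v}^2$ from pairing with $\bar v$, and the analogous statements for the adjoint operator $-\p_x^2 - \frac i h W - E$ by conjugation. By linearity I would then split $u_1 = v_0 + v_1 + v_2$, where $v_0, v_1, v_2$ solve this equation with right-hand sides $\chi_h f$, $-2\chi_h' u'$, and $-\chi_h'' u$ respectively.

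The term $v_0$ is immediate: $\ltwo{v_0} \le C\eta\ltwo{\chi_h f} \le C\eta\ltwo{f}$, the first term of the claimed bound. For $v_2$, Lemma \ref{cutoffsizelemma} part 2 (which is where Assumption \ref{widebadassumption} is invoked) gives $|\chi_h''| \le C g(h)^{-1/2}\bigl(q(g(h))^{-2} + p(g(h))^{-1}\bigr) W^{1/2}$ on $\supp\chi_h'$, so \eqref{wuestimate} yields $\ltwo{\chi_h'' u} \le C g^{-1/2}(q^{-2}+p^{-1})\bigl(\int W|u|^2\,dx\bigr)^{1/2} \le C g^{-1/2}(q^{-2}+p^{-1})h^{1/2}\bigl(\int |fu|\,dx\bigr)^{1/2}$; combined with the resolvent bound this produces the $\frac{\eta}{g^{1/2}}\bigl(\frac1{q^2}+\frac1p\bigr)$ term.

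The remaining term $v_1$, which carries a derivative of $u$ in its source, is the crux and the main obstacle. First I would rewrite $-2\chi_h' u' = -2\p_x(\chi_h' u) + 2\chi_h'' u$, absorbing the $2\chi_h'' u$ contribution into the estimate of $v_2$, so that it suffices to bound $\nm{(-\p_x^2 + \tfrac i h W - E)^{-1}\bigl(-2\p_x(\chi_h' u)\bigr)}_{L^2}$. I would estimate this by duality: for $\ltwo{\phi}=1$ let $\psi$ solve the adjoint equation with right-hand side $\phi$, so $\ltwo{\psi}\le C\eta$, and integrate by parts once to get $|\langle \phi, v_1\rangle| \le 2\ltwo{\psi'}\ltwo{\chi_h' u}$; then Lemma \ref{cutoffsizelemma} gives $|\chi_h'|\le C g(h)^{-1/2}q(g(h))^{-1}W^{1/2}$ on $\supp\chi_h'$, so by \eqref{wuestimate} $\ltwo{\chi_h' u} \le C g^{-1/2}q^{-1}h^{1/2}\bigl(\int|fu|\,dx\bigr)^{1/2}$. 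The delicate point is the bound on $\ltwo{\psi'}$: the naive energy identity $\ltwo{\psi'}^2 = \Re\langle\phi,\psi\rangle + E\ltwo{\psi}^2 \le C\eta + CE\eta^2$ loses a factor $\sqrt E$ that is far too lossy near $E\sim h^{-2}$ (where, because the damping is then strongly active, $\psi'$ is in fact much smaller), so one must instead feed the equation $(-\p_x^2 - \tfrac i h W - E)\psi = \phi$ back through the black-box control estimate to obtain a sharper, $E$-dependent bound, and split the range $q(g(h))^{-1}\le \sqrt E \le h^{-1}$ accordingly. Tracking this $E$-dependence, the several pieces assemble into the remaining terms $\frac{\eta g^{1/2}}{h} + g^{1/2} + \frac{1}{g^{1/2}q}$ multiplying $h^{1/2}\bigl(\int|fu|\,dx\bigr)^{1/2}$. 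Summing the three contributions and absorbing lower-order pieces with Young's inequality gives the stated inequality; controlling the derivative source term uniformly over the whole large-$E$ range is the one genuinely technical step.
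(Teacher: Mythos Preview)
Your overall architecture---apply a 1-d control estimate to \eqref{voneeq} and bound the commutator terms via Lemma~\ref{cutoffsizelemma}---matches the paper, but you have misidentified the precise form of the black-box input, and this is exactly what makes the derivative term look hard to you.

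The paper does not assume or prove a resolvent bound for the damped operator $-\p_x^2 + \tfrac{i}{h}W - E$. It invokes Lemma~\ref{1dgcclemma} (the standard 1-d geometric control estimate) for the \emph{undamped} operator $-\p_x^2 - E$, whose key feature is that it accepts a source split as $f_1 + f_2$ with $f_1 \in L^2$, $f_2 \in H^{-1}$, returning
\[
\ltwo{u_1} \le C E^{-1/2}\ltwo{f_1} + C\hp{f_2}{-1} + C\nm{u_1}_{L^2(I)}.
\]
One rewrites \eqref{voneeq} as $(-\p_x^2 - E)u_1 = (\ch f - \tfrac{i}{h}Wu_1 + \ch'' u) - 2\p_x(\ch' u)$, places the derivative term in $f_2$, and uses $\nm{\p_x(\ch' u)}_{H^{-1}} \le \ltwo{\ch' u}$ with no loss whatsoever. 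The localized term $\nm{u_1}_{L^2(I)}$ is bounded by $\ltwo{Wu_1}$ since $I \subset \{W>0\}$, and the damping contribution $\tfrac{1}{h}Wu_1$ sits in $f_1$. This single $H^{-1}$ observation eliminates your ``delicate point'' entirely: there is no duality argument, no adjoint $\psi$, no need to split the $E$-range. After bounding $E^{-1/2} \le \eta$ one has
\[
\ltwo{u_1} \le C\eta\ltwo{f} + C\Bigl(\tfrac{\eta}{h}+1\Bigr)\ltwo{Wu_1} + C\eta\ltwo{\ch'' u} + C\ltwo{\ch' u},
\]
and Lemma~\ref{cutoffsizelemma} together with $W \le Cg$ on $\supp\ch$ and \eqref{wuestimate} produce every term of the stated bound directly. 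In particular the contributions $\tfrac{\eta g^{1/2}}{h} + g^{1/2}$ come from $\ltwo{Wu_1} \le g^{1/2}\ltwo{W^{1/2}u}$, not from the derivative source as you suggest.

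A minor correction: Assumption~\ref{widebadassumption} is not used in the control estimate itself (Lemma~\ref{1dgcclemma} is for $-\p_x^2 - E$ and never sees $V$); it enters only through Lemma~\ref{cutoffsizelemma} part 2, to bound $|\ch''|$.
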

\begin{proof}
By Lemma \ref{1dgcclemma}, which is proved in \cite[Prop 4.2]{Burq2020}, since $u_1$ satisfies \eqref{voneeq}, there exists $C>0$ such that for $\sqrt{E} \geq \ed{q(g(h))^{-1}}$,
\begin{align*}
\ltwo{u_1} &\leq C \left( \frac{1}{\sqrt{E}} \ltwo{\ch f - \frac{i}{h} W \ed{\ch u} + \ch'' u} + \nm{\p_x(\ch' u)}_{H^{-1}} + \ltwo{W \ed{\ch u}}\right)  \\
&\leq C \left( \ed{q} \ltwo{f} + \left( \frac{\ed{q}}{h} +1\right) \ltwo{W \ed{\ch u}} + \ed{q} \ltwo{\ch'' u} + \ltwo{\ch' u}\right). 
\end{align*}
Now using Lemma \ref{cutoffsizelemma} to control the size of $\ch'$ and $\ch''$, and since \orange{$W(x)^{\frac{1}{2}}\leq CV(x)^{\frac{1}{2}} \leq C g(h)^{\frac{1}{2}}$ } \ed{on $ \supp(\ch)$ we have}
$$
\ltwo{u_1} \leq C  \ed{q} \ltwo{f} + C \left( \frac{\ed{q} g^{\frac{1}{2}}}{h} + g^{\frac{1}{2}} + \frac{\ed{q}}{g^{\frac{1}{2}}} \left( \frac{1}{q^2} + \frac{1}{p} \right) + \frac{1}{g^{\frac{1}{2}}q} \right) \ltwo{W^{\frac{1}{2}} u}.
$$
Then apply \eqref{wuestimate} to the final term to draw the desired conclusion.
\end{proof}

When $\sqrt{E}$ is small, the same multiplier method we used to prove case 1) of Proposition \ref{wideresolveprop}  gives an estimate. This is the other piece of the argument that requires us to have control over the second derivative of $V$, as well as the first. This is also where we make use of $U(z) \geq \Vidz$. 
\begin{lemma}\label{uonemusmall}
Recall $U(z)$ is an increasing function with $\Vidz \leq U(z) <\ed{\nu}\ed{((0,\max V))}$ and from Lemma \ref{multiplierlem}
\begin{equation}
\Theta(x) = \begin{cases} \frac{1}{\Udg} & \{x: 0<V(x)<\digamma^{-1}(U(g(h)))\} \\
1 & \text{ elsewhere}.
\end{cases}
\end{equation}
If $\sqrt{E} \leq \frac{1}{q(g(h))}$ and $W$ satisfies Assumptions \ref{widebadassumption} and \ref{wideinversebassumption} is true, then 
\begin{align}
\int \Theta\left( |u_1'|^2 + E |u_1|^2  \right) dx &\leq  C \int |f|^2 dx \\
+&C \left( \frac{g \Vidg}{h} + \frac{h \Vidg}{g} (q^{-2}+p^{-1})^2  \right)\ed{ \int |fu| dx}.
\end{align}
\end{lemma}
\begin{proof}
From the proof of the multiplier estimate apply \eqref{multipliereq} to $u_1$ using \eqref{voneeq} to see 
$$
\int \Theta\left( |u_1'|^2 + E |u_1|^2 \right)  dx \leq 2\left| \Re \int b (\ch f- 2\p_x(\ch' u) + \ch'' u) \bar{u}_1'dx \right|   + \frac{C}{h} \int W|u_1 \bar{u}_1'| dx,
$$
where $b$ is defined in \eqref{bdef} and we used that $b'(x)=\Theta(x)$ on $\{V(x) \leq \digamma^{-1}(U(g(h)))\} \supseteq \{V(x) \leq \d g(h)\}$, which contains  $\supp(u_1) =\supp(u \ch)$. Then, rearranging absolute values
\begin{equation} \label{uonemulteq}
\int \Theta\left( |u_1'|^2 + E |u_1|^2 \right) dx \leq C \int \frac{1}{h} W |u_1 \bar{u}_1'| + |\ch f \bar{u}_1'| +  |\ch'' u \bar{u}_1'| dx + 2 \left| \Re \int b \p_x (\ch' u) \bar{u}_1' dx\right|. 
\end{equation}
To obtain the desired estimate, we will estimate the final term on the right hand side, and then estimate the resulting terms, using Lemma \ref{cutoffsizelemma} to control the size of derivatives of cutoffs. The final term can be estimated using integration by parts and writing $u_1 = \ch u$ 
\begin{align*}
\left| \Re \int b \p_x (\ch' u ) \ed{\bar{u}_1'} dx \right|&= \left|\Re \int b' \ch' u \bar{u}_1' + b \ch' u (\ch'' \bar{u} + 2 \ch' \bar{u}' + \ch \bar{u}'') dx \right| \\
&\leq C \int \Vidg^{-1}  | \ch' u \bar{u}_1'| +  |\ch' \ch''| |u|^2 + (\ch')^2 |u \bar{u}'| \\
&\qquad \qquad + E |\ch'| |u|^2 +  |\ch' u \bar{f}| dx,
\end{align*}
where we used that $|b| \leq C, |b'| \leq \Vidgh^{-1},$ and applied \eqref{fueq} to $u''$, taking the real part. In order to eventually absorb terms back from the right hand side, we actually prefer to have terms of the form $uu_1'$ instead of $uu'$. To perform this exchange, note that since $u_1' = \ch u' + \ch' u$, then $|u'| \leq |u_1'| + |\ch' u|$ on $\supp \ch'$. Therefore 
\begin{align}
\left| \Re \int b \p_x (\ch' u) dx \right| \leq &C \int \bigg(\Vidg^{-1}  |\ch'|+(\ch')^2 \bigg)|u \bar{u}_1'| \\
&\qquad +  \bigg(|\ch' \ch''|  +|\ch'|^3+ E |\ch'| \bigg)|u|^2 +  |\ch' u \bar{f}| dx.
\end{align}

\ed{Let $\mathbbm{1}_{\chi_h}$ be the indicator function for the support of $\ch$}. Now use $E \leq \frac{1}{q(g(h))^2}$ by assumption and plug \ed{the above inequality} back into \eqref{uonemulteq}, using that $|u_1| =|\ch u| \leq C \mathbbm{1}_{\ch} |u|$, $\ch' =\ch' \mathbbm{1}_{\ch},$ and \orange{$W \leq CV$}
\begin{align*}
\int \Theta\left( |u_1'|^2 + E |u_1|^2 \right) dx \leq C \int& \bigg(\frac{1}{h} V +|\ch''|+\Vidg^{-1} | \ch'|+|\ch'|^2  \bigg) \mathbbm{1}_{\ch} |u \bar{u}_1'| + |\ch f \bar{u}_1'| + \\
&+  \bigg(|\ch' \ch''| +|\ch'|^3 +\frac{\ed{|\ch'|}}{q^2}\bigg) |u|^2 +  |\ch' u \bar{f}| \ed{dx}.
\end{align*}
Now to estimate $\ch'$ and $\ch''$ apply Lemma \ref{cutoffsizelemma}, and note that $\supp u_1, \supp u_1' \subseteq \{V(x) \leq \d g(h)\}$ so \orange{$V(x)^{\frac{1}{2}} \leq \ed{C} g(h)^{\frac{1}{2}}$} there, \ed{so we have}
\begin{align}
\int \Theta\left( |u_1'|^2 + E |u_1|^2 \right) \leq C \int& \bigg( \frac{g^{\frac{1}{2}}}{h} +\frac{1}{g^{\frac{1}{2}}} (q^{-2} + p^{-1}) + \frac{1}{g^{\frac{1}{2}} q \Vidg} \bigg) V^{\frac{1}{2}}  \mathbbm{1}_{\ch} |u \bar{u}_1'|  \\
&+ \frac{1}{gq} (q^{-2} + p^{-1}) V |u|^2 \\
&+\frac{1}{g^{\frac{1}{2}} q} V^{\frac{1}{2}} |u \bar{f}|+ |\ch f \bar{u}_1'| dx.\label{uonepostcutoff}
\end{align}
Now use Young's inequality for products and that $\Theta(x) =\Vidgh^{-1}$ on $\supp (\ch W)=\{0<V<\d g(h)\} \subseteq \{0<V < \digamma^{-1}(\Vidgh)\}$ to estimate \ed{for any $\e>0$}
\begin{equation}
\int A V^{\frac{1}{2}} \mathbbm{1}_{\ch} |u u_1'| dx \leq \frac{\Vidg A^2}{2 \e} \int V |u|^2  dx + \frac{\e}{2} \int \Theta |u_1'|^2 dx.
\end{equation}
We can also apply Young's inequality directly to estimate \ed{for any $\e>0$}
\begin{align}
&\int |\ch f \bar{u}_1'| dx \leq \frac{1}{2 \e} \int |f|^2 dx + \frac{\e}{2} \int |u_1'|^2 dx \\
&\int \frac{1}{g^{\frac{1}{2}}q} V^{\frac{1}{2}} |u \bar{f}| dx \leq \int |f|^2 dx + \frac{1}{g q^2} \int V|u|^2 dx.
\end{align}
Apply these to \eqref{uonepostcutoff}, choosing $\e$ small enough to absorb the $|u_1'|^2$ terms back into the left hand side, to obtain 
\begin{align}
\int \Theta\left( |u_1'|^2 + E |u_1|^2 \right) dx \leq &  C \bigg(\frac{g \Vidg}{h^2} + \frac{\Vidg}{g} (q^{-2} + p^{-1})^2 \\
& \qquad + \frac{1}{g q^2 \Vidg} + \frac{1}{gq}(q^{-2} + p^{-1}) \bigg) \int V|u|^2 dx +C \int |f|^2 dx. 
\end{align}
Now, by Assumption \ref{wideinversebassumption} $\Vidg \geq q(g)$ so
\begin{align}
&\frac{1}{gq} (q^{-2} + p^{-1}) \leq \frac{q}{g}\left( \frac{1}{q^4} + \frac{1}{pq^2}\right) \leq \frac{\Vidg}{g} ( q^{-2}+p^{-1})^2\\
&\frac{1}{gq^2 \Vidg} \leq \frac{1}{gq^3} \leq \frac{q}{g} \left(q^{-2}+p^{-1} \right)^2 \leq \frac{\Vidg}{g} ( q^{-2}+p^{-1})^2.
\end{align}
So we have 
\begin{align}
\int \Theta\left( |u_1'|^2 + E |u_1|^2 \right) dx \leq& C \int |f|^2 dx  \\
&+C \left( \frac{g \Vidg}{h^2} + \frac{\Vidg}{g} (q^{-2}+p^{-1})^2  \right) \int V|u|^2 dx.
\end{align}
Now use $V \leq C W$ and apply \eqref{wuestimate} to estimate the $V|u|^2$ term, thereby obtaining the desired inequality. 
\end{proof}
We can now combine these separate estimates together to prove the desired resolvent estimate.
\begin{proof}[Proof of Proposition \ref{wideresolveprop} part 2)]
%In part 2) where we assume Assumption \ref{widebadassumption}, apply \eqref{umusmall}  and Lemmas \ref{utwolemma} and \ref{uonemulargelemma} with $\eta(h)=C$, to see for all $E \in \Rb$
%\begin{align}
%\ltwo{u} &\leq \ltwo{u_1} + \ltwo{u_2} \\
%&\leq C \ltwo{f} + C \left( \frac{g(h)^{\frac{1}{2}}}{h^{\frac{1}{2}}} + \frac{h^{\frac{1}{2}}}{g(h)^{\frac{1}{2}}}(q(g)^{-2} + p(g)^{-1}) \right) \left( \int |fu| dx \right)^{\frac{1}{2}}.
%\end{align}
%Now recall $R_2(z) = z Q(z)$, where $Q(z):=\min(q^2(z), p(z))$, so $(q(z)^{-2} + p(z)^{-1}) \simeq Q(z)$. Then setting $g(h) =\ti{R}_2^{-1}(h)$, we have $h \simeq g(h)(q(g(h))^{-2} + p(g(h))^{-1})^{-1}$. Therefore 
%\begin{equation}\label{whyr2}
%\frac{g(h)^{\frac{1}{2}}}{h^{\frac{1}{2}}} \simeq \frac{h^{\frac{1}{2}}}{g(h)^{\frac{1}{2}}}\left(q(h(h))^{-2} + p(g(h))^{-1}\right).
%\end{equation}
%Plugging this back in  
%\begin{equation}
%\ltwo{u} \leq \frac{C g(h)^{\frac{1}{2}}}{h^{\frac{1}{2}}} \left( |fu| dx \right)^{\frac{1}{2}} \leq \frac{C g(h)}{h}\ltwo{f} + \e \ltwo{u}.
%\end{equation}
%Absorbing back the $\e \ltwo{u}$ term into the left hand side gives exactly
%\begin{equation}
%\ltwo{u} \leq C \frac{\ti{R}^{-1}_2(h)}{h} \ltwo{f} = C M_2(h) h \ltwo{f},
%\end{equation}
%as desired. 

In part 2), we assume Assumptions \ref{widebadassumption} and \ref{wideinversebassumption}. We apply \eqref{umusmall} and Lemmas \ref{utwolemma}, \ref{uonemulargelemma}, and \ref{uonemusmall}, to obtain, for all $E \in \Rb$ 
\begin{align*}
\ltwo{u} \leq \ltwo{u_1} + \ltwo{u_2} \leq& (C+q) \ltwo{f} + C \bigg(\frac{h^{\frac{1}{2}}}{g^{\frac{1}{2}}} + \frac{g^{\frac{1}{2}}q}{h^{\frac{1}{2}}}  + \frac{h^{\frac{1}{2}} q}{g^{\frac{1}{2}}} (q^{-2} + p^{-1})+\frac{h^{\frac{1}{2}}}{g^{\frac{1}{2}} q} \\
& + \frac{g^{\frac{1}{2}} \Vidg^{\frac{1}{2}}}{h^{\frac{1}{2}}} + \frac{h^{\frac{1}{2}} \Vidg^{\frac{1}{2}}}{g^{\frac{1}{2}}}(q^{-2} + p^{-1})\bigg) \left(\int |fu| dx \right)^{\frac{1}{2}}.
\end{align*}
Now by Assumption \ref{wideinversebassumption}, $q(g) \leq \Vidg$, so $q(g) \leq \Vidg^{\frac{1}{2}}$ and $q(g)^{-1} \leq \Vidg^{\frac{1}{2}} q(g)^{-2}$. Therefore 
\begin{align*}
&\frac{h^{\frac{1}{2}}}{g^{\frac{1}{2}}} \leq  \frac{h^{\frac{1}{2}}\Vidg^{\frac{1}{2}}}{g^{\frac{1}{2}}}(q^{-2} + p^{-1}),  \\
%&h^{\frac{1}{2}} g^{\frac{1}{2}} \leq \frac{h^{\frac{1}{2}}}{g^{\frac{1}{2}}} \\
&\frac{g^{\frac{1}{2}}q }{h^{\frac{1}{2}}} \leq  \frac{g^{\frac{1}{2}} \Vidg^{\frac{1}{2}}}{h^{\frac{1}{2}}},\\
&\frac{h^{\frac{1}{2}} q}{g^{\frac{1}{2}}} (q^{-2} + p^{-1}) \leq \frac{h^{\frac{1}{2}} \Vidg^{\frac{1}{2}}}{g^{\frac{1}{2}}} (q^{-2} + p^{-1}), \\
&\frac{h^{\frac{1}{2}}}{g^{\frac{1}{2}} q } \leq \frac{h^{\frac{1}{2}} \Vidg^{\frac{1}{2}}}{g^{\frac{1}{2}}}(q^{-2} + p^{-1}).
\end{align*}
Therefore, absorbing smaller terms 
\begin{equation}
\ltwo{u} \leq C \left(\frac{g^{\frac{1}{2}} \Vidg^{\frac{1}{2}}}{h^{\frac{1}{2}}} + \frac{h^{\frac{1}{2}} \Vidg^{\frac{1}{2}}}{g^{\frac{1}{2}}}(q^{-2} + p^{-1})\right) \left( \int |fu| dx \right)^{\frac{1}{2}} +  C \ltwo{f}.
\end{equation}
Note that $(q(z)^{-2} + p(z)^{-1}) \simeq \min(q^2(z), p(z))^{-1}$. Recall $R_2(z) = z \min(q^2(z), p(z))$, so setting $g(h) =\ti{R}_2^{-1}(h)$, we have $h \simeq g(h)(q(g(h))^{-2} + p(g(h))^{-1})^{-1}$. So  
\begin{equation}\label{whyr2}
\frac{g(h)^{\frac{1}{2}}}{h^{\frac{1}{2}}} \simeq \frac{h^{\frac{1}{2}}}{g(h)^{\frac{1}{2}}}\left(q(g(h))^{-2} + p(g(h))^{-1}\right).
\end{equation}
Plugging this back in gives
\begin{equation}
\ltwo{u} \leq C \frac{g(h)^{\frac{1}{2}} \Vidgh^{\frac{1}{2}}}{h^{\frac{1}{2}}} \left( \int|fu| dx \right)^{\frac{1}{2}}+ C \ltwo{f}.
\end{equation}
Now applying Young's inequality to products to $|fu|$ term, and absorbing the resultant $\e \ltwo{u}$ terms back into the left hand side gives 
\begin{equation}
\ltwo{u} \leq C \left(\frac{\ti{R}^{-1}_2(h) U(\d \ti{R}^{-1}_2(h))}{h} + 1\right) \ltwo{f} \leq C M_2(h) h \ltwo{f},
\end{equation}
which is exactly the desired estimate.
\end{proof}
\section{Proof of reduction to averaged damping}
\label{widegensec}
In this section we prove Theorem \ref{averagedresolventthm}, which says roughly that, averaged damping resolvent estimates imply a resolvent estimate for the original operator. We also prove Theorem \ref{widegenresolvethm}, which gives energy decay rates for general damping satisfying non-polynomial derivative bound conditions, as a corollary of Theorem \ref{averagedresolventthm}.  As mentioned above Theorem \ref{averagedresolventthm} is a generalization of \cite{Sun23} to non-polynomial $\rho(h)$. We follow the same averaging and normal form approach, and so the argument is similar. 

To begin we prove that averaging maintains membership in $\Dc^{9,\frac{1}{4}}(\T^2)$ as well as the $q$ and $p$ from Assumptions \ref{noninvarassumption} and \ref{widegenbadassumption}.
\begin{lemma}\label{averagingdbclemma}
\begin{enumerate}
	\item Suppose $W \in \Dc^{9,\frac{1}{4}}(\T^2)$, then for all $v \in \Sb^1$ periodic, $A_v(W) \in \Dc^{9,\frac{1}{4}}(\T^2)$
	\item Suppose $W$ satisfies Assumption \ref{noninvarassumption}, then for all $v \in \Sb^1$ periodic, there exists $\e_2>0$ such that
	\begin{equation}
	|\nabla A_v(W)| \leq \frac{A_v(W)}{q(A_v(W))}, \quad \text{ when } A_v(W) \leq \e_2.
	\end{equation}
	\item Suppose $W$ satisfies Assumption \ref{noninvarassumption} and \ref{widegenbadassumption}, then there exists $\e_2>0$ such that 
	\begin{equation}
	|\nabla A_v(W)| \leq \frac{A_v(W)}{q(A_v(W))}, \quad |\nabla^2 A_v(W)| \leq \frac{A_v(W)}{p(A_v(W))}, \quad \text{ when } A_v(W) \leq \e_2.
	\end{equation}
\end{enumerate}
\end{lemma}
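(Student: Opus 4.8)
The plan is to use that averaging along a periodic direction commutes with differentiation, and then feed the resulting orbit averages into Jensen's inequality for the concave functions $r_1(z)=z/q(z)$ and $r_2(z)=z/p(z)$ (resp.\ $z\mapsto z^{1-\e}$ and $z\mapsto z^{1-2\e}$ for part (1)). Concretely, since $W\in C^9(\T^2)$ and $v\in\Qc$ has some finite period $T_v$, differentiating under the integral in $A_v(W)(z)=\frac1{T_v}\int_0^{T_v}W(z+tv)\,dt$ gives $\nabla^k A_v(W)=A_v(\nabla^k W)$ for $0\le k\le 9$; in particular $\|\nabla^k A_v(W)\|_{L^\infty}\le\|\nabla^k W\|_{L^\infty}$, so $A_v(W)\in W^{9,\infty}(\T^2)$, and $|\nabla^k A_v(W)|\le A_v(|\nabla^k W|)$ pointwise.

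For part (1), the bounds $|\nabla W|\le C W^{1-\e}$ and $|\nabla^2 W|\le C W^{1-2\e}$ hold on all of $\T^2$ (they are automatic where $W$ is bounded below, after enlarging $C$), and for $\e=\tfrac14$ the maps $z\mapsto z^{1-\e}$ and $z\mapsto z^{1-2\e}=z^{1/2}$ are concave on $[0,\infty)$. Hence Jensen gives $|\nabla A_v(W)|\le A_v(|\nabla W|)\le C\,A_v(W^{1-\e})\le C\,(A_v W)^{1-\e}$ and likewise $|\nabla^2 A_v(W)|\le C\,(A_v W)^{1-2\e}$, so $A_v(W)\in\Dc^{9,\frac14}(\T^2)$.

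For parts (2) and (3), $r_1$ and $r_2$ are only assumed concave on $[0,\e_1]$, so I would fix $z_0$ with $s:=A_v(W)(z_0)\le\e_2$ (with $\e_2\le\e_1/2$, to be shrunk) and split the orbit into $G=\{t:W(z_0+tv)\le\e_1\}$ and $B=\{t:W(z_0+tv)>\e_1\}$. By Markov's inequality $|B|\le T_v s/\e_1$, hence $|G|\ge T_v/2$. On $B$ use only $|\nabla W|\le C_0:=\|\nabla W\|_{L^\infty}$, contributing at most $C_0 s/\e_1$ to $|\nabla A_v(W)(z_0)|$. On $G$ apply Jensen to the probability measure $\mathbbm{1}_G\,dt/|G|$ and the concave nondecreasing function $r_1$ (replacing $r_1$ by its running maximum if necessary, which is harmless since $r_1$ is increasing near $0$ and $r_1(0)=0$): the average of $W$ over $G$ is at most $(T_v/|G|)s\le 2s\le\e_1$, and the homogeneity-type estimate $r_1(\lambda w)\le\lambda r_1(w)$ for $\lambda\ge 1$ (a consequence of concavity and $r_1(0)=0$) then bounds the $G$-contribution by $r_1(s)=s/q(s)$. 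Combining, $|\nabla A_v(W)(z_0)|\le \big(1+\tfrac{C_0}{\e_1}q(s)\big)\,s/q(s)$, so choosing $\e_2$ with $\tfrac{C_0}{\e_1}q(\e_2)\le 1$ gives $|\nabla A_v(W)(z_0)|\le 2\,A_v(W)(z_0)/q(A_v(W)(z_0))$; the factor $2$ is absorbed by replacing $q$ with $q/2$, which only changes $M_j$ up to the constants tracked by $\simeq$. Part (3) is the same argument applied to $\nabla^2$, with $C_0':=\|\nabla^2 W\|_{L^\infty}$ and $r_2=z/p(z)$ in place of $r_1$.

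The main obstacle is exactly this: $A_v(W)$ can be small at $z_0$ even when $W$ is large somewhere along the orbit through $z_0$, so a single global Jensen inequality is not available as it is in part (1); the point is that the Markov bound forces the ``bad'' part of the orbit to have measure $O(s)$, which makes its contribution absorbable into the main term once $\e_2$ is small. A minor auxiliary remark: letting $W\to0$ in the derivative bounds and using $q(0)=p(0)=0$ forces $\nabla W=\nabla^2 W=0$ on $\{W=0\}$, which handles the degenerate case $A_v(W)(z_0)=0$.
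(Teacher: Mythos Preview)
Your argument is correct, but for parts~(2) and~(3) the paper finds a cleaner route. You split the orbit into $G=\{t:W(z_0+tv)\le\e_1\}$ and $B=\{t:W(z_0+tv)>\e_1\}$, bound $|B|$ by Markov, and apply Jensen only on $G$; this works but leaves a constant factor in front of $A_v(W)/q(A_v(W))$ and forces the side argument that $r_1$ may be taken nondecreasing (your running-maximum patch is fine, since the running maximum of a concave function remains concave, and the homogeneity estimate needs only $r_1(0)\ge 0$, not $r_1(0)=0$). The paper instead observes that uniform continuity of $W$ on $\T^2$ makes $B$ \emph{empty} once $\e_2$ is small enough: if $W(z_0+t_0v)>\e_1$ for some $t_0$, then $W>\e_1/2$ on a $t$-interval of fixed length $\delta>0$ depending only on $W$, so $A_v(W)(z_0)\ge \delta\e_1/(2T_v)$; choosing $\e_2$ below this fixed threshold forces $W\le\e_1$ along the entire orbit. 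A single application of Jensen to the concave $r_1$ on $[0,\e_1]$ then gives the exact inequality $|\nabla A_v(W)|\le r_1(A_v(W))$ with no constant and no monotonicity issue. Your Markov bound $|B|\le T_v s/\e_1$ is the quantitative shadow of this, but the uniform-continuity argument buys $|B|=0$ outright. Part~(1) is handled the same way in both proofs.
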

\begin{proof}
Fix $v \in \Sb^1$ periodic, with period $T_v$, and $\e_1$ from Assumption \ref{noninvarassumption}. To begin, we claim there exists $\e_2 \leq \e_1$ such that for all $z$ with $A_v(W)(z) \leq \e_2$ then $W(z+tv) \leq \e_1$ for all $t \in [0,T_v]$. 

To see this claim, first by uniform continuity of $W$ on $\T^2$, there exists $\d>0$ such that $|x-y|<\d$ implies $|W(x)-W(y)| <\frac{\e_1}{2}$. Now assume the claim is not true and we will derive a contradiction. So there exists $z_j, t_j$ with $A_v(W)(z_j) \leq \frac{1}{j}$ and $W(z_j+t_j v) > \e_1$. Choose $J > \frac{T_v}{\d \e_1}$, then $W(z_J + tv) > \frac{\e_1}{2}$ for all $t$ with $|t_J-t| < \d$, by the uniform continuity and since $|v|=1$. Therefore
$$
A_v(W)(z_j) = \frac{1}{T_v} \int_0^{T_v} W(z_J +tv) dt \geq \frac{1}{T_v} \d \e_1 > \frac{1}{J},
$$
which is a contradiction, so the claim is true. 

Thus we can work with $A_v(W) \leq \e_2$, ensuring $W \leq \e_1$ and thus $W$ satisfies the derivative bound condition along all the trajectories we are working with.
Now \ed{recalling $r_1(z) = \frac{z}{q(z)}$ from Assumption \ref{noninvarassumption}}
\begin{align*}
|\nabla A_v(W)(z)| &\leq \frac{1}{T_v} \int_0^{T_v} |\nabla W(z+tv)| dt \leq \frac{1}{T_v} \int_0^{T_v} \frac{W(z+tv)}{q(W(z+tv))} dt \\
& = \frac{1}{T_v} \int_0^{T_v} \ed{r}_1(W(z+tv)) dt \leq \ed{r}_1 \left( \frac{1}{T_v} \int_0^{T_v} W(z+tv) dt \right)\\
&= \ed{r}_1 (A_v(W)(z)) = \frac{A_v(W)(z)}{q(A_v(W)(z))},
\end{align*}
where we were able to exchange the order of the integral and the application of $\ed{r}_1$ using Jensen's inequality, since $\ed{r}_1$ is concave. 

An analogous argument shows that $|\nabla^2 A_v(W)(z)| \leq \frac{A_v(W)(z)}{p(A_v(W)(z))}$ and that $W \in \Dc^{9,\frac{1}{4}}(\T^2)$ implies $A_v(W) \in \Dc^{9,\frac{1}{4}}$. 
\end{proof}

\subsection{Proof of Theorem \ref{widegenresolvethm}} \label{wideresolveproof}
Now we prove Theorem \ref{widegenresolvethm} using the 1 dimensional resolvent estimates of Theorem \ref{wideresolvethm} and assuming the relation between averaged resolvent estimates and resolvent estimates of Theorem \ref{averagedresolventthm}.
\begin{proof}[Proof of Theorem \ref{widegenresolvethm}]
Consider $v \in \Sb^1$ \ed{periodic}. In case 1), since $W$ satisfies Assumption \ref{noninvarassumption}, then Lemma \ref{averagingdbclemma} part 2 guarantees that $A_v(W)$ satisfies Assumption \ref{wideassumption} with the same $q$. Therefore by Theorem \ref{wideresolvethm} case 1) with $U(z)=C$ \ed{for each periodic $v \in \Sb^1$ there exists $h_v>0$ such that for all $h \in (0,h_v)$ we have}
\begin{equation}
\nm{\left(\frac{i}{h} - \Ac_v\right)^{-1}}_{\Lc(\Hc)} \leq C  M_1(h).
\end{equation}
Then by Theorem \ref{averagedresolventthm} \ed{there exists $h_0>0$ such that for all  $h \in (0,h_0)$ we have} 
\begin{equation}
\nm{\left(\frac{i}{h} - \Ac\right)^{-1}}_{\Lc(\Hc)} \leq C M_1(h),
\end{equation}
which is exactly case 1. 

In case 2), $W$ satisfies Assumption \ref{widegenbadassumption}, and Lemma \ref{averagingdbclemma} part 3 guarantees that $A_v(W)$ satisfies \ed{Assumptions \ref{wideassumption} and \ref{widebadassumption}} with the same $q$ and $p$. So by Theorem \ref{wideresolvethm} case 2) \ed{for each periodic $v \in \Sb^1$ there exists $h_v>0$ such that for all $h \in (0,h_v)$ we have}
\begin{equation}
\nm{\left(\frac{i}{h} - \Ac_v\right)^{-1}}_{\Lc(\Hc)} \leq C  M_2(h),
\end{equation}
Then by Theorem \ref{averagedresolventthm} \ed{there exists $h_0>0$ such that for all  $h \in (0,h_0)$ we have} 
\begin{equation}
\nm{\left(\frac{i}{h} - \Ac\right)^{-1}}_{\Lc(\Hc)} \leq C M_2(h),
\end{equation}
which is exactly case 2. 
%
%For Cases 1) %and 3),
% $W$ satisfying Assumption \ref{wideinversegassumption} %and \ref{inversenoninvarassumption} 
% and Lemma \ref{averagingdbclemma} exactly guarantee that $A_v(W)$ satisfies Assumption \ref{wideinversegassumption}. 
% %and $U^{-1}_{\d}(z)=|W_{\d}^{-1}(z)| \geq |A_v(W)^{-1}_{\d}(z)|$ for all periodic $v \in \Sb^1$. 
% Therefore Proposition \ref{wideresolveprop} Case 1 with $U^{-1}=C$ %or $|W^{-1}_{\d}(z)|$ 
% give \eqref{1davgresolveeq} with $j=1$ for $U(z)=C$.
% %%and $U(z)=|W^{-1}_{\d}(z)|$ respectively. 
% Finally Theorem \ref{averagedresolventthm} gives Case 1) %and 3). 
%
%Similarly for case 2), Assumption \ref{noninvarassumption}' and Lemma \ref{averagingdbclemma} show that for all $v \in \Sb^1$ periodic, $A_v(W)(z)$ satisfies Assumptions \ref{wideassumption} and \ref{widebadassumption}. Then \eqref{1davgresolveeq}  follows from Proposition \ref{wideresolveprop} case 2 for $j=2$, and Theorem \ref{averagedresolventthm} gives Case 1) of Theorem \ref{widegenresolvethm}.
%
%%%Finally for Case 4, Assumption \ref{inversebadnoninvarassumption} exactly guarantees Assumption \ref{wideinversebassumption} for all periodic $v \in \Sb^1$ and so Proposition \ref{wideresolveprop} Case 3) gives \eqref{1davgresolveeq} with $j=3$ and Theorem \ref{averagedresolventthm} gives Case 4).

\end{proof}

\subsection{Proof of Theorem \ref{averagedresolventthm}: Start of contradiction argument}

Assume Theorem \ref{averagedresolventthm} does not hold and we will obtain a contradiction. By the semigroup equivalence of Lemma \ref{resolvequivlem} we \ed{can} assume the averaged resolvent estimates \eqref{assumed1dresolventeq} hold for all $v \in \Sb^1$ periodic, and that there exist sequences $\{h_n\} \in \Rb_{>0}$, $\{u_{h_n}\} \in \ed{H}^2(\T^2)$ with $h_n \ra 0$ and  
\begin{equation}\label{eqdwecontra}
(-h_n^2 \Delta + i h_n W - 1) u_{h_n} = f_{h_n}= o_{L^2}(h_{n}^2 \rho(\ed{h_n})),
\end{equation}
with 
\begin{equation}\label{quasimodel2eq}
\ltwo{u_{h_n}}=1.
\end{equation}
\ed{Recall that throughout this proof $\rho(h_n)^{-1}=o(h_n^{-\frac{1}{3}})$.}
For ease of notation we will drop the subscript $n$ and write $h$ and $u_h$, we also define an additional semiclassical scale in terms of $\dh$. Specifically, define $\hbar = \dh^{\frac{1}{2}} h^{\frac{1}{2}}$.

We begin with a version of the standard a priori estimates in this contradiction setting. 
\begin{lemma}\label{aprioricontralemma}
\begin{enumerate}
	\item 
	$$
	\ltwo{W^{\frac{1}{2}} u_h} = o(h^{\frac{1}{2}} \dh^{\frac{1}{2}}) = o(\hbar)
	$$
	\item 
	$$
	\ltwo{h\nabla u_h}^2 - \ltwo{u_h}^2 =o(h^2\dh).
	$$
\end{enumerate}
\end{lemma}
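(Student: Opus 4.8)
The plan is to derive both estimates from the quasimode equation \eqref{eqdwecontra} by pairing against $u_h$ and taking real and imaginary parts, exactly as in the standard a priori estimate (cf.\ \eqref{wuestimate} and \eqref{umusmall}), but keeping careful track of the error size $o_{L^2}(h^2\dh)$ rather than a fixed power of $h$. First I would pair \eqref{eqdwecontra} with $u_h$ in $L^2(\T^2)$ to obtain
\begin{equation}
\ltwo{h\nabla u_h}^2 + i h \int W |u_h|^2\, dz - \ltwo{u_h}^2 = \langle f_h, u_h\rangle.
\end{equation}
By Cauchy--Schwarz and \eqref{quasimodel2eq}, $|\langle f_h, u_h\rangle| \le \ltwo{f_h}\ltwo{u_h} = \ltwo{f_h} = o(h^2\dh)$.

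Taking the imaginary part of the identity gives $h\int W|u_h|^2\,dz = \Im\langle f_h,u_h\rangle = o(h^2\dh)$, hence $\int W|u_h|^2\,dz = o(h\dh)$, which is part (1) after taking square roots: $\ltwo{W^{1/2}u_h} = o(h^{1/2}\dh^{1/2}) = o(\hbar)$, using the definition $\hbar = \dh^{1/2}h^{1/2}$. Taking the real part gives $\ltwo{h\nabla u_h}^2 - \ltwo{u_h}^2 = \Re\langle f_h, u_h\rangle = o(h^2\dh)$, which is part (2). (Strictly one also needs $\dh \le 1$, which holds by hypothesis, so $o(h^2\dh) \le o(h^2)$ and the error terms are indeed small; and one may note that part (2) combined with \eqref{quasimodel2eq} shows $\ltwo{h\nabla u_h} = 1 + o(1)$, so $u_h$ is semiclassically localized at frequency $\sim h^{-1}$.)

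There is essentially no obstacle here: the only thing to be slightly careful about is that the error in \eqref{eqdwecontra} is measured against the scale $h^2\dh$, not $h^2$, so one must not prematurely absorb $\dh$ into a generic $o(1)$ — it must be carried through so that part (1) yields the sharp $o(\hbar)$ needed later (in the normal form argument this refined bound, rather than the cruder $o(h^{1/2})$, is what makes the contradiction work). So the proof is just the three lines above: pair with $u_h$, read off the imaginary part for (1), read off the real part for (2).
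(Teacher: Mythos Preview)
Your proof is correct and follows exactly the same approach as the paper: pair \eqref{eqdwecontra} with $u_h$, integrate by parts, and take imaginary and real parts to obtain (1) and (2) respectively. Your additional remarks about carrying the $\dh$ scale through and the consequence $\ltwo{h\nabla u_h}=1+o(1)$ are correct and helpful, but not needed for the lemma itself.
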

\begin{proof}
Multiply \eqref{eqdwecontra} by $\bar{u}_h$ and integrate by parts to obtain
\begin{equation}
\ltwo{h \nabla u_h}^2 - \ltwo{u_h}^2 + ih\ltwo{\sqrt{W} u_h}^2 = \<f_h, u_h\>_{L^2}.
\end{equation}
Take the imaginary part and the real part to obtain 1) and 2) respectively. 
\end{proof}
Since $\{u_h\}$ is bounded in $L^2(\T^2)$, up to replacement by a subsequence, there exists a semiclassical defect measure $\mu$ on $T^* \T^2$, such that for any symbol $a \in \Cs(T^*\T^2)$ 
\begin{equation}
\limh \<\Op_h(a) u_h, u_h\>_{L^2} = \int_{T^* \T^2} a(z,\zeta) d\mu.
\end{equation}
For details on the existence of this measure see \cite[Chapter 5]{Zworski2012}. 

We recall some standard facts about defect measures. Let $\phi_t$ be the geodesic flow on $T^* \T^2$.
\begin{lemma}\label{defectmeasurelem} 
The defect measure \ed{has total mass 1. Furthermore}, the defect measure is supported on the characteristic set of $\ed{-h^2\Delta-1}$, that is 
\begin{equation}
\supp(\mu) \subseteq \{(z,\zeta) \in T^* \T^2; |\zeta|=1\} := S^* \Tb^2.
\end{equation}
Additionally, letting $\omega=\{z \in \Tb^2; W(z)>0\}$ be the set where the damping is positive and $\mathcal{G}=\{(x,\xi) \in S^* \Tb^2: \phi_t(x,\xi) \in \omega \text{ for some } t \in \Rb\}$ be the set of points which are geometrically controlled by the damped set, then 
\begin{equation}
	\mu|_{\mathcal{G}} = 0. 
\end{equation}
\end{lemma}
\begin{proof}
The first statement follows from \ed{the definition of the defect measure, and the $L^2$ normalization of the $u_h$. The second statement is exactly Theorem 5.3 of \cite{Zworski2012} since $u_h$ is an $o_{L^2}(h)=o_{L^2}(1)$ quasimode.  Note that $ih Wu_h$ is treated as a perturbation of size $o(h \hbar)$ using Lemma \ref{aprioricontralemma}. }

\ed{To see the third statement, first note by Lemma \ref{aprioricontralemma} that the defect measure is $0$ where the damping is positive, that is 
\begin{equation}
	\mu|_{\omega \times \Rb^2} = 0.
\end{equation}
Then by \cite[Theorem 5.4]{Zworski2012} the defect measure is invariant under the geodesic flow
\begin{equation}
	\phi_t^* \mu = \mu. 
\end{equation}
The desired conclusion follows immediately from these two facts.}

\end{proof}

\subsection{Reduce to periodic directions}\label{toruscoordinatessubsection}
In this subsection we follow the approach of \cite[Section 6]{AL14} and \cite[Section 2.2]{Sun23}. See also \cite{BurqZworski2012, AM14}.

We will obtain our contradiction by proving that $\mu \equiv 0$, which contradicts
$$
1= \ed{\limh} \ltwo{u_h}^{\ed{2}} = \ed{\limh} \<u_h, u_h\>_{L^2} = \int_{T^* \T^2} 1 d\mu. 
$$
To that end, we consider $\mu$ along each direction $\zeta \in \Sb^1$, interpreted as a frequency variable in $S^* \T^2$. 

Recall our decomposition of $\Sb^1$ into rational directions $\Qc$ and irrational directions $\Rc$, from the statement of Theorem \ref{averagedresolventthm}. Since the geodesic flow starting at $x \in \T^2$ with irrational direction $v_0 \in \Rc$ is dense in $\T^2$, it necessarily passes \ed{through $\omega$, so by Lemma \ref{defectmeasurelem},  $\mu|_{\T^2 \times \Rc}=0$. }
%$\mu=0$ on $\omega$ and $\mu$ is invariant under the geodesic flow $\mu|_{\T^2 \times v_0}=0$, therefore

Therefore
$$
\mu =\mu|_{\T^2 \times \Qc} = \sum_{\ed{\zeta}_0 \in \Qc} \mu_{\zeta_0}, \quad \mu_{\zeta_0} = \mu|_{\T^2 \times \zeta_0}.
$$
So to see $\mu=0$, and obtain the desired contradiction, it is enough to show $\mu_{\zeta_0}=0$, for each $\ed{\zeta_0} \in \Qc$. 

Now focusing on a single $\zeta_0 = \frac{(p_0, q_0)}{\sqrt{p_0^2 + q_0^2}} \in \Qc$ at a time, we can perform a change of coordinates to make $\zeta_0 = (0,1)$, which will simplify the later analysis and notation. This does not interfere with us taking an arbitrary $\zeta_0 \in \Qc$. 

Let $\ed{\Lambda}$ be the rank 1 submodule of $\Zb^2$ generated by $k_0 = (p_0, q_0)$, and let $\ed{\Lambda}^{\bot}$ be vectors in $\Rb^2$ \ed{perpendicular} to $\ed{\Lambda}$. That is 
$$
\ed{\Lambda}^{\bot} := \{\zeta \in \Rb^2; \zeta \cdot \ed{k_0} =0\}.
$$
Denote by 
$$
\T^2_{\Lambda} := (\Rb \ed{\Lambda} / (2\pi \ed{\Lambda}) ) \times (\ed{\Lambda}^{\bot} / ( (2\pi \Zb)^2 \cap \ed{\Lambda}^{\bot}) ).
$$

Then there is a natural smooth covering map $\pi_{\Lambda}: \T^2_{\Lambda} \ra \T^2$ of degree $p_0^2 + q_0^2$, which we also extend to the cotangent bundles $\pi_{\Lambda}: T^* \T^2_{\Lambda} \ra T^* \T^2$. 
%\begin{figure}[h]
%\includegraphics[width=.6\textwidth]{toruslambda1}
%\caption{The original torus $\T^2$ on the left, with $k=(1,1)$ and $\ed{\Lambda}^{\bot}=(-1,1)$.  The blue and red line each indicate a single periodic orbit on $\T^2$, which become a single periodic orbit with the same length $\T^2_{\Lambda}$. The torus $\T^2_{\Lambda}$ on the right. The green numbers and circle indicate specific locations on the $\T^2$, and the preimage of those locations end on $\T^2_{\Lambda}$ under $\pi_{\Lambda}$.}
%\end{figure}

The pullback of a $2\pi \times 2 \pi$ periodic function $f: \Rb^2 \ra \Rb$ is $\tau \times \tau$ periodic, for $\tau=2\pi\sqrt{p_0^2+q_0^2}$. That is  
\begin{equation}
(\pi^*_{\Lambda} f)( x + k \tau, y + l \tau ) = (\pi^*_{\ed{\Lambda}} f)(x,y) \quad k,l \in \Zb, (x,y) \in \Rb^2.
\end{equation}

\begin{figure}[h]
\begin{minipage}{.4 \textwidth}
\begin{tikzpicture}
[
declare function={
  a=2;
}]
\draw (-a,-a) rectangle (a,a);
\draw [imayou, -, very thick] (-a,-a) --(a,0);
\draw[imayou, -, very thick] (-a,0)--(a,a);
\draw[persred, -, very thick] (a,-a)--(0,a);
\draw[persred, -, very thick] (0,-a)--(-a,a);
\draw[sand, pattern=north east lines, pattern color=sand] (.3*a,-.1*a) ellipse (a/6 and a/6);
%\draw[fill=persred] (-.5,0) rectangle (.5,2.5);
%\draw[pattern=north east lines, pattern color=imayou] (-1.25,0) rectangle (-.5,2.5);
%\draw[pattern=north east lines, pattern color=imayou] (.5,0) rectangle (1.25,2.5);
%\node[text=persred] at (0,-.2) {$W=0$};
\node[text=imayou] at (0,-a*1.2) {$k_0=(2,1)$};
\node[text=persred] at (0,a*1.2) {$k_0^{\bot}=(-1,2)$};
\end{tikzpicture}
\end{minipage}
\begin{minipage}{.4 \textwidth}
\begin{tikzpicture}[rotate=26.57,
transform shape,
declare function={
  x=2;
  y=1;
  a=o*sqrt(5);
  o=1;
}]
\draw (-a,-a) rectangle (a,a);
%draw \Lambda
\draw[imayou, -, very thick] (-a,-a)--(a,-a);
\draw[imayou,-,very thick] (-a,-3/5*a)--(a,-3/5*a);
\draw[imayou,-,very thick] (-a,-1/5*a)--(a,-1/5*a);
\draw[imayou,-,very thick] (-a,1/5*a)--(a,1/5*a);
\draw[imayou,-,very thick] (-a,3/5*a)--(a,3/5*a);
\draw[imayou, -, very thick] (-a,a)--(a,a);
%draw \Lambda^{\bot}
\draw[persred,-, very thick] (-a,-a)--(-a,a);
\draw[persred,-, very thick] (-3/5*a,-a)--(-3/5*a,a);
\draw[persred,-,very thick](-1/5*a,-a)--(-1/5*a,a);
\draw[persred, -, very thick](1/5*a,-a)--(1/5*a,a);
\draw[persred, -, very thick](3/5*a,-a)--(3/5*a,a);
\draw[persred, -, very thick](a,-a)--(a,a);
%draw the support location
\draw[sand, pattern=north east lines, pattern color=sand] (-2/5*a+.2*o,-4/5*a-.2*o) ellipse (o/6 and o/6);
\draw[sand, pattern=north east lines, pattern color=sand] (4/5*a+.2*o,-2/5*a-.2*o) ellipse (o/6 and o/6);
\draw[sand, pattern=north east lines, pattern color=sand] (.2*o,-.2*o) ellipse (o/6 and o/6);
\draw[sand, pattern=north east lines, pattern color=sand] (-4/5*a+.2*o,2/5*a-.2*o) ellipse (o/6 and o/6);
\draw[sand, pattern=north east lines, pattern color=sand] (2/5*a+.2*o,4/5*a-.2*o) ellipse (o/6 and o/6);
\node[text=imayou] at (0,-a*1.1) {$\ed{\Lambda}$};
\node[text=persred] at (a*1.2,0) {$\ed{\Lambda^{\bot}}$};
\end{tikzpicture}
\end{minipage}
\caption{The original torus $\T^2$ on the left with $\ed{k_0=(2,1)}$.  The torus $\T^2_{\Lambda}$ (scaled down by $\sqrt{5}$ to fit) on the right. The purple (red) line indicates a single periodic orbit on $\T^2$ parallel to \ed{$k_0$} (resp. \ed{$k_0^{\bot}$}) which has as pre-images from $\pi_{\Lambda}$, multiple periodic orbits with the same period on $\T^2_{\Lambda}$. The yellow region indicates a specific location on $\T^2$, and the pre-image of those locations end on $\T^2_{\Lambda}$.}
\end{figure}

We can identify the sequence $\{u_h\} \subset L^2(\T^2)$ with $\{\pi^*_{\Lambda} u_h\} \subset L^2(\T_{\Lambda}^2)$ by pulling back via $\pi_{\Lambda}$ to $\T^2_{\Lambda}$. Then \ed{on $\ed{\T}^2_{\Lambda}$, $\zeta_0 = \frac{k_0}{|k_0|} $ becomes $ (0,1)$}. Furthermore, the semiclassical defect measure $\mu$ on $T^* \T^2$ associated to $(u_h)$, is exactly the pushforward of the semiclassical defect measure associated to $(\pi_{\Lambda}^* u_h)$. 

Because we fix a periodic direction $\zeta_0$ and consider the semiclassical limit as $h \ra 0$, the period of the torus $\T^2_{\Lambda}$ does not \ed{affect} the following analysis. Therefore, we will perform these described pull back to work on $\T^2_{\Lambda}$, but write $\T^2$ instead of $\T^2_{\Lambda}$. Similarly we will write $z=(x,y), \zeta=(\xi, \eta)$ as the position and frequency variables, respectively, on $T^* \T^2_{\Lambda}$, and assume the period is $2\pi$. 

One thing which does change is that, because the covering map has degree $p_0^2+q_0^2$, the pre-image of the damped set $\pi^{-1}_{\Lambda}(\{W>0\})$, i.e. what we will call our damped set from now on, is a union of $p_0^2 + q_0^2$ copies of $\{W>0\}$ on $\T^2_{\Lambda}$. However, this does not pose any major issues. For instance if the set $\{W>0\}$ was a rectangle (resp. locally strictly convex with positive curvature or a superellipse), then the copies of $\{W>0\}$ on $\T^2_{\Lambda}$ are disjoint and still rectangles (resp. locally strictly convex with positive curvature or superellipses). 

\subsection{Conclusion of proof of Theorem \ref{averagedresolventthm}}
By the previous subsection, we may assume $\zeta_0=(0,1)$ and it is enough to show $\mu_{\zeta_0}=0$.

For the null bicharacteristic flow $\phi_t$ on $T^* \T^2$ and $\zeta \in \Sb^1$, we will also write $\phi_t(\zeta): \T^2 \ra \T^2$ for the projection of $\phi_t$ onto the position variables.

Fix some $c_1>0$. We may assume the $x$ projection of the set where $W$ is greater than or equal to $c_1$ contains an interval. That is, after shifting coordinates, there exists $\sigma_0$ small, such that 
$$
I_0 = (-\sigma_0, \sigma_0) \subset \pi_x (\{z; W(z) \geq c_1 >0\}),
$$
and so the cylinder over $I_0$ is contained in the vertical flow-out of the positive set of the damping.
\begin{equation}
\omega_0 := I_0 \times \T_y \subset \bigcup_{t \in [0,2\pi]} \phi_t (\zeta_0) \{W>0\}.
\end{equation}

\begin{figure}[h]
\centering
\begin{tikzpicture}
[
declare function={
  a=2;
}]
\draw (-a,-a) rectangle (a,a);
\draw[persred, ->, very thick] (-3/4*a,-a)--(-3/4*a,a);
\node[text=persred] at (-3/4*a,11/10*a) {$\zeta_0=(0,1)$};
\draw[persred, ->, very thick] (-3/4*a,-a)--(-3/4*a,9/10*a);

\draw[imayou, pattern=north east lines, pattern color=imayou] (0,-a) rectangle (a/2,a);
\node[text=imayou] at (a/4,-a*1.2) {$\omega_0$};
\draw[sand, pattern=north west lines, pattern color=sand] (.3*a,-.1*a) ellipse (2/3*a and a/2);
\node[text=sand]  at (-7/20*a,9/20*a) {$\{W>0\}$};

\end{tikzpicture}
\caption{Note that for nearly vertical trajectories, $\{W>0\}$ geometrically controls $\omega_0$.}
\end{figure}

Because of the continuity of the flow, $\{W>0\}$  also geometrically controls $\omega_0$ along nearly vertical trajectories. That is,
 there exist \ed{$T_0>0,$ and sufficiently small $ \e_0, c_0>0$}, so that for any $(z_0, \zeta) \in S^*\omega_0$ with $|\zeta-\zeta_0| \leq \e_0$, then
\begin{equation}
\int_0^{T_0} W \circ \phi_t(z_0, \zeta) dt \geq c_0 >0.
\end{equation}

Now, because we are interested in studying $\mu_{\zeta_0}$, we will frequency localize within this $\e_0$ of  $\zeta_0$. This will ensure that in the Proof of Proposition \ref{normalformquasiprop} the flowout of our quasimodes are still geometrically controlled by $\{W>0\}$ as dictated by the above inequality. %The size of this $\e_0$ is used in the proof of Proposition \ref{normalformquasiprop} to ensure that $\{W>0\}$ geometrically controls $\omega_0$ for nearly vertical trajectories. 

Now, pick $\psi_0 \in \Cs([-2,2], [0,1])$ with $\psi_0(\xi)=1$ on $|\xi| <1$, and let $u_h^1 = \psi_0 \left( \frac{hD_x}{\e_0}\right) u_h$. Then 
\begin{equation}\label{eq:uh1def}
(-h^2 \Delta+ihW-1)u_h^{1} = f^1_h := \psi_0\left(\frac{hD_x}{\e_0}\right) \ed{f_h} - i h\left [\psi_0\left(\frac{hD_x}{\e_0}\right), W\right] u_h.
\end{equation}
It can be seen that this $u_h^1$ is still a quasimode of the same order. By \cite[Lemma 3.1]{Sun23}
\begin{lemma}
Since $W \in C^2(\T^2)$ and \ed{$h^{\frac{1}{3}} \dh^{-1}=o(1)$, so in particular $h^{\frac{1}{2}} \dh^{-1} =o(1)$, we have}
\begin{equation}
\ltwo{f_h^1} = o(h^2 \dh).
\end{equation}
Therefore from the proof of Lemma \ref{aprioricontralemma} 
\begin{equation}
\ltwo{W^{\frac{1}{2}} u^1_h} = o(h^{\frac{1}{2}} \dh^{\frac{1}{2}}).
\end{equation}
Finally, there exists a semiclassical defect measure $\mu_1$ of $u^1_h$, and 
\begin{equation}
\mu_1 = \left|\psi\left(\frac{\xi}{\e_0}\right)\right|^2 \mu.
\end{equation}
\end{lemma}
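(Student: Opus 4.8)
The plan is to trace the claim back to the a priori damping bound of Lemma \ref{aprioricontralemma}. Write $\psi_0^h := \psi_0(hD_x/\e_0)$; this is a Fourier multiplier, so $\nm{\psi_0^h}_{\Lc(L^2)} \leq 1$. From \eqref{eqdwecontra}, applying $\psi_0^h$ and commuting it through $ihW$ gives exactly the stated formula $f_h^1 = \psi_0^h f_h - ih[\psi_0^h, W]u_h$. Since $\nm{\psi_0^h}_{\Lc(L^2)}\leq 1$ and $f_h = o_{L^2}(h^2\dh)$ by \eqref{eqdwecontra}, the first term is $o_{L^2}(h^2\dh)$ with no work, so the whole problem reduces to showing $\ltwo{h[\psi_0^h,W]u_h} = o(h^2\dh)$.

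For the commutator term I would use two ingredients: the bound $\ltwo{W^{\frac12}u_h} = o(\hbar) = o(h^{\frac12}\dh^{\frac12})$ from Lemma \ref{aprioricontralemma}, and Glaeser's inequality — since $W\in C^2(\T^2)$ and $W\geq 0$, $|\nabla W| \leq (2\nm{\nabla^2 W}_{L^{\infty}})^{\frac12}\,W^{\frac12}$, which in particular makes $W^{\frac12}$ Lipschitz. By the semiclassical calculus, expand $[\psi_0^h,W] = \frac{h}{i\e_0}(\partial_x W)\psi_0'(hD_x/\e_0) + h^2 R_h$ with $\nm{R_h}_{\Lc(L^2)} \leq C$, the two-derivative remainder being controlled because $W\in C^2$. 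Multiplying by $ih$: the remainder contributes $O(h^3)$, and the main term is $\frac{h^2}{\e_0}(\partial_x W)\psi_0'(hD_x/\e_0)u_h$, which I would bound by $Ch^2\ltwo{W^{\frac12}\psi_0'(hD_x/\e_0)u_h}$ using $|\partial_x W|\leq CW^{\frac12}$, then commute $W^{\frac12}$ past $\psi_0'(hD_x/\e_0)$ at cost $O_{\Lc(L^2)}(h)$ (Lipschitz symbol) to reach $Ch^2(\ltwo{W^{\frac12}u_h}+O(h)) = o(h^2\hbar)+O(h^3)$. Both $O(h^3)$ and $o(h^2\hbar)=o(h^{5/2}\dh^{\frac12})$ are $o(h^2\dh)$, since $h^{\frac12}\dh^{-1}=o(1)$ forces $\dh \gg h^{\frac12}\gg h$; this yields $\ltwo{f_h^1}=o(h^2\dh)$.

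The bound $\ltwo{W^{\frac12}u_h^1}=o(h^{\frac12}\dh^{\frac12})$ then follows by running the proof of Lemma \ref{aprioricontralemma} on $(-h^2\Delta + ihW-1)u_h^1 = f_h^1$: pair with $u_h^1$, integrate by parts and take imaginary parts to get $h\ltwo{W^{\frac12}u_h^1}^2 = \Im\<f_h^1,u_h^1\>_{L^2}\leq\ltwo{f_h^1}\ltwo{u_h^1}\leq\ltwo{f_h^1}$, using $\ltwo{u_h^1}\leq\ltwo{u_h}=1$. Finally, $u_h^1$ is bounded in $L^2$, hence has a semiclassical defect measure along a subsequence, and for every $b\in\Cs(T^*\T^2)$ one has $\<\Op_h(b)u_h^1,u_h^1\> = \<\psi_0^h\,\Op_h(b)\,\psi_0^h u_h,u_h\>$, which — since $\psi_0$ is real, so $(\psi_0^h)^*=\psi_0^h$, and by the symbol calculus — converges to $\int b\,|\psi_0(\xi/\e_0)|^2\,d\mu$; as this limit is independent of the subsequence, the full sequence $u_h^1$ has defect measure $\mu_1 = |\psi_0(\xi/\e_0)|^2\mu$.

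The main obstacle is the quantitative commutator estimate: the naive bound $\ltwo{h[\psi_0^h,W]u_h}=O(h^2)$ is insufficient here because $\dh$ may tend to $0$, so one genuinely has to extract extra decay by combining Glaeser's inequality with the a priori damping bound $\ltwo{W^{\frac12}u_h}=o(\hbar)$ and tracking operator orderings in the calculus carefully. Everything here is, in effect, \cite[Lemma 3.1]{Sun23}.
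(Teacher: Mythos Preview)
Your proof is correct and matches the paper's approach: the paper simply cites \cite[Lemma 3.1]{Sun23}, and what you have written is precisely a careful unwinding of that lemma, using Glaeser's inequality and the a priori bound $\ltwo{W^{1/2}u_h}=o(\hbar)$ to extract the extra decay from the commutator term. Your tracking of the small parameters (checking $O(h^3)$ and $o(h^2\hbar)$ are both $o(h^2\dh)$ under $h^{1/2}\dh^{-1}=o(1)$) is exactly the point.
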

Because $\mu_1 =  |\psi(\frac{\xi}{\e_0})|^2 \mu$, then $\mu_{\zeta_0} = \mu_1|_{\zeta=\zeta_0}$. Therefore, in order to establish that $\mu_{\zeta_0}=0$, and thus that $\mu=0$, it is enough to show $\ltwo{u_h^1}=o(1)$, \ed{so that $\mu_1=0$}.

To do so, we invoke the normal form construction of \cite{Sun23}, see also \cite{BurqZworski2012}, \cite{Sjostrand2000, Hitrik2002}, to convert the quasimodes $u^1_h$ into quasimodes $v_h$ of a $y$-invariant equation. We then control these quasimodes $v_h$ using the assumed  1-d resolvent estimates \eqref{assumed1dresolventeq}.

\begin{proposition}\label{normalformquasiprop}
If $W \in \Dc^{9,\frac{1}{4}}(\T^2)$ and $h^{\frac{1}{3}} \dh^{-1} =o(1)$, then there exists $\kappa(x) \in W^{1, \infty}(\T_x), b(\eta) = -\frac{\psi_1(\eta)}{2 \eta}$, with $\psi_1(\eta) \in \Cs(\Rb, [0,1])$ supported on $|\eta\pm 1| \leq \frac{1}{2}$, and identically $1$ on $|\eta \pm 1|\leq \frac{1}{4}$, such that there exists a sequence of quasimodes $\vht$ of 
\begin{equation}\label{vhtdef}
(-h^2\Delta + i h A_{\zeta}(W)-1) \vht + i h \kappa(x) A_{\zeta}(W)^{\frac{1}{2}} b(hD_y) h D_x \vht =: r_h = o_{L^2}(h^2 \dh),
\end{equation}
with 
\begin{equation}
\ltwo{u_h^1} \leq C \ltwo{\vht} + O(\dh),
\end{equation}
and 
\begin{equation}
\ltwo{A_{\zeta}(W)^{\frac{1}{2}} \vht} = o(\hbar).
\end{equation}
\end{proposition}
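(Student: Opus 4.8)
The plan is to follow the averaging and semiclassical normal form construction of \cite[Section 2.2]{Sun23}, adapting the bookkeeping to the non-polynomial scale $\dh$ (recall $\hbar = \dh^{\frac12}h^{\frac12}$, and throughout $\zeta_0=(0,1)$). First I would carry out a further frequency localization in $\eta$: since $u_h^1 = \psi_0(hD_x/\e_0)u_h$ is supported in $\{|\xi|\le 2\e_0\}$ and the symbol $\xi^2+\eta^2-1$ is elliptic there whenever $|\eta\mp 1|\ge \frac14$ (for $\e_0$ small), one applies the semiclassical elliptic parametrix to $(-h^2\Delta+ihW-1)u_h^1 = f_h^1 = o_{L^2}(h^2\dh)$ — treating $ihW$ as a perturbation via $\ltwo{W^{\frac12}u_h^1}=o(\hbar)$ — to get $\ltwo{(1-\psi_1(hD_y))u_h^1} = O(\dh)$. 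Setting $v_h^{(1)} := \psi_1(hD_y)u_h^1$ one then has $\ltwo{u_h^1}\le\ltwo{v_h^{(1)}}+O(\dh)$, with $v_h^{(1)}$ localized near $\eta=\pm1$ and $\xi=0$, solving the same equation up to $o_{L^2}(h^2\dh)$, and still satisfying $\ltwo{W^{\frac12}v_h^{(1)}}=o(\hbar)$.

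Next I would split the damping. Write $W = A_{\zeta}(W)+R$, $R := W - A_{\zeta}(W)$; since the $\zeta_0$-orbits are exactly the $y$-circles, $R$ has zero $y$-mean, hence $R=\p_y V_{\zeta}$ with $V_{\zeta}(x,y):=\int_0^y R(x,s)\,ds$ well-defined, $2\pi$-periodic in $y$, and in $W^{9,\infty}(\T^2)$. The structural point producing the factor $A_{\zeta}(W)^{\frac12}$ is that $W\in\Dc^{9,\frac14}(\T^2)$ gives $|\nabla W|\le CW^{\frac34}\le CW^{\frac12}$ near $W=0$, whence Cauchy--Schwarz in $y$ yields $|\p_x V_{\zeta}(x,y)|\le C\,A_{\zeta}(W)(x)^{\frac12}$ and $|\p_x A_{\zeta}(W)(x)|\le C\,A_{\zeta}(W)(x)^{\frac12}$. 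Then I would define the generator $\Gamma=\Ophw(\gamma_0)$ with $\gamma_0(x,y,\eta)=b(\eta)V_{\zeta}(x,y)$, $b(\eta)=-\psi_1(\eta)/(2\eta)$, chosen so that the leading term $\tfrac{h}{i}\{\gamma_0,\xi^2+\eta^2\}$ of $[\Gamma,-h^2\Delta]$ cancels $ihR$ on the frequency support of $v_h^{(1)}$ (where $\psi_1=1$ and the $(x,y)$-projection of the Hamilton flow of $\xi^2+\eta^2$ is $\approx\pm 2\p_y$).

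Then I would conjugate: set $\vht := e^{-\Gamma}v_h^{(1)}$ and expand $e^{\Gamma}(-h^2\Delta+ihA_{\zeta}(W)+ihR-1)e^{-\Gamma}$ through the commutator series. By construction the $ihR$ term is removed; the leftover $-2\xi\p_x\gamma_0$ piece of $\{\gamma_0,\xi^2+\eta^2\}$, using $\p_x V_{\zeta}=\kappa(x,y)A_{\zeta}(W)^{\frac12}$ with $\kappa$ bounded plus a final averaging normalization to reduce $\kappa$ to a function of $x$ alone, becomes precisely $ih\kappa(x)A_{\zeta}(W)^{\frac12}b(hD_y)hD_x\vht$; and all the rest — higher commutators, the $O(h^2)$ Weyl-calculus remainders (which cost up to $9$ derivatives of $V_{\zeta}$, all available), the commutator $[\Gamma,ihW]$, and $e^{\Gamma}$ applied to the $o(h^2\dh)$ right-hand side — is collected into $r_h$. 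The crux is $r_h=o_{L^2}(h^2\dh)$: the dangerous remainders all carry a factor $\p_y W$, $\p_y^2 W$, or $\p_x V_{\zeta}$ bounded by $W^{\frac12}$ or $A_{\zeta}(W)^{\frac12}$, so pairing with $\ltwo{W^{\frac12}v_h^{(1)}}=o(\hbar)$ turns an a priori $O(h^2)$ bound into something of order $o(h^2\hbar)=o(h^{5/2}\dh^{1/2})$, and the hypothesis $h^{\frac13}\dh^{-1}=o(1)$ (i.e.\ $\hbar\gg h^{2/3}$) is exactly the margin that forces this, and the higher-order contributions, into $o(h^2\dh)$. Finally, $e^{-\Gamma}$ is $L^2$-bounded with bounded inverse, so $\ltwo{u_h^1}\le C\ltwo{\vht}+O(\dh)$; and pairing \eqref{vhtdef} with $\bar{\vht}$ and taking imaginary parts — using $\ltwo{r_h}=o(h^2\dh)$, $\ltwo{\vht}=O(1)$, and that the skew residual term contributes only $o(\hbar^2)$ to the imaginary part (its coefficient is $O(A_{\zeta}(W)^{\frac12})$ and $hD_x$ is controlled by Step 1) — gives $\ltwo{A_{\zeta}(W)^{\frac12}\vht}=o(\hbar)$.

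I expect the main obstacle to be the error control in the conjugation step: systematically identifying which remainder terms carry a $W^{\frac12}$- or $A_{\zeta}(W)^{\frac12}$-type factor, estimating the commutator cost of moving those factors past the pseudodifferential operators with only $W\in W^{9,\infty}(\T^2)$ available, and verifying that every term is genuinely $o(h^2\dh)$ at the non-polynomial scale — it is here that the hypothesis $h^{\frac13}\dh^{-1}=o(1)$ is forced. The remainder is a lengthy but routine adaptation of the polynomial-scale argument of \cite{Sun23}.
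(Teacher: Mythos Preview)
Your overall plan --- the averaging/normal-form construction of \cite{Sun23} --- matches the paper, which essentially just cites \cite[Propositions 3.5, 5.2, 5.6]{Sun23}. But your first localization step diverges from the paper in a way that matters for the error analysis.

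You propose to localize in $\eta$ via the elliptic parametrix to peel off the $O(\dh)$ piece. The paper instead localizes in $\xi$ at the second semiclassical scale $\hbar=(h\dh)^{1/2}$: one sets $v_h=\psi(\hbar D_x)u_h^1$ and $w_h=(1-\psi(\hbar D_x))u_h^1$, and the bound $\ltwo{w_h}=O(\dh)$ comes not from ellipticity but from the geometric control of $\omega_0$ by $\{W>0\}$ along near-vertical trajectories (this is \cite[Proposition 3.5]{Sun23}, and is precisely where the choice of $\e_0$ enters). The normal form $G$ is then applied to $v_h$, giving $\vht=Gv_h$.

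This $\hbar$-scale $\xi$-localization is not cosmetic: on the frequency support of $v_h$ one has $|\xi|\lesssim\hbar^{-1}$, so $hD_x$ acts with norm $O(h\hbar^{-1})$ rather than $O(1)$. That gain is exactly what closes the remainder bookkeeping in \cite[Lemmas 5.4--5.7]{Sun23}; in particular the sharp G{\aa}rding step in \cite[Lemma 5.5]{Sun23} produces a term that must be $o(\hbar^2)=o(h\dh)$, and this is the true origin of the hypothesis $h^{1/3}\dh^{-1}=o(1)$ (see Remark~\ref{Gardingrmk}). Your heuristic ``$O(h^2)$ paired against $o(\hbar)$'' does not isolate this mechanism, and without the $\hbar$-localization the commutator expansion you describe would not directly yield $r_h=o_{L^2}(h^2\dh)$. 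The reduction of $\kappa$ to a function of $x$ alone is also more delicate than ``a final averaging normalization'' --- in \cite{Sun23} it emerges from the structure of the second normal-form iteration.
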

\begin{proof}
This follows from \cite[Proposition 3.5 and 5.6]{Sun23}. To provide some more detail, to begin we again frequency localize $u_h^1$, this time at $\hbar=h^{\frac{1}{2}} \dh^{\frac{1}{2}}$ scale. This is done so that in \cite[Proposition 5.6 and Lemma 5.7]{Sun23}, $hD_x$ on $|\xi|\leq 1$ can be replaced, as an $O_{L^2}(1)$ $h$-semiclassical operator, by $h \hbar^{-1} \hbar D_x$ on $|\xi|\leq 1$ as an $O_{L^2}(h \hbar^{-1})$ $\hbar$-semiclassical operator. 

In particular, let $\psi \in \Cs([-2,2], [0,1])$ satisfy $\psi=1$ on $|x|\leq 1$, then consider 
$$
v_h = \psi(\hbar D_x) u^1_h, \quad w_h = (1-\psi(\hbar D_x)) u^1_h.
$$
These $v_h$ and $w_h$ are still quasimodes of order $o_{L^2}(h^2 \dh)$ \cite[Lemma 3.2]{Sun23} and $\ltwo{w_h} = O(\dh)$ \cite[Proposition 3.5]{Sun23}. Note, it is in the proof of Proposition 3.5 that the size of $\e_0$ as defined in relation to the Geometric Control Condition is used. 

Thus $\ltwo{u_h^1} = \ltwo{v_h} + O(\dh)$. The $\vht$ quasimodes will be constructed from the $v_h$ quasimodes. By \cite[Proposition 5.2 and Proposition 5.6]{Sun23}, there exists $G$ an invertible bounded operator on $L^2$ such that $\vht=Gv_h$ \ed{satisfies the desired quasimode equation} and $\ltwo{A(W)^{\frac{1}{2}} \vht} = o(\hbar)$. 

Finally, since $v_h$ and $\vht$ are related by an invertible map, $\frac{1}{C} \ltwo{\vht} \leq \ltwo{v_h} \leq C \ltwo{\vht}$ and so indeed $\ltwo{u_h^1} \leq C \ltwo{\vht} + O(\dh)$.
\end{proof}
\begin{remark}\label{Gardingrmk}
\begin{enumerate}	
	\item  Note the requirement of $h \dh^{-\frac{1}{3}}=o(1)$ in \cite[Lemma 5.5]{Sun23} is actually a typo, it comes from ensuring, at the end of the proof of Lemma 5.5 that 
\begin{equation}
(h\hbar^{-1})^{\ed{4}} + h^{4/3} = o(\hbar^2) = o(h\dh),
\end{equation}
which is satisfied when $h^{\frac{1}{3}}\dh^{-1}= o(1)$.

	\item The requirement that $|\nabla W| \leq C W^{3/4}, |\nabla^2 W| \leq C W^{\frac{1}{2}}$ is used throughout the proof to replace these derivatives of $W$ in symbol expansions by the original damping, to estimate terms using Lemma \ref{aprioricontralemma}. This is frequently done using the Sharp \ed{G\aa rding} inequality, which is what requires $W \in W^{9, \infty}(\T^2)$. Specifically from \cite[Proposition 5.1]{Sjostrandwiener}, to apply the sharp \ed{G\aa rding} inequality to terms involving $\nabla^2 W$, we need $\nabla^2 W \in W^{7,\infty}(\T^2)$. Note this is also why we require one more derivative of regularity on $W$ than \cite{AL14}. They also rely on this same sharp \ed{G\aa rding} inequality result, but only apply it to $W$ and $\nabla W$. 

	\item The other times the number of $L^{\infty}$ derivatives of $W$, call it $m$, are used in \cite{Sun23} are to ensure 
\begin{equation}
o(\hbar^m) = o(h^3) \qquad o(h^m \hbar^{-m}) = o(h \hbar^{2}),
\end{equation}
in the proofs of Lemmas 5.4, 5.5 and \ed{Proposition 5.6}. Since we are assuming $\dh^{-1} h^{\frac{1}{3}} =o(1)$, these are guaranteed when $m \geq 6, m \geq 7$ respectively. 
\end{enumerate}
\end{remark}

We now must connect the assumed 1-d resolvent estimates to resolvent estimates for these quasimodes $v_{h}^{(2)}$. 

To do so, suppose $v_{h,E} \in H^2(\T_x)$ solves 
\begin{equation}\label{vhedef}
-h^2 \p_x^2 v_{h,E} - E v_{h,E} +ih \ti{W}(x) v_{h,E} + h^2 \kappa_{h,E} \ti{W}(x)^{\frac{1}{2}} \p_x v_{h,E} = r_{h,E},
\end{equation}
where $(\kappa_{h,E})_{h>0,E\in\Rb}$ is a uniform bounded family in $W^{1,\infty}(\T, \Rb)$.  Then our assumed 1-d resolvent estimates, give resolvent estimates for these $v_{h,E}$.
\begin{proposition} \label{1dresolveprop}
Assume $\ti{W} \in \Dc^{9,\frac{1}{4}}(\T^2)$,\ed{ $\ti{W}(x,y)=\ti{W}(x)$ and $\ti{W}$ is} nonnegative. If there exists $h_1, C>0$ and $\dh :(0,h_1) \ra (0,1)$ with $h^{\frac{1}{3}} \dh^{-1}=o(1)$, such that for $h \in (0,h_1)$, and all $\ed{\mathcal{E}} \in (0, h \dh)$
\begin{equation}\label{1dresolveassume}
\nm{ (-h^2 \p_x^2 +i h \ti{W} -\ed{\mathcal{E}})^{-1} }_{\Lc(L^2)} \leq C \dh^{-1} h^{-2},
\end{equation}
then there exists $h_0 \in (0,1), C_0 >0$ such that for all $h \in (0,h_0)$ and all $E \in \Rb$, the solutions $v_{h,E}$ of \eqref{vhedef} satisfy 
$$
\ltwo{v_{h,E}} \leq C_0 h^{-2} \dh^{-1} \ltwo{r_{h,E}} + C_0 h^{-\frac{1}{2}} \dh^{-\frac{1}{2}} \ltwo{\ti{W}^{\frac{1}{2}} v_{h,E}}.
$$
\end{proposition}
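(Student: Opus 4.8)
The plan is to treat the zeroth-order term $h^2\kappa_{h,E}\ti W^{1/2}\p_x v_{h,E}$ in \eqref{vhedef} as a perturbation of the clean one-dimensional operator $-h^2\p_x^2+ih\ti W-E$, and then invoke the hypothesis \eqref{1dresolveassume}. Writing $v=v_{h,E}$, $r=r_{h,E}$ for brevity, rearrange \eqref{vhedef} as
\begin{equation}
(-h^2\p_x^2+ih\ti W-E)v = r - h^2\kappa_{h,E}\ti W^{1/2}\p_x v,
\end{equation}
so that \eqref{1dresolveassume} gives
\begin{equation}
\ltwo{v} \leq C\dh^{-1}h^{-2}\ltwo{r} + C\dh^{-1}h^{-2}\cdot h^2\ltwo{\kappa_{h,E}\ti W^{1/2}\p_x v} \leq C_0\dh^{-1}h^{-2}\ltwo{r} + C_0\dh^{-1}\ltwo{\ti W^{1/2}\p_x v},
\end{equation}
using that $\kappa_{h,E}$ is uniformly bounded in $L^\infty$. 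The remaining task is to bound $\ltwo{\ti W^{1/2}\p_x v}$ — a term involving a \emph{derivative} of $v$ — by the allowed right-hand side quantities $h^{-2}\dh^{-1}\ltwo{r}$ and $h^{-1/2}\dh^{-1/2}\ltwo{\ti W^{1/2}v}$.

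To control $\ltwo{\ti W^{1/2}\p_x v}$, I would run a pairing/integration-by-parts argument: multiply \eqref{vhedef} by $\overline{v}$, integrate over $\T_x$, and integrate by parts in the $-h^2\p_x^2 v$ term to produce $h^2\ltwo{\p_x v}^2$. Taking real and imaginary parts of the resulting identity — exactly as in Lemma \ref{aprioricontralemma} and the a priori estimates throughout the excerpt — yields the two basic bounds
\begin{equation}
h\ltwo{\ti W^{1/2}v}^2 \lesssim \ltwo{r}\ltwo{v} + \text{(error from the }\kappa\text{ term)}, \qquad \big|\, h^2\ltwo{\p_x v}^2 - E\ltwo{v}^2\,\big| \lesssim \ltwo{r}\ltwo{v} + \text{(error)}.
\end{equation}
The $\kappa$-term error $h^2|\int \kappa \ti W^{1/2}\p_x v\,\overline v|$ is handled by Cauchy--Schwarz and Young: $h^2\ltwo{\ti W^{1/2}\p_x v}\ltwo{v}$, with the first factor itself to be absorbed. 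To actually estimate $\ti W^{1/2}\p_x v$ rather than just $\p_x v$, I would instead pair \eqref{vhedef} against $\ti W \overline{v}$ (or commute $\ti W^{1/2}$ through), producing a term $h^2\int \ti W|\p_x v|^2$ plus commutator terms of the form $h^2\int (\p_x \ti W)\,\overline{v}\,\p_x v$; here the derivative bound condition $|\p_x \ti W|\leq C\ti W^{1-1/4}=C\ti W^{3/4}$ coming from $\ti W\in\Dc^{9,1/4}$ is exactly what lets one bound $|\p_x\ti W|\lesssim \ti W^{1/2}$ near $\{\ti W=0\}$ (since $\ti W$ is bounded) and thereby close the estimate in terms of $\ltwo{\ti W^{1/2}v}$ and $\ltwo{\p_x v}$. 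Combining these, absorbing the $\e\ltwo{\ti W^{1/2}\p_x v}$ and $\e h^{2}\ltwo{\p_x v}^2$ contributions back to the left, and using $h^2\ltwo{\p_x v}^2\lesssim |E|\ltwo{v}^2 + \ltwo{r}\ltwo{v}$ from the real part, one arrives at
\begin{equation}
h^2\ltwo{\ti W^{1/2}\p_x v}^2 \lesssim \ltwo{r}^2 + h\ltwo{\ti W^{1/2}v}^2 + (\text{lower order}),
\end{equation}
which after taking square roots and multiplying by the right powers of $h,\dh$ feeds back into the displayed bound for $\ltwo{v}$ with the stated constants.

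The main obstacle I anticipate is the bookkeeping in the last step: one must verify that every error term generated by the first-order perturbation and by the commutators $[\ti W^{1/2},\p_x]$ is genuinely of lower order than, or absorbable into, the two target quantities $h^{-2}\dh^{-1}\ltwo{r}$ and $h^{-1/2}\dh^{-1/2}\ltwo{\ti W^{1/2}v}$ — in particular that the factor $\dh^{-1/2}$ (not $\dh^{-1}$) on the damping term is the right one, which should follow because the derivative term carries one extra power of $h$ relative to $ih\ti W$ and the scale $\hbar=\dh^{1/2}h^{1/2}$ is the natural one, exactly as in the quasimode construction of Proposition \ref{normalformquasiprop}. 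A secondary subtlety is that \eqref{1dresolveassume} is only assumed for real $E$; since \eqref{vhedef} also has real spectral parameter $E$ this causes no difficulty, but one should be careful that the perturbation argument does not implicitly require resolvent bounds at complex energies. No compactness or measure-theoretic input is needed here — this is a self-contained perturbative resolvent estimate.
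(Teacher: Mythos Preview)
Your strategy---apply \eqref{1dresolveassume} and treat $h^2\kappa_{h,E}\ti W^{1/2}\p_x v$ as a perturbation, controlling $\ltwo{\ti W^{1/2}\p_x v}$ by pairing against $\ti W\bar v$---is exactly how the paper handles the \emph{low-energy} regime $E\le h\dh$ (Lemmas \ref{lowhypinterlemma} and \ref{lowhyplemma}). But as a stand-alone argument for all $E\in\Rb$ it has a gap, and the gap shows up precisely where you wrote ``(lower order)''.

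When you pair \eqref{vhedef} with $\ti W\bar v$ and take the real part, the spectral term produces $E\ltwo{\ti W^{1/2}v}^2$ on the right. With no restriction on $E$ this term is of size $\ltwo{\ti W^{1/2}v}^2$ (for $E\sim 1$), so the best you get is $h\ltwo{\ti W^{1/2}\p_x v}\lesssim \ltwo{\ti W^{1/2}v}+\cdots$. Feeding that into $\ltwo{v}\le C\dh^{-1}\ltwo{\ti W^{1/2}\p_x v}+\cdots$ yields a coefficient $\dh^{-1}h^{-1}$ on $\ltwo{\ti W^{1/2}v}$, not the required $\dh^{-1/2}h^{-1/2}$. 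Since $\dh^{-1}h^{-1}\le \dh^{-1/2}h^{-1/2}$ would force $h\dh\ge 1$, this is genuinely too weak. The sharp $\dh^{-1/2}$ is \emph{not} bookkeeping: in the application (proof of Theorem \ref{averagedresolventthm}) one has $\ltwo{\ti W^{1/2}v}=o(h^{1/2}\dh^{1/2})$, and $\dh^{-1}h^{-1}\cdot h^{1/2}\dh^{1/2}=h^{-1/2}\dh^{-1/2}\not\to 0$.

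The paper's fix is to split on the size of $E$. For $E\le h\dh$ your perturbative argument works, because the dangerous term becomes $E\ltwo{\ti W^{1/2}v}^2\le h\dh\ltwo{\ti W^{1/2}v}^2$, and the extra factor $\dh^{1/2}$ exactly cancels one $\dh^{-1}$ from \eqref{1dresolveassume} to give $\dh^{-1/2}$. For $E\ge h\dh$ the paper abandons \eqref{1dresolveassume} entirely and instead applies a bare geometric-control estimate for $-\p_x^2-\lambda^2$ (Lemma \ref{1dgcclemma}), which already carries a gain $\lambda^{-1}\le h^{1/2}\dh^{-1/2}$; here both $ih\ti W v$ and the $\kappa$-term go to the right as forcing and localization. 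A separate elliptic argument handles $E\le c_1h^2$. Your proposal is correct in spirit for one third of the picture, but the high-energy regime requires a genuinely different input that you did not anticipate.
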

\ed{We point out that we only assume \eqref{1dresolveassume} for $\mathcal{E} \in (0,h\dh)$, because we are able to show the desired resolvent estimate directly for $E \geq h\dh$ using a 1d geometric control result in Lemma \ref{highhyplemma}.}

We postpone the proof of this proposition and complete the proof of Theorem \ref{averagedresolventthm}.
\begin{proof}[Proof of Theorem \ref{averagedresolventthm}]
	%\ed{Recall that we are proceeding by contradiction, so we have assumed that the desired resolvent estimate is false. From this we obtained a sequence of quasimodes $u_h$. We then used these $u_h$ to define $u_h^1$ in \eqref{eq:uh1def}.}
	Applying Proposition \ref{normalformquasiprop} to $u^1_h$ we obtain a sequence of quasimodes $\vht$. Then taking the Fourier transform in $y$ of \eqref{vhtdef}, and writing $v_{h,n}^{(2)}:= \F_y(\vht)(x,n)$ 
\begin{equation}
(-h^2 \p_x^2 + h^2 n^2-1 + i h A_{\zeta}(W) + h^2 \kappa(x) A_{\zeta}(W)^{\frac{1}{2}} b(hn) \p_x) v_{h,n}^{(2)} = \F_y(r_h).
\end{equation}
Let $E=1-h^2 n^2, \ti{W}=A_{\zeta}(W)(x)$ and $\kappa_{h,E}(x) = \kappa(x) b(hn)$, which is uniformly bounded in $W^{1,\infty}(\T)$ with respect to $h$ and $n$. Note that by Lemma \ref{averagingdbclemma}, $\ti{W} \in \Dc^{9, \frac{1}{4}}(\T^2)$.

Note also, the assumed 1-d resolvent estimates \eqref{assumed1dresolventeq} exactly match \eqref{1dresolveassume} and so we may apply Proposition \ref{1dresolveprop}. Doing so for each fixed $n \in \Zb$, then taking the $l^2_n$ norm and applying the Plancherel \ed{equality, for $h<h_0(\zeta_0)$ we have}
\begin{equation}
\ltwo{\vht} \leq C h^{-2} \dh^{-1} \ltwo{r_{h}} + Ch^{-\frac{1}{2}} \dh^{-\frac{1}{2}} \ltwo{A_{\zeta}(W)^{\frac{1}{2}} \vht} = o(1),
\end{equation}
where the final equality follows by recalling that $\ltwo{r_h} = o(h^2 \dh)$ and $\ltwo{A_{\zeta}(W)^{\frac{1}{2}} \vht} = o(\hbar)=o(h^{\frac{1}{2}} \dh^{\frac{1}{2}})$ from Proposition \ref{normalformquasiprop}.

Now by Proposition \ref{normalformquasiprop} $\ltwo{u_h^1} = o(1)$ \ed{ for $h<h_0(\zeta_0)$. Now because 
	\begin{equation}
		0=\limh \ltwo{u_h^1} = \int_{S^* \Tb^2} d \mu_1,
	\end{equation}
	and $\mu_{\zeta_0}=\mu_1|_{\zeta=\zeta_0}$, we have that $\mu_{\zeta_0}=0$. However this $\zeta_0$ was arbitrary within $\Qc$, so we have $\mu_{\zeta}=0$ for all $\zeta \in \Qc$} and so $\mu \equiv 0$. This contradicts \eqref{quasimodel2eq}, i.e. that $\ltwo{u_h}=1$.
\end{proof}

\subsection{Proof of 1-d resolvent connection}
In this section we prove Proposition \ref{1dresolveprop}. The statement and our approach are similar to that of \cite[Proposition 6.1]{Sun23}, but we proceed directly, rather than by contradiction, and our hypotheses and conclusion allow for non-polynomial $\dh$.
 
\begin{proof}[Proof of Proposition \ref{1dresolveprop}]

Let $\vhe, \rhe, E \in \Rb, h>0$ solve 
$$
(-h^2 \p_x^2 - E+ ih\ti{W} - h^2 \kappa_{h,E}(x) \ti{W}^{\frac{1}{2}} \p_x) \vhe = \rhe.
$$
Before proceeding with the proof we record a basic weighted energy \ed{equality}.
\begin{lemma}\label{weightedenergylemma}
Let $w \in C^2(\T; \Rb)$, then
\begin{align}
\int_{\T} w |h \p_x \vhe|^2 dx + \int_{\T} \left(-\frac{1}{2} h^2 \p_x^2 w-Ew\right) |\vhe|^2 dx - \frac{h^2}{2} \int_{\T}\p_x (w \kappa_{h,E} \ti{W}^{\frac{1}{2}}) |\vhe|^2 dx \\
= \Re \int_{\T} w r_{h,E} \bvhe dx.
\end{align}
\end{lemma}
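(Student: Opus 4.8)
The statement to prove is Lemma~\ref{weightedenergylemma}, a weighted energy identity. Here is my proof plan.

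\textbf{Overall approach.} This is a standard weighted-energy (Morawetz-type multiplier) computation: multiply the equation \eqref{vhedef} defining $\vhe$ by the conjugate of a first-order-multiplier applied to $\vhe$, integrate over $\T$, and take real parts so that all purely imaginary contributions (in particular the damping term $ih\ti W$) drop out and the remaining terms can be integrated by parts into the claimed form. Specifically, I would multiply
$$
(-h^2 \p_x^2 - E + ih\ti W - h^2 \kappa_{h,E}\ti W^{\frac12}\p_x)\vhe = \rhe
$$
by $w\,\bvhe$ and integrate. Here the choice of multiplier is simply the weight $w$ times $\bvhe$ (not $\p_x\bvhe$), which is the natural one to produce the stated identity with the $-\tfrac12 h^2 \p_x^2 w$ term.

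\textbf{Key steps in order.} (1) Write $\int_\T w\,\rhe\,\bvhe\,dx = -h^2\int w (\p_x^2\vhe)\bvhe - E\int w|\vhe|^2 + ih\int w\ti W|\vhe|^2 - h^2\int w\kappa_{h,E}\ti W^{\frac12}(\p_x\vhe)\bvhe$. (2) Integrate the first term by parts: $-h^2\int w(\p_x^2\vhe)\bvhe = h^2\int \p_x(w\bvhe)\,\p_x\vhe = h^2\int w|\p_x\vhe|^2 + h^2\int (\p_x w)(\p_x\vhe)\bvhe$; here periodicity on $\T$ kills the boundary term. (3) Take the real part of the whole identity. The term $ih\int w\ti W|\vhe|^2$ is purely imaginary (since $w$, $\ti W$ real and $|\vhe|^2\geq 0$), so it vanishes. (4) For the leftover term $h^2\Re\int(\p_x w)(\p_x\vhe)\bvhe$, note $(\p_x\vhe)\bvhe + \vhe\,\overline{\p_x\vhe} = \p_x|\vhe|^2$, so $\Re\int(\p_x w)(\p_x\vhe)\bvhe = \tfrac12\int(\p_x w)\p_x|\vhe|^2 = -\tfrac12\int(\p_x^2 w)|\vhe|^2$ after another integration by parts. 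This produces the $-\tfrac12 h^2\p_x^2 w$ coefficient on $|\vhe|^2$. (5) Similarly, for the $\kappa_{h,E}$ term, write $\Re\int w\kappa_{h,E}\ti W^{\frac12}(\p_x\vhe)\bvhe = \tfrac12\int w\kappa_{h,E}\ti W^{\frac12}\p_x|\vhe|^2 = -\tfrac12\int \p_x(w\kappa_{h,E}\ti W^{\frac12})|\vhe|^2$, giving the last term on the left of the claimed identity (with the sign working out because the term appears as $-h^2\kappa_{h,E}\ti W^{\frac12}\p_x$ in \eqref{vhedef}). (6) Collect everything: the real part of the right side is $\Re\int w\,\rhe\,\bvhe$, and the real part of the left side is exactly the sum of the three integrals in the statement.

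\textbf{Expected main obstacle.} There is no deep obstacle here — it is a routine manipulation. The only points requiring care are: (a) bookkeeping the signs, especially on the $\kappa_{h,E}\ti W^{\frac12}\p_x$ term, whose sign convention in \eqref{vhedef} must be tracked consistently through the conjugation and integration by parts; and (b) justifying the integrations by parts, i.e.\ that $\vhe\in H^2(\T)$ and $w\in C^2(\T)$, $\kappa_{h,E}\in W^{1,\infty}$, $\ti W^{1/2}$ sufficiently regular (which follows from $\ti W\in\Dc^{9,\frac14}$ and the derivative bound $|\nabla\ti W|\leq C\ti W^{1-\e}$ controlling $\p_x\ti W^{1/2}$) so that $\p_x(w\kappa_{h,E}\ti W^{\frac12})$ makes sense as an $L^\infty$ function and all boundary terms vanish by periodicity. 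With those in hand the identity follows directly.
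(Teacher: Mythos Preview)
Your approach is correct and is exactly the standard multiplier computation that underlies this identity; the paper itself does not reprove it but simply cites \cite[Lemma 6.3]{Sun23}, whose argument is precisely the one you outline. One small correction: in \eqref{vhedef} the $\kappa_{h,E}\ti W^{1/2}\p_x$ term carries a $+h^2$, not $-h^2$, so your step (1) should read $+h^2\int w\kappa_{h,E}\ti W^{1/2}(\p_x\vhe)\bvhe$; with that sign the integration by parts in step (5) produces $-\tfrac{h^2}{2}\int\p_x(w\kappa_{h,E}\ti W^{1/2})|\vhe|^2$ directly, matching the lemma.
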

This is exactly \cite[Lemma 6.3]{Sun23}.

%\begin{corollary}
%If $h^{\frac{1}{3}} \dh^{\ed{-1}} =o(1)$, then
%$$
%\ltwo{h\p_x \vhe}^2 -E\ltwo{\vhe}^2 = h^2 \ltwo{\ti{W}^{\frac{1}{2}} \vhe}^{\frac{1}{2}} \ltwo{\vhe}^{\frac{1}{2}}.
%$$
%\end{corollary}
%\begin{proof}
%Since $\p_x \kappa_{h,E}$ is uniformly bounded in $L^{\infty}(\T)$ and $\p_x (\ti{W})^{\frac{1}{2}} \leq C \ti{W}^{\frac{1}{4}}$, 
%\begin{equation}
%h^2 \left| \ed{\int_{\Tb}} \p_x (\kappa_{h,E} \ti{W}^{\frac{1}{2}}) |\vhe|^2 dx \right| \lesssim h^2 \ltwo{\ti{W}^{\frac{1}{8}} \vhe}^2 \leq h^2 \ltwo{\vhe}^{\frac{3}{2}} \ltwo{\ti{W}^{\frac{1}{2}} \vhe}^{\frac{1}{2}}.
%\end{equation}
%Now apply Lemma \ref{weightedenergylemma} with $w=1$, to obtain the desired inequality. 
%\end{proof}
Now the proof of Proposition \ref{1dresolveprop} is dived into 3 cases. Before we describe the cases, we must make a preliminary statement. To begin, note that $\{\ed{\ti{W}}>0\}$ contains at least one interval $I=(\alpha,\beta) \subset (-\pi,\pi)$. For a fixed $c_0 \geq 0$, we can construct a weight $w \in C^2(\T; \Rb)$ such that $w \geq c_0>0$ everywhere and $w'''<0$ in a neighborhood of $\T \backslash I$. Thus, for a $c_1>0$ sufficiently small
\begin{equation}
\frac{1}{2} w'' + c_1 w <0 \text{ in a neighborhood of } \T \backslash I.
\end{equation}
Now the three cases are
\begin{enumerate}
	\item Elliptic regime $E \leq c_1 h^2$
	\item High energy hyperbolic regime $E \geq h \dh$
	\item Low energy hyperbolic regime $c_1 h^2 \leq E \leq h \dh$.
\end{enumerate}

The elliptic regime will be addressed using the above weighted energy identity with the weight $w$ we just introduced. The high energy hyperbolic regime follows using a standard 1-d resolvent estimate of $(-h^2\p_x^2-E)$ and \ed{treats} the other terms as perturbations. For the low energy hyperbolic regime, the term $h^2 \kappa_{h,E} \ti{W}^{\frac{1}{2}} \p_x \vhe$ can be absorbed as an error into the right hand side, and we can then apply the assume 1-d resolvent estimate \eqref{1dresolveassume}.

\noindent 1) Elliptic regime $E \leq c_1 h^2$.

\begin{lemma} \label{ellipticlemma}
If $E \leq c_1 h^2$, then 
\begin{equation}
\ltwo{\vhe} \lesssim h^{-2} \ltwo{\rhe} + \ltwo{\ti{W}^{\frac{1}{2}} \vhe}.
\end{equation}
\end{lemma}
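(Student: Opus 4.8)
The plan is to feed the specially constructed weight $w$ into the weighted energy identity of Lemma~\ref{weightedenergylemma} and exploit the two defining features of this regime: the sign condition $\tfrac12 w'' + c_1 w < 0$ on a neighborhood of $\T\setminus I$, and the bound $E\le c_1 h^2$. First I would apply Lemma~\ref{weightedenergylemma} with this $w$ and move the $\kappa_{h,E}\ti{W}^{1/2}$ term to the right-hand side, so the left-hand side reads $\int_\T w|h\p_x\vhe|^2 + \int_\T(-\tfrac12 h^2 w'' - Ew)|\vhe|^2$. Since $w>0$ and $E\le c_1 h^2$ we have $-\tfrac12 h^2 w'' - Ew \ge -h^2(\tfrac12 w'' + c_1 w)$, which is $\ge c h^2>0$ on the compact set $\T\setminus I$ and merely $\ge -C h^2$ on $I$. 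Dropping the nonnegative gradient term yields
\begin{equation}
c h^2 \int_{\T\setminus I}|\vhe|^2\,dx \le C h^2\int_I|\vhe|^2\,dx + \Bigl|\Re\int_\T w\,\rhe\,\bvhe\,dx\Bigr| + \frac{h^2}{2}\Bigl|\int_\T \p_x\bigl(w\,\kappa_{h,E}\,\ti{W}^{1/2}\bigr)\,|\vhe|^2\,dx\Bigr|.
\end{equation}

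Next I would estimate the three terms on the right. Shrinking $I$ if needed so that $\bar I\subset\{\ti{W}>0\}$, we get $\ti{W}\ge c_W>0$ on $I$, hence $\int_I|\vhe|^2 \le c_W^{-1}\ltwo{\ti{W}^{1/2}\vhe}^2$. Cauchy--Schwarz gives $|\Re\int_\T w\,\rhe\,\bvhe| \le C\ltwo{\rhe}\ltwo{\vhe}$. For the first-order term, the point is that after the integration by parts already carried out in Lemma~\ref{weightedenergylemma} only $\p_x(w\,\kappa_{h,E}\,\ti{W}^{1/2})$ appears, and the derivative bound built into $\ti{W}\in\Dc^{9,\frac14}(\T^2)$ gives $|\p_x\ti{W}|\le C\ti{W}^{3/4}$, hence $|\p_x(\ti{W}^{1/2})|\le C\ti{W}^{1/4}$; together with the uniform $W^{1,\infty}$ bound on $\kappa_{h,E}$ and the boundedness of $w,w',\ti{W}$ on $\T$ this gives $|\p_x(w\,\kappa_{h,E}\,\ti{W}^{1/2})|\le C\ti{W}^{1/4}$, so by H\"older's inequality ($\int \ti{W}^{1/4}|\vhe|^2\le\ltwo{\ti{W}^{1/2}\vhe}^{1/2}\ltwo{\vhe}^{3/2}$) this term is bounded by $C h^2\ltwo{\ti{W}^{1/2}\vhe}^{1/2}\ltwo{\vhe}^{3/2}$.

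Finally I would combine these bounds with $h^2\ltwo{\vhe}^2 \le h^2\int_{\T\setminus I}|\vhe|^2 + Ch^2\ltwo{\ti{W}^{1/2}\vhe}^2$, divide through by $h^2$, and apply Young's inequality twice — to $h^{-2}\ltwo{\rhe}\ltwo{\vhe}$ and to $\ltwo{\ti{W}^{1/2}\vhe}^{1/2}\ltwo{\vhe}^{3/2}$ — to absorb the resulting $\ltwo{\vhe}^2$ contributions into the left-hand side. This leaves $\ltwo{\vhe}^2 \lesssim \ltwo{\ti{W}^{1/2}\vhe}^2 + h^{-4}\ltwo{\rhe}^2$, and taking square roots gives the stated inequality. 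The only genuinely delicate point is the first-order term $h^2\kappa_{h,E}\ti{W}^{1/2}\p_x\vhe$: estimated directly it would cost a full derivative of $\vhe$ that we cannot control, and it is precisely the bookkeeping of the weighted energy identity combined with the $\Dc^{9,\frac14}$ derivative bound that converts it into the harmless quantity above; one should also keep in mind the standing convention that $\ti{W}^{1/2}$ and the products containing it are Lipschitz, so that the integration by parts underlying Lemma~\ref{weightedenergylemma} is legitimate.
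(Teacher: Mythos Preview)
Your proof is correct and somewhat more direct than the paper's. Both arguments feed the weight $w$ into Lemma~\ref{weightedenergylemma} and exploit the sign condition $\tfrac12 w'' + c_1 w < 0$ near $\T\setminus I$ together with $E\le c_1 h^2$, and both handle the $\kappa_{h,E}\ti W^{1/2}$ term identically (via the pointwise bound $|\p_x(w\kappa_{h,E}\ti W^{1/2})|\le C\ti W^{1/4}$ and H\"older). The difference is in which term of the left-hand side is kept. The paper retains $\int w|h\p_x\vhe|^2$, moves the $|\vhe|^2$ integral to the right, obtains $\ltwo{\p_x\vhe}\le C_\e h^{-2}\ltwo{\rhe}+C_\e\ltwo{\ti W^{1/2}\vhe}+\e\ltwo{\vhe}$, and then recovers $\ltwo{\vhe}$ via the Poincar\'e--Wirtinger inequality together with an argument controlling the mean $\hvhe$ through $\int_\T\ti W\,dx>0$. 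You instead discard the gradient term and use the positivity of $-\tfrac12 h^2 w''-Ew$ on $\T\setminus J$ (for a compact $\bar J\subset I$) to control $\int_{\T\setminus J}|\vhe|^2$ directly, then bound $\int_J|\vhe|^2$ by $c_W^{-1}\ltwo{\ti W^{1/2}\vhe}^2$. Your route avoids Poincar\'e--Wirtinger entirely; the paper's route yields the bonus estimate $\ltwo{\p_x\vhe}\lesssim h^{-2}\ltwo{\rhe}+\ltwo{\ti W^{1/2}\vhe}$, though this is not needed for the lemma as stated. One small point of phrasing: your ``shrinking $I$'' is really passing to the compact $J$ furnished by the hypothesis that the sign condition holds on a \emph{neighborhood} of $\T\setminus I$; this is exactly the compact $K\subset I$ that appears in the paper's argument.
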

\begin{proof}
We will use the weighted energy identity to control $\p_x \vhe$ and then use the Poincar\'e-Wirtinger inequality to convert this to control of $\vhe$. 

Since $E \leq c_1 h^2$, using the weight $w$ we defined above, we have $\frac{1}{2}h^2 w'' + E w <0$ in a neighborhood of $\T \backslash I$.  Therefore, for a compact $K \subset I$, away from $\T \backslash I$,  
\begin{align*}
\int_{\T} \left(\frac{1}{2} h^2 \p_x^2 w + E w\right) |\vhe|^2 dx \leq \int_K \left(\frac{1}{2} h^2 \p_x^2 w + E w\right) |\vhe|^2 dx \leq Ch^2 \int_{\T} \ti{W} |\vhe|^2 dx,
\end{align*}
where $C= \frac{1}{\min_K \ti{W}}$. Then applying Lemma \ref{weightedenergylemma} with weight $w$, we have 
\begin{align}
c_0& \ltwo{h \p_x \vhe}^2 \leq \int_{\T} w |h \p_x \vhe|^2 dx \\
&\leq \Re \int_{\T} w \rhe \bvhe dx + Ch^2 \int_{\T} \ti{W} |\vhe|^2 dx + C \frac{h^2}{2} \int_{\T}\p_x (w \kappa_{h,E} \ti{W}^{\frac{1}{2}}) |\vhe|^2 dx. \label{vheellipticweight}
\end{align}
Now since $|\p_x(w \kappa_{h,E} \ti{W}^{\frac{1}{2}})| \leq C \ti{W}^{\frac{1}{4}}(x)$, then by H\"older's inequality
\begin{equation}
\ltwo{\ti{W}^{1/8} \vhe}^2 \leq \ltwo{\ti{W}^{\frac{1}{2}} \vhe}^{\frac{1}{2}} \ltwo{\vhe}^{\frac{3}{2}}.
\end{equation}
Applying this to \eqref{vheellipticweight} and applying Young's inequality for products
\begin{equation}\label{vhepelliptic}
\ltwo{\p_x \vhe} \leq C_{\e} h^{-2} \ltwo{\rhe} + C_{\e} \ltwo{\ti{W}^{\frac{1}{2}} \vhe} +\e\ltwo{\vhe}, \forall \e>0.
\end{equation}
Now, letting $\hvhe=\frac{1}{2\pi} \int_{\T} \vhe dx$, by the Poincar\'e-Wirtinger inequality
\begin{equation}
\ltwo{\vhe-\hvhe} \leq C \ltwo{\p_x \vhe}.
\end{equation}
So 
\begin{align}
\ltwo{\vhe} &\leq \ltwo{\vhe-\hvhe} + \ltwo{\hvhe} \\
&\leq C \ltwo{\p_x \vhe} + |\hvhe|.
\end{align}
Now because $\int_{\T} \ti{W} dx >0$  
\begin{align}
\ltwo{\vhe} &\leq C \ltwo{\p_x \vhe} + \left( \int \ti{W} dx \right)^{\frac{1}{2}} |\hvhe| \left( \int \ti{W} dx \right)^{-\frac{1}{2}} \\
&\leq  C \ltwo{\p_x \vhe} + C \left( \int \ti{W} |\hvhe|^2 dx \right)^{\frac{1}{2}} \\
&\leq C \ltwo{\p_x \vhe} + C \left( \int \ti{W} |\vhe|^2 dx \right)^{\frac{1}{2}} + C \left( \int \ti{W} |\vhe(x) - \hvhe|^2 dx\right)^{\frac{1}{2}}.
\end{align}
\ed{Bounding $\ti{W} \leq C$ in the third term, then} applying the Poincar\'e-Wirtinger inequality once more and then combining this with \eqref{vhepelliptic} gives 
\begin{equation}
\ltwo{\vhe} + \ltwo{\vhe'} \leq C_{\e} h^{-2} \ltwo{\rhe} + C_{\e} \ltwo{\ti{W}^{\frac{1}{2}} \vhe},
\end{equation}
as desired.
\end{proof}

\noindent 2) High energy hyperbolic regime $E \geq h \dh$

Recall the standard 1-d geometric control result, for a proof see \cite[Proposition 4.2]{Burq2020}. 
\begin{lemma} \label{1dgcclemma}
Let $I \subset \T$ be a nonempty open set. Then, there exists $C=C_I>0$ such that for any $v \in L^2(\T), f_1 \in L^2(\T), f_2 \in H^{-1}(\T)$ and $\lambda \geq 1$, if 
$$
(-\p_x^2-\lambda^2) v = f_1+f_2,
$$
then
$$
\ltwo{v} \leq C \lambda^{-1} \ltwo{f_1} + C \hp{f_2}{-1} + C\nm{v}_{L^2(I)}.
$$

\end{lemma}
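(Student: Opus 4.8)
The plan is to work with the Fourier series of $v$ on $\T$ and split it at frequency distance $1$ from the resonant set $\{\pm\lambda\}$: write $v = v_{hf}+v_{lf}$, where $v_{lf}$ collects the (at most six) Fourier modes $e^{inx}$ with $|n-\lambda|\le 1$ or $|n+\lambda|\le 1$ and $v_{hf}$ is the remainder. The non-resonant part $v_{hf}$ will be bounded directly from the equation by inverting the multiplier $n^2-\lambda^2$, and the resonant part $v_{lf}$, being a short exponential sum clustered near $\pm\lambda$, will be controlled on $\T$ by its size on $I$ through an elementary observability estimate; combining the two and absorbing $\ltwo{v_{hf}}$ into the $L^2(I)$ term then gives the claim.

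For $v_{hf}$, first I would check by a short case analysis (according to $|n|\le\lambda/2$, $|n|\ge 2\lambda$, or in between) that on the non-resonant frequencies $|n^2-\lambda^2|\ge c\lambda$ and simultaneously $|n^2-\lambda^2|^2\ge c(1+n^2)$, with $c>0$ independent of $\lambda\ge 1$. Since $(n^2-\lambda^2)\hat v(n)=\hat f_1(n)+\hat f_2(n)$ for every $n$, Plancherel's theorem gives
\begin{equation}
\ltwo{v_{hf}}^2 \;\lesssim\; \frac{1}{\lambda^2}\sum_n|\hat f_1(n)|^2 + \sum_n\frac{|\hat f_2(n)|^2}{1+n^2} \;\lesssim\; \lambda^{-2}\ltwo{f_1}^2 + \hp{f_2}{-1}^2 ,
\end{equation}
that is $\ltwo{v_{hf}}\lesssim\lambda^{-1}\ltwo{f_1}+\hp{f_2}{-1}$, which already accounts for the first two terms of the claimed bound (and, restricted to $I$, is harmless since $\nm{v_{hf}}_{L^2(I)}\le\ltwo{v_{hf}}$).

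For $v_{lf}$ the key point is an observability inequality $\ltwo{v_{lf}}\le C_I\,\nm{v_{lf}}_{L^2(I)}$ with $C_I$ \emph{independent of $\lambda$}. Splitting $v_{lf}=P+Q$ into the cluster near $\lambda$ and the cluster near $-\lambda$, I would factor out the modulus-one exponential $e^{i\lfloor\lambda\rfloor x}$ from $P$ (and $e^{-i\lfloor\lambda\rfloor x}$ from $Q$), so that $\int_I|P|^2$ and $\int_I|Q|^2$ become quadratic forms in the coefficients depending only on the bounded relative frequencies. There are finitely many such configurations, each giving a positive-definite form (a nonzero trigonometric polynomial cannot vanish on the interval $I$), hence uniformly bounded below, while the $P$–$Q$ cross term over $I$ is $O(\lambda^{-1})$ because every frequency gap between $P$ and $Q$ exceeds $2\lambda-2$; this yields the inequality for $\lambda$ large. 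For $\lambda$ in a bounded range there are only finitely many possible frequency sets, hence finitely many fixed finite-dimensional subspaces on which restriction to $I$ is injective, and one takes the largest constant. Then $\nm{v_{lf}}_{L^2(I)}\le\nm{v}_{L^2(I)}+\ltwo{v_{hf}}$ and $\ltwo{v}\le\ltwo{v_{hf}}+\ltwo{v_{lf}}$ combine to
\begin{equation}
\ltwo{v}\;\le\; C\lambda^{-1}\ltwo{f_1}+C\hp{f_2}{-1}+C_I\nm{v}_{L^2(I)}.
\end{equation}

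The one step requiring genuine care is precisely this uniform-in-$\lambda$ observability for the resonant modes — everything else is bookkeeping. A slicker, though less explicit, alternative is a compactness–contradiction argument: a normalized sequence of counterexamples with $\lambda_k\to\infty$ produces a semiclassical defect measure on $\{\xi^2=1\}$ that is invariant under the geodesic flow on $\T$ (which sweeps out all of $\T$ in each of the two directions) and carries no mass over $I$, forcing $\mu\equiv 0$ and contradicting $\ltwo{v_k}=1$; the case of bounded $\lambda_k$ follows from Rellich compactness together with unique continuation for $-v''=\lambda^2 v$. This is the route of \cite{Burq2020}.
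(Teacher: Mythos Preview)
The paper does not actually prove this lemma: it is stated as a standard fact and the proof is deferred entirely to \cite[Proposition 4.2]{Burq2020}. So there is no ``paper's own proof'' to compare against beyond that citation.

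Your Fourier-series argument is correct and self-contained. The multiplier lower bounds $|n^2-\lambda^2|\ge c\lambda$ and $|n^2-\lambda^2|^2\ge c(1+n^2)$ on the non-resonant set are exactly what is needed to handle the $L^2$ and $H^{-1}$ right-hand sides separately, and the uniform observability for the resonant cluster via factoring out $e^{\pm i\lfloor\lambda\rfloor x}$ is a clean way to make the finite-dimensional step $\lambda$-independent. The only point worth a line of extra care is the $P$--$Q$ cross term: you should note that the frequency gap is at least $2\lambda-2$ and that $I$ contains an interval, so the oscillatory integral bound $O(\lambda^{-1})$ really is uniform in the (bounded) relative-frequency configuration. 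With that, the argument is complete.

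The compactness/defect-measure alternative you sketch at the end is indeed the route taken in \cite{Burq2020}, and hence the one the paper implicitly relies on. Your explicit approach has the advantage of being elementary and giving, in principle, a computable constant in terms of $|I|$; the defect-measure argument is shorter to write and generalizes more readily off the circle, which is presumably why the paper prefers to cite it.
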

To apply this lemma we treat both $\ti{W}$ and $\kappa_{h,E} \ti{W}^{\frac{1}{2}}$ terms as perturbations.

\begin{lemma}\label{highhyplemma}
There exists $C,h_0>0$ such that for $h \in (0,h_0)$ and $E \geq h \dh$, then 
$$
\ltwo{\vhe} \leq C h^{-\frac{3}{2}} \dh^{-\frac{1}{2}} \ltwo{\rhe} + h^{-\frac{1}{2}} \dh^{-\frac{1}{2}} \ltwo{\ti{W} \vhe} + \ltwo{\ti{W}^{\frac{1}{2}} \vhe}.
$$
\end{lemma}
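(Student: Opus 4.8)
The plan is to rescale the equation for $\vhe$ to frequency $\lambda := \sqrt{E}/h$ and apply the one-dimensional geometric control estimate of Lemma \ref{1dgcclemma}, treating the damping term and the first-order drift term $\kappa_{h,E}\ti{W}^{\frac{1}{2}}\p_x$ as perturbations. Since $E \geq h\dh$ we have $\lambda^2 = E/h^2 \geq \dh/h$, and since $h^{\frac{1}{3}}\dh^{-1} = o(1)$ forces $\dh/h \to \infty$, we get $\lambda \geq 1$ for $h$ small together with the key gain $\lambda^{-1} \leq (h/\dh)^{\frac{1}{2}}$. Dividing the equation for $\vhe$ by $h^2$ puts it in the form $(-\p_x^2 - \lambda^2)\vhe = f_1 + f_2$ with $f_1 = h^{-2}\rhe - \tfrac{i}{h}\ti{W}\vhe \in L^2$, so $\ltwo{f_1} \leq h^{-2}\ltwo{\rhe} + h^{-1}\ltwo{\ti{W}\vhe}$, and $f_2 = \kappa_{h,E}\ti{W}^{\frac{1}{2}}\p_x\vhe$.

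First I would bound $f_2$ in $H^{-1}(\T)$ without losing a derivative on $\vhe$: writing $f_2 = \p_x\big(\kappa_{h,E}\ti{W}^{\frac{1}{2}}\vhe\big) - \p_x\big(\kappa_{h,E}\ti{W}^{\frac{1}{2}}\big)\vhe$ and using that $(\kappa_{h,E})$ is uniformly bounded in $W^{1,\infty}(\T)$ together with $\ti{W} \in \Dc^{9,\frac{1}{4}}(\T^2)$ --- which gives $|\p_x\ti{W}^{\frac{1}{2}}| = \tfrac{1}{2}\ti{W}^{-\frac{1}{2}}|\p_x\ti{W}| \leq C\ti{W}^{\frac{1}{4}}$ --- one obtains $\hp{f_2}{-1} \leq C\ltwo{\ti{W}^{\frac{1}{2}}\vhe} + C\ltwo{\ti{W}^{\frac{1}{4}}\vhe}$. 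The last term is absorbed through the interpolation $\ltwo{\ti{W}^{\frac{1}{4}}\vhe}^2 = \int_{\T}\ti{W}^{\frac{1}{2}}|\vhe|^2\,dx \leq \ltwo{\ti{W}^{\frac{1}{2}}\vhe}\,\ltwo{\vhe}$ followed by Young's inequality, so $\hp{f_2}{-1} \leq C_{\e}\ltwo{\ti{W}^{\frac{1}{2}}\vhe} + \e\ltwo{\vhe}$ for any $\e>0$.

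Then I would fix a small open interval $I' \Subset \{\ti{W}>0\}$; by continuity $\ti{W} \geq c>0$ on $\overline{I'}$, hence $\nm{\vhe}_{L^2(I')} \leq c^{-\frac{1}{2}}\ltwo{\ti{W}^{\frac{1}{2}}\vhe}$. Applying Lemma \ref{1dgcclemma} with this $I'$ gives $\ltwo{\vhe} \leq C\lambda^{-1}\ltwo{f_1} + C\hp{f_2}{-1} + C\nm{\vhe}_{L^2(I')}$; inserting the bounds above and using $\lambda^{-1}\ltwo{f_1} \leq (h/\dh)^{\frac{1}{2}}\big(h^{-2}\ltwo{\rhe} + h^{-1}\ltwo{\ti{W}\vhe}\big) = h^{-\frac{3}{2}}\dh^{-\frac{1}{2}}\ltwo{\rhe} + h^{-\frac{1}{2}}\dh^{-\frac{1}{2}}\ltwo{\ti{W}\vhe}$, then absorbing the term $\e\ltwo{\vhe}$ into the left-hand side (legitimate for $h$ small), yields the stated inequality up to the value of the constants. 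The only delicate point --- the one I expect to be the main obstacle --- is the $H^{-1}$ treatment of the drift term $f_2$: it is precisely here that the derivative bound condition built into $\ti{W}\in\Dc^{9,\frac{1}{4}}$ is used, ensuring $\ti{W}^{\frac{1}{2}}\in W^{1,\infty}(\T)$ with the quantitative estimate $|\p_x\ti{W}^{\frac{1}{2}}|\leq C\ti{W}^{\frac{1}{4}}$, without which the coefficient of the first-order term could behave singularly along $\{\ti{W}=0\}$.
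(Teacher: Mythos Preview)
Your proposal is correct and follows essentially the same approach as the paper: rescale to frequency $\lambda = E^{1/2}/h$, apply Lemma \ref{1dgcclemma} with $f_1 = h^{-2}\rhe - ih^{-1}\ti{W}\vhe$ and $f_2 = \kappa_{h,E}\ti{W}^{1/2}\p_x\vhe$, then handle $f_2$ in $H^{-1}$ via the Leibniz rewriting and the bound $|\p_x\ti{W}^{1/2}| \leq C\ti{W}^{1/4}$, interpolate $\ltwo{\ti{W}^{1/4}\vhe}$, and absorb with Young's inequality. The only cosmetic difference is that the paper controls $\nm{\vhe}_{L^2(I)}$ by $\ltwo{\ti{W}\vhe}$ rather than $\ltwo{\ti{W}^{1/2}\vhe}$, and the absorption of $\e\ltwo{\vhe}$ is by choosing $\e$ small (not $h$ small), but neither affects the argument.
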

\begin{proof}
Let $\lambda = h^{-1} E^{\frac{1}{2}} \geq h^{-\frac{1}{2}} \dh^{\frac{1}{2}}$. Then for $h$ small enough $\lambda \geq 1$. We can rearrange \eqref{vhedef}
\begin{equation}
(-\p_x^2 - \lambda^2) \vhe = h^{-2} \rhe - i h^{-1} \ti{W} \vhe - \kappa_{h,E} \ti{W}^{\frac{1}{2}} \p_x \vhe.
\end{equation}
Then by Lemma \ref{1dgcclemma} with $v=\vhe, f_1=h^{-2}\rhe-ih^{-1}\ti{W} \vhe, f_2 = -\kappa_{h,E} \ti{W}^{\frac{1}{2}} \p_x\vhe$ and $I=\{\ti{W}>c\}$
\begin{align}
\ltwo{\vhe} \leq& C \left( h E^{-\frac{1}{2}} \ltwo{h^{-2} \rhe - i h^{-1} \ti{W} \vhe} + \hp{\kappa_{h,E} \ti{W}^{\frac{1}{2}} \p_x \vhe}{-1} + \nm{\vhe}_{L^2(I)} \right) \\
\leq& C h^{-\frac{3}{2}} \dh^{-\frac{1}{2}} \ltwo{\rhe} + h^{-\frac{1}{2}}\dh^{-\frac{1}{2}} \ltwo{\ti{W} \vhe} \\
& + \hp{\p_x(\kappa_{h,E} W^{\frac{1}{2}} \vhe) - \p_x(\kappa_{h,E} \ti{W}^{\frac{1}{2}}) \vhe}{-1} + \frac{1}{c} \ltwo{\ti{W} \vhe}. \label{vhehighhypeq}
\end{align}
Where the second inequality follows \ed{by the product rule}. Since $|\p_x (\kappa_{h,E} \ti{W}^{\frac{1}{2}})| \leq C \ti{W}^{\frac{1}{4}}$, the $H^{-1}$ term is bounded by 
$$
\ltwo{\ti{W}^{\frac{1}{2}} \vhe} + \ltwo{\ti{W}^{\frac{1}{4}} \vhe} \leq \ltwo{\ti{W}^{\frac{1}{2}} \vhe} + C \ltwo{\ti{W}^{\frac{1}{2}} \vhe}^{\frac{1}{2}} \ltwo{\vhe}^{\frac{1}{2}}.
$$
Combining this with \eqref{vhehighhypeq} and Young's inequality for products, we obtain the desired result.
\end{proof}

\noindent 3) Low energy hyperbolic regime $c_1 h^2 \leq E \leq h \dh$

Again we let $\lambda = h^{-1} E^{\frac{1}{2}}$, so $\lambda \leq h^{-\frac{1}{2}} \dh^{\frac{1}{2}}$. As mentioned above, in this case, the $h^2 \kappa_{h,E} \ti{W}^{\frac{1}{2}}\p_x \vhe$ term can be treated as a perturbation. 
\begin{lemma}\label{lowhypinterlemma}
If $E \leq h \dh$, then there exists $C>0$, not depending on $h, \dh$ or  $E$ such that 
\begin{equation}
\ltwo{\ti{W}^{\frac{1}{2}} \p_x \vhe} \leq C \bigg( h^{-2} \ltwo{\rhe} + h^{-\frac{1}{2}} \dh^{\frac{1}{2}} \ltwo{\ti{W}^{\frac{1}{2}} \vhe}\\
+\ltwo{\ti{W}^{\frac{1}{2}} \vhe}^{\frac{1}{2}} \ltwo{\vhe}^{\frac{1}{2}}\bigg).
\end{equation}
\end{lemma}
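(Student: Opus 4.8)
The plan is to run the weighted energy identity of Lemma~\ref{weightedenergylemma} with the weight $w=\ti{W}$ itself. This is legitimate since $\ti{W}\in W^{9,\infty}(\T)\subset C^2(\T)$, and the fractional powers of $\ti{W}$ that appear are regular enough precisely because of the derivative bounds packaged in $\Dc^{9,\frac14}(\T^2)$. With $w=\ti{W}$ the left-hand side of that identity is exactly $\int_\T \ti{W}\,|h\p_x\vhe|^2\,dx=h^2\ltwo{\ti{W}^{\frac12}\p_x\vhe}^2$, and, after dividing by $h^2$, rearranging, and using $\ti{W}\cdot\ti{W}^{\frac12}=\ti{W}^{\frac32}$, it reads
\[
\ltwo{\ti{W}^{\frac12}\p_x\vhe}^2 = h^{-2}\Re\!\int_\T \ti{W}\,\rhe\,\bvhe\,dx + \tfrac12\!\int_\T (\p_x^2\ti{W})\,|\vhe|^2\,dx + h^{-2}E\,\ltwo{\ti{W}^{\frac12}\vhe}^2 + \tfrac12\!\int_\T \p_x\!\big(\kappa_{h,E}\ti{W}^{\frac32}\big)\,|\vhe|^2\,dx.
\]
It then suffices to estimate these four terms.

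For the first term, Cauchy--Schwarz together with $\ti{W}\le C$ gives $h^{-2}\big|\Re\int\ti{W}\rhe\bvhe\big|\le Ch^{-2}\ltwo{\rhe}\ltwo{\ti{W}^{\frac12}\vhe}$, which Young's inequality splits into $\tfrac14 h^{-4}\ltwo{\rhe}^2$ plus a constant times $\ltwo{\ti{W}^{\frac12}\vhe}^2$. For the second term I use the Hessian bound $|\nabla^2\ti{W}|\le C\ti{W}^{1-2(1/4)}=C\ti{W}^{\frac12}$ coming from $\Dc^{9,\frac14}$, and then $\int\ti{W}^{\frac12}|\vhe|^2\le \ltwo{\ti{W}^{\frac12}\vhe}\ltwo{\vhe}$ by Cauchy--Schwarz, so this term is $\le C\ltwo{\ti{W}^{\frac12}\vhe}\ltwo{\vhe}$. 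For the third term, the hypothesis $E\le h\dh$ gives $h^{-2}E\le h^{-1}\dh$ (if $E<0$ this term has a favorable sign and is discarded), so it is $\le h^{-1}\dh\,\ltwo{\ti{W}^{\frac12}\vhe}^2$. For the fourth term, expanding the derivative and using $|\p_x\ti{W}|\le C\ti{W}^{\frac34}$, the uniform $W^{1,\infty}$ bound on $\kappa_{h,E}$, and boundedness of $\ti{W}$, one obtains $\big|\p_x(\kappa_{h,E}\ti{W}^{\frac32})\big|\le C(\ti{W}^{\frac54}+\ti{W}^{\frac32})\le C\ti{W}^{\frac12}$, hence this term is again $\le C\ltwo{\ti{W}^{\frac12}\vhe}\ltwo{\vhe}$. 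Finally, absorbing the leftover $\ltwo{\ti{W}^{\frac12}\vhe}^2$ into $C\ltwo{\ti{W}^{\frac12}\vhe}\ltwo{\vhe}$ (again via $\ti{W}^{\frac12}\le C$) collects everything into
\[
\ltwo{\ti{W}^{\frac12}\p_x\vhe}^2 \le C\Big( h^{-4}\ltwo{\rhe}^2 + h^{-1}\dh\,\ltwo{\ti{W}^{\frac12}\vhe}^2 + \ltwo{\ti{W}^{\frac12}\vhe}\,\ltwo{\vhe}\Big),
\]
and taking square roots, using subadditivity of $\sqrt{\,\cdot\,}$, yields the asserted inequality.

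The computation is essentially routine energy bookkeeping; the step needing genuine care is matching the derivative-bound hypotheses in $\ti{W}\in\Dc^{9,\frac14}$ to the weight-dependent terms — namely that $|\nabla^2\ti{W}|\le C\ti{W}^{\frac12}$ is exactly strong enough to absorb the $\p_x^2\ti{W}$ contribution, and that $|\p_x\ti{W}|\le C\ti{W}^{\frac34}$ (with boundedness of $\ti{W}$ and of $\kappa_{h,E}$) makes $\p_x(\kappa_{h,E}\ti{W}^{\frac32})$ no larger than a multiple of $\ti{W}^{\frac12}$ — together with checking that every error term lands in one of the three buckets on the right, the only mild subtlety there being $\ltwo{\ti{W}^{\frac12}\vhe}^2\le C\ltwo{\ti{W}^{\frac12}\vhe}\ltwo{\vhe}$ since $\ti{W}$ is bounded. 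Note that only the upper bound $E\le h\dh$ is used here; the lower bound $c_1 h^2\le E$ of the low-energy hyperbolic regime plays no role in this lemma.
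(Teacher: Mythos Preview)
Your proof is correct and is essentially the same computation as the paper's, just packaged differently: the paper integrates $\int \ti{W}\,|\p_x\vhe|^2$ by parts directly and substitutes the equation for $\p_x^2\vhe$, while you invoke Lemma~\ref{weightedenergylemma} with $w=\ti{W}$, which encodes exactly that integration by parts. The term-by-term estimates (using $|\nabla^2\ti{W}|\le C\ti{W}^{1/2}$, $|\nabla\ti{W}|\le C\ti{W}^{3/4}$, $E\le h\dh$, and $\ti{W}\le C$) are the same in both proofs, up to harmless differences in where Young's inequality is applied.
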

\begin{proof}
Writing $\lambda = h^{-1} E^{\frac{1}{2}}$, $\vhe$ solves
\begin{equation}
-\p_x^2 \vhe - \lambda^2 \vhe + i h^{-1} \ti{W} \vhe= h^{-2} \rhe - \kappa_{h,E} \ti{W}^{\frac{1}{2}} \p_x \vhe.
\end{equation}
So now, consider the quantity to be estimated, integrate by parts, and apply the above equation
\begin{align}
\Re \int_{\T} \ti{W} \p_x \vhe \p_x \bvhe dx =& - \Re \int_{\T} \p_x \ti{W} \p_x \vhe \bvhe dx - \Re \int_{\T} \ti{W} \p_x^2 \vhe \bvhe dx \\
=&-\Re \int_{\T} \p_x \ti{W} \p_x \vhe \bvhe dx \\
& -\Re \int_{\T} \ti{W} {\bvhe} \left(-\lambda^2 \vhe + i h \ti{W} \vhe - h^2 \rhe + \kappa_{h,E} \ti{W}^{\frac{1}{2}} \p_x \vhe \right) dx. \label{lowhypintereq}
\end{align}
We now estimate each of the terms on the right hand side in turn.

Writing $\Re(\p_x \vhe {\bvhe}) = \frac{1}{2} \p_x |\vhe|^2$, integrating by parts, using that $\ti{W} \in \Dc^{9,\frac{1}{4}}$ and applying H\"older's inequality
\begin{equation}
-\Re \int_{\T} \p_x \ti{W} \p_x \vhe \vhe dx = \frac{1}{2} \int_{\T} \p_x^2 \ti{W} |\vhe|^2 \leq C \ltwo{\ti{W}^{\frac{1}{4}} \vhe}^2 \leq C \ltwo{\ti{W}^{1/2} \vhe} \ltwo{\vhe}.
\end{equation}
Using the same identity of $\vhe$ we can also write
\begin{align*}
\Re \int_{\T} \ti{W} \kappa_{h,E} \bvhe \ti{W}^{\frac{1}{2}} \p_x \vhe dx &= \frac{1}{2} \int_{\T} \ti{W}^{\frac{3}{2}} \kappa_{h,E} \p_x |\vhe|^2 dx \\
&= -\frac{1}{2} \int_{\T} \p_x (\kappa_{h,E} \ti{W}^{\frac{3}{2}}) |\vhe|^2 dx. 
\end{align*}
So 
\begin{equation}
\left|\Re \int_{\T} \ti{W} \bvhe \kappa_{h,E} \ti{W}^{\frac{1}{2}} \p_x \vhe dx \right| \leq C \ltwo{\ti{W}^{\frac{1}{2}} \vhe}^2.
\end{equation}
Since $\lambda^2 \leq h^{-1} \dh$ 
\begin{equation}
\left| \int_{\T} \kappa_{h,E} \ti{W}^{\frac{3}{2}} \lambda^2 |\vhe|^2 dx \right| \leq C h^{-1} \dh \ltwo{ \ti{W}^{\frac{1}{2}} \vhe}^2. 
\end{equation}
For the final term
\begin{align*}
\left|\Re \int_{\T} \ti{W} \bar{v}_{h,E} (-ih^{-1} \ti{W} \vhe - h^{-2} \rhe) dx \right| &= \left| h^{-2} \Re \int_{\T} \ti{W} \bar{v}_{h,E}  \rhe dx \right| \\
& \leq h^{-2} \ltwo{\rhe} \ltwo{\ti{W}^{\frac{1}{2}} \vhe}.
\end{align*}
Plugging these back into \eqref{lowhypintereq}, applying Young's inequality for products, and taking the square root of both sides gives the desired result. 
\end{proof}

We can now complete the low energy hyperbolic estimate using the assumed 1-d resolvent estimates 
\begin{lemma}\label{lowhyplemma}
There exist $c_1, h_0>0$ such that if $h \in (0,h_0)$ and $c_1 h^2 \leq E \leq h \dh$, then
\begin{equation}
\ltwo{\vhe} \leq C \left( h^{-2} \dh^{-1} \ltwo{\rhe} + h^{-\frac{1}{2}} \dh^{-\frac{1}{2}} \ltwo{\ti{W}^{\frac{1}{2}} \vhe} \right).
\end{equation}
\end{lemma}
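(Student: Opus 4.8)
The plan is to dispose of this final (low-energy hyperbolic) regime by treating the first-order term $h^2\kappa_{h,E}\ti{W}^{1/2}\p_x\vhe$ as a perturbation that can be moved to the right-hand side, after which the assumed one-dimensional resolvent estimate \eqref{1dresolveassume} applies directly. Concretely, I would rewrite \eqref{vhedef} as
\begin{equation*}
(-h^2\p_x^2 + ih\ti{W} - E)\vhe = \rhe - h^2\kappa_{h,E}\ti{W}^{1/2}\p_x\vhe,
\end{equation*}
and, using \eqref{1dresolveassume} together with the uniform $L^\infty$ bound on the family $\kappa_{h,E}$, conclude
\begin{equation*}
\ltwo{\vhe} \leq C\dh^{-1}h^{-2}\ltwo{\rhe} + C\dh^{-1}\ltwo{\ti{W}^{1/2}\p_x\vhe}.
\end{equation*}
Note there is no circularity here: the right-hand side involves the weighted derivative $\ltwo{\ti{W}^{1/2}\p_x\vhe}$, not $\ltwo{\vhe}$, and it is exactly that quantity which Lemma \ref{lowhypinterlemma} was set up to control — this is why that interior estimate was proved first.

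The next step is to substitute the conclusion of Lemma \ref{lowhypinterlemma},
\begin{equation*}
\ltwo{\ti{W}^{1/2}\p_x\vhe} \leq C\big(h^{-2}\ltwo{\rhe} + h^{-1/2}\dh^{1/2}\ltwo{\ti{W}^{1/2}\vhe} + \ltwo{\ti{W}^{1/2}\vhe}^{1/2}\ltwo{\vhe}^{1/2}\big),
\end{equation*}
into the previous display. The first term then contributes $C\dh^{-1}h^{-2}\ltwo{\rhe}$, of the desired shape; the second contributes $Ch^{-1/2}\dh^{-1/2}\ltwo{\ti{W}^{1/2}\vhe}$ (the two powers $\dh^{-1}$ and $\dh^{1/2}$ combining to $\dh^{-1/2}$), again of the desired shape; and the cross term $C\dh^{-1}\ltwo{\ti{W}^{1/2}\vhe}^{1/2}\ltwo{\vhe}^{1/2}$ I would split by Young's inequality for products as $\e\ltwo{\vhe} + C_\e\dh^{-2}\ltwo{\ti{W}^{1/2}\vhe}$, absorbing the $\e\ltwo{\vhe}$ piece into the left-hand side after choosing $\e$ small.

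The only genuinely non-bookkeeping point — and what I expect to be the crux — is that the coefficient $\dh^{-2}$ produced by Young's inequality on $\ltwo{\ti{W}^{1/2}\vhe}$ must be dominated by the target coefficient $h^{-1/2}\dh^{-1/2}$. This amounts to $\dh^{-3/2}\leq h^{-1/2}$, i.e. $h^{1/3}\leq\dh$, which holds for all sufficiently small $h$ precisely by the standing hypothesis $h^{1/3}\dh^{-1}=o(1)$. Once that is in hand, the estimate closes after absorbing constants into $C$, yielding exactly the claimed bound. I would remark in passing that the lower bound $E\geq c_1h^2$ plays no role in this argument itself; it is only there so that the elliptic, low-energy hyperbolic, and high-energy hyperbolic regimes together exhaust $\Rb$ when Lemmas \ref{ellipticlemma}, \ref{highhyplemma}, and \ref{lowhyplemma} are combined to prove Proposition \ref{1dresolveprop}.
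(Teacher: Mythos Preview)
Your proposal is correct and follows essentially the same approach as the paper: rewrite \eqref{vhedef} to put the first-order term on the right, apply the assumed 1-d resolvent estimate \eqref{1dresolveassume}, substitute Lemma \ref{lowhypinterlemma}, use Young's inequality on the cross term, and close using $h^{1/3}\dh^{-1}=o(1)$ to compare $\dh^{-2}$ against $h^{-1/2}\dh^{-1/2}$. Your closing remark about the lower bound on $E$ being inessential to this particular lemma is also accurate.
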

\begin{proof}
Write $\vhe$ as a solution of 
\begin{equation}
(-h^2 \p_x^2 + i h \ti{W} - E) \vhe = \rhe - h^2 \kappa_{h,E} \ti{W}^{\frac{1}{2}} \p_x \vhe.
\end{equation}
Then by the assumed 1-d resolvent estimate \eqref{1dresolveassume} and Lemma \ref{lowhypinterlemma}
\begin{align*}
\ltwo{\vhe} &\leq C h^{-2} \dh^{-1} \ltwo{\rhe-h^2 \kappa_{h,E} \ti{W}^{\frac{1}{2}} \p_x \vhe} \\
& \leq C \bigg( h^{-2} \dh^{-1} \ltwo{\rhe} + \dh^{-1}h^{-\frac{1}{2}} \dh^{\frac{1}{2}} \ltwo{\ti{W}^{\frac{1}{2}}\vhe} \\
&\qquad \qquad + \dh^{-1} \ltwo{\ti{W}^{\frac{1}{2}} \vhe}^{\frac{1}{2}} \ltwo{\vhe}^{\frac{1}{2}} \bigg).
\end{align*}
Applying Young's inequality to the last term and absorbing back $\ltwo{\vhe}$ into the left hand side 
\begin{equation}
\ltwo{\vhe} \leq C \left( h^{-2} \dh^{-1} \ltwo{\rhe} + (h^{-\frac{1}{2}} \dh^{-\frac{1}{2}} + C_{\e} \dh^{-2}) \ltwo{\ti{W} \vhe} \right).
\end{equation}
Now $\dh^{-2} \leq C h^{-\frac{1}{2}} \dh^{-\frac{1}{2}}$ since $h^{\frac{1}{3}} \dh^{-1}=o(1)$ and so the desired estimate holds. 
\end{proof}
Now we conclude the proof of Proposition \ref{1dresolveprop}. Since the 3 cases $E \leq c_1 h^2, c_1 h^2 \leq E \leq h \dh, h\dh \leq E$ cover all possible relations between $E$ and $h$, by Lemma \ref{ellipticlemma}, \ref{highhyplemma}, and \ref{lowhyplemma}, we have the existence of $C, h_0>0$ such that if $h \in (0,h_0)$ then
\begin{equation}
\ltwo{\vhe} \leq C \left( (h^{-2} \dh^{-1} + h^{-\frac{3}{2}} \dh^{-\frac{1}{2}} + h^{-2} ) \ltwo{\rhe} + (h^{-\frac{1}{2}} \dh^{-\frac{1}{2}} +1) \ltwo{\ti{W}^{\frac{1}{2}} \vhe} \right).
\end{equation}
Then, potentially taking $h_0$ smaller, we can drop the $h^{-\frac{3}{2}} \dh^{-\frac{1}{2}}, h^{-2},$ and $1$ terms to obtain the desired inequality. 
\end{proof}

\section{Applications involving geometry of the damped set}\label{s:geometryApplications}
\ed{In this section, we discuss consequences of Theorem \ref{averagedresolventthm} where the damped set $\{W>0\}$ satisfies some additional geometric considerations. We first state the results, then in subsection \ref{avggrowthsec} we state and prove averaging results which we use in the proofs. In section \ref{proofconvexrate} we prove these energy decay rates.} 
	
First we consider damping possessing a product structure and supported on a rectangle, and show that it decays at rate matching the decay rate of its ``rougher" direction. 
\begin{theorem}\label{rectthm}
	Assume that $W \in \mathcal{D}^{9,\frac{1}{4}}(\T^2)$. Suppose $\{W>0\}$ is the rectangle $(-\sigma_1, \sigma_1)_x \times (\sigma_2, \sigma_2)_y$ and let $\tix=(|x|-\sigma_1)_+$ and $\tiy = (|y|-\sigma_2)_+$.  
	\begin{enumerate}
		\item Assume that there exists $R_0 \geq 1,$ such that for  $(x,y) \in \{W>0\}$ 
		\begin{equation}
			R_0^{-1} \tix^{\beta_1} \tiy^{\beta_2} \ln(\tix^{-1})^{-\gamma_1} \ln(\tiy^{-1})^{-\gamma_2} \leq W(x,y) \leq R_0 \tix^{\beta_1} \tiy^{\beta_2} \ln(\tix^{-1})^{-\gamma_1} \ln(\tiy^{-1})^{-\gamma_2},
		\end{equation}
		with $0 \leq \beta_1<\beta_2$ and $\gamma_1, \gamma_2 \in \Rb$, or $0< \beta_1=\beta_2$, and $\gamma_1<\gamma_2 \in \Rb$, then energy decays at rate 
		\begin{equation}
			r(t)=t^{-\frac{\beta_1+2}{\beta_1+3}} \ln(\ed{2}+t)^{\frac{-\gamma_1}{\beta_1+3}}.
		\end{equation}
		\item Assume that there exists $R_0 \geq 1,$ such that for   $(x,y) \in \{W>0\}$  
		\begin{equation}
			R_0^{-1} \tix^{\beta_1} \tiy^{\beta_2} \exp(-c_1 \tix^{-\alpha_1} - c_2 \tiy^{-\alpha_2})  \leq W(x,y) \leq R_0 \tix^{\beta_1} \tiy^{\beta_2} \exp(-c_1 \tix^{-\alpha_1} - c_2 \tiy^{-\alpha_2}), 
		\end{equation}
		with $c_1, c_2>0, \beta_1, \beta_2 \in \Rb,$ and $0<\alpha_1 \leq \alpha_2$, and that $W$ satisfies Assumptions \ref{noninvarassumption} and \ref{widegenbadassumption} with $q(z) = \ln(z^{-1})^{-\left(\frac{\alpha_1+1}{\alpha_1}\right)}, p(z)=q(z)^2$, 
		then energy decays at rate 
		\begin{equation}
			r(t) = t^{-1} \ln(\ed{2}+t)^{\frac{2(\alpha_1+1)}{\alpha_1}-\frac{1}{\alpha_2}}.
		\end{equation}
		%	\item Assume that there exists $R_0>0,$ such that for  $(x,y) \in \omega$  
		%	\begin{equation}
			%	R_0^{-1}  \tix^{\beta_1} \ln(\tix^{-1})^{-\gamma_1} \tiy^{\beta_2} \exp(-c_2 \tiy^{-\alpha_2}) \leq W(x,y)  \leq R_0 \tix^{\beta_1} \ln(\tix^{-1})^{-\gamma_1} \tiy^{\beta_2} \exp(-c_2 \tiy^{-\alpha_2}),
			%	\end{equation}
		%	with $\beta_1 \geq 0, \gamma_1 \in \Rb, c_2, \alpha_2>0$, and $\beta_2 \in \Rb$, then energy decays at rate 
		%	\begin{equation}
			%	r(t)=t^{-\frac{\beta_1+2}{\beta_1+3}} \ln(t)^{-\frac{\gamma_1}{\beta_1+3}}.
			%	\end{equation}
	\end{enumerate}
\end{theorem}
\begin{remark}
	\begin{enumerate}
		\item Note that $W(x,y)=(|x|-\sigma_1)_+^{\beta_1} (\ed{|y|}-\sigma_2)_+^{\beta_2}$ with $9 \leq \beta_1 \leq \beta_2$ satisfies \eqref{ddbc} with $\e=1/\beta_1$. \ed{Thus Theorem \ref{rectthm} gives energy decay at a faster rate than the $t^{-\frac{\beta_1+1}{\beta_1+2}}$ provided by 
		 Theorem \ref{dbctheorem}}.
		\item In case 1 the damping has less regularity and faster growth in the $x$ direction, than in the $y$ direction. One would anticipate that the additional regularity in the $y$ direction should not make the energy decay worse than the rate for $y$-invariant damping in Theorem \ref{wideexthm}, and our result bears this out. 
		\item When $W(x,y) = \exp(-(|x|-\sigma_1)_+^{-\alpha_1} - (|y|-\sigma_2)_+^{\alpha_2})$ with $0 < \alpha_1 \leq \alpha_2$, it satisfies the hypotheses of case 2 of Theorem \ref{rectthm}. It also satisfies \eqref{ddbc} for any $\e>0$, so Theorem \ref{dbctheorem} would only give decay at $t^{-1+\d}$, for any $\d>0$, which is slower than the decay provided by Theorem \ref{rectthm} case 2. For such damping, it would be reasonable to expect that the $\frac{1}{\alpha_2}$ term in the decay rate in case 2 could be replaced by an $\alpha_1$ to match the best decay rate from Theorem \ref{wideexthm}, as we did in case 1 of this theorem, but we are unable to do so. 
		\item We do not include damping with growth behavior given purely by a power of $\ln(x^{-1})$ as such damping do not satisfy $W \in \Dc^{9,\frac{1}{4}}(\T^2)$. 
		\item In case 2, the concavity portions of Assumptions \ref{noninvarassumption} and \ref{widegenbadassumption} are guaranteed for such a $q$ and $p$ by Lemma \ref{logconcavelemma}.
	\end{enumerate}
\end{remark}
\ed{Before discussing our next result, recall that} in \cite{Sun23}, energy decay rates for the $y$-invariant damping $(|x|-\sigma)_+^{\beta}$ are converted to energy decay rates for damping supported on strictly convex sets with positive curvature and growing like $d^{\beta}$, where $d$ is the distance to the boundary of the support. In particular, the energy decay rate is improved from $t^{-\frac{\beta+2}{\beta+3}}$  for $y$-invariant damping, to $t^{-\frac{\beta+\frac{1}{2}+2}{\beta+\frac{1}{2}+3}}$ for damping with strictly convex support.  We generalize this by extending it to damping turning on non-polynomially and to sets that interpolate between rectangles and strictly convex sets with positive curvature. To make this discussion precise, we define locally strictly convex sets with positive curvature.

\begin{definition}
	Let $\T^2_{A,B} :=\Rb^2 /(2\pi A \times 2 \pi B)$ be a general flat torus defined via the covering map $\pi_{A,B}: \Rb^2 \ra \T^2_{A,B}$. 
	An open set $\omega \subset \T^2$ is said to be locally strictly convex with positive curvature, if the boundary of each component of $\pi^{-1}(\omega) \subset \Rb^2$ is $C^2$ and has strictly positive curvature as a closed curve in $\Rb^2$. Sometimes we will say that the boundary of $\omega$ is locally strictly convex. 
	
	For a function $f$, we define notation for the boundary of its support 
	\begin{equation}
		\ed{\Sigma(f) = \p \{z: f(z)>0\}.}
	\end{equation}
\end{definition}

When $\Sigma(W)$ is locally strictly convex with positive curvature, we are able to generalize the improvement made in \cite{Sun23} to poly-log damping. We also point out that applying this same approach to poly-exponential damping does not improve the energy decay rate.

\begin{theorem}\label{polylogconveximprovethm}
	Assume that $W \in \mathcal{D}^{9,\frac{1}{4}}(\T^2)$.  Assume further that $\{ (x,y) \in \T^2; W(x,y)>0\}$ is locally strictly convex and let $d=dist((x,y), \Sigma(W))$.
	\begin{enumerate}
		\item If there exists $R_0 \geq 1, \beta > 0,$ and $\gamma \in \Rb,$ such that for  $(x,y) \in \{W>0\}$
		\begin{equation}
			R_0^{-1} d^{\beta} \ln(d^{-1})^{-\gamma} \leq W(x,y) \leq R_0 d^{\beta} \ln(d^{-1})^{-\gamma},
		\end{equation}
		%Suppose further that $W$ satisfies Assumption \ref{noninvarassumption} with $q(z)=z^{1/\beta} \ln(z^{-1})^{\gamma/\beta}, p(z) = q(z)^2$. 
		then solutions of \eqref{DWE} are stable at rate 
		\begin{equation}
			r(t) = t^{-\frac{\beta+\frac{1}{2}+2}{\beta+\frac{1}{2}+3}} \ln(\ed{2}+t)^{\frac{\gamma}{\beta+\frac{1}{2}+3}}.
		\end{equation}
		\item If there exists $R_0 \geq 1, \alpha,c>0,$ and $\beta \in \Rb,$ such that for $(x,y) \in \{W>0\}$
		\begin{equation}
			R_0^{-1} d^{\beta} \exp(-cd^{-\alpha}) \leq W(x,y) \leq R_0 d^{\beta} \exp(-cd^{-\alpha}),
		\end{equation}
		and $W$ satisfies Assumptions \ref{noninvarassumption} and \ref{widegenbadassumption} with $q(z) = \ln(z^{-1})^{-\left(\frac{\alpha+1}{\alpha}\right)}, p(z)=q(z)^2$, 
		then solutions of \eqref{DWE} are stable at rate
		\begin{equation}
			r(t) = t^{-1} \ln(\ed{2}+t)^{\frac{2\alpha+1}{\alpha}}.
		\end{equation}
	\end{enumerate}
\end{theorem}
\begin{remark}
	\begin{enumerate}
		\item Case 1 recovers the sharp decay of \cite{Sun23} at $\gamma=0$. When $\gamma$ is nonzero, there is a $\gamma$ dependent $\ln(\ed{2}+t)$ change in the energy decay rate. Just as \cite{Sun23} replaces $\beta$ by $\beta+\frac{1}{2}$ in the rate of \cite{Kleinhenz2019, DatchevKleinhenz2020}, this Theorem replaces $\beta$ by $\beta+\frac{1}{2}$ in the rate from Theorem \ref{wideexthm}.
		\item This result obtains the improved energy decay rate for poly-log $W$ in part by showing that averaging $W$ along straight lines increases the polynomial power from $\beta$ to $\beta+\frac{1}{2}$, when $\{W>0\}$ is strictly convex. When $W$ is poly-\ed{exponential} and $\{W>0\}$ is strictly convex, averaging $W$ along straight lines actually increases the polynomial power from $\beta$ to $\beta+\frac{1}{2}+\alpha$. However, the change in the polynomial power is only relevant in the derivative bound condition for poly-log damping, \ed{see} Theorem \ref{wideexthm}, Examples \ref{polyexp1dex} and \ref{polylog1dex}, which is why there is no change to the energy decay rate for poly-exponential damping. 
		%\item For the details of the improvement of H\"older regularity of $W$ see Lemma \ref{avgconveximproveprop}.
		\item In case 2, the concavity portions of Assumptions \ref{noninvarassumption} and \ref{widegenbadassumption} are guaranteed for such a $q$ and $p$ by Lemma \ref{logconcavelemma}.
	\end{enumerate}
\end{remark}

To interpolate between strictly convex sets and rectangles we use superellipses, also called Lam\'e curves. 

We define the general superellipse, for some $a,b>0, n \geq m \geq 2$ with $n, m \in \Rb$
\begin{equation}
	E_{a,b}^{n,m} :=\{(x,y) \in \T^2: W(x,y)>0\} = \left\{\left|\frac{x}{a}\right|^{n} + \left|\frac{y}{b}\right|^{m} < 1\right \}.
\end{equation}
\begin{figure}[h]
	\centering
	\begin{tikzpicture}
		[
		declare function={
			sxn(\t)= a*cos(\t r)^(2/n);
			syn(\t)= b*sin(\t r)^(2/n);
			sxm(\t)=a*cos(\t r)^(2/m);
			sym(\t)=b*sin(\t r)^(2/n);
			a=3;
			b=2;
			n=4;
			m=10;
		}]
		%\draw (-3,-3) rectangle (3,3);
		%\draw[fill=persred] (-.5,0) rectangle (.5,2.5);
		\draw[usuai, thick] (-a,-b) rectangle (a, b);
		\draw[imayou, variable=\t, domain=0:pi/2, thick] plot ({sxm(\t)},{sym(\t)}) -- plot({-sxm(pi/2-\t)},{sym(pi/2-\t)}) -- plot({-sxm(\t)},{-sym(\t)}) -- plot({sxm(pi/2-\t)},{-sym(pi/2-\t)}) -- cycle;
		\draw[persred, variable=\t, domain=0:pi/2, thick] plot ({sxn(\t)},{syn(\t)}) -- plot({-sxn(pi/2-\t)},{syn(pi/2-\t)}) -- plot({-sxn(\t)},{-syn(\t)}) -- plot({sxn(pi/2-\t)},{-syn(pi/2-\t)}) -- cycle;
		\draw[sand, thick] (0,0) ellipse (a and b);
		%\draw[pattern=north east lines, pattern color=imayou] (.5,0) rectangle (1.25,2.5);
		%\node[text=persred] at (0,-.2) {$W=0$};
		\node[sand] at (4.5,-.5) {$n=m=2$};
		\node[persred] at (4.5,0){$n=m=4$};
		\node[imayou] at (4.5,.5){$n=4, m=10$};
		\node[usuai] at (4.5,1) {$``n=m=\infty"$};
	\end{tikzpicture}
	\caption{The superellipse $E_{a,b}^{n,m}$ for various values of $n, m$. As $n$ increases it becomes more ``rectangular" in the $x$ variable and as $m$ increases it becomes more ``rectangular" in the $y$ variable.}
\end{figure}

When $\ed{m}>2$, $\p E_{a,b}^{n,m}$ does not have positive curvature at $(\pm a,0)$ and when $\ed{n} >2, \p E_{a,b}^{n,m}$ does not have positive curvature at $(0, \pm b)$. In both cases it has strictly positive curvature at all other points. This loss of positive curvature prevents the full improvement from $\beta$ to $\beta+\frac{1}{2}$ in the decay rate for poly-log damping, but it is not too severe to prevent a weaker improvement. In particular the energy decay rate is improved by replacing $\beta$ by $\beta+\frac{1}{n}$ (i.e. adding the smaller of $\frac{1}{n}$ and $\frac{1}{m}$). There is again no improvement to the energy decay rate for poly-exponential damping,  for the same reasons as before. 
\begin{theorem}\label{ellipsethm}
	Assume that $W \in \mathcal{D}^{9,\frac{1}{4}}(\T^2)$.  Assume further that $\{ (x,y) \in \T^2; W(x,y)>0\}=E_{a,b}^{n,m}$ and let $d=dist((x,y), \Sigma(W))$.
	\begin{enumerate}
		\item If there exists $R_0 \geq 1, \beta>0,$ and $\gamma \in \Rb,$ such that for $(x,y) \in \{W>0\}$
		\begin{equation}
			R_0^{-1} d^{\beta} \ln(d^{-1})^{-\gamma} \leq W(\ed{x,y}) \leq R_0 d^{\beta} \ln(d^{-1})^{-\gamma},
		\end{equation}
		then the damped wave equation is stable at rate 
		\begin{equation}
			r(t) = t^{-\frac{\beta+\frac{1}{n}+2}{\beta+\frac{1}{n}+3}} \ln(\ed{2}+t)^{-\frac{\gamma}{\beta+\frac{1}{n}+3}}.
		\end{equation}
		\item If there exists $R_0 \geq 1, \alpha,c>0,$ and  $\beta \in \Rb,$ such that for $(x,y) \in \{W>0\}$
		\begin{equation}
			R_0^{-1} d^{\beta} \exp(-cd^{-\alpha}) \leq W(x,y) \leq R_0 d^{\beta} \exp(-cd^{-\alpha}),
		\end{equation}
		and $W$ satisfies Assumptions \ref{noninvarassumption} and \ref{widegenbadassumption} with $q(z) = \ln(z^{-1})^{-\left(\frac{\alpha+1}{\alpha}\right)}, p(z)=q(z)^2$, then solutions of \eqref{DWE} are stable at rate
		\begin{equation}
			r(t) = t^{-1} \ln(\ed{2}+t)^{\frac{2\alpha+1}{\alpha}}.
		\end{equation}
	\end{enumerate}
\end{theorem}

\begin{remark}
	\begin{enumerate}	
		\item When $n=m=2$, this is just an ellipse, which is convex with strictly positive curvature, and the decay rate exactly matches the improvement for convex sets with strictly positive curvature in Theorem \ref{polylogconveximprovethm}.
		\item As $n$ or $m \ra \infty$ the superellipse begins to resemble two parallel lines joined by two curves. If both $n,m \ra \infty$, the superellipse approaches a rectangle. Either way, the energy decay rate approaches that found in Theorem \ref{rectthm}.
		\item This result generalizes \cite{Sun23} to provide improved energy decay rates for domains that are not strictly convex with positive curvature. \ed{See \cite{DKP25} for a further generalization that allows $\Sigma(W)$ to be a curve or polygon when $W(z) \simeq d^{\beta}$.} %Furthermore, we are able to describe exactly how the size of this improvement is determined by the geometry of $\{W>0\}$. 
		%\item For the details of the improvement of H\"older regularity of $W$ see Lemma \ref{superellipseaveraging}.
		\item As when $\{W>0\}$ is strictly convex, there is no improvement to the energy decay rate for poly-exponential damping, as different values of $\beta$ have no effect on the $q$ in the derivative bound condition, and thus no effect on the energy decay rate.
		\item In case 2, the concavity portions of Assumptions \ref{noninvarassumption} and \ref{widegenbadassumption} are guaranteed for such a $q$ and $p$ by Lemma \ref{logconcavelemma}.
	\end{enumerate}
\end{remark}

\subsection{Growth properties of averaged functions using the geometry of the support}\label{avggrowthsec}
In this section we prove estimates for the growth behavior of $A_v(W)$ near the boundary of its support, in terms of the growth behavior of $W$ near the boundary of its support, as well as the geometry of the support of $W$. \ed{These will be used in Section \ref{proofconvexrate} along with Theorems \ref{averagedresolventthm} and \ref{wideexthm} to prove the energy decay rates from the previous section.}

First, if $\{W>0\}$ is a rectangle, when averaging along a direction $v$ not parallel to one of the sides of the rectangle, there is a polynomial improvement in the growth behavior of $A_v(W)$ relative to $W$. 

\begin{lemma}\label{averagingsquarelemma}
Assume that $\{W>0\}$ is the rectangle $(-\sigma_1, \sigma_1) \times (-\sigma_2, \sigma_2) $ and let $\tix = (|x|-\sigma_1)_+$ and $\tiy = (|y|-\sigma_2)_+$. 
Let $v \in \Sb^1$ periodic, with $v \not \in\{(1,0), (0,1)\}$.
%We recall the derivative bound condition 
%\begin{equation}
%|\nabla A_v(W)| \leq \frac{A_v(W)}{q(A_v(W))}, \quad  |\nabla^2 A_v(W)| \leq \frac{A_v(W)}{p(A_v(W))} \quad \text{ near } A_v(W)=0.
%\end{equation}
 
\begin{enumerate}
	\item Assume that there exists $C \geq 1$ such that for $(x,y)$ near $\Sigma(W)$ 
	\begin{equation}
	C^{-1}  \ed{\tix}^{\beta_1} \ed{\tiy}^{\beta_2} \ln(\tix^{-1})^{-\gamma_1} \ln(\tiy^{-1})^{-\gamma_2} \leq W(x,y) \leq C  \ed{\tix}^{\beta_1} \ed{\tiy}^{\beta_2} \ln(\tix^{-1})^{-\gamma_1} \ln(\tiy^{-1})^{-\gamma_2},
	\end{equation}
	for $\beta_1, \beta_2 >0$ and $\gamma_1, \gamma_2 \in \Rb$, then there exists $C_v \geq 1$ such that for all $z \in \{A_v(W)(z) >0\}$ near $\Sigma(A_v(W))$, and letting $s=dist(z, \Sigma(A_v(W)))$
	\begin{equation}
	C_v^{-1} s^{\beta_1+\beta_2+1} \ln(s^{-1})^{-\gamma_1-\gamma_2} \leq A_v(W)(s) \leq C_vs ^{\beta_1+\beta_2+1} \ln(s^{-1})^{-\gamma_1-\gamma_2}.
	\end{equation}
%	Furthermore $A_v(W)$ satisfies the derivative bound condition with 
%	\begin{equation}
%	q(z) = z^{\frac{1}{\beta_1+\beta_2+1}} \ln(z^{-1})^{\frac{\gamma_1+\gamma_2}{\beta_1+\beta_2+1}}, \quad p(z)=q(z)^2.
%	\end{equation}
	\item Assume that there exists $C \geq 1$, such that for $(x,y)$ near $\Sigma(W)$
	\begin{equation}	
	 C^{-1} \tix^{\beta_1} \exp(-c_1 \tix^{-\alpha_1}) \tiy^{\beta_2} \exp(-c_2 \tiy^{-\alpha_2} )\leq W(x,y) \leq C \tix^{\beta_1} \exp(-c_1 \tix^{-\alpha_1}) \tiy^{\beta_2} \exp(-c_2 \tiy^{-\alpha_2}),
	 \end{equation}
	 for $0 < \alpha_1 \leq \alpha_2,$ $0<c_1,c_2$ and $\beta_1, \beta_2 \in \Rb$, then there exists $C_v, c_{v} \geq 1$ such that for all $z \in \{A_v(W)(z) >0\}$ near $\Sigma(A_v(W))$, and letting $s=dist(z, \Sigma(A_v(W)))$
	\begin{equation}
	\ed{C_v^{-1} \exp\left(\ed{-}c_{v} s^{-\alpha_2}\right) \leq A_v(W)(s) \leq C_v  \exp\left( \ed{-}c_{v}^{-1} s^{-\alpha_2}\right) }.
	\end{equation}
%	Furthermore $A_v(W)$ satisfies the derivative bound condition with 
%	\begin{equation}
%	q(z) = \ln(z^{-1})^{-\left(\frac{\alpha_2+1}{\alpha_2} \right)}, \quad p(z)=q(z)^2.
%	\end{equation}
%	\item Assume that near $\p R$, $W(x,y) = \tix^{\beta_1} \ln(\tix^{-1})^{-\gamma_1} \tiy^{\beta_2} \exp(-c_2 \tiy^{-\alpha_2})$ for $\beta_1 >0, \gamma_1, \beta_2 \in \Rb, \alpha_2 \geq 0, c_2 >0$, then there exists $C_v, c_v \geq 1$ such that for all $s$ 
%	\begin{align}
%	C_v^{-1} s^{\beta_1+\beta_2+1} \ln(s^{-1})^{-\gamma_1} \exp(-c_v^{-1} s^{-\alpha_2} )\leq A_v(W)(s) \leq \\
%	C_v s^{\beta_1+\beta_2+1} \ln(s^{-1})^{-\gamma_1} \exp(-c_v s^{-\alpha_2} )
%	\end{align}
\end{enumerate}
\end{lemma}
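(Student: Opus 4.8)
The core idea is that averaging $W$ along a periodic direction $v\notin\{(1,0),(0,1)\}$ over the rectangle $(-\sigma_1,\sigma_1)\times(-\sigma_2,\sigma_2)$ only ``sees'' the damping near the two edges the trajectory crosses transversally, and that near a corner the product structure of $W$ gets integrated in a way that raises the power in a predictable manner. First I would reduce to a single corner: by the product structure and the piecewise behavior of $\tix,\tiy$, the boundary $\Sigma(A_v(W))$ of the averaged set is again the boundary of a translate of the rectangle (since $A_v(W)>0$ exactly on $\pi_\Lambda^{-1}$ of the rectangle, possibly shifted, because averaging a function positive on a rectangle along a line keeps it positive on that rectangle after smoothing out the corners — more precisely, $A_v(W)(z)>0$ iff the orbit $z+tv$ meets $\{W>0\}$), so $s=\dist(z,\Sigma(A_v(W)))$ is comparable, near a corner, to $\min(\tix,\tiy)$ evaluated along the trajectory, and we may WLOG work near the corner $(\sigma_1,\sigma_2)$ with $\tix,\tiy$ both small. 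The trajectory through a point at distance $\simeq s$ from the corner, traveling in direction $v=(v_1,v_2)$ with $v_1,v_2\neq 0$, spends a parameter-length $\simeq s/|v_1|$ with $\tix(z+tv)\in[0,\text{const}\cdot s]$ and similarly for $\tiy$, so inside the rectangle the integrand looks like $(\text{linear in }t)^{\beta_1}(\text{linear in }t)^{\beta_2}$ times the log corrections.

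Concretely, for case 2 (poly-exponential), write the point as $z$ with $\tix(z)=a, \tiy(z)=b$ both $\simeq s$ (this is the generic worst case; the general estimate follows by monotonicity in $a,b$ and by checking the case where only one of them is small, which reduces to the one-dimensional poly-exponential computation already handled in Example \ref{polyexp1dex}). Parametrize $t\mapsto z+tv$ and split the period integral into the (bounded number of) intervals where the orbit is inside the rectangle. On such an interval, after an affine change of variable $t\mapsto \tau$, the contribution is
\begin{equation}
\int_0^{\simeq s} (\tau+O(s))^{\beta_1}\exp(-c_1(\tau+O(s))^{-\alpha_1})\,(s-\tau+O(s))^{\beta_2}\exp(-c_2(s-\tau+O(s))^{-\alpha_2})\,d\tau,
\end{equation}
up to direction-dependent constants absorbed into $C_v,c_v$. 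Since $\alpha_1\le\alpha_2$, the factor $\exp(-c_2 (s-\tau)^{-\alpha_2})$ is the dominant exponential suppression near $\tau=s$, while near $\tau=0$ the factor $\exp(-c_1\tau^{-\alpha_1})$ dominates; a Laplace/Watson-type asymptotic analysis shows the integral is governed by the balance point and yields $\simeq s^{\beta_1+\beta_2+\alpha_1+1}\exp(-c_v s^{-\alpha_2})$ for the two-sided bound, where the extra $s^{\alpha_1+1}$ (rather than just $s^{1}$) and the change from $c_1,c_2$ to a single $c_v$ come from the width of the Laplace peak and the fact that the slower exponential $\exp(-c_1\tau^{-\alpha_1})$ contributes a polynomial-in-$s$ prefactor $s^{\alpha_1}$ after the stationary-phase-type expansion (this mirrors the constant $C_1=\frac{\e^{-\alpha}+1}{2}$ bookkeeping in the $y$-invariant examples). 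The upper bound uses $W\le C(\cdots)$ directly and crude domination of the integrand by its peak value times the peak width; the lower bound restricts the integral to a subinterval of length $\simeq s$ near the balance point where all factors are bounded below by the claimed order.

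The main obstacle — and the step requiring the most care — is the two-sided Laplace-type estimate with \emph{two competing exponential singularities at the two ends of the interval} and unequal exponents $\alpha_1\le\alpha_2$: one must identify the correct balance point $\tau_*(s)$, show it lies strictly inside $(0,\simeq s)$, compute the Gaussian-type width there, and verify that the polynomial prefactors $\tau^{\beta_1},(s-\tau)^{\beta_2}$ evaluated at $\tau_*$ combine with the width to give exactly exponent $\beta_1+\beta_2+\alpha_1+1$. A clean way to organize this is to substitute $\tau=s\rho$, reduce to $\int_0^{1} (\cdots) \exp(-s^{-\alpha_1}[c_1\rho^{-\alpha_1}+c_2 s^{\alpha_1-\alpha_2}(1-\rho)^{-\alpha_2}])\,d\rho$ times $s^{\beta_1+\beta_2+1}$, observe that when $\alpha_1<\alpha_2$ the second bracket term forces $1-\rho$ bounded below so the effective large parameter multiplies $c_1\rho^{-\alpha_1}$ near $\rho=0$ — handing us the $s^{\alpha_1}$ gain from the standard one-variable Laplace lemma (which I would state and prove, or cite from Example \ref{polyexp1dex}, as a separate lemma) — while when $\alpha_1=\alpha_2$ one gets a genuine two-sided balance but the exponent count is the same. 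Finally I would translate the estimate on $A_v(W)$ at a point with $\tix,\tiy\simeq s$ into the stated estimate in terms of $s=\dist(z,\Sigma(A_v(W)))$ by noting these two quantities are comparable near the corner (with comparability constants depending on $v$), and handle the non-generic case where the point is near an edge but far from a corner by the already-established one-dimensional poly-exponential asymptotics of Example \ref{polyexp1dex}, which give the same exponents with $\beta_2$ or $\beta_1$ simply a bounded multiplicative factor. An entirely analogous (and simpler, since there is no exponential) argument handles case 1, where the log factors pass through the integral essentially unchanged because $\ln((\tau+O(s))^{-1})\simeq\ln(s^{-1})$ uniformly on the relevant interval.
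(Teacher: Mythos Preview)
Your reduction to a single corner is correct and matches the paper. The divergence is in how the resulting line integral is evaluated, and the paper's route is both simpler and avoids the issues in your sketch.

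The paper parametrizes the line at perpendicular distance $s$ from the corner (placed at the origin) as $t\mapsto sv^{\bot}+tv$ with $v=(-v_1,v_2)$, $v^{\bot}=(v_2,v_1)$, so the two coordinates are the linear functions $sv_2-tv_1$ and $sv_1+tv_2$, vanishing at $t=sv_2/v_1$ and $t=-sv_1/v_2$ respectively. The key move you are missing is to \emph{split the integral at $t=0$}. On $[0,sv_2/v_1]$ the second coordinate lies in $[sv_1,\,s(v_1+v_2^2/v_1)]$, hence is $\simeq s$ throughout, so its entire factor (polynomial, log, or exponential) is effectively constant and pulls out; only the first coordinate runs down to $0$. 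A linear substitution then reduces each half to the one-variable integral $\int_0^{cs} u^{\beta}\ln(u^{-1})^{-\gamma}\,du$ or $\int_0^{cs} u^{\beta}\exp(-u^{-\alpha})\,du$, handled by an elementary incomplete-gamma asymptotic (Lemma~\ref{uppergammalem}). Summing the two halves and comparing exponents (using $\alpha_1\le\alpha_2$) gives the claim directly; no Laplace analysis is needed.

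Your two-sided Laplace description is not merely more complicated---it is misleading as written. The phase $c_1\rho^{-\alpha_1}+c_2 s^{\alpha_1-\alpha_2}(1-\rho)^{-\alpha_2}$ has an interior minimum, but when $\alpha_1<\alpha_2$ it drifts to the boundary, $\rho_*\simeq s^{(\alpha_2-\alpha_1)/(\alpha_1+1)}\to 0$, so this is a boundary-layer problem and the ``Gaussian-type width'' heuristic yields the wrong polynomial exponent. Your later remark that the large coefficient forces $1-\rho$ bounded below and reduces matters to a one-variable Laplace lemma near $\rho=0$ is the correct mechanism, but carrying that out \emph{is} the split-at-the-midpoint argument above. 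Two further corrections: (i) $\{A_v(W)>0\}$ is $v$-invariant, hence a union of strips parallel to $v$, not a translated rectangle; its boundary consists of the lines through the corners parallel to $v$, which is precisely why the corner governs the vanishing. (ii) Your case-1 justification that $\ln((\tau+O(s))^{-1})\simeq\ln(s^{-1})$ uniformly fails, since one coordinate genuinely reaches $0$ at each endpoint; the log does not pull out pointwise. What is true is the integral identity $\int_0^{x} u^{\beta}\ln(u^{-1})^{-\gamma}\,du\simeq x^{\beta+1}\ln(x^{-1})^{-\gamma}$ (Lemma~\ref{uppergammalem}), which only becomes applicable \emph{after} the split isolates a single vanishing factor.
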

Note that $W(x,y)=f(x)g(y)$ and so when $v$ is parallel to one of the sides of the rectangle $A_v(W) = C f(x)$ or $Cg(y)$. 
\begin{proof}
1) Fix $v \neq (1,0), (0,1)$. Note that $A_v(W)(z)$ can only be near 0, when $z+vt$ is a line that passes near a single corner of $R$, \ed{see Figure \ref{f:squareCorner}}. Because of this, after changing coordinates so that the corner we are interested in is located at $(0,0)$, we can understand the behavior of $A_v(W)(z)$ near the boundary of its support via 
\begin{equation}
A_v(W)(s) = \frac{1}{T_v} \int_{-T_v/2}^{T_v/2} W(s v^{\bot} + tv) dt, \quad 0<s<<1,
\end{equation} 
where $T_v$ is the period of the trajectory generated by $v$. We will focus on the case where the corner is the bottom left corner of $R$, the other cases will follow from analogous arguments. 

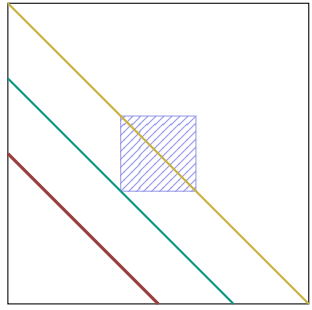
\begin{figure}[h]
\centering
\begin{tikzpicture}
[
declare function={
  a=2;
}]
\draw (-a,-a) rectangle (a,a);
\draw[imayou, pattern=north east lines, pattern color=imayou] (-a/4,-a/4) rectangle (a/4,a/4);
\draw[persred, -, very thick] (0,-a)--(-a,0);
\draw[sand, -, thick] (-a,a)--(a,-a);
\draw[usuai, -, thick] (-a/4,-a/4) --(a/2, -a);
\draw[usuai, -, thick] (-a/4, -a/4)--(-a,a/2);
\end{tikzpicture}
\caption{The torus with damping supported on the purple rectangle. Note that $A_v(W)(z)=0$ for $z$ along the red line, $A_v(W)(z)$ is near 0 for $z$ near to the green line. $A_v(W)(z) \geq c_0 > 0$ for $z$ near the yellow line.}\label{f:squareCorner}
\end{figure}

For both of our growth behavior cases, we have $W(x,y)=0$ for $x \leq 0, y\leq 0$. Letting $v=(-v_1, v_2)$ and $v^{\bot}=(v_2, v_1)$ for $v_1, v_2>0$, then $sv^{\bot} + tv = (sv_2-t v_1, tv_2 + sv_1)$, so $sv_2 - tv_1<0$ when $t> \frac{sv_2}{v_1}$ and $tv_2 + sv_1<0$ when $t<-\frac{sv_1}{v_2}$. Thus
\begin{align}
A_v(W)(s) &= \frac{1}{T_v}\int_{-T_v/2}^{T_v/2} W(s v^{\bot} + tv) dt = \frac{1}{T_v} \int_{-\frac{sv_1}{v_2}}^{\frac{sv_2}{v_1}} W(sv_2 - tv_1, tv_2 + sv_1) dt \\
&= \frac{1}{T_v} \left( \int_0^{ \frac{sv_2}{v_1}} W dt + \int_{-\frac{sv_1}{v_2}}^0 W dt\right). \label{rectspliteq}
\end{align}
Note that 
\begin{align}\label{rectangleavgcoordeq}
&\text{For } t \in \left[0, \frac{sv_2}{v_1}\right], \quad sv_1 \leq tv_2+sv_1 \leq s\left(v_1 + \frac{v_2^2}{v_1} \right) \\
&\text{ For } t \in \left[-\frac{sv_1}{v_2}, 0\right], \quad sv_2 \leq sv_2 - tv_1 \leq s\left(v_2 + \frac{v_1^2}{v_2} \right).
\end{align}
%For the derivatives 
%\begin{align}
%\p_s A_v(W)(s) &= \frac{1}{2P} \int_{-P}^P \p_s W(s v_2 - tv_1, tv_2 + sv_1) dt \\
%&= \frac{1}{2P} \int_{-P}^P (v_2 \p_x + v_1 \p_y)(W)(sv_2-tv_1, tv_2+sv_1) dt. 
%\end{align}
%and 
%\begin{align}
%\p_s^2 A_v(W)(s) &= \frac{1}{2P} \int_{-P}^P \p_s^2 W(s v_2 - tv_1, tv_2 + sv_1) dt \\
%&= \frac{1}{2P} \int_{-P}^P (v_2^2 \p_x^2 + v_2 v_1 \p_x \p_y + v_1^2 \p_y)(W)(sv_2-tv_1, tv_2+sv_1) dt. 
%\end{align}
We can now specialize to each of the cases. 

2) In the first case 
\begin{equation}
W(sv_2 - tv_1, tv_2 +sv_1) = (sv_2-tv_1)^{\beta_1} \ln((sv_2-tv_1)^{-1})^{-\gamma_1} (tv_2+sv_1)^{\beta_2}\ln((tv_2+sv_1)^{-1})^{-\gamma_2}.
\end{equation}
Therefore by \eqref{rectspliteq} and \eqref{rectangleavgcoordeq}
\begin{align}
A_v(W)(s) \simeq& s^{\beta_2} \ln(s^{-1})^{-\gamma_2} \int_0^{\frac{sv_2}{v_1}}(sv_2-tv_1)^{\beta_1} \ln((sv_2-tv_1)^{-1})^{-\gamma_1} dt \\
&+ s^{\beta_1} \ln(s^{-1})^{-\gamma_1} \int_{-\frac{sv_1}{v_2}}^0 (tv_2+sv_1)^{\beta_2} \ln((tv_2+sv_1)^{-1})^{-\gamma_2} dt. 
\end{align}
In the first integral, make the substitution $u=sv_2-tv_1$ and in the second $u=tv_2+sv_1$
\begin{align}
A_v(W)(s) \simeq& s^{\beta_2} \ln(s^{-1})^{-\gamma_2} \int_0^{sv_2} u^{\beta_1} \ln(u^{-1})^{-\gamma_1} dt \\
&+ 
s^{\beta_1} \ln(s^{-1})^{-\gamma_1} \int_0^{sv_1} u^{\beta_2} \ln(u^{-1})^{-\gamma_2} dt. 
\end{align}
Then by Lemma \ref{uppergammalem}, since $s$ is near $0$
\begin{equation}
A_v(W)(s) \simeq s^{\beta_1+\beta_2+1} \ln(s^{-1})^{-\gamma_1-\gamma_2},
\end{equation}
as desired.
% To see the derivative bound conditions are satisfied, note that 
%\begin{equation}
%\p_z^j z^{\beta} \ln(z^{-1})^{-\gamma} \leq C z^{\beta-j} \ln(z^{-1})^{-\gamma} \text{ for } z \text{ near } 0.
%\end{equation}
%So 
%\begin{align}
%|\p_s A_v(W)(s)| &\leq C \int_{-\frac{sv_1}{v_2}}^{\frac{sv_2}{v_1}} (sv_2-tv_1)^{\beta_1-1} \ln((sv_2-tv_1)^{-1})^{-\gamma_1} (tv_2+sv_1)^{\beta_2} \ln((tv_2+sv_1)^{-1})^{-\gamma_2} \\
%&+ (sv_2-tv_1)^{\beta_1} \ln((sv_2-tv_1)^{-1})^{-\gamma_1} (tv_2+sv_1)^{\beta_2-1} \ln((tv_2+sv_1)^{-1})^{-\gamma_2} dt \\
%& \leq C s^{\beta_1+\beta_2} \ln(s^{-1})^{-\gamma_1-\gamma_2},
%\end{align}
%where the final inequality follows by the preceding argument. This, along with the inequalities satisfied by $A_v(W)(s)$ imply the first derivative bound condition. A similar inequality holds for the second derivative, which implies the corresponding derivative bound condition.  

2) In the second case 
\begin{equation}
W(sv_2-tv_1, tv_2+sv_1) = (sv_2-tv_1)^{\beta_1} (tv_2+sv_1)^{\beta_2} \exp(-c_1 (sv_2-tv_1)^{-\alpha_1}-c_2(tv_2+sv_1)^{-\alpha_2}).
\end{equation}
By \eqref{rectspliteq} and \eqref{rectangleavgcoordeq} there exists $\ti{c_1}, \ti{c_2} >0$ such that 
\begin{align}
A_v(W)(s) \lesssim& s^{\beta_2} \exp(-\ti{c_2} s^{-\alpha_2} )\int_0^{\frac{sv_2}{v_1}} \exp(-c_1(sv_2-tv_1)^{-\alpha_1}) (sv_2-tv_1)^{\beta_1} dt \\
&+s^{\beta_1} \exp(-\ti{c_1} s^{-\alpha_1}) \int_{-\frac{sv_1}{v_2}}^0 \exp(-c_2(tv_2+sv_1)^{-\alpha_2} (tv_2+sv_1)^{\beta_2} dt.
\end{align} 
We will focus on the upper bound as the arguments for the lower bound are analogous. Using the change of variables $u=c_1^{-\frac{1}{\alpha\ed{_1}}}(sv_2-tv_1)$ for the first integral and $u=c_2^{-\frac{1}{\alpha\ed{_2}}}(tv_2+sv_1)$ for the second integral
\begin{align}
A_v(W)(s) \lesssim& s^{\beta_2} \exp(-\ti{c_2} s^{-\alpha_2}) \int_0^{c_1^{-\frac{1}{\ed{\alpha_1}}} sv_2} u^{\beta_1} \exp(-u^{-\alpha_1}) du \\
&+ s^{\beta_1} \exp(-\ti{c_1} s^{-\alpha_1}) \int_0^{c_2^{-\frac{1}{\ed{\alpha_2}}} sv_1} u^{\beta_2} \exp(-u^{-\alpha_2}) du. 
\end{align}
Therefore by Lemma \ref{uppergammalem} since $s$ is near 0
\begin{align}
A_v(W)(s) \lesssim& s^{\beta_1+\beta_2+\alpha_1+1} \exp(-\ti{c_2} s^{-\alpha_2} -c_1 v_2^{-\alpha_1} s^{-\alpha_1}) \\
&+ s^{\beta_1+\beta_2+\alpha_2+1} \exp(-\ti{c_1} s^{-\alpha_1} -c_2 v_2^{-\alpha_2} s^{-\alpha_2}).
\end{align}
Since $0 < \alpha_1 \leq \alpha_2$, for $s$ small we have 
\begin{equation}
A_v(W)(s) \leq C_{v} s^{\beta_1+\beta_2+\alpha_1+1} \exp(-c_{v}^{-1} s^{-\alpha_2}).
\end{equation}
\ed{Taking $c_{v}$ larger we can drop the polynomial factor and have 
\begin{equation}
	A_v(W)(s) \leq C_v \exp(-c_{v}^{-1} s^{-\alpha_2}).
\end{equation}}
%The derivative bound condition follows from a similar proof to case 1 since 
%\begin{equation}
%|\p_z z^{\beta} \exp(-cz^{-\alpha})| \lesssim z^{\beta-\alpha-1} \exp(-cz^{-\alpha}).
%\end{equation}
%
%4) The third case follows from an argument analogous to case 1 and case 2, noting that in case 2 we can take $\alpha_1=0$ when $\beta_1>0$. 
\end{proof}

When averaging damping with a known growth behavior at the boundary of its support, there is an improvement to this boundary growth behavior when the support of the damping is convex. This generalizes \cite[Proposition 4.4]{Sun23}.
\begin{lemma} \label{avgconveximproveprop}
Assume $\Sigma(W)$ is a disjoint union of strictly convex curves with positive curvature. 
\begin{enumerate}
	\item If there exists $R>0$ such that for all $z \in \{W>0\}$ near $\Sigma(W)$, letting $d=\dist(z,\Sigma(W))$
	\begin{equation}\label{bndrypolylog}
	R^{-1} d^{\beta} \ln(d^{-1})^{-\gamma} \leq W(z) \leq R d^{\beta} \ln(d^{-1})^{-\gamma},
	\end{equation}
	%and $W$ satisfies Assumption \ref{qnoninvarassumption} with $q(z)=z^{1/\beta} \ln(z^{-1})^{\gamma/\beta}$, 
	then for any periodic direction $v \in \Sb^1$, there exists $R_v>0$ such that for all $z \in \{A_v(W) >0\}$ near $\Sigma(A_v(W))$, letting $s= \dist(z,\Sigma(A_v(W)))$
	\begin{equation}\label{bndrypolylogimprove}
	R_v^{-1} s^{\beta+\frac{1}{2}} \ln(s)^{-\gamma} \leq A_v(W)(z) \leq R_v s^{\beta+\frac{1}{2}} \ln(s)^{-\gamma}.
	\end{equation}
	%and $A_v(W)$ satisfies Assumption \ref{qnoninvarassumption} with $q(z)=z^{1/(\beta+\frac{1}{2})} \ln(z^{-1})^{\gamma/(\beta+\frac{1}{2})}$.
	\item If there exists $R>0$ such that for all $z \in \{W>0\}$ near $\Sigma(W)$, letting $d=\dist(z,\Sigma(W))$
	\begin{equation}\label{bndrypolyexp}
	R^{-1} d^{\beta} \exp(-cd^{-\alpha}) \leq W(z) \leq R d^{\beta} \exp(-cd^{-\alpha}),
	\end{equation}
	%and $W$ satisfies Assumption \ref{qnoninvarassumption} with $q(z) = \ln(z^{-1})^{-\left(\frac{\alpha+1}{\alpha}\right)}$, 
	then for any periodic direction $v \in \Sb^1$, there exists $c_v>0, R_v>0$ such that for all $z \in \{A_v(W) >0\}$ near $\Sigma(A_v(W))$, letting $s= \dist(z,\Sigma(A_v(W)))$
	\begin{equation}\label{bndrypolyexpimprove}
	\ed{ R_v^{-1} \exp(-c_v s^{-\alpha}) \leq A_v(W)(z) \leq R_v \exp(-c_v^{-1}s^{-\alpha}).}
	\end{equation}
	%and $W$ satisfies Assumption \ref{qnoninvarassumption} with $q(z) = \ln(z^{-1})^{-\left(\frac{\alpha+1}{\alpha}\right)}$.
\end{enumerate}
\end{lemma}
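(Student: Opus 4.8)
The plan is to reduce the averaging integral to a one-dimensional integral transverse to the boundary, exploit the strict convexity to get a sharp lower bound on the distance function along a chord, and then evaluate the resulting one-variable integral using the elementary estimates in the appendix (Lemma \ref{uppergammalem} and its analogues). Fix a periodic direction $v\in\Sb^1$. As in the proof of Lemma \ref{averagingsquarelemma}, $A_v(W)(z)$ can only be small when the line $t\mapsto z+tv$ comes close to $\Sigma(W)$; since $\Sigma(W)$ is a disjoint union of strictly convex curves with positive curvature, near such a point the line either misses a given convex component entirely or crosses it in a short chord. So it suffices to analyze, for a single strictly convex component $\Gamma$ with support set $\Omega$, the quantity
$$
A_v(W)(z)=\frac{1}{T_v}\int_{\{t:\,z+tv\in\Omega\}}W(z+tv)\,dt,
$$
and we may assume the chord $\{t:z+tv\in\Omega\}$ is an interval $[t_-,t_+]$ of length $\ell$, with $s:=\dist(z,\Sigma(A_v(W)))$ comparable to how close the line is to being tangent to $\Gamma$.

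The geometric heart of the argument is the relation between the transverse distance $d(z+tv)=\dist(z+tv,\Gamma)$ and the chord parameter. Parametrize the chord by arclength $r$ from its midpoint, $r\in[-\ell/2,\ell/2]$. Strict positivity of the curvature, $\kappa\ge\kappa_0>0$, gives a two-sided comparison: there exist constants (depending on $v$ and $\kappa_0$) such that
$$
c_1\bigl((\ell/2)^2-r^2\bigr)\le d(z+tv)\le c_2\bigl((\ell/2)^2-r^2\bigr),
$$
i.e. the distance to the boundary along the chord behaves quadratically, vanishing at the two endpoints, with the maximum $\simeq \ell^2$ attained at the midpoint; moreover the ``depth'' of the chord satisfies $\ell^2\simeq s$ (this is exactly the $\beta\mapsto\beta+\frac12$ mechanism: a chord at signed distance $s$ from tangency penetrates to depth $\simeq s$ and has length $\simeq s^{1/2}$). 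This is the step I expect to be the main obstacle — making the two-sided quadratic estimate uniform in $z$ as the chord degenerates to a tangent line, and handling the fact that $W$ is only \emph{comparable} to $d^\beta\ln(d^{-1})^{-\gamma}$ (resp. $d^\beta e^{-cd^{-\alpha}}$) rather than equal to it, so one must be careful that the upper and lower constants do not drift. Both of these are essentially the content of \cite[Proposition 4.4]{Sun23}, which the present lemma generalizes, so I would follow that argument, inserting the monotone function $\ln(\cdot)^{-\gamma}$ or $e^{-c(\cdot)^{-\alpha}}$ in place of the pure power and checking it only contributes a slowly varying factor.

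With the quadratic comparison in hand, in case (1) we substitute $d\simeq s-r^2$ (after rescaling so $\ell/2\simeq s^{1/2}$) and get
$$
A_v(W)(z)\simeq \int_{-s^{1/2}}^{s^{1/2}}\bigl(s-r^2\bigr)^{\beta}\ln\bigl((s-r^2)^{-1}\bigr)^{-\gamma}\,dr.
$$
Rescaling $r=s^{1/2}\rho$ pulls out $s^{\beta+1/2}$, and since $\ln((s(1-\rho^2))^{-1})^{-\gamma}=\ln(s^{-1})^{-\gamma}(1+o(1))$ uniformly for $\rho$ in compact subsets of $(-1,1)$ while the endpoint contributions are integrable (here one uses the appendix lemma on $\int_0^\varepsilon u^\beta\ln(u^{-1})^{-\gamma}du\simeq \varepsilon^{\beta+1}\ln(\varepsilon^{-1})^{-\gamma}$ together with $\beta>0$ to control the region near $\rho=\pm1$), one obtains $A_v(W)(z)\simeq s^{\beta+1/2}\ln(s^{-1})^{-\gamma}$, which is \eqref{bndrypolylogimprove}. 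In case (2) the same substitution gives
$$
A_v(W)(z)\simeq \int_{-s^{1/2}}^{s^{1/2}}\bigl(s-r^2\bigr)^{\beta}\exp\bigl(-c(s-r^2)^{-\alpha}\bigr)\,dr,
$$
and now the integrand is sharply concentrated near the midpoint $r=0$: writing $s-r^2=s(1-\rho^2)$ and expanding $(1-\rho^2)^{-\alpha}\approx 1+\alpha\rho^2$ near $\rho=0$, the Gaussian-type Laplace estimate $\int e^{-c s^{-\alpha}(1+\alpha\rho^2)}\,s^{1/2}d\rho\simeq s^{1/2}\cdot s^{\alpha/2}\,e^{-cs^{-\alpha}}$ produces the extra $s^{\alpha}$ beyond $s^{\beta+1/2}$, giving the polynomial power $\beta+\frac12+\alpha$ claimed in \eqref{bndrypolyexpimprove}; the constant in the exponential shifts from $c$ to some $c_v\in(0,c]$ on the lower side and $\ge c$ on the upper side because the quadratic comparison of $d$ with $s-r^2$ has different constants top and bottom, which is exactly why the statement only asks for $\exp(-c_v s^{-\alpha})$ versus $\exp(-c_v^{-1}s^{-\alpha})$ rather than a matching constant. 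Finally, summing over the finitely many convex components that the trajectory can approach, and noting a trajectory bounded away from all of $\Sigma(W)$ has $A_v(W)$ bounded below, completes the proof.
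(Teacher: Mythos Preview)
Your outline is essentially the paper's argument: change coordinates so $v=(0,1)$, localize to a single convex component, use strict positive curvature plus Lemma~\ref{distancelemma} to write $d\simeq X-Y^2$ along the chord (your $s-r^2$), and reduce to a one-variable integral. For case~(1) your rescaling $r=s^{1/2}\rho$ is equivalent to the paper's substitution $u=X-X^{1/2}Y$ (after using $X-Y^2\simeq X-X^{1/2}Y$ on $[0,X^{1/2}]$); both feed directly into Lemma~\ref{uppergammalem}.

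In case~(2) your Laplace computation contains an arithmetic slip: the Gaussian $\int e^{-c\alpha s^{-\alpha}\rho^2}\,d\rho\simeq s^{\alpha/2}$, so together with the Jacobian $s^{1/2}$ and the amplitude $s^{\beta}$ you obtain $s^{\beta+\frac12+\frac{\alpha}{2}}e^{-cs^{-\alpha}}$, not the extra $s^{\alpha}$ you claim. The paper avoids this by using the \emph{same} linearizing substitution as in case~(1): from $(X-X^{1/2}Y)\le X-Y^2\le 2(X-X^{1/2}Y)$ one gets, after $u=X-X^{1/2}Y$,
\[
A(W_1)(x)\;\lesssim\; X^{-1/2}\int_0^{cX} u^{\beta}e^{-u^{-\alpha}}\,du \;\simeq\; X^{\beta+\frac12+\alpha}e^{-(cX)^{-\alpha}}
\]
directly from Lemma~\ref{uppergammalem}(2), which produces the power $\beta+\tfrac12+\alpha$ stated in \eqref{bndrypolyexpimprove}. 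Your Laplace answer $\beta+\tfrac12+\tfrac{\alpha}{2}$ is in fact the sharp polynomial prefactor when the exponential constant is kept exactly $c$; since the lemma allows different constants $c_v$ and $c_v^{-1}$ in the lower and upper exponentials, any polynomial discrepancy can be absorbed, so your approach still yields the stated bounds once the arithmetic is corrected---but as written the step ``produces the extra $s^{\alpha}$'' does not follow from the displayed estimate.
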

Note that in Case 1) the same improvement from $\beta$ to $\beta+\frac{1}{2}$ as in \cite{Sun23} is found, which produces an improvement in the energy decay rate. 
%In Case 2) there is actually a greater improvement in the polynomial power from $\beta$ to $\beta+\frac{1}{2}+\alpha$, despite this additional $\alpha$, there is no change in the derivative bound condition, because the polynomial power does not effect the derivative bound condition for poly-exponential damping in general. 

We proceed via geometric arguments that apply to the general setup of $\Sigma_{\omega}$ consisting of strictly convex curves, and then address the two cases individually. The geometric argument exactly follows that of \cite[Proposition 4.4]{Sun23}, but we must restate it in order to properly articulate the new arguments. 
\begin{proof}
As mentioned above, because $v$ is a periodic direction, there exists $p_0, q_0 \in \Zb, \gcd(p_0,q_0)=1$ such that $v= \frac{(p_0,q_0)}{\sqrt{p_0^2+q_0^2}}$. Now we perform the change of coordinates described in Subsection \ref{toruscoordinatessubsection}. Recall, the covering map $\pi_v: \T^2_v \ra \T^2$ is of degree $p_0^2+q_0^2$ and it is compatible with the periodicity of the two tori. That is a $2\pi$ periodic function $f$ on $\ed{\T}^2$ becomes a $T_v=2\pi \sqrt{p_0^2+q_0^2}$ periodic function $\pi_v^{*} f$ on $\ed{\T}^2_v$. Locally the coordinate map $(x,y) \in \T^2_v \mapsto z \in \ed{\T}^2$ is given by $xv^{\bot} + yv=z$. Furthermore, because $\pi_v$ is locally isometric, each component of the pre-image of the damped region $\pi^{-1}_v(\{W>0\})$ is still strictly convex with positive curvature, and the inequalities \eqref{bndrypolylog} and \eqref{bndrypolyexp} still hold with $d=\dist((x,y), \Sigma(\pi_v^* W))$ for $z$ near $\Sigma(\pi_v^* W)$.
 
 Define the averaging operator on the new torus $\T_v^2$ as 
$$
\ti{A}(F)(x,y)= \frac{1}{T_v} \int_0^{T_v} F(x,y+t) dt.
$$
Note that we average along $(0,1)$, which is the direction that $v$ is pulled back to. Furthermore 
\begin{align*}
\pi_v^*( A_v(W))(x,y) &= A_v(W)(xv^{\bot} + yv) = \frac{1}{T_v} \int_0^{T_v} W\left( x v^{\bot} + (y+t)v\right) dt \\
&= \frac{1}{T_v} \int_0^{T_v} (\pi_v^* W)(x, y+t) dt = \ti{A}(\pi^*_v W)(x,y).
\end{align*}
Therefore it is enough for us to prove \eqref{bndrypolylogimprove} and \eqref{bndrypolyexpimprove} for the averaged function on $\T^2_v$,  $\ti{A}(\pi^*(W))$. 
 
Therefore, without loss of generality, we may assume $v=(0,1)$ and assume that $\Omega:=\{W>0\}$ has $l$ connected components $\Omega_1, \ldots, \Omega_l$. We will call $W|_{\Omega_j}=W_j$ and can further state that such that the boundary $\Sigma(W_j)$ of each $\Omega_j$ has positive curvature. 
 
We first consider the case $l=1$. By translation invariance we may assume $\Omega_1=\{W_1>0\}$ is contained in the fundamental domain $(-m \pi, m\pi)_x \times (-M \pi, M \pi)_y$, so the function $A_v(W)$ can be identified as a periodic function on $\Rb_x$. 
 
Because $\Omega_1=\{W_1>0\}$ is strictly convex, the line $\mathcal{P}_x=\{(x,0)+t(0,1): t \in \Rb\}$ intersects $0,1,$ or $2$ points of $\Sigma(W_1)$. Thus for all $x$, the set $\mathcal{P}_x \cap \Omega_1$ has finite measure in $\Rb$ and the function $x \mapsto P(x):=\ed{\nu}_{\Rb}(\mathcal{P}_x \cap \Omega_1)$ is continuous. Furthermore by the convexity of $\Omega_1$, $P(x)$ is supported on a single interval $\ed{I=[a,b]} \subset (-m\pi, m\pi)$. Since $A(W_1)(x)=0$ if $P(x)=0$, the vanishing behavior of $A(W_1)$ is determined when $x$ is close to \ed{$a$ and $b$}. We will only estimate $A(W_1)$ for $x \in [\ed{a, a}+\e)$ as analogous arguments apply for $x$ near $\ed{b}$.
 
So now note, that because $P(x)=0$ for $x<\ed{a}$ and $P(x)>0$ for $x>\ed{a}$, then $\mathcal{P}_x$ is tangent to $\Sigma(W_1)$ at some point $z_0=(\ed{a},y_0)$. For a small enough $\e>0$ we may \ed{parameterize} $(x,y) \in \Sigma(W_1)$ for $x \in [\ed{a, a}+\e)$ by $x=\ed{a}+g(y)$, where $g$ is a function satisfying $g(y_0)=g'(y_0)=0$ and $g''(y_0)\geq c_0 > 0$, where the last condition holds because the curvature of $\Sigma(W_1)$ is strictly positive. Therefore, there exists a $C^1$ diffeomorphism $Y= \Phi(y)$ of a neighborhood of $y_0$, such that $\Phi(y_0)=0$ and $\Phi(g(y))=Y^2$.  Under this diffeomorphism the parametrization for $\Sigma(W_1)$ becomes $x=\ed{a} + Y^2$. 
 
From this, we have that for each $x \in (\ed{a,a}+\e)$, we can write $\mathcal{P}_x \cap \Sigma(W_1)=\{(x,l^{-}(x)), (x,l^{+}(x))\}$ where 
\begin{equation}
l^{\pm}(x) = \pm \Phi^{-1}(\sqrt{x-\ed{a}}).
\end{equation}

So now for $x \in \ed{(a, a}+\e)$ we have 
\begin{equation}
A(W_1)(x) = \frac{1}{2M \pi} \int_{-M\pi}^{M\pi} W_1(x,y) dy = \frac{1}{2\ed{M}\pi} \int_{l^-(x)}^{l^+(x)} W_1(x,y) dy.
\end{equation}
Case 1) When $W(z) \simeq d^{\beta} \ln(d^{-1})^{-\gamma}$, note that by Lemma \ref{distancelemma}, \ed{and the diffeomorphism $\Phi$} $d \simeq |x-(\ed{a}+g(y))|$ and so $W(z) \simeq |x-(\ed{a}+g(y))|^{\beta} \ln(|x-(\ed{a}+g(y))|^{-1})^{-\gamma}$. 
%Since $W(z) \simeq f(d)$ for $f(d)= d^{\beta} \ln(d^{-1})^{-\gamma}$ or $=d^{\beta} \exp(-d^{-\alpha})$, then \textcolor{red}{this step is slightly suspicious for $\exp$}
%\begin{equation}
%W(z) \simeq f(|x-(\alpha+g(y))|)
%\end{equation}
Therefore \ed{making the change of variables $Y=\Phi(y)$}
\begin{equation}
A(W_1)(x) \simeq \frac{1}{2M \pi} \int_{-\sqrt{x-\ed{a}}}^{\sqrt{x-\ed{a}}} (|x-\ed{a}-Y^2|)^{\beta} \ln(|x-\ed{a}-Y^2|^{-1})^{-\gamma} |(\Phi^{-1})'(Y)| dY.
\end{equation}
Replacing $x-\ed{a}$ by $X$ and using the symmetry of $Y^2$ 
%\begin{equation}
%A(W_1)(x) \simeq \int_0^{\sqrt{X}} f(|X-Y^2|) dY.
%\end{equation}
%Case 1) $f(d)=d^{\beta}$
\begin{equation}
A(W_1)(x) \simeq \int_0^{\sqrt{X}} (X-Y^2)^{\beta} \ln( (X-Y^2)^{-1})^{-\gamma} dY.
\end{equation}
For $Y \in [0,\sqrt{X}]$ we have $\sqrt{X} \leq \sqrt{X}+Y \leq 2 \sqrt{X}$, so
\begin{equation}
(X-Y^2) = (\sqrt{X}+Y)(\sqrt{X}-Y) \simeq \sqrt{X} (\sqrt{X}-Y) = (X-\sqrt{X}Y),
\end{equation}
and so
\begin{equation}
(X-Y^2)^{\beta} \ln( (X-Y^2)^{-1})^{-\gamma}  \simeq (X-X^{\frac{1}{2}}Y)^{\beta} \ln(X-X^{\frac{1}{2}} Y)^{-\gamma}.
\end{equation}
Therefore
\begin{equation}
A(W_1)(x) \simeq \int_0^{\sqrt{X}} (X-X^{\frac{1}{2}}Y)^{\beta} \ln( (X-X^{\frac{1}{2}} Y)^{-1})^{-\gamma} dY.
\end{equation}
Make the substitution $u=X-X^{\frac{1}{2}}Y$ 
\begin{equation}
A(W_1)(x) \simeq X^{-\frac{1}{2}} \int_0^X u^{\beta} \ln(u^{-1})^{-\gamma} du.
\end{equation}
Then by Lemma \ref{uppergammalem}, since $X$ is near $0$ 
\begin{equation}
A(W_1)(x) \simeq X^{\beta+\frac{1}{2}} \ln(X^{-1})^{-\gamma}.
\end{equation}
So indeed, replacing $X$ by $x-\ed{a}$
\begin{equation}
A(W_1)(x) \simeq_{\Phi, k,\beta, \gamma} (x-\ed{a})^{\beta+\frac{1}{2}} \ln\left((x-\ed{a})^{-1}\right)^{-\gamma}.
\end{equation}
%Now since $|\nabla W| \leq \frac{W}{q(W)}$ and $|\nabla^2 W| \leq \frac{W}{q(W)^2}$ with $q(z) = z^{1/\beta}\ln(z^{-1})^{\gamma/\beta}$, we have $|\nabla W| \leq d^{\beta-1} \ln(d^{-1})^{-\gamma}, |\nabla^2 W| \leq d^{\beta-2} \ln(d^{-1})^{-\gamma}$ and so 
%\begin{align*}
%|\p_x^j A(W_1)(x) &\leq \frac{1}{2k \pi} \int_{-k \pi}^{k \pi} |\p_x^j W_1(x,y)| dy \leq \int_{l^{-}(x)}^{l^+(x)} |\p_x^j W(x,y)| dy \\
%&\leq C\int_0^{\sqrt{X}} (X-Y^2)^{\beta-j} \ln(\frac{1}{X-Y^2})^{-\gamma} dY \\
%&\leq C x^{\beta-j+\frac{1}{2}} \ln(x^{-1})^{-\gamma} \simeq \frac{A(W_1)}{q_2(A(W_1))}.
%\end{align*}

Case 2) When $W(d) = d^{\beta} \exp(-cd^{-\alpha})$, note again by Lemma \ref{distancelemma} \ed{and the diffeomorphism}, $d \simeq |x-\ed{a}-g(y)|$ and so there exists $C_1>1$ such that 
\begin{equation}
C_1^{-1} |x-\ed{a}+g(y)|^{\beta} \exp(-C_1|x-\ed{a}-g(y)|^{-\alpha}) \leq W(z) \leq C_1 |x-\ed{a}+g(y)|^{\beta} \exp(-C_1^{-1} |x-\ed{a}-g(y)|^{-\alpha}).
\end{equation} 
Therefore
\begin{align}
&C_1^{-1} \int_{-\sqrt{x-\ed{a}}}^{\sqrt{x-\ed{a}}} |x-\ed{a}-Y^2|^{\beta} \exp(-C_1|x-\ed{a}-Y^2|^{-\alpha})  |(\Phi^{-1})'(Y)|dY \\
&\leq A_v(W)(x) \leq C_1 \int_{-\sqrt{x-\ed{a}}}^{\sqrt{x-\ed{a}}} |x-\ed{a}-Y^2|^{\beta} \exp(-C_1^{-1}|x-\ed{a}-Y^2|^{-\alpha})  |(\Phi^{-1})'(Y)|dY
\end{align}
Replacing $x-\ed{a}$ by $X$ and using the symmetry of $Y^2$
\begin{align}
&C^{-1}  \int_0^{\sqrt{X}} (X-Y^2)^{\beta} \exp(-C_1 (X-Y^2)^{-\alpha}) dY\\
& \leq A(W_1)(x) \leq C \int_0^{\sqrt{X}} (X-Y^2)^{\beta} \exp(-C_1^{-1} (X-Y^2)^{-\alpha}) dY.
\end{align}
As before $(X-X^{\frac{1}{2}} Y) \leq (X-Y^2) \leq 2 (X-X^{\frac{1}{2}} Y)$ for $Y \in [0,\sqrt{X}]$, so 
\begin{align}
 &\exp(-C_1^{-1} (X-Y^2)^{-\alpha}) \leq \exp( C_1^{-1} (2X-2X^{\frac{1}{2}} Y)^{-\alpha})\\
 \exp(-C_1(X-X^{1/2}Y)^{-\alpha}) \leq& \exp(-C_1(X-Y^2)^{-\alpha}).
\end{align}
We will first focus on the upper bound. Letting $c=2C_1^{\frac{1}{\alpha}}$
%\begin{equation}
%A(W_1)(x) \lesssim \int_0^{\sqrt{X}} (X-X^{\frac{1}{2}} Y)^{\beta} \exp(-(cX-cX^{\frac{1}{2}} Y)^{-\alpha}) dY.
%\end{equation}
and making the substitution $u=c(X-X^{\frac{1}{2}}Y)$ 
\begin{equation}
A(W_1)(x) \lesssim X^{-\frac{1}{2}} \int_0^{cX} u^{\beta} \exp(-u^{-\alpha}) du.
\end{equation}
Then by Lemma \ref{uppergammalem}, since $X$ is small
\begin{equation}
A(W_1)(x) \lesssim X^{\frac{1}{2}+\beta+\alpha} \exp(-C_1^{-1} (2 X)^{-\alpha}).
\end{equation}
An analogous argument with $c=C_1^{-\frac{1}{\alpha}}$ gives 
\begin{equation}
A(W_1)(x) \gtrsim X^{\frac{1}{2}+\beta+\alpha} \exp(-C_1 X^{-\alpha}).
\end{equation}
\ed{Replacing $C_1$ by a larger $c_v$ allows us to drop the polynomial factors and obtain 
\begin{equation}
	\exp(-c_v x^{-\alpha}) \lesssim A(W_1)(x) \lesssim \exp(-c_v^{-1} x^{-2\alpha}).
\end{equation}}
%Because we do not ask for an improvement on $q$ in this case, Lemma \ref{noninvarassumption} immediately shows that $A(W_1)$ satisfies Assumption \ref{qnoninvarassumption} with the same $q$. 

To complete the proof, we consider the case where $l>1$, and we must show \eqref{bndrypolylogimprove} and \eqref{bndrypolyexpimprove}. We again follow the argument of \cite[Proposition 4.4]{Sun23}.  Note that the supports of $A(W_j)$ may overlap. By linearity
\begin{equation}
A(W_1+ \cdots + W_l) = A(W_1) + \cdots + A(W_l).
\end{equation}
For a fixed $x_0 \in \Sigma(A(W))$, define those indices $j$, where $A(W_j)$ is positive to the right (or left) of $x_0$ by 
\begin{align*}
S_+ :=\left\{j \in \{1, \ldots, l\}: \int_{x_0}^{x_0+\e} A(W_j)(x) dx > 0, \forall \e>0\right\} \\
S_-:=\left\{j \in \{1,\ldots, l\}: \int_{x_0-\e}^{x_0} A(W_j)(x) dx >0, \forall \e>0\right\}.
\end{align*}
Because $x_0 \in \Sigma(A(W))$, then $A(W_j)(x_0)=0$ for all $j$ and $A(W_j)>0$ near $x_0$ for at least one $j$, i.e. $S_+ \cup S_- \neq \emptyset$. 

Therefore for $j \in S_{\pm}$, and $x \mp x_0 >0$ with $x$ near $x_0$ we have 
\begin{equation}
\dist(x,\Sigma(A(W)))=\dist(x,x_0)=\dist(x,\Sigma(A(W_j))).
\end{equation} 
Because of this and applying our argument when $l=1$ to each $A(W_j)$, we have
\begin{equation}
f(\ed{\dist}(x,\Sigma(A(W)))) =f(\ed{\dist}(x,\Sigma(A(W_j)))) \simeq A(W_j)(x), \quad \text{ for all } x \mp x_0 >0 \text{ near } x_0,
\end{equation}
where $f$ is the desired boundary behavior in \eqref{bndrypolylogimprove} and  \eqref{bndrypolyexpimprove}. On the other hand, for all $j \not \in S_+ \cup S_-$, $A(W_j)=0$ in a neighborhood of $x_0$. Summing over $j \in S_+ \cup S_-$ we obtain the desired bound. 
\end{proof}

\ed{Our final averaging result shows that for damping supported on a superellipse the improvement to the growth behavior depends on the averaging direction and the exponents in the definition of the superellipse.}
\begin{lemma}\label{superellipseaveraging}
Suppose $W$ is positive on the interior of a general superellipse. That is for some $a,b>0, n_1 \geq n_2 \geq 2$
\begin{equation}
\{(x,y) \in \T^2: W(x,y)>0\} = \{(x,y) \in \T^2: \left| \frac{x}{a}\right|^{n_1} + \left| \frac{y}{b} \right|^{n_2} \leq 1\}.
\end{equation}
Let $v \in \Sb^1$ periodic
\begin{enumerate}
	\item Assume that there exists $C \geq 1$, such that for all $z \in \{W>0\}$ near $\Sigma(W)$, letting $d=\dist(z,\Sigma(W))$
	\begin{equation}
	C^{-1} d^{\beta} \ln(d^{-1})^{-\gamma} \leq W(z) \leq C d^{\beta} \ln(d^{-1})^{-\gamma} ,
	\end{equation}
	with $\beta>0, \gamma \in \Rb$, then for $v_1=(1,0)$ or $v_2=(0,1)$, there exists $C_v \geq 1$, such that for all $z \in \{A_v(W)>0\}$ near $\Sigma(A_v(W))$, letting $s=\dist(z,\Sigma(A_v(W)))$
	\begin{equation}
C_v^{-1} s^{\beta+\frac{1}{n_i}} \ln(s^{-1})^{-\gamma} \leq A_{v_i}(W)(z) \leq C_v s^{\beta+\frac{1}{n_i}} \ln(s^{-1})^{-\gamma}.
\end{equation}
	For all other $v \in \Sb^1$ periodic, there exists $C_v \geq 1$, such that for all $z \in \{A_v(W)>0\}$ near $\Sigma(A_v(W))$
	\begin{equation}
C_v^{-1} s^{\beta+\frac{1}{2}} \ln(s^{-1})^{-\gamma} \leq A_{v}(W)(z) \leq C_v s^{\beta+\frac{1}{2}} \ln(s^{-1})^{-\gamma}.
\end{equation}
	
	\item Assume that there exists $C \geq 1$, such that for all $z \in \{W>0\}$ near $\Sigma(W)$, letting $d=\dist(z,\Sigma(W))$
	\begin{equation}
	C^{-1} d^{\beta} \exp(-cd^{-\alpha}) \leq W(z) \leq C d^{\beta} \exp(-cd^{-\alpha}),
	\end{equation}
	with $\alpha, c>0, \beta \in \Rb$, then \ed{for all $v \in \Sb^1$ periodic}, there exists $c_v, R_v \geq 1$, such that for all $z \in \{A_v(W)>0\}$ near $\Sigma(A_v(W))$, letting $s=\dist(z,\Sigma(A_v(W)))$
	\begin{equation}
	\ed{R_v^{-1} \exp(-c_v s^{-\alpha}) \leq A_{v_i}(W)(z) \leq R_v \exp(-c_v^{-1} s^{-\alpha}).}
	\end{equation}
\end{enumerate}

%For $z \in \{(x,y): W>0\}$, letting $d=\dist(z,\Sigma_W)$, assume there exists $C>0$ such that 
%\begin{equation}
%C^{-1}d^{\beta} \ln(d^{-1})^{-\gamma} \leq W(z) \leq C d^{\beta} \ln(d^{-1})^{-\gamma}.
%\end{equation}
%For $v \in \Sb^1$ periodic, let $d_v=\dist(z, \Sigma_{A_v(W)})$. 
%\begin{enumerate}
%	\item For $v_1=(1,0)$ and $v_2=(0,1)$, then there exists $C_v>0$ such that 
%	\begin{equation}
%	C^{-1} d_v^{\beta+\frac{1}{n_i}} \ln(d_v^{-1})^{-\gamma} \leq A_{v_i}(W)(z) \leq C d_v^{\beta+\frac{1}{n_i}} \ln(d_v^{-1})^{-\gamma}.
%	\end{equation}
%	Furthermore 
%	\begin{equation}
%	|\nabla A_{v_i}(W)| \leq \frac{A_{v_i}(W)}{q(A_{v_i}(W)} \quad \text{ with } \quad q(z) = z^{\frac{1}{\beta+\frac{1}{n_i}}} \ln(z^{-1})^{-\gamma}
%	\end{equation}
%	\item For all other $v \in \Sb^1$ periodic, there exists $C_v>0$ such that 
%	\begin{equation}
%	C^{-1} d_v^{\beta+\frac{1}{2}} \ln(d_v^{-1})^{-\gamma} \leq A_{v}(W)(z) \leq C d_v^{\beta+\frac{1}{2}} \ln(d_v^{-1})^{-\gamma}.
%	\end{equation}
%	Furthermore 
%	\begin{equation}
%	|\nabla A_{v}(W)| \leq \frac{A_{v}(W)}{q(A_{v}(W)} \quad \text{ with } \quad q(z) = z^{\frac{1}{\beta+\frac{1}{2}}} \ln(z^{-1})^{-\gamma}
%	\end{equation}
\end{lemma}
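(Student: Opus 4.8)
The plan is to reduce to the two already-proven averaging lemmas (Lemma \ref{averagingsquarelemma} and Lemma \ref{avgconveximproveprop}) by a change of coordinates on the torus, exactly as in the proof of Lemma \ref{avgconveximproveprop}. First I would fix a periodic direction $v=\frac{(p_0,q_0)}{\sqrt{p_0^2+q_0^2}}$ with $\gcd(p_0,q_0)=1$ and, using the covering map $\pi_v:\T^2_v\ra\T^2$ of degree $p_0^2+q_0^2$ from Subsection \ref{toruscoordinatessubsection}, pull back $W$ so that we may assume $v=(0,1)$ and average along vertical lines. Since $\pi_v$ is a local isometry, the pre-image of $E_{a,b}^{n_1,n_2}$ is a disjoint union of super-ellipse copies, and the distance-to-boundary estimates \eqref{bndrypolylog} (resp.\ \eqref{bndrypolyexp}) are preserved with $d=\dist((x,y),\Sigma(\pi_v^*W))$. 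It then suffices to prove the claimed boundary growth for $\ti A(\pi_v^*W)$ on $\T^2_v$.

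\textbf{The two geometric regimes.} The super-ellipse $\p E_{a,b}^{n_1,n_2}$ has strictly positive curvature at every point \emph{except} $(\pm a,0)$ (when $n_1>2$) and $(0,\pm b)$ (when $n_2>2$). For a vertical averaging line $\Pc_x$, the function $P(x)=\mu_{\Rb}(\Pc_x\cap E_{a,b}^{n_1,n_2})$ is supported on $(-a,a)$ and vanishes at $x=\pm a$. Near a generic endpoint where $\p E$ has positive curvature, the argument of Lemma \ref{avgconveximproveprop} applies verbatim: parametrize $\Sigma$ near the tangency point $z_0=(\alpha,y_0)$ by $x=\alpha+g(y)$ with $g(y_0)=g'(y_0)=0$, $g''(y_0)>0$, so that a diffeomorphism $Y=\Phi(y)$ gives $x=\alpha+Y^2$, and Lemma \ref{distancelemma} gives $d\simeq|x-\alpha-g(y)|$; the substitution $u=X-X^{1/2}Y$ plus Lemma \ref{uppergammalem} yields the $\beta\mapsto\beta+\frac12$ improvement (and $\beta\mapsto\beta+\frac12+\alpha$ in the poly-exp case). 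The new ingredient is the degenerate endpoints. At $x=\pm a$, since $n_1>2$, the boundary near the tangency point is $x=a-c|Y|^{n_1}+o(|Y|^{n_1})$ after the diffeomorphism $Y=\Phi(y)$ straightening the arc, so $d\simeq |x-a+c|Y|^{n_1}|$. Then for $X=a-x$ small,
\[
A(W_1)(x)\simeq\int_0^{(X/c)^{1/n_1}}\left(X-c Y^{n_1}\right)^{\beta}\ln\!\left((X-cY^{n_1})^{-1}\right)^{-\gamma}\,dY,
\]
and the substitution $u=X-cY^{n_1}$, i.e.\ $Y=((X-u)/c)^{1/n_1}$, $dY\simeq (X-u)^{1/n_1-1}du$, reduces this (after noting $X-u\simeq X$ on the bulk of the range, and handling the boundary layer $u\to X$ by a direct estimate) to $X^{1/n_1-1}\int_0^X u^{\beta}\ln(u^{-1})^{-\gamma}\,du\simeq X^{\beta+1/n_1}\ln(X^{-1})^{-\gamma}$ by Lemma \ref{uppergammalem}; the poly-exp case is identical with the $\exp(-u^{-\alpha})$ weight giving the extra $\alpha$. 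This explains why averaging along $v_1=(1,0)$ or $v_2=(0,1)$ produces the exponent $\beta+\frac{1}{n_i}$ (the two coordinate directions being the only ones whose averaging lines can be tangent to $\Sigma$ at a degenerate vertex — for $v_1=(1,0)$ one averages horizontally and meets the degenerate points $(0,\pm b)$, giving $\tfrac{1}{n_2}$; for $v_2=(0,1)$ the degenerate points $(\pm a,0)$, giving $\tfrac{1}{n_1}$), while every other periodic $v$ only ever meets $\Sigma$ tangentially at points of strictly positive curvature, recovering $\beta+\frac12$.

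\textbf{Assembly and the main obstacle.} For $v$ not parallel to a coordinate axis, I would observe that after the coordinate change the averaging direction is still $(0,1)$ on $\T^2_v$ but the pulled-back super-ellipses are rotated, so their boundary has strictly positive curvature at \emph{every} point of vertical tangency (the degenerate vertices are no longer vertical-tangent points), and Lemma \ref{avgconveximproveprop}'s argument applies directly to give $\beta+\frac12$. Finally, to handle multiple connected components of $\pi_v^{-1}(\{W>0\})$ whose projections $P_j$ overlap, I would repeat the $S_+,S_-$ bookkeeping at the end of the proof of Lemma \ref{avgconveximproveprop}: at a point $x_0\in\Sigma(A_v(W))$, sum $A(W_j)$ over the indices for which $A(W_j)>0$ on a one-sided neighborhood, using that for those $j$ the distances $\dist(x,\Sigma(A(W)))$ and $\dist(x,\Sigma(A(W_j)))$ coincide, while the remaining $A(W_j)$ vanish near $x_0$. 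The main obstacle is the careful treatment of the boundary layer in the substitution $u=X-cY^{n_1}$ near $u=X$ (where the Jacobian $(X-u)^{1/n_1-1}$ blows up since $n_1>2$): one must check this layer contributes only a lower-order term, which follows because on $u\in[X/2,X]$ the integrand $u^{\beta}\ln(u^{-1})^{-\gamma}\simeq X^{\beta}\ln(X^{-1})^{-\gamma}$ is essentially constant and $\int_{X/2}^{X}(X-u)^{1/n_1-1}du\simeq X^{1/n_1}$, matching the main term — so the estimate is of the right order throughout, and Lemma \ref{uppergammalem} can be invoked on the full range after this check.
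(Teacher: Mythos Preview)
Your overall strategy matches the paper's: for $v\neq v_1,v_2$ you reduce to the strictly-convex case (Lemma \ref{avgconveximproveprop}), noting that the degenerate vertices $(\pm a,0),(0,\pm b)$ are never vertical tangent points after the $\pi_v$ coordinate change; for the coordinate directions you compute the averaging integral directly at the degenerate vertex using Lemma \ref{distancelemma} and Lemma \ref{uppergammalem}. The multi-component $S_+/S_-$ bookkeeping is also the same.

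The one substantive difference is how the degenerate integral $\int_0^{X^{1/n}}(X-Y^n)^\beta\ln((X-Y^n)^{-1})^{-\gamma}\,dY$ is evaluated. The paper avoids your boundary-layer splitting by first invoking the elementary inequality of Lemma \ref{diffpowerslemma},
\[
X-Y^{n}\simeq X-X^{\frac{n-1}{n}}Y\qquad (Y\in[0,X^{1/n}]),
\]
which linearizes the integrand and permits the substitution $u=X-X^{(n-1)/n}Y$; the Jacobian $X^{(1-n)/n}$ then comes out as a constant factor and Lemma \ref{uppergammalem} applies directly to $\int_0^X u^\beta\ln(u^{-1})^{-\gamma}\,du$. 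Your direct substitution $u=X-cY^n$ with the splitting at $u=X/2$ is correct as well (the lower bound on $[0,X/2]$ uses that $(X-u)^{1/n-1}\geq X^{1/n-1}$ there since the exponent is negative), but the paper's linearization is slightly cleaner and avoids the singularity check altogether.

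One minor correction: your index assignment in the parenthetical is swapped. Horizontal averaging $v_1=(1,0)$ meets the degenerate points $(0,\pm b)$, where the boundary reads $y=b-c|x|^{n_1}$, giving $\beta+\tfrac{1}{n_1}$; vertical averaging $v_2=(0,1)$ meets $(\pm a,0)$ with $x=a-c|y|^{n_2}$, giving $\beta+\tfrac{1}{n_2}$. This matches the lemma as stated.
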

\begin{proof}
The statement for $v \in \Sb^1$ periodic not including $v_1, v_2$, follows from the same argument as in Proposition \ref{avgconveximproveprop}. To give some additional detail, note that $\Sigma_W$ is strictly convex with positive curvature at all points other than $(\pm a,0)$ and $(0, \pm b)$. Furthermore, if $x_0 \in \Sigma(A_v(W))$ then for some $t_0 \in \Rb$, $x_0+t_0 v$ is a point where $v$ is tangent to $\Sigma(W)$. When $v \not\in\{(1,0),(0,1)\}$, these points of tangency cannot be $(\pm a, 0), (0,\pm b),$ so $\Sigma(W)$ is strictly convex at them, \ed{see Figure \ref{f:diagonalSuper}}. These points, $x_0+t_0 v$, take the role of \ed{the endpoints of the interval $I$ on which $P(x)$ is supported} in the proof of Proposition \ref{avgconveximproveprop} and so we can use the same proof.

\begin{figure}[h]
\centering
\begin{tikzpicture}
[
declare function={
  sxn(\t)= a*cos(\t r)^(2/n);
  syn(\t)= b*sin(\t r)^(2/n);
  sxm(\t)=a*cos(\t r)^(2/m);
  sym(\t)=b*sin(\t r)^(2/n);
  a=3;
  b=2;
  n=3;
  m=3;
}]

\draw[variable=\t, domain=0:pi/2, thick] plot ({sxn(\t)},{syn(\t)}) -- plot({-sxn(pi/2-\t)},{syn(pi/2-\t)}) -- plot({-sxn(\t)},{-syn(\t)}) -- plot({sxn(pi/2-\t)},{-syn(pi/2-\t)}) -- cycle;
\draw[persred, -, thick] (-4.009,0)--(-1,3.009);
\draw[persred, -, thick] (4.009,0)--(1,-3.009);
%\node[text=persred] at (2.7,-1.5) {$v$};
%\node[text=persred] at (-2.7,1.5) {$v$};
\end{tikzpicture}
\caption{Lines parallel to $v$ and tangent to $\Sigma_W$ in red. The points $x_0$ must lie along these red lines.}\label{f:diagonalSuper}
\end{figure}
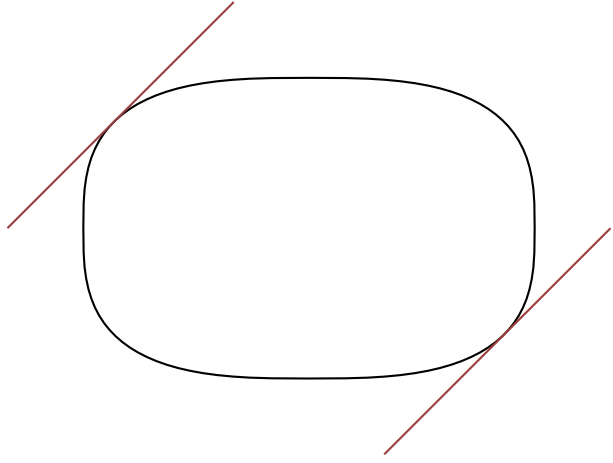

For the $v_i$, we will focus on $v_1$, since the argument for $v_2$ is analogous. The vanishing behavior of $A_{v_1}(W)(x)$ is determined when $x$ is close to $\pm a$. We will focus on $x\in(-a,-a+\e)$ as the analysis for $x$ near $a$ is similar. 
\ed{For a small enough $\e>0$ we may parameterize $(x,y) \in \Sigma(W_1)$ for $x \in [-a,-a+\e)$ by $x=a+g(y)$, where $g$ is a function satisfying $g(y_0)=g'(y_0)=\cdots = g^{(n_2-1)}=0$ and $g^{(n_2)}(y_0)\geq c_0 > 0$. Therefore, there exists a $C^1$ diffeomorphism $Y= \Phi(y)$ of a neighborhood of $y_0$, such that $\Phi(y_0)=0$ and $\Phi(g(y))=|Y|^{n_2}$.  Under this diffeomorphism the parametrization for $\Sigma(W_1)$ becomes $x=\ed{a} + |Y|^{n_2}$. }

Let $\mathcal{P}_x$ be the line in $\Rb^2$ passing through $(x,0)$ parallel to $e_2$. For each $x \in [-a, -a+\e)$ we have
\begin{equation}
 \Pc \cap \Sigma_W = \{(x,l^{-}(x)), (x,l^+(x))\}= \ed{\{(x,\pm \Phi((x+a)^{1/n_2})), (x,(x+a)^{1/n_2})\}.}
\end{equation}
1) When $W(z) \simeq d^{\beta} \ln(d^{-1})^{-\gamma}$, by Lemma \ref{distancelemma} and \ed{the diffeomorphism $\Phi$}, $d \simeq |x+a-|y|^{n_2}|$ for $x \in [-a, -a+\e)$ and so 
\begin{align}
A_{v_1}(W)(x) &= \frac{1}{2\pi} \int_{-\pi}^{\pi} W(x,y) dy = \frac{1}{2\pi} \int_{l^{-}(x)}^{l^+(x)} W(x,y) dy \\
&\simeq \int_{-(x+a)^{1/n_2}}^{(x+a)^{1/n_2}} |x+a-|\ed{Y}|^{n_2}|^{\beta} \ln(|x+a-|\ed{Y}|^{n_2}|^{-1})^{-\gamma} \ed{|(\Phi^{-1})'(Y)|} \ed{dY}. 
\end{align}
Replacing $x+a=X$ and using the symmetry of $\ed{|Y|}$ 
\begin{align}
A_{v_1}(W)(x) \simeq \int_0^{X^{1/n_2}} |X-\ed{Y}^{n_2}|^{\beta} \ln(|x-\ed{Y}^{n_2}|^{-1})^{-\gamma} \ed{dY}. 
\end{align}
Now for $\ed{Y} \in (0,X^{1/n_2})$ by Lemma \ref{diffpowerslemma}, there exists $C>0$ such that 
\begin{equation}\label{diffpowersellipse}
C^{-1}(X-X^{\frac{n_2-1}{n_2}}Y) \lesssim X-Y^{n_2} \lesssim C(X-X^{\frac{n_2-1}{n_2}}Y) .
\end{equation}
So then
\begin{equation}
A_{v_1}(W)(x) \simeq \int_0^{X^{1/n_2}} (X-X^{\frac{n_2-1}{n_2}} \ed{Y})^{\beta} \ln((X-X^{\frac{n_2-1}{n_2}} \ed{Y})^{-1})^{-\gamma} dy. 
\end{equation}
Let $u=X-X^{\frac{n_2-1}{n_2}}\ed{Y}$ 
\begin{equation}
A_{v_1}(W)(x) \simeq \int_0^X X^{\frac{1-n_2}{n_2}} u^{\beta} \ln(u^{-1})^{-\gamma} du \simeq X^{\frac{1-n_2}{n_2}} X^{\beta+1} \ln(\ed{X}^{-1})^{-\gamma}.
\end{equation}
where the final inequality follows from Lemma \ref{uppergammalem}.
So indeed 
\begin{equation}
A_{v_1}(W)(x) \simeq (x+\ed{a})^{\beta+\frac{1}{n_2}} \ln((x+\ed{a})^{-1})^{-\gamma}.
\end{equation}
2) When $W \simeq d^{\beta} \exp(-cd^{-\alpha})$ this follows an analogous proof with modifications similar to those in the proof of Lemma \ref{avgconveximproveprop}
\end{proof}
\subsection{Proofs of energy decay rates using averaging results}\label{proofconvexrate}
In this subsection we prove energy decay rates for damping satisfying non-polynomial growth conditions and supported on rectangles (Theorem  \ref{rectthm}), strictly convex sets (Theorem \ref{polylogconveximprovethm}) or superellipses (Theorem \ref{ellipsethm}).

We begin by proving energy decay for damping supported on a rectangle, Theorem \ref{rectthm} case 1. The proof uses Theorem \ref{averagedresolventthm} to reduce to resolvent estimates with the damping replaced by an average. These resolvent estimates are in turn proved by controlling the turn on behavior of the damping using Lemma \ref{averagingsquarelemma} and then invoking the $y$-invariant resolvent estimates of Theorem \ref{wideresolvethm}.
\begin{example}\label{rectex}(Proof of Theorem \ref{rectthm} case 1)
\ed{Recall} $W$ is supported on a rectangle, and if $x,y$ are local coordinates for the boundary of its support then $W \simeq x^{\beta_1} y^{\ed{\beta_2}} \ln(x^{-1})^{-\gamma_1} \ln(y^{-1})^{-\gamma_2}$. 

Consider $v \in \Sb^1$ periodic. When $v=(0,1)$, for $s$ a local coordinate for the boundary of $\{A_v(W)>0\}$ then $A_v(W) \simeq s_+^{\beta_1} \ln(s_+^{-1})^{-\gamma_1}$. Then by Theorem \ref{wideresolvethm} part 1 and Example \ref{polylog1dex} \ed{there exist $h_v>0$ such that for $h \in (0,h_v)$ we have}
\begin{equation}\label{polylogrectresolve}
\nm{\left(\frac{i}{h}-\Ac_v \right)^{-1}}_{\Lc(\Hc)} \lesssim h^{-\frac{\ed{\beta_1}+3}{\beta_1+2}} \ln(h^{-1})^{-\frac{\gamma_1}{\beta_1+2}}.
\end{equation}
\ed{Note that we did not assume that $W$ satisfies Assumption \ref{noninvarassumption} and so we are not controlling the derivatives of $A_v(W)$ directly. However, to apply Theorem \ref{wideresolvethm} it suffices that $A_v(W) \simeq s_+^{\beta_1} \ln(s_+^{-1})^{-\gamma_1}$ whose derivatives we do have control of.}
%Note that although we could control the derivatives of $A_v(W)$ directly, the statement of Theorem 1.7 does not require this \ed{because} $A_v(W)$ is bounded above and below by constant multiples of $s^{\beta} \ln(s^{-1})^{-\gamma}$, whose derivatives we already have control of.
\ed{Arguing} similarly when $v=(1,0)$
\begin{equation}
\nm{\left(\frac{i}{h}-\Ac_v \right)^{-1}}_{\Lc(\Hc)} \lesssim h^{-\frac{\beta_2+3}{\beta_2+2}} \ln(h^{-1})^{-\frac{\gamma_2}{\beta_2+2}}.
\end{equation}
For all other $v$, by Lemma \ref{averagingsquarelemma},  we have
\begin{equation}
A_v(W)(s) \simeq s^{\beta_1+\beta_2+1} \ln(s^{-1})^{-\gamma_1-\gamma_2}.
\end{equation}
So by Theorem \ref{wideresolvethm} part 1 and Example \ref{polylog1dex}, \ed{there exist $h_v>0$ such that for $h \in (0,h_v)$ we have}
\begin{equation}
\nm{\left(\frac{i}{h}-\Ac_v \right)^{-1}}_{\Lc(\Hc)} \lesssim h^{-\frac{\beta_1+\beta_2+4}{\beta_1+\beta_2+3}} \ln(h^{-1})^{-\left(\frac{\gamma_1+\gamma_2}{\beta_1+\beta_2+3}\right)}.
\end{equation}
Note that by the hypotheses on $\beta_1, \beta_2, \gamma_1, \gamma_2$ we have
\begin{equation}
 h^{-\frac{\beta_1+3}{\beta_1+2}} \ln(h^{-1})^{-\frac{\gamma_1}{\beta_1+2}} \geq h^{-\frac{\beta_2+3}{\beta_2+2}} \ln(h^{-1})^{-\frac{\gamma_2}{\beta_2+2}} \geq h^{-\frac{\beta_1+\beta_2+4}{\beta_1+\beta_2+3}} \ln(h^{-1})^{-\left(\frac{\gamma_1+\gamma_2}{\beta_1+\beta_2+3}\right)}.
\end{equation}
Therefore, for all $v \in \Sb^1$ periodic \eqref{polylogrectresolve} holds.
%\begin{equation}
%\nm{\left(\frac{i}{h}-\Ac_v \right)^{-1}}_{\Lc(\Hc)}  \lesssim h^{-\frac{\beta_1+3}{\beta_1+2}} \ln(h^{-1})^{-\frac{\gamma_1}{\beta_1+2}}.
%\end{equation}
Thus, by Theorem \ref{averagedresolventthm} and writing $h^{-1}=\lambda$, \ed{there exists $\lambda_0>0$ such that for $\lambda \in \Rb, |\lambda|\geq \lambda_0$ we have}
%\begin{equation}
%\nm{(-h^2\Delta+ihW-1)^{-1}}_{\Lc(L^2)} \leq h^{-\frac{1}{\beta_1+2}} \ln(h^{-1})^{-\gamma_1}{\beta_1+2}.
%\end{equation}
%This gives a semigroup resolvent estimate 
\begin{equation}
\nm{(i\lambda -\Ac)^{-1}}_{\Lc(\Hc)} \lesssim \lambda^{\frac{\beta_1+3}{\beta_1+2}} \ln(\lambda)^{-\frac{\gamma_1}{\beta_1+2}}.
\end{equation}
The right hand side is of positive increase in $\lambda$, and so energy decays at rate 
\begin{equation}
t^{-\frac{\beta_1+2}{\beta_1+3}} \ln(\ed{2}+t)^{-\frac{\gamma_1}{\beta_1+3}}.
\end{equation}
%The other statements follow analogous arguments, invoking the corresponding parts of Lemma \ref{averagingsquarelemma} and Example \ref{polyexp1dex} and Theorem \ref{wideresolvethm}, noting that the $M_3(h)$ found in Example \ref{polyexp1dex} does not depend on $\beta$, the exponent of the polynomial $x^{\beta}$, or $c$, the constant in $\exp(-cx^{\alpha})$. 
\qed
\end{example}
Next, the proof of case 2 of Theorem \ref{rectthm} follows the same rough outline, but has some meaningful differences, for example we must invoke Lemma \ref{averagingdbclemma} to ensure the derivative bound conditions are appropriately satisfied. This is due to different behavior of poly-exponential functions under averaging, and so we provide a complete proof here. 
\begin{example}\label{rectex1}(Proof of Theorem \ref{rectthm} case 2)
\ed{Recall} $W$ is supported on a rectangle and \ed{if $x$ and $y$ are local coordinates for the boundary of its support}, $W \simeq x^{\beta_1} y^{\beta_2} \exp(-c_1 x^{-\alpha_1}) \exp(-c_2 y^{-\alpha_2})$. Consider $v \in \Sb^1$ periodic. When $v=(0,1)$, for $s$ a local coordinate of the boundary of $\{A_v(W)>0\}$ then $A_v(s) \simeq s_+^{\beta_1} \exp(-c_1 s^{-\alpha_1}_+)$. Then by Theorem \ref{wideresolvethm} part 3 and Example \ref{polyexp1dex} \ed{there exists $h_v>0$ such that for $h \in (0,h_v)$ we have}
\begin{equation}
\nm{\left(\frac{i}{h}-\Ac_v \right)^{-1}}_{\Lc(\Hc)} \lesssim h^{-1} \ln(h^{-1})^{\frac{2\alpha_1+1}{\alpha_1}}.
\end{equation}
Similarly when $v=(1,0)$, then $A_v(W)(s) \simeq s^{\beta_2} \exp(-c_2 y^{-\alpha_2})$ and  \ed{there exists $h_v>0$ such that for $h \in (0,h_v)$ we have}
\begin{equation}
\nm{\left(\frac{i}{h}-\Ac_v \right)^{-1}}_{\Lc(\Hc)} \lesssim h^{-1} \ln(h^{-1})^{\frac{2\alpha_2+1}{\alpha_2}}.
\end{equation}
For all $v \ed{\not \in} \{(1,0), (0,1)\}$, by Lemma \ref{averagingsquarelemma} we have
\begin{equation}
\ed{C^{-1} \exp(-c_{v} s^{-\alpha_2}) \leq A_v(W)(s) \leq C \exp(-c_{v}^{-1} s^{-\alpha_2}).}
\end{equation}
Note that although the upper and lower bounds of $A_v(W)$ both satisfy the derivative bound conditions (Assumptions \ref{noninvarassumption} and \ref{widegenbadassumption}) with the same $q$ and $p$, they are not constant multiples of each other, and so we may not directly apply Theorem \ref{wideresolvethm} and Example \ref{polyexp1dex}.
Instead, \ed{recall} that $W$ satisfies Assumptions \ref{noninvarassumption} and \ref{widegenbadassumption} with $q(z) = \ln(z^{-1})^{-\left(\frac{\alpha_1+1}{\alpha_1}\right)}, p(z)=q(z)^2$. Therefore by Lemma \ref{averagingdbclemma} part 3, $A_v(W)$ satisfies Assumptions \ref{wideassumption} and \ref{widebadassumption} with the same $q$ and $p$. Finally 
 \begin{equation}
 \digamma_v(\d z) = \ed{\nu}( \{x \in \Sb^1: 0<A_v(W) < \d z\})  \simeq \ln(z^{-1})^{-\frac{1}{\alpha_2}} \geq q(z),
 \end{equation}
 so Assumption \ref{wideinversebassumption} is satisfied. Now following Example \ref{polyexp1dex}
\begin{align}
&R_2(z) \simeq z \ln(z^{-1})^{-\frac{2(\alpha_1+1)}{\alpha_1}}, \qquad \ti{R}_2^{-1}(h) \simeq h \ln(h^{-1})^{\frac{2(\alpha_1+1)}{\alpha_1}}.
%&|A_v(W)_{\d}^{-1}(\ti{R}^{-1}_2(h))| \simeq \ln(h^{-1})^{-\frac{1}{\alpha_2}}.
\end{align}
Then by Theorem \ref{wideresolvethm} part 3, \ed{for all $v \in \Sb^1$ periodic there exists $h_v>0$ such that for $h \in (0,h_v)$ we have} 
\begin{equation}\label{exprectresolve}
\nm{\left(\frac{i}{h}-\Ac_v \right)^{-1}}_{\Lc(\Hc)} \lesssim h^{-1} \ln(h^{-1})^{\frac{2(\alpha_1+1)}{\alpha_1} - \frac{1}{\alpha_2}}.
\end{equation}
Then since $0 < \alpha_1 \leq \alpha_2$
\begin{equation}
\frac{2(\alpha_2+1)}{\alpha_2} -\frac{1}{\alpha_2} \leq \frac{2(\alpha_1+1)}{\alpha_1} -\frac{1}{\alpha_1}\leq \frac{2(\alpha_1+1)}{\alpha_1} -\frac{1}{\alpha_2}.
\end{equation}
So for all $v \in \Sb^1$ periodic, \eqref{exprectresolve} holds, and so by Theorem \ref{averagedresolventthm}, and writing $h^{-1}=\lambda$, \ed{there exists $\lambda_0>0$ such that for $\lambda \in \Rb, |\lambda| \geq \lambda_0$ we have }
\begin{equation}
\nm{(i\lambda-\Ac)^{-1}}_{\Lc(\Hc)} \leq \lambda \ln(\lambda)^{\frac{2(\alpha_1+1)}{\alpha_1} - \frac{1}{\alpha_2}}.
\end{equation}
This right hand side is of positive increase, so energy decays at rate 
\begin{equation}
r(t) = t^{-1} \ln(\ed{2}+t)^{\frac{2(\alpha_1+1)}{\alpha_1} - \frac{1}{\alpha_2}}.
\end{equation}
\qed
\end{example}

The proof of energy decay for damping supported on a strictly convex set, Theorem \ref{polylogconveximprovethm}, follows a similar outline. We again use Theorem \ref{averagedresolventthm} to reduce to resolvent estimates with the damping replaced by an average. These resolvent estimates are in turn proved by controlling the turn-on behavior of the damping using Lemma \ref{avgconveximproveprop} and then invoking the $y$-invariant resolvent estimates of Theorem \ref{wideresolvethm}. One notable change is that Lemma \ref{avgconveximproveprop} provides turn-on behavior that does not depend on the averaging direction $v$. 
\begin{example}\label{convexex}(Proof of Theorem \ref{polylogconveximprovethm})
Assume $\{W>0\}$ is a locally strictly convex set with positive curvature and for $d$ a local coordinate for the boundary of $\{W>0\}$, $W \ed{\simeq} d^{\beta} \ln(d^{-1})^{-\gamma}$. 

By Lemma \ref{avgconveximproveprop} for any periodic direction $v \in \Sb^1$, for $s$ a local coordinate of the boundary of $\{A_v(W) >0\}$ we have 
\begin{equation}
R^{-1}s^{\beta+\frac{1}{2}} \ln(s^{-1})^{-\gamma} \leq A_v(W)(s) \leq R s^{\beta+\frac{1}{2}} \ln(s^{-1})^{-\gamma}.
\end{equation}
So by Theorem \ref{wideresolvethm} part 1 and Example \ref{polylog1dex}, for all $v \in \Sb^1$ periodic \ed{there exists $h_v>0$ such that for $h \in (0,h_v)$ we have}
\begin{equation}
\nm{\left(\frac{i}{h}-\Ac_v \right)^{-1}}_{\Lc(\Hc)} \lesssim h^{-\frac{\beta+\frac{1}{2}+3}{\beta+\frac{1}{2}+2}} \ln(h^{-1})^{-\frac{\gamma}{\beta+\frac{1}{2}+2}}.
\end{equation}
Thus, by Theorem \ref{averagedresolventthm}, and writing $h^{-1}=\lambda$, \ed{there exists $\lambda_0>0$, such that for all $\lambda \in \Rb, |\lambda| \geq \lambda_0$ we have}
\begin{equation}
\nm{(i\lambda -\Ac)^{-1}}_{\Lc(\Hc)} \lesssim \lambda^{\frac{\beta+\frac{1}{2}+3}{\beta+\frac{1}{2}+2}} \ln(\lambda)^{-\frac{\gamma}{\beta+\frac{1}{2}+2}}.
\end{equation}
The right hand side is of positive increase in $\lambda$, and so energy decays at rate 
\begin{equation}
t^{-\frac{\beta+\frac{1}{2}+2}{\beta+\frac{1}{2}+3}} \ln(\ed{2}+t)^{-\frac{\gamma}{\beta+\frac{1}{2}+3}}.
\end{equation}
%The second statement follows from an analogous argument, invoking Example \ref{polyexp1dex} and the corresponding parts of Lemma \ref{avgconveximproveprop} and Theorem \ref{wideresolvethm}, noting that the $M_3(h)$ found in Example \ref{polyexp1dex} does not depend on $\beta$, the exponent of the polynomial $x^{\beta}$, or $c$, the constant in $\exp(-cx^{\alpha})$. 

To see the second statement, we begin by noting that $W$ satisfies Assumptions \ref{noninvarassumption} and \ref{widegenbadassumption} with $q(z) =\ln(z^{-1})^{-(\frac{\alpha+1}{\alpha})}, p(z)=q(z)^2$. Therefore by Lemma \ref{averagingdbclemma} part 3, $A_v(W)$ satisfies Assumptions \ref{wideassumption} and \ref{widebadassumption} with the same $q$ and $p$, for all $v \in \Sb^1$ periodic. Now by Lemma \ref{avgconveximproveprop} part 2), $\ed{A_v(W) \leq R_v \exp(-c_v^{-1} s^{-\alpha})}$ and so 
 \begin{equation}
 \digamma_v(\d z) = \ed{\nu}( \{x \in \Sb^1: 0<A_v(W) < \d z\})  \simeq \ln(z^{-1})^{-\frac{1}{\alpha}} \geq q(z),
 \end{equation}
and Assumption \ref{wideinversebassumption} is satisfied. Then by Example \ref{polyexp1dex} and Theorem \ref{wideresolvethm} part 3, for all $v \in \Sb^1$ periodic, \ed{there exists $h_v>0$ such that for $h \in (0,h_v)$ we have}
\begin{equation}
\nm{\ed{\left( \frac{i}{h} -\Ac_v\right)}^{-1}}_{\Lc(\Hc)} \lesssim h^{-1} \ln(h^{-1})^{\frac{2\alpha+1}{\alpha}}.
\end{equation}
\ed{Thus} by Theorem \ref{averagedresolventthm}, and writing $h^{-1}=\lambda$, \ed{there exists $\lambda_0>0$ such that for $\lambda\in \Rb, |\lambda| \geq \lambda_0$ we have}
\begin{equation}
\nm{(i\lambda -\Ac)^{-1}}_{\Lc(\Hc)} \lesssim \lambda^{-1} \ln(\lambda)^{-\frac{2\alpha+1}{\alpha}}.
\end{equation}
The right hand side is of positive increase in $\lambda$ and so energy decays at rate 
\begin{equation}
r(t) = t^{-1} \ln(\ed{2}+t)^{\frac{2\alpha+1}{\alpha}}.
\end{equation}
\qed
\end{example}
The proof of energy decay for damping supported on a superellipse, Theorem \ref{ellipsethm}, follows a similar outline. We again use Theorem \ref{averagedresolventthm} to reduce to resolvent estimates with the damping replaced by an average. These resolvent estimates are in turn proved by controlling the turn-on behavior of the damping using Lemma \ref{avgconveximproveprop} and then invoking the $y$-invariant resolvent estimates of Theorem \ref{wideresolvethm}. As in the proof of Theorem \ref{rectthm} there are different resolvent estimates for $A_v(W)$ depending on the direction $v$. 

\begin{example}\label{superellipseexample}(Proof of Theorem \ref{ellipsethm})
Assume $\{W>0\}$ is a superellipse and for $d$ a local coordinate for the boundary of $\{W>0\}$, $W \ed{\simeq} d^{\beta} \ln(d^{-1})^{-\gamma}$. 

Consider $v \in \Sb^1$ periodic. When $v \neq (1,0)$ or $(0,1)$, then by Lemma \ref{superellipseaveraging} for $s$ a local coordinate for the boundary of $\{A_v(W)>0\}$, we have $A_v(W)(s) \simeq s^{\beta+\frac{1}{2}} \ln(s^{-1})^{-\gamma}$. Thus by Theorem \ref{wideresolvethm} and Example \ref{polylog1dex} \ed{there exists $h_v>0$ such that for $h \in (0,h_v)$ we have}
\begin{equation}
\nm{\left(\frac{i}{h}-\Ac_v \right)^{-1}}_{\Lc(\Hc)} \lesssim h^{-\frac{\beta+\frac{1}{2}+3}{\beta+\frac{1}{2}+2}} \ln(h^{-1})^{-\frac{\gamma}{\beta+\frac{1}{2}+2}}.
\end{equation}
Similarly, for $v_1=(1,0)$ and $v_2=(0,1)$, and letting $n_1=n, n_2=m$, by Lemma \ref{superellipseaveraging} $A_{v_i}(s) \simeq s^{\beta+\frac{1}{n_i}} \ln(s^{-1})^{-\gamma}$. Thus by Theorem \ref{wideresolvethm} part 1 and Example \ref{polylog1dex} \ed{there exists $h_i>0$ such that for $h \in (0,h_i)$ we have}
\begin{equation}
\nm{\left(\frac{i}{h}-\Ac_v \right)^{-1}}_{\Lc(\Hc)} \lesssim h^{-\frac{\beta+\frac{1}{n_i}+3}{\beta+\frac{1}{n_i}+2}} \ln(h^{-1})^{-\frac{\gamma}{\beta+\frac{1}{n_i}+2}}.
\end{equation}
Since $n=n_1 \geq n_2=m \geq 2$
\begin{equation}
h^{-\frac{\beta+\frac{1}{2}+3}{\beta+\frac{1}{2}+2} } \ln(h^{-1})^{-\frac{\gamma}{\beta+\frac{1}{2}+2}} \leq h^{-\frac{\beta+\frac{1}{m}+3}{\beta+\frac{1}{m}+2} }\ln(h^{-1})^{-\frac{\gamma}{\beta+\frac{1}{m}+2}} \leq h^{-\frac{\beta+\frac{1}{n}+3}{\beta+\frac{1}{n}+2}} \ln(h^{-1})^{-\frac{\gamma}{\beta+\frac{1}{n}+2}}.
\end{equation}
Therefore, for all $v \in \Sb^1$ periodic \ed{there exists $h_v>0$ such that for $h \in (0,h_v)$ we have}
\begin{equation}
\nm{\left(\ed{\frac{i}{h}-\Ac_v }\right)^{-1}}_{\Lc(\Hc)} \lesssim h^{-\frac{\beta+\frac{1}{n}+2}{\beta+\frac{1}{n}+2}} \ln(h^{-1})^{-\frac{\gamma}{\beta+\frac{1}{n}+2}}.
\end{equation}
Thus by Theorem \ref{averagedresolventthm} and setting $\lambda=h^{-1}$, \ed{there exists $\lambda_0>0$ such that for $\lambda \in \Rb, |\lambda| \geq \lambda_0$ we have}
\begin{equation}
\nm{(i\lambda -\Ac)^{-1}}_{\Lc(\Hc)} \lesssim \lambda^{\frac{\beta+\frac{1}{n}+3}{\beta+\frac{1}{n}+2}} \ln(\lambda)^{-\frac{\gamma}{\beta+\frac{1}{n}+2}}.
\end{equation}
The right hand side is of positive increase in $\lambda$ and so energy decays at rate 
\begin{equation}
r(t)=t^{-\frac{\beta+\frac{1}{n}+2}{\beta+\frac{1}{n}+3}} \ln(\ed{2}+t)^{-\frac{\gamma}{\beta+\frac{1}{n}+3}}.
\end{equation}
The second statement follows an analogous argument, modifying as we did in Example \ref{convexex}. \qed
\end{example}

\section{Eigenfunction Concentration on Product Manifolds}\label{eigsec}
A necessary ingredient for the proof of Theorem \ref{thinresolvethm} is a concentration estimate for Laplace eigenfunctions on product manifolds. This is similar to \cite{BurqZuily2015}[Theorem 1.1], but with a modification to allow for non-polynomial scales. 

Before stating the nonconcentration result, we define the product setup we will use.

Let $(M_j, g_j), j=1,2,$ be compact Riemannian manifolds. Let $\mathcal{M}=M_1 \times M_2, \g=g_1 \otimes g_2$ be the product, and use $d_j$ (respectively $d$) to denote the geodesic distance on $M_j$ (resp. $\mathcal{M}$). Let $\Delta_{\g}$ be the Laplace-Beltrami operator on $(\mathcal{M}, \g$) and assume that $M_j$ is $W^{j,\infty}$ and $g_j \in W^{j-1, \infty}$ for $j=1,2$. 

Let $q_0 \in M_2$ and $\Sigma \in M_1 \times \{q_0\}$. For $\eta>0$ define
$$
N_{\eta} = \{m = (p,q) \in \mathcal{M}: d(m,\Sigma) < \eta) = M_1 \times \{q \in M_2: d_2(q,q_0) < \eta\}.
$$
%For ease of notation let $L^2(\eta) = L^2(N_{\eta})$ and $L^2(\eta_1 \backslash \eta_2) = L^2(N_{\eta_1} \backslash N_{\eta_2})$.
Now we can state the eigenfunction nonconcentration result at non-polynomial scales. This is \cite[Theorem 1.1]{BurqZuily2015} with $h^{\d}$ replaced by $\rho(h)$, and the outer radius on the right hand side terms changed from $2\rho(h)$ to $\e \rho(h)$. For completeness, we include a proof of this proposition here, although we use the same approach.
\begin{proposition} \label{bzprop}
For any $\e>0$, and any function $\rho(h): (0,\infty) \ra (0,\infty)$ with $\limh \rho(h)=0$, there exists $C, h_0>0$, such that, for all $h \in (0, h_0)$, and every $\psi \in L^2\ed{(\mathcal{M})}$
$$
\nm{\psi}_{L^2(N_{\rho(h)})} \leq C \left( \nm{\psi}_{L^2(N_{\e\rho(h)}\backslash N_{\rho(h)})} + \frac{\rho(h)^2}{h^2} \nm{ (h^2\Delta + 1) \psi}_{L^2(N_{\e\rho(h)})} \right).
$$
\end{proposition}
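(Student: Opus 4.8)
The plan is to reduce the eigenfunction-scale statement to a fixed-scale statement by rescaling, exactly as in \cite{BurqZuily2015}. First I would introduce, for each $h$, the parameter $\lambda = \rho(h)/h$ and the rescaled semiclassical parameter $\hbar = h/\rho(h) = 1/\lambda$, so that $h^2\Delta + 1 = \rho(h)^2(\hbar^2 \Delta + \rho(h)^{-2})$. The point of working on the product $\mathcal{M} = M_1 \times M_2$ is that $\Delta_\g = \Delta_{g_1}\otimes \mathrm{Id} + \mathrm{Id}\otimes\Delta_{g_2}$, and $N_\eta = M_1 \times \{d_2(\cdot,q_0)<\eta\}$ involves only the second factor. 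So I would expand $\psi$ in an $L^2(M_1)$-orthonormal basis of eigenfunctions $e_k$ of $\Delta_{g_1}$ with eigenvalues $\mu_k^2$, writing $\psi = \sum_k \psi_k(q) e_k(p)$ with $\psi_k \in L^2(M_2)$. Then $\nm{\psi}_{L^2(N_\eta)}^2 = \sum_k \nm{\psi_k}_{L^2(\{d_2<\eta\})}^2$ and $(h^2\Delta+1)\psi = \sum_k \big((h^2\Delta_{g_2} + (1 + h^2\mu_k^2))\psi_k\big) e_k$, so it suffices to prove the one-dimensional-in-spirit estimate
\begin{equation}
\nm{\psi_k}_{L^2(B_{\rho(h)})} \leq C\Big( \nm{\psi_k}_{L^2(B_{\e\rho(h)}\setminus B_{\rho(h)})} + \frac{\rho(h)^2}{h^2}\nm{(h^2\Delta_{g_2} + z_k)\psi_k}_{L^2(B_{\e\rho(h)})}\Big)
\end{equation}
on $M_2$ near $q_0$, \emph{uniformly in the spectral parameter} $z_k = 1 + h^2\mu_k^2 \geq 1$, where $B_r = \{q: d_2(q,q_0)<r\}$; summing the squares in $k$ then recovers the claim. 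The uniformity in $z_k\ge 1$ is essential and is what makes this more than a soft statement.

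Next I would rescale in $M_2$. Choose normal (or just bi-Lipschitz) coordinates $x$ near $q_0$ so that $B_r$ corresponds roughly to a Euclidean ball of radius $r$, set $x = \rho(h) X$, and let $\varphi(X) = \psi_k(\rho(h)X)$. Under this change $h^2\Delta_{g_2} = \frac{h^2}{\rho(h)^2}\Delta_{\tilde g_h}$ where $\tilde g_h$ is the metric $g_2$ dilated to the $X$-ball of radius $\e$; since $g_2 \in W^{1,\infty}$ (Lipschitz) and $M_2$ is $W^{2,\infty}$, the $\tilde g_h$ converge (as $h\to 0$, after passing to subsequences) in the appropriate weak-$*$ sense to a Lipschitz metric on $B(0,\e)\subset\mathbb{R}^{d-k}$, with uniform ellipticity and Lipschitz bounds independent of $h$. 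The equation becomes $(\Delta_{\tilde g_h} + z_k)\varphi = \frac{\rho(h)^2}{h^2}\cdot(\text{RHS data})$, i.e. a second-order elliptic equation with bounded coefficients and a zeroth-order term $z_k \ge 1$, on the fixed annular region $B(0,\e)$. The target inequality becomes: $\nm{\varphi}_{L^2(B(0,1))} \le C(\nm{\varphi}_{L^2(B(0,\e)\setminus B(0,1))} + \nm{(\Delta_{\tilde g_h}+z_k)\varphi}_{L^2(B(0,\e))})$, uniformly in $z_k\ge 1$ and in $h$.

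At this point the statement is a quantitative unique-continuation / Caccioppoli-type estimate for the family of operators $\Delta_{\tilde g_h} + z$, $z\ge 1$, on a fixed domain, and this is precisely the heart of the Burq–Zuily argument; I would invoke it in the same way, the key mechanism being a Carleman estimate with a weight adapted to the annulus $B(0,\e)\setminus\overline{B(0,1)}$ that allows one to control $\varphi$ on $B(0,1)$ by its mass in the annulus plus the source term, with constants depending only on ellipticity and Lipschitz constants of the metric and \emph{not} on the (positive) zeroth-order coefficient $z$ — indeed a large positive $z$ only helps in such estimates since it can be absorbed favorably. The one subtlety relative to the smooth case, already handled in \cite{BurqZuily2015}, is that the metric is only Lipschitz, so one uses the Carleman estimates valid for $C^{0,1}$ (or $W^{1,\infty}$) coefficients. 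The main obstacle, and the step I expect to require the most care, is exactly this: tracking that all constants in the Carleman/interpolation estimate are uniform both as the rescaled metrics $\tilde g_h$ vary with $h$ (they do not converge in $C^1$, only in a weaker topology) and as $z_k\to\infty$, so that after summing over $k$ one still gets a single constant $C$. Once that uniform estimate is in hand, undoing the rescaling (the factor $\rho(h)^2/h^2$ on the source term reappears exactly as stated, and the annulus $B(0,\e)\setminus B(0,1)$ pulls back to $N_{\e\rho(h)}\setminus N_{\rho(h)}$) and summing $\sum_k$ in $L^2$ with Plancherel on $M_1$ completes the proof.
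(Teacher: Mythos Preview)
Your approach --- eigenfunction decomposition on $M_1$, reduction to a one-parameter family of problems on $M_2$, rescaling to a fixed domain, and invoking the Burq--Zuily fixed-scale estimate --- is exactly the paper's. There is, however, a sign slip that affects your heuristic for the hard case: with the geometer's Laplacian used throughout this paper (so $\Delta_{g_1}\le 0$, eigenvalues $-\mu_k^2$), the reduced operator on the $k$-th mode is $h^2\Delta_{g_2}+(1-h^2\mu_k^2)$, not $h^2\Delta_{g_2}+(1+h^2\mu_k^2)$. Thus $z_k=1-h^2\mu_k^2$ ranges over $(-\infty,1]$, and after rescaling the effective spectral parameter is $\rho(h)^2\tau$ with $\tau=h^{-2}-\mu_k^2\in\Rb$. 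The genuinely delicate regime is $\tau$ large \emph{positive} (the oscillatory Helmholtz case, $\mu_k\ll h^{-1}$), not the coercive one; your remark that ``a large positive $z$ only helps'' points to the wrong case. The Burq--Zuily fixed-scale result you invoke (their Proposition~3.2, quoted here with $\e$ in place of $2$) is in fact stated uniformly for all real $\tau$, so once the sign is corrected the argument goes through unchanged. Also note a minor algebra slip: after the rescaling $x=\rho(h)X$ the zeroth-order coefficient becomes $\frac{\rho(h)^2}{h^2}z_k$, not $z_k$ itself.
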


Following the strategy of \cite{BurqZuily2015, BurqZworski2004} we can use eigenfunctions of $M_1$ to reduce \ed{to} a simpler problem on $M_2$.  

Let $B(q_0, r) \subset M_2$ be the ball of radius $r$ (with respect to $d_2$) centered at $q_0$. The following is analogous to \cite[Proposition 3.1]{BurqZuily2015}, with similar replacements as described above. 

\begin{proposition}\label{mtwoconcprop}
For any $\e>0$, and any function $\rho(h): (0,\infty) \ra (0,\infty)$ with $\limh \rho(h)=0$, there exists $C, h_0>0$, such that, for all $h \in (0, h_0)$, every $\tau \in \Rb$ and every $U \in H^2(M_2)$
$$
\nm{U}_{L^2(B(q_0, \rho(h)))} \leq C \left( \nm{U}_{L^2(B(q_0,\e \rho(h)) \backslash B(q_0, \rho(h)))} + \rho(h)^2 \nm{(-\Delta_{g_2} - \tau) U}_{L^2(B(q_0, \e \rho(h)))} \right).
$$
\end{proposition}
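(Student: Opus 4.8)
The plan is to follow the classical Carleman/weight-function approach of \cite{BurqZuily2015}, adapted so that the cutoff scale is the given $\rho(h)$ rather than a polynomial power $h^\delta$. First I would reduce to a local statement near $q_0$: choose geodesic normal coordinates on $M_2$ centered at $q_0$, so that a neighborhood of $q_0$ is identified with a ball in $\Rb^{d_2}$ with a Lipschitz metric, and $d_2(q,q_0) \simeq |x|$. Since everything is scale-invariant up to constants, I would rescale $x = \rho(h) x'$, which turns the balls $B(q_0,\rho(h))$, $B(q_0, \e\rho(h))$ into fixed balls $B(0,1)$, $B(0,\e)$, and turns the operator $-\Delta_{g_2} - \tau$ into $\rho(h)^{-2}\left(-\Delta_{\tilde g} - \rho(h)^2 \tau\right)$ where $\tilde g$ is the rescaled (still Lipschitz, now even closer to flat) metric. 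Writing $E = \rho(h)^2 \tau$, the inequality to be proved becomes the scale-free statement
\begin{equation}
\nm{U}_{L^2(B(0,1))} \leq C\left( \nm{U}_{L^2(B(0,\e)\setminus B(0,1))} + \nm{(-\Delta_{\tilde g} - E)U}_{L^2(B(0,\e))} \right),
\end{equation}
uniformly in $E \in \Rb$, which is exactly the form of the propagation-of-smallness estimate in \cite[Proposition 3.1]{BurqZuily2015}. The factor $\rho(h)^2$ in the statement is precisely the Jacobian/scaling factor produced by this change of variables, which is why it appears multiplying the source term.

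The heart of the matter is then the uniform-in-$E$ estimate above. I would treat two regimes separately. For $E$ bounded (say $|E| \le E_0$), this is a standard interpolation/three-balls inequality for a second-order elliptic operator with Lipschitz coefficients, following from a Carleman estimate with a radial weight $e^{\lambda \varphi(|x|)}$, $\varphi$ convex-increasing: one applies the Carleman inequality to $\chi U$ for a cutoff $\chi$ supported in $B(0,\e)$ and equal to $1$ on a slightly smaller ball, absorbs the commutator terms (supported in the annulus) into the right-hand side, and optimizes in $\lambda$. For $E$ large and positive (the genuinely hyperbolic/oscillatory regime, $E \to +\infty$), the radial Carleman weight must be chosen $E$-dependent — this is the delicate point in \cite{BurqZuily2015} — so that the conjugated operator remains subelliptic despite the large zeroth-order term; concretely one takes a weight whose derivative is comparable to $\sqrt E$ and checks that the sub-principal symbol condition (the bracket condition $\{p_\varphi, \overline{p_\varphi}\} > 0$ on the characteristic set, or its Lipschitz-coefficient substitute) holds with constants independent of $E$. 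The main obstacle I anticipate is carrying the Carleman estimate through with only Lipschitz regularity of $\tilde g$ uniformly in $E$; this is handled in \cite{BurqZuily2015} and I would cite their argument rather than redo it, checking only that replacing $2\rho(h)$ by $\e\rho(h)$ as the outer annulus radius changes nothing (it only shrinks the region where the cutoff derivatives live, which is harmless, and one needs $\e > 1$ so the annulus $B(0,\e)\setminus B(0,1)$ is nonempty).

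Finally, I would undo the rescaling and the normal-coordinate change, noting that all constants produced are uniform (they depend only on $\e$, the dimension, and the Lipschitz norm of $g_2$ near $q_0$, not on $h$, $\rho(h)$, or $\tau$), and that for $h$ small enough $\rho(h)$ is small enough that $B(q_0,\e\rho(h))$ lies inside the fixed normal-coordinate chart. This yields the stated inequality for all $h \in (0,h_0)$.

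Hence the only real content beyond bookkeeping is the $E$-uniform Carleman estimate, which is imported verbatim from \cite{BurqZuily2015}; the novelty here is purely that the geometric scale is an arbitrary $\rho(h)\to 0$, which the scaling argument reduces to the fixed-scale case.
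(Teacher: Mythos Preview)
Your proposal is correct and follows essentially the same route as the paper: localize near $q_0$ in normal coordinates, rescale by $\rho(h)$ to reduce to a fixed-scale estimate on $B(0,\e)$ with rescaled metric converging to flat, and then invoke \cite[Proposition~3.2]{BurqZuily2015} (with outer radius $\e>1$ in place of $2$, a cosmetic change since one only needs room for cutoffs to turn off). The paper's cited estimate is slightly sharper --- it carries a factor $(1+|\tau|^{1/2})^{-1}$ on the source term --- but your weaker version already suffices for the stated proposition.
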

\begin{proof}[Proof of Proposition \ref{bzprop} via Proposition \ref{mtwoconcprop}]
Note that on a compact Riemannian manifold with an $L^{\infty}$ metric, the Laplace-Beltrami operator can be defined via a quadratic form on $H^1$. It is self adjoint with its natural domain and has compact resolvent. Because of this, there exists an orthonormal basis of $L^2(M_1)$ of eigenfunctions $e_n$ of $\Delta_{M_1}$ with eigenvalues $-\lambda_n^2$. 

So now, for $\psi \in \ed{L^2(\mathcal{M})}$, we can decompose it with respect to this orthonormal basis, by writing $\hat{\psi}_n(q) = \<\psi(\cdot, q), e_n(\cdot)\>_{L^2(M_1)}$. Then $\psi(p,q) = \sum_{n \in \Nb} \hat{\psi}_n(q) e_n(p)$ and for $r>0$, in analog to the Plancherel inequality
\begin{equation}\label{plancherel}
\nm{\psi}_{L^2(N_r)}^2 = \nm{\psi}^2_{L^2(M_1 \times B(q_0, r))} = \sum_{n \in \Nb} \nm{\hat{\psi}_n}_{L^2(B(q_0,r))}^2.
\end{equation}
Now set $F=(h^2 \Delta_g+1) \psi$ and pair both sides of the equation with $e_n$ to see 
$$
(-h^2 \Delta_{g_2} + h^2 \lambda_n^2-1) \hat{\psi}_n =\hat{F}_n.
$$
Set $\tau= h^{-2} -\lambda_n^2$ to rewrite this as $(-\Delta_{g_2} - \tau) \hat{\psi}_n = h^{-2} \hat{F}_n$, and then 
apply Proposition \ref{mtwoconcprop} to obtain
$$
\nm{\hat{\psi}_n}^2_{L^2(B(q_0, \rho(h)))} \leq C \left( \nm{\hat{\psi}_n}^2_{L^2(B(q_0, \e \rho(h)) \backslash B(q_0, \rho(h)))} + \rho(h)^4 \nm{h^{-2} \hat{F}_n}_{L^2(B(q_0,\e \rho(h)))}^2 \right).
$$
Taking the sum over $n \in \Nb$, applying \eqref{plancherel}, and taking square roots of both sides gives the desired estimate.
\end{proof}

\begin{proof}[Proof of Proposition \ref{mtwoconcprop}]
We follow the proof of \cite[Proposition 3.1]{BurqZuily2015}. The problem is local near $q_0$, so we will use a diffeomorphism to work in a neighborhood of the origin in $\Rb_z^k$ (where $k=\dim(M_2)$) such that the new metric $g$ satisfies $g|_{z=0} =\operatorname{Id}$. 

Now making the change of variables $x = \frac{z}{\rho(h)}$ we set 
\begin{align*}
u(x) = U(\rho(h)x) = U(z), \\
G(z)=(-\Delta_{g} - \tau) U(z), \\
F(x) = G(\rho(h)x) = G(z), \\
g^{h}(x) = g(\rho(h) x) = g(z).
\end{align*}
This gives the new equation
\begin{equation}
(-\Delta_{g^h} - \rho(h)^2 \tau ) u(x) = \rho(h)^2 F(x). 
\end{equation}
Note that because $g|_{z=0}$, the metric $g^h$ converges in the Lipschitz topology to the flat metric $g_0 = \operatorname{Id}$ as $h \ra 0^+$. Proposition \ref{mtwoconcprop} then follows immediately from
\begin{proposition} For $\ed{\e>1}$, the $g_n$ be a family of metrics on $B(0,\e) \subset \Rb^k$, which converges to the flat metric as $n \ra \infty$. Then, there exists $C(\e)>0, N_0>0$ such that for every $n \geq N_0, \tau \in \Rb$, $u \in H^2(B(0,\e))$
\begin{equation}
\nm{u}_{L^2(B(0,1))} \leq C \left( \nm{u}_{L^2(B(0,\e) \backslash B(0,1))} + \frac{1}{1+|\tau|^{\frac{1}{2}}} \nm{\ed{(-\Delta_{g_n} - \tau) u}}_{L^2(B(0,\e))}\right).\end{equation}
\end{proposition}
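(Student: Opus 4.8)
The plan is to reduce the statement to a uniform (in $n$ and $\tau$) quantitative unique continuation / Carleman-type estimate for second-order elliptic operators with Lipschitz coefficients, and then to upgrade it to an estimate with the \emph{outer} $L^2$-norm on the right-hand side by a standard three-ball-in-annulus propagation argument together with a covering/chaining argument. Concretely, write $P_n = -\Delta_{g_n}$. Since $g_n \to g_0 = \mathrm{Id}$ in the Lipschitz topology on $B(0,\e)$, for $n$ large the operators $P_n$ are uniformly elliptic with uniformly bounded Lipschitz coefficients, so all the constants in the estimates below can be taken uniform in $n \geq N_0$. The two regimes to handle are $|\tau|$ bounded and $|\tau| \to \infty$; the genuinely hard, $n$-uniform ingredient is the latter.

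First I would establish the \emph{local} estimate: there exist $C, N_0, r_0 > 0$ such that for $n \geq N_0$, all $\tau \in \Rb$, and all $u \in H^2(B(0, 2r_0))$,
\begin{equation}
\nm{u}_{L^2(B(0,r_0))} \leq C\left( \nm{u}_{L^2(B(0,2r_0)\setminus B(0,r_0))} + \frac{1}{1+|\tau|^{1/2}} \nm{(P_n - \tau)u}_{L^2(B(0,2r_0))}\right).
\end{equation}
For $|\tau|$ in a bounded set this is a standard consequence of a Carleman estimate for $P_n - \tau$ with Lipschitz coefficients (the Lipschitz regularity is exactly the threshold for such estimates, cf.\ the hypotheses on $g_2$), applied with a weight whose level sets separate $B(0,r_0)$ from $B(0,2r_0)$; the constant is uniform in $n$ because only the ellipticity constant and the Lipschitz seminorm of $g_n$ enter. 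For $|\tau| \to \infty$ one rescales: set $h = |\tau|^{-1/2}$, $v(x) = u(x)$, and view $P_n - \tau$ as $h^{-2}(h^2 P_n \mp 1)$, i.e.\ a semiclassical operator; the needed estimate becomes a semiclassical Carleman / propagation-of-smallness estimate for $h^2 P_n \mp 1$, which again holds with $n$-uniform constants for Lipschitz metrics. I would cite/reuse here precisely the machinery of \cite{BurqZuily2015} (their Proposition 3.1 is exactly this kind of statement at fixed scale), noting only that the scaling $x = z/\rho(h)$ has already been performed so we are working at unit scale on $B(0,\e)$.

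Next I would pass from $B(0, 2r_0)$ to $B(0,\e)$ by a chaining argument. Cover the annulus $\overline{B(0,1)}$ by finitely many balls $B(x_j, r_0)$ with $B(x_j, 2r_0) \subset B(0,\e)$, and apply the local estimate (re-centered at each $x_j$; translation invariance of the flat model and uniform control of $g_n$ make the constants uniform) to get
\begin{equation}
\nm{u}_{L^2(B(x_j, r_0))} \leq C\left( \nm{u}_{L^2(B(x_j,2r_0)\setminus B(x_j,r_0))} + \frac{1}{1+|\tau|^{1/2}}\nm{f}_{L^2(B(0,\e))}\right),
\end{equation}
where $f = (P_n - \tau)u$. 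Summing over $j$ and absorbing: each term $\nm{u}_{L^2(B(x_j,2r_0)\setminus B(x_j,r_0))}$ is bounded by $\nm{u}_{L^2(B(0,1))}$ plus $\nm{u}_{L^2(B(0,\e)\setminus B(0,1))}$, so one gets $\nm{u}_{L^2(B(0,1))} \leq C(\theta \nm{u}_{L^2(B(0,1))} + \nm{u}_{L^2(B(0,\e)\setminus B(0,1))} + (1+|\tau|^{1/2})^{-1}\nm{f}_{L^2(B(0,\e))})$ — which does not immediately close. The standard fix is to instead run the propagation inequality through a shrinking sequence of annuli anchored against the outer region $B(0,\e)\setminus B(0,1)$, i.e.\ use the three-ball inequality iteratively so that the ``bad'' interior term is always strictly smaller and can be absorbed after finitely many steps; this is the technically fiddly but routine part, carried out verbatim as in \cite{BurqZuily2015}. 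The net output is exactly the displayed inequality with $C = C(\e)$.

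The main obstacle, and the only point requiring care, is the \textbf{$n$-uniformity of the Carleman estimate in the high-frequency regime $|\tau| \to \infty$} for metrics that are merely Lipschitz. One cannot differentiate the coefficients, so the conjugated operator $e^{\varphi/h} (h^2 P_n \mp 1) e^{-\varphi/h}$ must be handled with a weight $\varphi$ chosen independently of $n$ and with all error terms controlled purely by $\|g_n\|_{\mathrm{Lip}}$ and the ellipticity constant; the convexification of $\varphi$ and the choice of the large Carleman parameter must be made once and for all, and one checks that since $g_n \to \mathrm{Id}$ the ``perturbation'' $g_n - \mathrm{Id}$ contributes a term that is small for $n$ large and hence absorbable. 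I would treat this exactly as in \cite[Section 3]{BurqZuily2015}: the only new feature relative to that reference is the appearance of the parameter $\e$ (their argument used outer radius $2$), which is harmless — it only changes the number of balls in the covering and the number of iterations in the three-ball chain, hence the constant $C(\e)$. With that in hand, Proposition~\ref{mtwoconcprop}, and then Proposition~\ref{bzprop} via the eigenfunction decomposition already written out, follow.
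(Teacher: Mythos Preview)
The paper does not give its own proof of this proposition: it simply cites \cite[Proposition 3.2]{BurqZuily2015} and remarks that replacing the outer radius $2$ by any $\e>1$ does not affect the argument (one only needs room for a cutoff to turn off between $1$ and $\e$). So the ``paper's proof'' is a black-box citation, and the real question is whether your sketch reproduces or replaces the Burq--Zuily argument.

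Your proposal has a genuine gap in the regime $\tau\to+\infty$. Carleman estimates and three-ball inequalities for the semiclassical Helmholtz operator $h^2P_n-1$ (with $h=|\tau|^{-1/2}$) involve exponential weights $e^{\varphi/h}$; extracting a propagation-of-smallness inequality from them costs a factor $e^{C/h}=e^{C\sqrt{\tau}}$, not the polynomial factor $(1+|\tau|^{1/2})^{-1}$ that the statement requires. The operator has nonempty real characteristic variety $\{|\xi|_g=1\}$, so there is no elliptic gain to cancel the weight, and the ``standard fix'' you allude to (iterating three-ball estimates through shrinking annuli) compounds rather than cures this loss. In short, the additive, $\tau$-polynomial estimate you are after is \emph{not} a consequence of the Carleman/unique-continuation machinery you invoke. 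The Burq--Zuily proof proceeds instead by reducing to the flat metric (this is exactly what $g_n\to\mathrm{Id}$ buys), decomposing in spherical harmonics to obtain a family of one-dimensional radial ODEs, and analyzing those directly; the linear bound with the $(1+|\tau|^{1/2})^{-1}$ factor comes from explicit properties of the 1D solutions, not from a Carleman weight. A smaller point: you cite their Proposition 3.1 as the model, but that corresponds to the paper's Proposition~\ref{mtwoconcprop}; the statement under discussion is their Proposition 3.2.
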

This is exactly \cite[Proposition 3.2]{BurqZuily2015} with $\e>1$ replacing $2$. Making this change does not alter the proof of this proposition. It is only necessary that $\e>1$, so that cutoff functions can turn off between $1$ and $\e$. Because of this we do not reproduce their argument here.
\end{proof}
\section{Proofs when damping is $0$ only along geodesics}\label{thinsec}
In this section we prove Theorems \ref{thinresolvethm} and \ref{thinqmthm}. 
By Lemma \ref{resolvequivlem} to prove Theorem \ref{thinresolvethm} it is enough to prove. 
\begin{proposition}\label{thinresolventprop}
With the same geometric assumptions and assumptions on $V$ as in Theorem \ref{thinresolvethm}, for all $\e>1$, there exist $C, h_0>0$ such that for all $0 < h \leq h_0$
\begin{equation}\label{georeseq}
\ltwo{u} \leq \ed{ C\max\left( \frac{h}{V(\rho(h))}, \rho^2(h)\right) \ltwo{\left(-\Delta + \frac{i}{h} W - \frac{1}{h^2} \right) u},}
\end{equation}
where $\rho(h) = \ti{R}_{\e}^{-1}(h)$, for $R_{\e}(z) = z^2 \sqrt{V(z) V(\e z)}$. 
\end{proposition}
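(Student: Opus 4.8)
The plan is to mimic the structure used for the $y$-invariant resolvent estimates (Proposition \ref{wideresolveprop}), but now in the geometric product setup: separately control $u$ on the region where $V$ is ``large'' and on the region near the trapped set where $V$ is ``small,'' then balance the two contributions by an appropriate choice of the semiclassical scale $\rho(h)=\ti R_\e^{-1}(h)$. Throughout I work with $f=(-\Delta+\tfrac{i}{h}W-\tfrac1{h^2})u$ and may assume, after using the geometric hypotheses, that everything is localized to the collar neighborhood $\mathcal{U}\cong M_1\times B(0,1)$ where $cV(|x|)\le W\le CV(|x|)$; away from $\mathcal{U}$ one has geometric control and the standard a priori damping estimates $\ltwo{W^{1/2}u}^2\lesssim h\int|fu|$ and $\ltwo{\psi u'}^2\lesssim \int|fu|$ handle $u$ directly.

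First I would set up the cutoff: let $\chi_h(x)=\chi(V(|x|)/g(h))$ with $g(h)$ to be chosen $\simeq\rho(h)$, so $\chi_h\equiv1$ where $V(|x|)\le g(h)$ and $\chi_h\equiv0$ where $V(|x|)\ge\delta g(h)$. Write $u=\chi_h u+(1-\chi_h)u$. On $\supp(1-\chi_h)$ we have $W\gtrsim g(h)$, so $\ltwo{(1-\chi_h)u}\lesssim g(h)^{-1/2}\ltwo{W^{1/2}u}\lesssim (h/g(h))^{1/2}(\int|fu|)^{1/2}$, exactly as in Lemma \ref{utwolemma}. The real work is estimating $u_1=\chi_h u$, which near $\Sigma$ looks like a transverse one-dimensional problem on $B(0,1)\subset\Rb^{d-k}$ fibered over $M_1$. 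Here the key new ingredient is the eigenfunction non-concentration result, Proposition \ref{bzprop}: expanding in eigenfunctions $e_n$ of $\Delta_{M_1}$ reduces matters to the transverse operator $-\Delta_{g_2}-\tau$ on $B(0,1)$ with $\tau=h^{-2}-\lambda_n^2$, and Proposition \ref{bzprop} gives
$$
\ltwo{u}_{L^2(N_{\rho(h)})}\le C\Big(\ltwo{u}_{L^2(N_{\e\rho(h)}\setminus N_{\rho(h)})}+\frac{\rho(h)^2}{h^2}\ltwo{(h^2\Delta+1)u}_{L^2(N_{\e\rho(h)})}\Big).
$$
On the annulus $N_{\e\rho(h)}\setminus N_{\rho(h)}$ one has $V(|x|)\ge V(\rho(h))$ (as $|x|\ge\rho(h)$ and $V$ increasing), hence $W\gtrsim V(\rho(h))$ there, so that first term is $\lesssim V(\rho(h))^{-1/2}\ltwo{W^{1/2}u}\lesssim (h/V(\rho(h)))^{1/2}(\int|fu|)^{1/2}$; the second term is $\tfrac{\rho(h)^2}{h}\ltwo{f}$. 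Combining, $\ltwo{u}\lesssim \big(h/V(\rho(h))\big)^{1/2}(\int|fu|)^{1/2}+\tfrac{\rho(h)^2}{h}\ltwo f$, and Cauchy--Schwarz/Young on $\int|fu|$ (absorbing $\e\ltwo u$) yields $\ltwo u\lesssim \big(h/V(\rho(h))+\rho(h)^2/h\big)\ltwo f$. The definition $R_\e(z)=z^2\sqrt{V(z)V(\e z)}$ and $\rho(h)=\ti R_\e^{-1}(h)$ is exactly what makes $\rho(h)^2\sqrt{V(\rho(h))V(\e\rho(h))}\simeq h$; using $V(\e\cdot)\gtrsim V(\cdot)$ this forces $h/V(\rho(h))\lesssim \rho(h)^2\sqrt{V(\e\rho(h))/V(\rho(h))}$ and more importantly balances the two terms so that $h/V(\rho(h))+\rho(h)^2/h\simeq \max\big(h/V(\rho(h)),\rho(h)^2/h\big)\cdot\text{(const)}$, which is the claimed bound $hM_\e(h)$.

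The step I expect to be the main obstacle is the passage between the clean product model and the actual geometry: the hypotheses only give a $W^{2,\infty}$ diffeomorphism $\Psi$ and a Lipschitz transverse metric $g_2$, so the ``transverse one-dimensional'' reduction is really a fibered elliptic problem with rough coefficients, and one must check that the constants in Proposition \ref{bzprop} (which is stated for Lipschitz metrics converging to flat after rescaling) apply uniformly, and that the error terms generated by pushing $-\Delta_g$ through $\Psi$ — commutators of size $O(\|\nabla\Psi\|)$ applied to $u$, controlled via Lemma \ref{aprioricontralemma}-type a priori bounds and the smallness of the collar — are genuinely lower order than $hM_\e(h)\ltwo f$. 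A secondary technical point is justifying the eigenfunction decomposition on $M_1$ when $g_1\in L^\infty$ only (self-adjointness and completeness via the quadratic form, as noted in the proof of Proposition \ref{bzprop}), and keeping the $\e>1$ slack consistently so that cutoffs can turn off on $(\rho(h),\e\rho(h))$; once these are in place the balancing argument above is essentially forced by the definition of $R_\e$. Finally, the positive-increase hypothesis on $m_\e$ is only invoked afterward, through Proposition \ref{rsslemma}, to convert \eqref{georeseq} into the stated energy decay rate, so it plays no role in proving the resolvent bound itself.
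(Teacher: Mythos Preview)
Your local analysis on $\mathcal{U}$ is essentially the paper's: split at scale $\rho(h)$, use the damping estimate where $|x|\ge\rho(h)$, and use Proposition~\ref{bzprop} where $|x|\le\rho(h)$. Two computational slips: the second term from Proposition~\ref{bzprop} involves $(h^2\Delta+1)u = ihWu - h^2 f$, so you get $\rho(h)^2\ltwo{f}$ (not $\rho(h)^2/h$) together with a term $\tfrac{\rho(h)^2}{h}\ltwo{Wu}_{N_{\e\rho(h)}}$; bounding $W\le CV(\e\rho(h))^{1/2}W^{1/2}$ on $N_{\e\rho(h)}$ is precisely where $V(\e\rho(h))$ enters and why $R_\e$ carries the $\sqrt{V(z)V(\e z)}$. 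With that fixed, your balancing is the same as the paper's \eqref{whyredef}.

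The genuine gap is the passage from the estimate on $\mathcal{U}$ to the global estimate on $M$. Your claim that ``away from $\mathcal{U}$ one has geometric control and the standard a priori damping estimates handle $u$ directly'' is not correct: on $M\setminus\mathcal{U}$ the damping $W$ can vanish (only the \emph{flowout} reaches $\{W>0\}$), so $\ltwo{W^{1/2}u}$ does not control $\ltwo{u}_{L^2(M\setminus\mathcal{U})}$. Moreover, even on $\mathcal{U}$ your estimate has $\ltwo{u}_{L^2(M)}^{1/2}$ on the right (via $\int|fu|$), so Young's inequality cannot be closed: you would be absorbing $\e\ltwo{u}_{L^2(M)}$ into a left-hand side that is only $\ltwo{u}_{L^2(\mathcal{U})}$. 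The paper resolves this by a \emph{contradiction argument with semiclassical defect measures}: if \eqref{georeseq} fails along a sequence $(u_n,h_n)$ with $\ltwo{u_n}=1$, then \eqref{uUgeoeq} forces $\ltwo{u_n}_{L^2(\mathcal{U})}\to0$, while the a priori estimates make any defect measure $\mu$ flow-invariant and zero on $S^*\Omega$, hence zero on $\mathcal{GC}$; since $\mathcal{T}\subset S^*\mathcal{U}$ and $S^*M=\mathcal{GC}\cup\mathcal{T}$, $\mu\equiv0$, contradicting total mass $1$. This propagation step is the actual ``main obstacle,'' not the regularity of $\Psi$ (which is already handled inside Proposition~\ref{bzprop}).
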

The core idea of the proof is to estimate $u$ separately where $V$ is small and where $V$ is large. Where $V$ is large we use an a priori estimate to control $u$. Because $V$ is only $0$ at $x=0$, it is small only on in a neighborhood of $x=0$, and so $u$ can be estimated there using the eigenfunction concentration result Proposition \ref{bzprop}. 
The definition of $R_{\e}(z)$ is needed to balance terms in \eqref{whyredef}. Note that $\limh \ti{R}_{\e}^{-1}(h)=0$. 

The strategy follows that of \cite[Proposition 2.1]{BurqZuily2015} with $\rho(h)$ replacing their $h^{\d}$, although the introduction of non-polynomial $\rho(h)$ requires a departure from their argument. 
\begin{proof}
Let 
\begin{equation}\label{fuM}
f = \left(-\Delta + \frac{i}{h} W - \frac{1}{h^2} \right) u.
\end{equation}
Multiplying both sides by $\bar{u}$, integrating by parts on $M$ and taking the real and imaginary parts
\begin{align}
\ltwom{W^{\frac{1}{2}}u} \leq \ed{h^{\frac{1}{2}} \ltwom{u}^{\frac{1}{2}} \ltwom{f}^{\frac{1}{2}}, }\label{wuM} \\
h^2 \ltwom{\nabla_{g} u}^2 \leq \ltwom{u}^2 + h^2 \ltwom{u} \ltwom{f} \label{graduM}.
\end{align}
Now, in the neighborhood $\mathcal{U}$ of $T$ using the isometry $\Psi$, we set 
\begin{equation}\label{isoM}
\phi(p,x) = u(\Psi^{-1}(p,z)), \quad \ti{W}(p,x) = W(\Psi^{-1}(p,x)), \quad \ti{f}(p,x) = f(\Psi^{-1}(p,x)).
\end{equation}
Then from \eqref{fuM} we have on $M_1 \times B(0,1)$, after rearranging 
$$
(h^2\Delta_{\ti{g}} +1) \phi = i h \ti{W} \phi - h^2 \ti{f}.
$$
Now let $\rho(h):(0,\infty) \ra (0,\infty)$ such that $\limh \rho(h)=0$. Note, that we eventually choose $\rho(h) =\ti{R}^{-1}_{\e}(h)$. 

On one hand for $\{|x| \geq \rho(h)\}$, since $V$ is increasing $V(x) \geq V(\rho(h))$ so 
\begin{equation}\label{ncgeq}
\nm{\phi}_{L^2(M_1 \times \{\rho(h) \leq |x|\})} \leq \nm{ \frac{V^{\frac{1}{2}}(x)}{V^{\frac{1}{2}}(\rho(h))} \phi }_{L^2(M_1 \times \{\rho(h) \leq |x|\})} \leq \frac{C}{V(\rho(h))^{\frac{1}{2}}} \nm{W^{\frac{1}{2}} \phi}_{L^2(M_1 \times B(0,1))}.
\end{equation}
On the other hand, applying Proposition \ref{bzprop} with $\Sigma = M_1 \times \{0\}$
\begin{align}
\nm{\phi}&_{L^2(M_1 \times \{|x| \leq \rho(h)\})}\leq C \left( \nm{\phi}_{L^2(M_1 \times \{\rho(h) \leq |x| \leq \e \rho(h)\})} + \frac{\rho(h)^2}{h^2} \nm{ih \ti{W} \phi - h^2 \ti{f}}_{L^2(M_1\times \{|x| \leq \e \rho(h)\})} \right) \nonumber\\
&\leq C \nm{\phi}_{L^2(M_1 \times \{\rho(h) \leq |x|\})} + C\rho(h)^2 \nm{\ti{f}}_{L^2(M_1 \times B(0,1))}+ \frac{C \rho(h)^2}{h} \nm{\ti{W} \phi}_{L^2(M_1 \times \{|x| \leq \e\rho(h)\})} \nonumber\\
&\leq \left( \frac{C}{V(\rho(h))^{\frac{1}{2}}} + \frac{C \rho(h)^2}{h} V(\e \rho(h))^{\frac{1}{2}} \right) \nm{\ti{W}^{\frac{1}{2}} \phi}_{L^2(M_1 \times B(0,1))} + C \rho(h)^2 \nm{\ti{f}}_{L^2(M_1 \times B(0,1))}, \label{ngeq}
\end{align}
where we used that $\ti{W}^{\frac{1}{2}} \leq C V^{\frac{1}{2}}(\e \rho(h))$ on $\{|x| \leq \e\rho(h)\}$ and $\frac{\ed{\ti W(x)}}{V(\rho(h))} \geq C$ on $\{\rho(h) \leq |x|\}$. 

Combining \eqref{ncgeq} and \eqref{ngeq}
\begin{align*}
\nm{\phi}_{L^2(M_1 \times B(0,1))} \leq& C \left( \frac{1}{V(\rho(h))^{\frac{1}{2}}} + \frac{\rho(h)^2 V(\e \rho(h))^{\frac{1}{2}}}{h} \right) \nm{\ti{W}^{\frac{1}{2}} \phi}_{L^2(M_1 \times B(0,1))} \\
&+ C \rho(h)^2 \nm{\ti{f}}_{L^2(M_1 \times B(0,1))}.
\end{align*}
Now applying \eqref{isoM} and \eqref{wuM} 
\begin{equation}
\nm{u}_{L^2(\mathcal{U})} \leq C \left( \frac{h^{\frac{1}{2}}}{V(\rho(h))^{\frac{1}{2}}} + \frac{\rho(h)^2 V(\e \rho(h))^{\frac{1}{2}}}{h^{\frac{1}{2}}} \right) \ltwom{u}^{\frac{1}{2}} \ltwom{f}^{\frac{1}{2}} + C \rho(h)^2 \ltwom{f}.
\end{equation}
Now since $R_{\e}(z) = z^2 \sqrt{V(z) V(\e z)}$, setting $\rho(h) = \ti{R}_{\e}^{-1}(h)$ gives $h \simeq \rho(h)^2 \sqrt{V(\rho(h)) V(\e \rho(h))}$ and so 
\begin{equation}\label{whyredef}
\frac{h^{\frac{1}{2}}}{V(\rho(h))^{\frac{1}{2}}} \simeq \frac{\rho(h)^2 V(\e \rho(h))^{\frac{1}{2}}}{h^{\frac{1}{2}}}.
\end{equation}
Therefore
\begin{equation}\label{uUgeoeq}
\nm{u}_{L^2(\mathcal{U})} \leq C \frac{h^{\frac{1}{2}}}{V(\rho(h))^{\frac{1}{2}}} \ltwom{u}^{\frac{1}{2}} \ltwom{f}^{\frac{1}{2}} + C \rho(h)^2 \ltwom{f}.
\end{equation}
Now to conclude the proof we will proceed by contradiction. 

If \eqref{georeseq} were not true, then there would exist sequences $u_n \in H^2(M), f_n \in L^2(M), h_n \ra 0^+$ such that 
$$
\left(-\Delta_g + \frac{i}{h} W - \frac{1}{h_n^2} \right) u_n = f_n, \quad 1 = \ltwom{u_n} > n \max \left( \frac{h_n}{V(\rho(h_n))}, \rho(h_n)^2\right) \ltwom{f_n}.
$$
So $(-h_n^2 \Delta_g + i h_n W - 1) u_n = h_n^2 f_n$ and $\ltwo{h_n^2 f_n} = o\left(\min\left( h_n V(\rho(h_n)), \frac{h_n^2}{\rho(h_n)}\right)\right)$. In particular 
\begin{equation}\label{hnfneq}
\ltwo{h_n^2 f_n} = o(h_n),
\end{equation}
because $\limh \rho(h_n)=0$ and $V(0)=0$. We could also write $\ltwo{f_n}=o(\min(h_n^{-1} V(\rho(h_n)), \rho(h_n)^{-1})$, and then from \eqref{uUgeoeq} 
\begin{equation}\label{unconceq}
\limn \nm{u_n}_{L^2(\mathcal{U})} =0.
\end{equation}

Now note that since $(u_n)$ is bounded in $L^2(M)$, up to replacement by a subsequence, there exists a semiclassical measure $\mu$ on $S^* M$, such that for any $a \in C_0^{\infty}(S^*M)$
\begin{equation}
\limn \<a(x,h_nD_x) u_n, u_n\>_{L^2(M)} = \int_{S^*M} a(x,\xi) d \mu,
\end{equation}
again see \cite[Chapter 5]{Zworski2012}. From \eqref{graduM}, the sequence $u_n$ is $(h_n)$ oscillating, so any $\mu$ has total mass $ =\limn \ltwom{u_n}=1$ \cite[Proposition 4]{Burq1997}. 

From \eqref{wuM} and \eqref{hnfneq}, noting that $W \leq C W^{\frac{1}{2}}$ 
\begin{equation}
(-h_n^2 \Delta - 1) u_n = -i h_n W u_n + h_n^2 f_n = o(h_n)_{L^2}.
\end{equation}
Therefore by \cite[Proposition 4.4]{Burq2002} the measure $\mu$ is invariant under the bicharacteristic flow. By \eqref{wuM} $\mu$ is $0$ on $S^* \Omega$ and so by this invariance under the flow, it is $0$ on $\mathcal{GC}$. However, by \eqref{unconceq} it is also $0$ on $S^* \mathcal{U} \supset \mathcal{T}$, and since $S^*M = \mathcal{GC} \cup \mathcal{T}$, then $\mu$ is identically $0$. This contradicts that $\mu$ has total mass 1, so the desired conclusion must be true. 
\end{proof}
By Lemma \ref{resolvequivlem} and Proposition \ref{bdlemma}, to prove Theorem \ref{thinqmthm} it is enough to prove the following proposition.

\begin{proposition}\label{thinqmprop}
Under the same geometric assumptions as in Theorem \ref{thinresolvethm}, if there exists an increasing function $V:[0,\infty) \ra [0,\infty)$ with $V(0)=0$ and  $C>0$ such that $W\left(\Psi^{-1}(p,x)\right) \leq C V(|x|)$ for all $(p,x) \in M_1 \times B(0,1)$,  and recalling $R(z)=z^2 V(z)$, then there exist sequences $h_n \ra 0$ and $\{v_n\} \in L^2(\T^2)$ such that 
\begin{equation}
\ltwom{\left(-\Delta+\frac{i}{h_n}W - \frac{1}{h_n^2}\right) v_n} \lesssim \frac{V(\ti{R}^{-1}(h_n))}{h_n}, \quad \ltwom{v_n}=1.
\end{equation}
\end{proposition}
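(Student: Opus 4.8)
The plan is to construct an explicit quasimode concentrated on the trapped set, mimicking the Laplace eigenfunction that is invariant along the geodesic direction, and localized at the scale $\rho(h) = \ti{R}^{-1}(h)$ in the transverse variable. Working in the product coordinates $\Psi$ near $T$, write $M_1 \times B(0,1)$ with variables $(p,x)$, and let $(e_k, -\lambda_k^2)$ be an orthonormal eigenbasis of $\Delta_{g_1}$ on $M_1$. First I would fix $h$ and choose $k=k(h)$ so that $\lambda_k$ is as close as possible to $h^{-1}$; since the spectrum of $\Delta_{g_1}$ has counting function growing like a power, consecutive eigenvalues near $h^{-2}$ are spaced at most $O(1)$ apart, so one can arrange $|h^{-2} - \lambda_k^2| \le C$. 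Then set $v_h(p,x) = e_k(p)\, \psi\!\left(\tfrac{|x|}{\rho(h)}\right) / \nm{e_k(p)\psi(|x|/\rho(h))}_{L^2}$, where $\psi \in \Cs((-1,1);[0,1])$ is identically $1$ near $0$. The normalization gives $\nm{v_h}_{L^2} = 1$ automatically; note $\nm{\psi(|x|/\rho(h))}_{L^2(B(0,1))} \simeq \rho(h)^{(d-k)/2}$ by the Lipschitz comparison of $g_2$ with the flat metric.

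Next I would compute $\left(-\Delta + \tfrac{i}{h}W - \tfrac{1}{h^2}\right)v_h$ in these coordinates. There are three contributions. The term coming from $-\Delta_{g_1}$ acting on $e_k$ together with $-h^{-2}$ produces $(\lambda_k^2 - h^{-2})v_h$, which is $O(1)$ in $L^2$ — this is the source of the $\rho(h)^{-2}$ type obstruction and we will see it is dominated. The term from $-\Delta_{g_2}$ (or more precisely the transverse part of $\Delta_{\ti g}$, which is a Lipschitz perturbation of $\rho(h)^{-2}\Delta_{\mathrm{flat}}$ after rescaling) acting on $\psi(|x|/\rho(h))$ contributes something of size $O(\rho(h)^{-2})$ in $L^\infty$, but crucially supported only where $\psi' \ne 0$, i.e. on $\{|x| \simeq \rho(h)\}$; its $L^2$ norm is therefore $O(\rho(h)^{-2}) \cdot \rho(h)^{(d-k)/2}$, so relative to the normalized $v_h$ it is $O(\rho(h)^{-2})$. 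Finally the damping term $\tfrac{i}{h}W v_h$: on the support of $v_h$ we have $|x| \le \rho(h)$, so $W(\Psi^{-1}(p,x)) \le C V(|x|) \le C V(\rho(h))$ since $V$ is increasing; hence $\nm{\tfrac1h W v_h}_{L^2} \le \tfrac{C}{h}V(\rho(h))$. Collecting these, and using that cutting $v_h$ off to the neighborhood $\mathcal U$ of $T$ only introduces smooth commutator errors of lower order (or simply extending $v_h$ by zero outside $\mathcal U$, which is legitimate since $\psi$ is supported in $B(0,1)$), we get
\begin{equation}
\nm{\left(-\Delta + \tfrac{i}{h}W - \tfrac{1}{h^2}\right)v_h}_{L^2} \le C\left( 1 + \rho(h)^{-2} + \tfrac{1}{h}V(\rho(h)) \right).
\end{equation}

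It remains to check that the right-hand side is $\lesssim \tfrac{V(\ti R^{-1}(h))}{h} = \tfrac{V(\rho(h))}{h}$. Recall $R(z) = z^2 V(z)$, so $\rho(h) = \ti R^{-1}(h)$ satisfies $h \simeq \rho(h)^2 V(\rho(h))$, i.e. $\tfrac{V(\rho(h))}{h} \simeq \rho(h)^{-2}$. Thus the $\rho(h)^{-2}$ term is exactly of the claimed size, and the constant $1$ is dominated since $\rho(h) \to 0$; so the quasimode bound $\nm{(\cdots)v_h}_{L^2} \lesssim \tfrac{V(\rho(h))}{h}$ holds, which is the first displayed conclusion with $v_n = v_{h_n}$, $h_n \to 0$ any sequence. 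The second displayed conclusion follows immediately: if a resolvent estimate $\ltwom{v} \le \tfrac{h}{V(\ti R^{-1}(h))}\ltwom{(\cdots)v}$ held, applying it to $v_h$ would give $1 = \ltwom{v_h} \le \tfrac{h}{V(\rho(h))} \cdot C\tfrac{V(\rho(h))}{h} \cdot o(1)$ — here the $o(1)$ comes from tracking that the error is actually $o(\rho(h)^{-2})$ rather than $O$, or alternatively from the constant $C$ being improvable to $o(1)$ by a slightly more careful choice — forcing a contradiction; in the cleanest formulation one builds the approximate eigenvalue so that $\nm{(\cdots)v_h}_{L^2} = o\!\big(\tfrac{V(\rho(h))}{h}\big)$, and then a resolvent bound of that exact order is impossible.

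The main obstacle I anticipate is twofold. First, the transverse Laplacian term: after rescaling $x = \rho(h)y$ the operator $\Delta_{\ti g}$ in the $x$-variables becomes $\rho(h)^{-2}(\Delta_{\mathrm{flat}} + (\text{Lipschitz perturbation}))$, and one must be careful that the low regularity of $g_2$ (only Lipschitz) does not spoil the $O(\rho(h)^{-2})$ bound on $\Delta_{\ti g}\psi(|x|/\rho(h))$ — this is fine because $\psi$ is smooth and the metric coefficients are bounded, but the argument should be phrased via the quadratic form / weak formulation rather than pointwise second derivatives, exactly as the regularity hypotheses in Theorem \ref{thinresolvethm} are set up. Second, pinning down the approximate eigenvalue: we need $|\lambda_k^2 - h^{-2}| = o(\rho(h)^{-2})$ for the contradiction in the sharp form, and since $\rho(h)^{-2} \to \infty$ this is automatic once consecutive eigenvalues near $h^{-2}$ are $O(1)$-separated (Weyl law on $(M_1,g_1)$), but one should state this lemma cleanly and verify it does not require more regularity of $g_1$ than assumed. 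Everything else — the normalization computation and the support bound on $W$ — is routine.
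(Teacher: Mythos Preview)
Your overall structure matches the paper's, but there is a genuine gap in how you handle the eigenvalue term. You fix $h$ and then choose $k=k(h)$ so that $|\lambda_k^2 - h^{-2}| \leq C$, appealing to Weyl-law spacing. This claim is false when $\dim M_1 = 1$, which is precisely the case of primary interest (e.g.\ $M=\T^2$ with $M_1=\T_y$): there the eigenvalues of $-\Delta_{g_1}$ are essentially $n^2$, and consecutive ones near $h^{-2}$ are separated by $\sim h^{-1}$, not $O(1)$. Worse, this error term cannot be absorbed: since $h \simeq \rho(h)^2 V(\rho(h))$ we have $\rho(h)^{-2} \simeq V(\rho(h))/h$, and because $V(\rho(h))\to 0$ this is $o(h^{-1})$. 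Thus the mismatch $(\lambda_k^2 - h^{-2})v_h$ would \emph{dominate} both the transverse Laplacian term and the damping term, and the quasimode would only give a bound of order $h^{-1}$, which is useless.

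The paper's fix is simple and removes the issue entirely: rather than fixing $h$ and approximating, set $h_n := \lambda_n^{-1}$ from the outset, so that $(-\Delta_{g_1} - h_n^{-2})e_n = 0$ exactly and the eigenvalue error vanishes. This is permitted because the proposition only asks for \emph{some} sequence $h_n\to 0$. With that choice, only the transverse term $O(\rho(h)^{-2})$ and the damping term $O(V(\rho(h))/h)$ remain, they balance by the definition of $R$, and the rest of your computation goes through. Your worry about the Lipschitz regularity of $g_2$ is not a real obstacle: $\Delta_{g_2}$ applied to the smooth cutoff $\chi(x/\rho(h))$ only involves the $L^\infty$ metric coefficients and their first derivatives, so the pointwise bound $|\Delta_{g_2}\chi(x/\rho(h))|\lesssim \rho(h)^{-2}$ holds directly. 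Finally, your discussion of the second displayed conclusion is overcomplicated: the statement only says no \emph{better} resolvent estimate can hold, and that is immediate from the $O$-bound on the quasimode---no $o$ is needed.
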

The definition of $R(z)$ is needed to balance terms in \eqref{whyrdef}. Note also that $\limh \ti{R}^{-1}(h)=0$. 
To construct the quasimodes we take a cutoff function centered at $x=0$ in $B(0,1)$ and then further concentrate its support by rescaling via $\ti{R}^{-1}(h)$. Its size as a quasimode is then computed directly.
\begin{proof}
Throughout we will write $\rho(h)=\ti{R}^{-1}(h)$.

In the neighborhood $\mathcal{U}$ of $T$ we use the isometry $\Psi$ to work with $(p,x) \in M_1 \times B(0,1)$, where $B(0,1)$ is the unit ball in $\Rb^{d-k}$. We will define the quasimodes to be a product of an eigenfunction of $M_1$ and a smooth cutoff compactly supported in $B(0,1)$. 

As in the proof of Proposition \ref{bzprop}, there exists an orthonormal basis $e_n$ of $L^2(M_1)$ of eigenfunctions of $\Delta_{g_1}$ with eigenvalues $\lambda_n \ra \infty$. Define the sequence $h_n=\lambda_n^{-1}$, so $e_n$ solves $(-\Delta_{g_1} - \frac{1}{h_n^2}) e_n=0$. For ease of notation we will write $h:=h_n$. 
Define $\chi  \in C_c^{\infty}(B(0,1), [0,1])$. Then define 
\begin{align}
u_n(p,x) &= \frac{ c_1}{\rho(h)^{\frac{d-k}{2}}} \chi (x \rho(h)^{-1} ) e_n(p), \quad (p,x) \in M_1 \times B(0,1) \\
 v_n(X)& = u_n(\Psi(X)), \quad X \in \mathcal{U},
\end{align}
where $c_1 = \ed{\|\chi\|_{L^2(B(0,1))}^{-1}}$, so $\ltwomo{u_h}=1$. We extend $v_n$ by $0$ for $ z\in M$ outside of $\mathcal{U}$.  This extension is smooth because $\p \mathcal{U}$ is exactly $\ed{\Psi^{-1}(M_1 \times \p B(0,1))}$.

Now compute, using that $W(\Psi^{-1}(p,x)) \leq C V(|x|)$ and the isometry $\Psi$
\begin{align}
\ltwom{\left(-\Delta_g+\frac{i}{h} W - \frac{1}{h^2}\right) v_n} = \nm{\left(-\Delta_g+\frac{i}{h} W - \frac{1}{h^2}\right) v_n}_{L^2(\mathcal{U})} \\
=\ltwomo{\left(-\Delta_{g_1\otimes g_2}+\frac{i}{h}W(\Psi^{-1}(p,x)) - \frac{1}{h^2}\right) u_n}\\
=  \frac{c_1}{\rho(h)^{\frac{d-k}{2}}}\ltwomo{\left(-\Delta_{g_1} - \Delta_{g_2} + \frac{i}{h}W(\Psi^{-1}(p,x)) - \frac{1}{h^2}  \right)  \chi\left(\frac{x}{\rho(h)} \right) e_n(p) }\\
 \leq \frac{C}{\rho(h)^{\frac{d-k}{2}}} \left( \int_{B(0,1)} \frac{1}{\rho(h)^4} \left|(\Delta_{g_2} \chi)\left( \frac{x}{\rho(h)} \right)\right|^2 + \frac{1}{h^2} V(\ed{|x|})^2 \left|\chi \left( \frac{x}{\rho(h)} \right)\right|^2 dx \right)^{\frac{1}{2}}.
\end{align}
Make a change of variables $\rho(h)y=x$ and obtain
\begin{align}
 \ltwom{\left(-\Delta_g+\frac{i}{h} W - \frac{1}{h^2}\right) v_n} & \leq C \left( \int_{B(0,1/\rho(h))} \frac{1}{\rho(h)^4} |\Delta_{g_2} \chi(y)|^2 + \frac{V(\ed{\rho(h)|y|})^2}{h^2} |\chi(y)|^2 dy \right)^{\frac{1}{2}} \\
 &\leq C \left( \frac{1}{\rho(h)^2} + \frac{V(\rho(h))}{h}\right),
\end{align}
where we used $\supp \chi, \Delta \chi \subset B(0,1)$ and that $V$ is increasing. Now since $R(z)=z^2 V(z)$ and $\rho(h) = \ti{R}^{-1}(h)$  we have $h \simeq \rho(h)^2 V(\rho(h))$ and so
\begin{equation}\label{whyrdef}
\ed{\rho(h)^{-2} \simeq \frac{V(\rho(h))}{h}.}
\end{equation}
Therefore
\begin{equation}
\ltwom{\left(-\Delta+\frac{i}{h} W - \frac{1}{h^2} \right) v_n }\leq C\frac{V(\ti{R}^{-1}(h))}{h},
\end{equation}
as desired. 
\end{proof}
\appendix
\section{Basic Inequalities and Estimates}\label{appendix}

First a lemma giving envelope inverses for products of powers with logarithms or exponentials. 
\begin{lemma}\label{envinvlemma}
For any $\zeta, \eta, \theta, c \in \Rb$
\begin{enumerate}
	\item If $m(\lambda) = \lambda^{\eta} \ln(\lambda)^{-\theta}$,  then $\ti{m}^{-1}(t) = t^{\frac{1}{\eta}}\ln(\ed{2}+t)^{\frac{\theta}{\eta}}$, for $t$ large enough.

	\item If $R(z) = z^{\eta} \ln (z^{-1})^{\theta}$,  then  $\ti{R}^{-1}(h) = h^{\frac{1}{\eta}}\ln(h^{-1})^{-\frac{\theta}{\eta}}$, for $h$ small enough.

	\item If $R(z) = z^{\eta} \exp(-cz^{-\zeta})$,  then $\ti{R}^{-1}(h) = \ln\left(h^{-\frac{1}{c}} \ln(h^{-\frac{1}{c}})^{-\frac{\eta}{\zeta c}} \right)^{-\frac{1}{\zeta}} $, for $h$ small enough. 
\end{enumerate}
\end{lemma}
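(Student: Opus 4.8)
The three claims are all verified by the same elementary strategy: exhibit an explicit candidate $g$ for the envelope inverse and check directly that $R\circ g(h)\simeq h$ (respectively $m\circ \widetilde m^{-1}(t)\simeq t$), i.e. that the composition is sandwiched between two positive constant multiples of the identity for $h$ small (resp. $t$ large). By Definition \ref{envinvdef} this is exactly what is required. Since $R$ is continuous and strictly monotone near the relevant endpoint in each case (increasing to $0$ as $z\to 0^+$ in cases (2) and (3), increasing to $\infty$ as $\lambda\to\infty$ in case (1)), a genuine inverse exists, so the only content is that the \emph{stated} closed-form expression agrees with it up to the allowed multiplicative slack. The key mechanism in all three cases is that logarithmic factors are ``asymptotically inert'' under substitution: $\ln(h^a f(h))=\ln(h^a)+\ln f(h) = (a+o(1))\ln h$ whenever $f(h)$ is a power of $\ln(h^{-1})$, so a log raised to any fixed power changes only by a bounded factor when its argument is perturbed by such a slowly varying quantity.

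\textbf{Case (1) and Case (2).} These are essentially the same computation, related by $z\mapsto z^{-1}$, so I would do (2) carefully and note (1) follows verbatim. Set $g(h)=h^{1/\eta}\ln(h^{-1})^{-\theta/\eta}$. Then $g(h)^\eta = h\,\ln(h^{-1})^{-\theta}$, and
$$
\ln\big(g(h)^{-1}\big) = \tfrac{1}{\eta}\ln(h^{-1}) + \tfrac{\theta}{\eta}\ln\ln(h^{-1}) = \Big(\tfrac{1}{\eta}+o(1)\Big)\ln(h^{-1}),
$$
so $\ln(g(h)^{-1})^{\theta} \simeq \ln(h^{-1})^{\theta}$ for $h$ small (the implied constants are, e.g., $(1/(2\eta))^{\theta}$ and $(1/\eta)^{\theta}$ once $h$ is small enough, with the inequalities flipping if $\theta<0$ — in either case it is a two-sided bound by positive constants). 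Therefore
$$
R(g(h)) = g(h)^\eta\,\ln(g(h)^{-1})^\theta = h\,\ln(h^{-1})^{-\theta}\cdot\ln(g(h)^{-1})^\theta \simeq h,
$$
which is the claim. Case (1): with $g(t)=t^{1/\eta}\ln(t)^{\theta/\eta}$ one gets $g(t)^\eta = t\ln(t)^\theta$ and $\ln g(t) = (\tfrac1\eta+o(1))\ln t$, hence $m(g(t)) = g(t)^\eta \ln(g(t))^{-\theta} \simeq t$.

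\textbf{Case (3).} This is the one with real (if still routine) work, and I expect it to be the main obstacle, because the inverse must be extracted from a transcendental equation rather than just read off. Write $R(z)=z^\eta\exp(-cz^{-\zeta})$ and look for $z$ with $R(z)\simeq h$, i.e. $c z^{-\zeta} - \eta\ln(z^{-1}) \simeq \ln(h^{-1})$, equivalently (dividing by $c$) $z^{-\zeta} \simeq \tfrac1c\ln(h^{-1}) + \tfrac{\eta}{c}\ln(z^{-1})$. To leading order $z^{-\zeta}\approx \tfrac1c\ln(h^{-1})$, so $z^{-1}\approx (\tfrac1c\ln(h^{-1}))^{1/\zeta}$ and $\ln(z^{-1}) \approx \tfrac1\zeta\ln\ln(h^{-1})$; feeding this back gives $z^{-\zeta} \simeq \tfrac1c\ln(h^{-1}) + \tfrac{\eta}{\zeta c}\ln\ln(h^{-1}) = \tfrac1c\ln\!\big(h^{-1}\ln(h^{-1})^{\eta/\zeta}\big)$. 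Thus the natural candidate is
$$
g(h) = \ln\!\Big(h^{-1/c}\,\ln(h^{-1/c})^{-\eta/(\zeta c)}\Big)^{-1/\zeta},
$$
matching the statement. To finish I would substitute this $g$ into $R$ and verify $R(g(h))\simeq h$: compute $g(h)^{-\zeta} = \ln(h^{-1/c}\ln(h^{-1/c})^{-\eta/(\zeta c)}) = \tfrac1c\ln(h^{-1}) - \tfrac{\eta}{\zeta c}\ln\ln(h^{-1/c})$, so $\exp(-c g(h)^{-\zeta}) = h\cdot \ln(h^{-1/c})^{\eta/\zeta} = h\cdot(\tfrac1c)^{\eta/\zeta}\ln(h^{-1})^{\eta/\zeta}(1+o(1))^{\eta/\zeta}$, while $g(h)^\eta = \ln(h^{-1/c}\ln(h^{-1/c})^{-\eta/(\zeta c)})^{-\eta/\zeta} \simeq \ln(h^{-1})^{-\eta/\zeta}$ since the argument of the outer log is $(\tfrac1c+o(1))\ln(h^{-1})$. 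Multiplying, the two powers of $\ln(h^{-1})^{\eta/\zeta}$ cancel up to a constant, leaving $R(g(h)) = g(h)^\eta\exp(-c g(h)^{-\zeta}) \simeq h$, as required. The only care needed is that all the ``$o(1)$'' and ``$(1+o(1))^{\text{power}}$'' factors are genuinely two-sided bounded by positive constants for $h$ in some $(0,h_0)$; this is immediate because each arises as a fixed power of a quantity tending to $1$. I would also remark that the formula in the excerpt writes $h^{-1/c}$ rather than $h^{-1}$ inside; the two differ by the harmless constant exponent $1/c$, and one can equally absorb it — the envelope-inverse notion is exactly designed to tolerate this.
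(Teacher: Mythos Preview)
Your proposal is correct and takes essentially the same approach as the paper: in each case you substitute the candidate $g$ into $R$ (or $m$) and verify directly that $R(g(h))\simeq h$, which is exactly what the paper does in a single line per case. Your treatment is more detailed---in particular you include the asymptotic-iteration motivation for the candidate in case~(3) and spell out why the $(1+o(1))$ factors are two-sidedly bounded---but the verification itself matches the paper's computation.
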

\begin{proof}
1) For large $t$
$$
m(\ti{m}^{-1}(t)) = t \ln(\ed{2}+t)^{\theta} \ln\left(t^{1/\eta} \ln(\ed{2}+t)^{\theta/\eta}\right)^{-\theta} \simeq \ed{t}.
$$

2) For small $h$
$$
R(\ti{R}^{-1}(h)) =h\ln(h^{-1})^{-\theta} \ln\left(h^{-1/\eta} \ln(h)^{\theta/\eta}\right)^{\theta} \simeq h.
$$

3) For small $h$
$$
R(\ti{R}^{-1}(h)) =\ln\left(h^{-\frac{1}{c}} \ln(h^{-\frac{1}{c}})^{-\frac{\eta}{\zeta c}} \right)^{-\frac{\eta}{\zeta}} h \ln(h^{-\frac{1}{c}})^{\frac{\eta}{\zeta}} \simeq h.
$$
\end{proof}

We now prove some lemmas that are used in the proofs of Lemma \ref{averagingsquarelemma}, \ref{avgconveximproveprop} and \ref{superellipseaveraging}.

First, we prove a lemma stating that the distance between the curve $x=y^n$ and a point $(x_0,y_0)$ to the right of it, is approximated by the distance between  $(x_0,y_0)$ and $(y_0^n, y_0)$.
\begin{lemma}\label{distancelemma}
Fix $n>1$, there exists $C,\e>0$ such that if $(x_0,y_0) \in \Rb^2$ with $0<x_0<\e$ and satisfies $x_0 \geq y_0^n$, then
\begin{equation}
C^{-1} |y_0^{n}-x_0| \leq \dist( (x_0,y_0), \{(x,y) \in \Rb^2: x=|y|^n\} ) \leq C |y_0^{n}-x_0|.
\end{equation}
\end{lemma}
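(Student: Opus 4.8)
The plan is to reduce the statement to an elementary one-variable estimate by exploiting that the nearest point on the curve $C_n = \{(x,y): x = |y|^n\}$ to $(x_0, y_0)$ is close to the horizontal projection $(|y_0|^n, y_0)$ when $(x_0,y_0)$ lies to the right of the curve and near the origin. First I would fix $\e > 0$ small and observe that the upper bound is immediate: the point $(|y_0|^n, y_0)$ lies on $C_n$, so
\begin{equation}
\dist\big((x_0,y_0), C_n\big) \leq |(x_0,y_0) - (|y_0|^n, y_0)| = |x_0 - |y_0|^n|,
\end{equation}
which gives the upper bound with $C = 1$. So the real content is the lower bound: I must show that no point of $C_n$ is much closer to $(x_0,y_0)$ than $(|y_0|^n, y_0)$ is.

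For the lower bound, let $(t^n \operatorname{sgn}(t)\text{-type point})$ — more precisely parametrize $C_n$ near the relevant arc by $y \mapsto (|y|^n, y)$ and let $(|y_1|^n, y_1)$ be a nearest point (which exists since $C_n$ is closed and we may restrict to a compact neighborhood). Write $d = \dist((x_0,y_0), C_n)$, so $d^2 = (x_0 - |y_1|^n)^2 + (y_0 - y_1)^2$. I would split into two cases. If $|y_1 - y_0| \geq \tfrac12 (x_0 - |y_0|^n)$, then already $d \geq |y_1 - y_0| \geq \tfrac12(x_0 - |y_0|^n) = \tfrac12 |x_0 - |y_0|^n|$ and we are done. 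Otherwise $|y_1 - y_0| < \tfrac12(x_0 - |y_0|^n)$; since $x_0 < \e$ this forces $|y_0|$, $|y_1|$ to be small, and then by the mean value theorem $\big||y_1|^n - |y_0|^n\big| \leq n \max(|y_0|,|y_1|)^{n-1} |y_1 - y_0|$, which for $\e$ small is at most $\tfrac12 (x_0 - |y_0|^n)$. Hence
\begin{equation}
x_0 - |y_1|^n \geq (x_0 - |y_0|^n) - \big||y_1|^n - |y_0|^n\big| \geq \tfrac12 (x_0 - |y_0|^n),
\end{equation}
so $d \geq x_0 - |y_1|^n \geq \tfrac12 |x_0 - |y_0|^n|$. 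In both cases $d \geq \tfrac12 |x_0 - |y_0|^n|$, giving the lower bound with $C = 2$.

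The one subtlety — and the step I expect to require the most care — is justifying that in the second case $|y_0|$ and $|y_1|$ are genuinely small (so that the $n-1$ power of the max is a small factor and the MVT estimate closes): this uses $x_0 < \e$ together with $|y_0|^n \leq x_0$ and the bound $|y_1 - y_0| < \tfrac12 x_0 < \tfrac12 \e$, which bounds $|y_1| \leq |y_0| + \tfrac12\e \leq \e^{1/n} + \tfrac12 \e$. Choosing $\e$ small enough that $n(\e^{1/n} + \e)^{n-1} \leq \tfrac12$ makes everything work, and one should note $n > 1$ is exactly what guarantees $\max(|y_0|,|y_1|)^{n-1} \to 0$ as $\e \to 0$ (for $n=1$ the curve is a line and the statement would need $C$ depending on the slope, but here $n>1$). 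I would also remark that the same argument, applied with the roles of the axes swapped or with $(|y|^n, y)$ replaced by $(|y|^{n_2}, y)$ etc., is what is actually invoked in Lemmas \ref{averagingsquarelemma}, \ref{avgconveximproveprop}, and \ref{superellipseaveraging}, where the relevant curve is a coordinate-graph piece of $\Sigma(W)$.
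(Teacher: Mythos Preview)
Your proof is correct and takes a genuinely different route from the paper's. The paper argues geometrically: it lets $(x_1,y_1)$ be the nearest point on the curve, uses the orthogonality condition (the normal line at $(x_1,y_1)$ must pass through $(x_0,y_0)$) to bound the slope $(y_1-y_0)/(x_1-x_0) = -n y_1^{n-1}$ by $1$ in absolute value for $\e$ small, concludes $|y_1-y_0| \leq |x_1-x_0|$ so that $d \simeq |x_1-x_0|$, and then argues $|x_1-x_0| \simeq |y_0^n - x_0|$. You instead do a direct case split on whether $|y_1-y_0|$ is large or small relative to $x_0-|y_0|^n$, handling the small case with the mean value theorem applied to $t \mapsto |t|^n$. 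Your argument is slightly more elementary in that it never invokes the first-order optimality condition at the minimizer (so it would extend verbatim to a curve that is merely locally Lipschitz with small constant rather than $C^1$ with small derivative), while the paper's normal-line computation makes the geometric picture more transparent and yields the sharper constant $C=\sqrt{2}$ in the upper estimate $d \leq \sqrt{2}\,|x_1-x_0|$. Both approaches hinge on the same smallness mechanism, namely that $n>1$ forces the curve to be nearly vertical near the origin, so the horizontal displacement dominates.
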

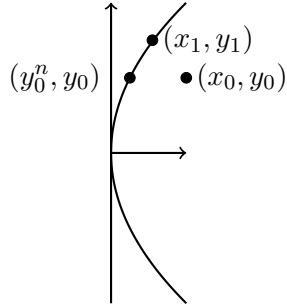
\begin{figure}[h]
\centering
\begin{tikzpicture}
[
declare function={
  a=1;
}]
\draw[domain=-a:a,samples=100,variable=\y,thick] plot ({\y*\y},{2*\y});
\draw[->, thick] (0,-2*a)--(0,2*a);
\draw[->, thick] (0,0)--(a*a,0);
\filldraw[black] (1,1) circle (2pt) node[anchor=west]{$(x_0,y_0)$};
\filldraw[black] (.25,1) circle (2pt);
\node[anchor=east] at (0,1) {$(y_0^n,y_0)$}; 
\filldraw[black] (.55,1.5) circle (2pt) node[anchor=west]{$(x_1,y_1)$};
\end{tikzpicture}
\caption{The curve $x^n=y$.}
\end{figure}
\begin{proof}
By symmetry we can consider $y_0 \geq 0$. Let $(x_1, y_1)$ be the point on $x=|y|^n$ closest to $(x_0, y_0)$. By inspection we can see $0 \leq x_1 \leq x_0$ and $y_1\geq 0$. The  line \ed{perpendicular} to the curve at $(x_1,y_1)$ must also pass through $(x_0, y_0)$ and has slope $-n y_1^{n-1}$. So 
\begin{equation}
\frac{y_1-y_0}{x_1-x_0} = - n y_1^{n-1}.
\end{equation}
Since $x_1\leq x_0<\e$, $y_1=x_1^{1/n} <\e^{1/n}$. Thus taking $\e<n^{-\frac{n}{n-1}}$
\begin{equation}
-1< - n \e^{\frac{n-1}{n}} < - n y_1^{n-1} =\frac{y_1-y_0}{x_1-x_0}.
\end{equation}
Since $x_1-x_0<0$, rearranging gives $y_1-y_0 < x_0-x_1$. Therefore 
\begin{equation}
|x_1-x_0| \leq d( (x_0,y_0), (x_1,y_1)) = \sqrt{(x_1-x_0)^2+(y_1-y_0)^2} \leq |x_1-x_0| \sqrt{2}.
\end{equation}

Now note that $|x_0-x_1| \leq |y_0^{n}-x_0|$, otherwise $(y_0^{n}, y_0)$ would be a point on the curve closer to $(x_0,y_0)$ than $(x_1, y_1)$. Again using that the slope of the line between $(x_0,y_0)$ and $(x_1, y_1)$ has magnitude at most $|n \e^{\frac{n-1}{n}}|$
\begin{equation}
|y_1-y_0| \leq n \e^{\frac{n-1}{n}} |y_0^n -x_0|.
\end{equation}
Since $(x_1, y_1)$ is on the curve, \ed{and by a Taylor series for $\e<0$, we have} $x_1=y_1^{n} \simeq y_0^{n} + c \e^{\frac{n-1}{n}} |y_0^n -x_0|$ and so
\begin{equation}
|x_1-x_0| \simeq |y_0^n-x_0| + c \e^{\frac{n-1}{n}} |y_0^n -x_0| \simeq |y_0^n -x_0|, 
\end{equation}
for $\e>0$ small enough.
\end{proof}

Next we prove a basic inequality related to the difference of powers formula. 
\begin{lemma}\label{diffpowerslemma}
Suppose $x,y >0$ and $n \in [1,\infty)$. If $y \in [0,x^{1/n}]$, then
\begin{equation}
(x-x^{\frac{n-1}{n}}y) \leq x-y^n \leq n(x-x^{\frac{n-1}{n}}y).
\end{equation}
\end{lemma}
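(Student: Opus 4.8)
The plan is to exploit the factorization $x - y^n = \prod_{k=0}^{n-1}(x^{1/n} - \zeta^k y^{1/n}\cdot\ldots)$—wait, that is not the right tool for real $n$. Instead I would work directly with the substitution $t = y/x^{1/n} \in [0,1]$, which reduces both inequalities to a single-variable statement. Write $x - y^n = x(1 - t^n)$ and $x - x^{(n-1)/n}y = x(1-t)$. After dividing through by $x>0$, the claim becomes precisely
\begin{equation}
1 - t \;\le\; 1 - t^n \;\le\; n(1-t), \qquad t \in [0,1], \ n \ge 1.
\end{equation}
So the whole lemma collapses to this elementary two-sided bound on $1 - t^n$.

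For the lower bound $1 - t \le 1 - t^n$, i.e. $t^n \le t$ on $[0,1]$: since $t \in [0,1]$ and $n \ge 1$, we have $t^n \le t^1 = t$ because raising a number in $[0,1]$ to a larger power decreases it. (Formally, $t^n = t \cdot t^{n-1} \le t$ since $t^{n-1} \le 1$.) For the upper bound $1 - t^n \le n(1-t)$: I would factor $1 - t^n = (1-t)(1 + t + t^2 + \cdots + t^{n-1})$ when $n$ is a positive integer, and note each of the $n$ terms in the sum is $\le 1$, giving $1 + t + \cdots + t^{n-1} \le n$; multiplying by $1 - t \ge 0$ gives the result. For general real $n \ge 1$, the cleanest route is the mean value theorem / convexity: the function $\phi(s) = s^n$ on $[t,1]$ has derivative $\phi'(s) = n s^{n-1} \le n$ for $s \in [t,1]\subseteq[0,1]$, so $1 - t^n = \phi(1) - \phi(t) = \phi'(\xi)(1-t) \le n(1-t)$ for some $\xi \in (t,1)$. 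Alternatively, since $s \mapsto s^n$ is convex on $[0,\infty)$ for $n \ge 1$, the slope of the secant from $(t, t^n)$ to $(1,1)$ is at most the derivative at the right endpoint, namely $n$, which is exactly the upper bound.

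There is essentially no obstacle here; the only thing to be slightly careful about is the edge case $t = 1$ (i.e. $y = x^{1/n}$), where both sides of each inequality vanish and equality holds trivially, and the case $n = 1$, where all three quantities coincide. I would also remark that the hypothesis $x, y > 0$ is used only to divide by $x$ and to make the substitution $t = y/x^{1/n}$ well-defined, and that the hypothesis $y \le x^{1/n}$ is exactly what guarantees $t \in [0,1]$ so that both $x - y^n \ge 0$ and $x - x^{(n-1)/n} y \ge 0$. So the proof is: substitute $t = y x^{-1/n}$, reduce to $1 - t \le 1 - t^n \le n(1-t)$ on $[0,1]$, prove the left inequality from $t^{n-1}\le 1$ and the right inequality from convexity of $s\mapsto s^n$ (secant slope bounded by endpoint derivative $n$), then multiply back through by $x$.
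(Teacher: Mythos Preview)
Your proof is correct and, in fact, cleaner than the paper's. Both arguments ultimately rely on a derivative comparison for the upper bound, but your substitution $t = y/x^{1/n}$ collapses the problem to the one-variable inequality $1-t \le 1-t^n \le n(1-t)$ on $[0,1]$, after which the lower bound is immediate from $t^{n-1}\le 1$ and the upper bound is a one-line application of the mean value theorem. The paper instead works directly in the $(x,y)$ variables: it first treats integer $n$ via the difference-of-powers factorization, then for general $n$ compares $\frac{d}{dy}(x-y^n)=-ny^{n-1}$ against $-nx^{(n-1)/n}$ and $-x^{(n-1)/n}$, using that both sides agree at an endpoint and invoking an intermediate-value argument for the lower bound. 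Your route avoids the integer/non-integer case split and the somewhat roundabout IVT step; the paper's route is more hands-on but arrives at the same place. Either is perfectly acceptable for this elementary lemma.
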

\begin{proof}
When $n \in \Nb$ this follows immediately from the difference of powers formula and that $y \in [0,x^{1/n}]$
\begin{equation}
x-y^n = (x^{\frac{1}{n}}-y)\left(x^{\frac{n-1}{n}} + x^{\frac{n-2}{n}} y + \cdots + x^{\frac{1}{n}} y^{n-2} + y^{n-1} \right) \simeq (x^{\frac{1}{n}} - y) x^{\frac{n-1}{n}}.
\end{equation}
To see this for general $n \in [1,\infty)$, note that $x-y^n=n(x-x^{\frac{n-1}{n}}y)=0$ at $y=x^{1/n}$. Furthermore, on $y \in [0,x^{1/n}]$
\begin{equation}
\frac{d}{dy} (x-y^n) = - ny^{n-1} \geq -n x^{\frac{n-1}{n}} = \frac{d}{dy} n(x-x^{\frac{n-1}{n}} y).
\end{equation}
So indeed $x-y^n \leq n(x-x^{\frac{n-1}{n} } y)$ for $y \in [0,x^{\frac{1}{n}}]$. On the other hand, \ed{by the mean value theorem applied to $f(z)=z^n$, there exists $\zeta \in (y,x^{1/n})$ such that 
	\begin{equation}
		\frac{x-y^n}{x^{1/n}-y} = n \zeta^{n-1}.
	\end{equation}
Rearranging this gives 
\begin{equation}
	x-y^n = n \zeta^{n-1} (x^{1/n} -y ) \leq n x^{\frac{n-1}{n}}(x^{1/n}-y).
\end{equation}}
\end{proof}

The following lemma is used to obtain an improvement to the polynomial growth of damping after averaging.
\begin{lemma}\label{uppergammalem}
\begin{enumerate}
\item Fix $\beta>0, \gamma \in \Rb$, for $x$ small enough 
\begin{equation}
\int_0^x u^{\beta} \ln(u^{-1})^{-\gamma} du \simeq x^{\beta+1} \ln(x^{-1})^{-\gamma}.
\end{equation}
\item Fix $\alpha, c>0, \beta \in \Rb$, for $x$ small enough 
\begin{equation}
\int_0^{cx} u^{\beta} \exp(-u^{-\alpha}) du \simeq x^{\beta+\alpha+1} \exp(-(cx)^{-\alpha}).
\end{equation}
\end{enumerate}
\end{lemma}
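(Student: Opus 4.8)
The plan is to prove both asymptotics by the same two-sided argument: bound the integrand above and below by the value of the integrand at (a constant multiple of) the upper endpoint, then integrate. The key observation is that in both cases the integrand factors as a power times a slowly-varying factor, and the slowly-varying factor is essentially constant on the dyadic scale $[x/2,x]$ (resp.\ near the endpoint $cx$), while the power contributes the main growth.

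For part (1), first I would split $\int_0^x u^\beta \ln(u^{-1})^{-\gamma}\,du = \int_0^{x/2} + \int_{x/2}^x$. On $[x/2,x]$ we have $\ln(u^{-1})^{-\gamma} \simeq \ln(x^{-1})^{-\gamma}$ for $x$ small (since $\ln((x/2)^{-1}) = \ln(x^{-1}) + \ln 2$ and $\ln(x^{-1}) \to \infty$, the ratio tends to $1$, uniformly up to constants depending only on $\gamma$), and $u^\beta \simeq x^\beta$; hence this piece is $\simeq x^{\beta+1}\ln(x^{-1})^{-\gamma}$, which is exactly the claimed size, so it suffices to show the tail $\int_0^{x/2} u^\beta \ln(u^{-1})^{-\gamma}\,du$ is bounded by a constant times this. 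For the upper bound on the tail I would use monotonicity: for $\gamma \geq 0$, $\ln(u^{-1})^{-\gamma} \leq \ln((x/2)^{-1})^{-\gamma} \lesssim \ln(x^{-1})^{-\gamma}$ on $(0,x/2)$, so $\int_0^{x/2} u^\beta \ln(u^{-1})^{-\gamma} du \leq \ln(x^{-1})^{-\gamma}\int_0^{x/2} u^\beta du \simeq x^{\beta+1}\ln(x^{-1})^{-\gamma}$; for $\gamma<0$ the factor $\ln(u^{-1})^{-\gamma} = \ln(u^{-1})^{|\gamma|}$ is increasing toward the endpoint but only logarithmically, so one absorbs it by noting $\int_0^{x/2}u^\beta \ln(u^{-1})^{|\gamma|}du \lesssim \ln(x^{-1})^{|\gamma|}\int_0^{x/2} u^{\beta}\,du$ still holds after possibly shrinking to the comparison $\ln(u^{-1}) \le \ln(x^{-1}) + \ln(x/u)$ and using $\int_0^{x/2} u^\beta \ln(x/u)^{|\gamma|} du \lesssim x^{\beta+1}$ by the substitution $u = xs$. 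The lower bound follows just from restricting the integral to $[x/2,x]$ and the comparison already noted. Since $\beta > 0$ guarantees integrability at $0$ and that $\int_0^x u^\beta\,du \simeq x^{\beta+1}$, the argument closes.

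For part (2), the substitution $v = u/(cx)$ turns $\int_0^{cx} u^\beta \exp(-u^{-\alpha})\,du$ into $(cx)^{\beta+1}\int_0^1 v^\beta \exp(-(cxv)^{-\alpha})\,dv$, but this is not yet of the claimed form, so instead I would work directly. The function $u \mapsto u^\beta \exp(-u^{-\alpha})$ is, for $u$ small, increasing and its logarithmic derivative is $\beta/u + \alpha u^{-\alpha-1} \sim \alpha u^{-\alpha-1}$, so it is sharply increasing: near the endpoint $cx$ essentially all the mass of the integral concentrates. Concretely I would use Watson's-lemma-type reasoning: with $u = cx(1 - t)$, $\exp(-u^{-\alpha}) = \exp\big(-(cx)^{-\alpha}(1-t)^{-\alpha}\big)$ and $(1-t)^{-\alpha} \geq 1 + \alpha t$, so $\exp(-u^{-\alpha}) \leq \exp(-(cx)^{-\alpha})\exp(-\alpha(cx)^{-\alpha} t)$; integrating $\int_0^1 \exp(-\alpha(cx)^{-\alpha}t)\,dt \simeq (cx)^{\alpha}$ gives the upper bound $\lesssim x^{\beta} \cdot (cx)^{\alpha+1}\exp(-(cx)^{-\alpha}) \simeq x^{\beta+\alpha+1}\exp(-(cx)^{-\alpha})$ after also controlling $u^\beta \simeq x^\beta$ near the endpoint (and checking the negligible contribution of $u$ away from $cx$, where $\exp(-u^{-\alpha})$ is super-polynomially smaller). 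For the lower bound I restrict to $u \in [cx(1 - (cx)^\alpha), cx]$, on which $(1-t)^{-\alpha} \leq 1 + C(cx)^\alpha$ so $\exp(-u^{-\alpha}) \gtrsim \exp(-(cx)^{-\alpha})$, $u^\beta \simeq x^\beta$, and the interval has length $\simeq x^{\alpha+1}$, yielding $\gtrsim x^{\beta+\alpha+1}\exp(-(cx)^{-\alpha})$.

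The main obstacle is the sign-of-$\gamma$ (resp.\ sign-of-$\beta$) bookkeeping in the elementary estimates: when the slowly-varying factor is increasing toward the endpoint one cannot simply pull it out, and one must verify that $\int_0^x u^{\beta}\ln(x/u)^{|\gamma|}\,du \lesssim x^{\beta+1}$ and the analogous exponential tail bound $\int_0^{cx/2} u^\beta \exp(-u^{-\alpha})\,du = o\big(x^{\beta+\alpha+1}\exp(-(cx)^{-\alpha})\big)$ — the latter because $\exp(-u^{-\alpha})$ at $u = cx/2$ is smaller than at $u = cx$ by a factor $\exp\big(-(cx)^{-\alpha}(2^\alpha - 1)\big)$, which beats any power of $x$. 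None of this is deep, but it must be done carefully so that the constants in $\simeq$ depend only on $\alpha,\beta,c,\gamma$ and not on $x$. I would organize the write-up as: (i) the endpoint-interval lower bound (same for both parts), (ii) pull-out of the monotone-decreasing case, (iii) the $\int u^\beta \ln(x/u)^{|\gamma|}$ / exponential-tail estimate for the remaining case, then combine.
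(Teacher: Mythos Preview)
Your argument is correct, and it is a genuinely different route from the paper's. The paper handles both parts by a single substitution that reduces each integral to the upper incomplete Gamma function: in (1) set $v=\ln(u^{-1})$ and then $t=(\beta+1)v$ to get a constant times $\Gamma(-\gamma+1,(\beta+1)\ln(x^{-1}))$; in (2) set $t=u^{-\alpha}$ to get a constant times $\Gamma\big(\frac{-1-\beta}{\alpha},(cx)^{-\alpha}\big)$. Both are then read off from the standard asymptotic $\Gamma(a,z)\simeq z^{a-1}e^{-z}$ as $z\to\infty$. What the paper buys is brevity and a unified treatment of both signs of $\gamma$ (and of $\beta$) with no case splitting, at the cost of invoking an external asymptotic. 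What your approach buys is self-containment: the dyadic splitting in (1) and the Laplace-method localization in (2) use nothing beyond calculus, and they make transparent \emph{why} the extra power $x^{\alpha}$ appears in (2) --- it is the effective width of the region near the endpoint where $\exp(-u^{-\alpha})$ is within a bounded factor of its endpoint value. The sign bookkeeping you flag is real but routine; your sketch for $\gamma<0$ via $\ln(u^{-1})=\ln(x^{-1})+\ln(x/u)$ and the substitution $u=xs$ is the right way to close it, and for $\beta<0$ in (2) the monotonicity of $u\mapsto u^\beta\exp(-u^{-\alpha})$ for small $u$ handles the tail $[0,cx/2]$ in one line.
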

\begin{proof}
1) Make the substitution $v=\ln(u^{-1})$
\begin{equation}
\int_0^x u^{\beta} \ln(u^{-1})^{-\gamma} du =   \int_{\ln(x^{-1})}^{\infty} e^{-v(\beta+1)} v^{-\gamma} dv. 
\end{equation}
Writing $(\beta+1)v=t$, we have the above
\begin{equation}
= C \int_{(\beta+1)\ln(x^{-1})}^{\infty} e^{-t} t^{-\gamma} dt = C \Gamma(-\gamma+1, (\beta+1)\ln(x^{-1})),
\end{equation}	
where $\Gamma(a,z)$ is the upper incomplete Gamma function as defined in \cite[\href{https://dlmf.nist.gov/8.2}{(8.2)}]{DLMF}. By \cite[\href{https://dlmf.nist.gov/8.11}{(8.11.i)}]{DLMF} 
\begin{equation}\label{uppergamma}
\Gamma(a,z) \simeq z^{a-1} e^{-z}(1+O(z^{-1})) \text{ as } z \ra \infty, \quad a \text{ fixed}.
\end{equation}
So for $x$ near $0$
\begin{equation}
\int_0^x u^{\beta} \ln(u^{-1})^{-\gamma} du  \simeq \ln(x^{-1})^{-\gamma} e^{-(\beta+1) \ln(x^{-1})} (1+\ln(x^{-1})^{-1} ) \simeq x^{\beta+1} \ln(x^{-1})^{-\gamma}.
\end{equation}

2) Make the substitution $u^{-\alpha}=t$
\begin{equation}
\int_0^{cx} u^{\beta} \exp(-u^{-\alpha}) \ed{du} =  C \int_{(c x)^{-\alpha}}^{\infty} t^{-\frac{1}{\alpha}-1} t^{-\beta/\alpha} \exp(-t) dt=C \Gamma\left( \frac{-1-\beta}{\alpha}, (cx)^{-\alpha}\right).
\end{equation}
Once again by \eqref{uppergamma} for $x$ near 0
\begin{equation}
\int_0^{Cx} u^{\beta} \exp(-u^{-\alpha}) \simeq (x^{-\alpha})^{\frac{-1-\beta}{\alpha}-1} \exp(-(cx)^{-\alpha}) (1+x^{\alpha}) \simeq x^{1+\beta+\alpha} \exp(-(c x)^{-\alpha}).
\end{equation}
\end{proof}

Finally, we compute elementary derivatives to check that  the concavity portion of Assumptions \ref{noninvarassumption} and \ref{widegenbadassumption} are satisfied for the examples we work with. This concavity is needed in order to apply Lemma \ref{averagingdbclemma}
\begin{lemma}\label{logconcavelemma}
If $q(z) = \ln(z^{-1})^{-\gamma}$ for $\gamma>0$, then there exists $\e>0$ such that $R(z)=\frac{z}{q(z)}$ is concave on $[0,\e]$.
\end{lemma}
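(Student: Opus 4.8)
The statement is purely an elementary one-variable calculus computation, so the plan is simply to differentiate $R(z)=z/q(z)=z\,\ln(z^{-1})^{\gamma}$ twice and read off the sign of $R''$ near $0$. The convenient bookkeeping device is to set $L(z)=\ln(z^{-1})=-\ln z$, so that $L'(z)=-1/z$ and hence $zL'(z)\equiv-1$. With this, the first derivative telescopes: $R'(z)=L^{\gamma}+z\cdot\gamma L^{\gamma-1}L'=L^{\gamma}-\gamma L^{\gamma-1}=L^{\gamma-1}(L-\gamma)$.

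Next I would differentiate once more, again using $zL'\equiv-1$, to get
\begin{equation}
R''(z)=-\frac1z L^{\gamma-2}\big[(\gamma-1)(L-\gamma)+L\big].
\end{equation}
Expanding the bracket gives $(\gamma-1)(L-\gamma)+L=\gamma L-\gamma^2+\gamma=\gamma(L-\gamma+1)$, so
\begin{equation}
R''(z)=-\frac{\gamma}{z}\,L(z)^{\gamma-2}\big(L(z)-\gamma+1\big).
\end{equation}

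Now since $L(z)=-\ln z\to+\infty$ as $z\to0^+$ and $L$ is decreasing, picking any $\varepsilon<e^{1-\gamma}$ ensures $L(z)\ge L(\varepsilon)>\gamma-1$ for all $z\in(0,\varepsilon]$; thus on $(0,\varepsilon)$ each of the three factors $\gamma/z$, $L(z)^{\gamma-2}$, and $L(z)-\gamma+1$ is strictly positive, whence $R''(z)<0$ there. Finally $R$ extends continuously to $z=0$ with $R(0)=0$ (because $z\ln(z^{-1})^{\gamma}\to0$), so $R$ is continuous on $[0,\varepsilon]$ and $R''\le 0$ on the interior, which by the standard characterization of concavity forces $R$ to be concave on the closed interval $[0,\varepsilon]$. (The fact that $R'(z)=L^{\gamma-1}(L-\gamma)\to+\infty$ as $z\to0^+$ is not an obstruction, exactly as for $z\mapsto\sqrt z$.) There is no genuine difficulty here; the only care needed is the algebraic simplification of the bracket in $R''$ and noting the continuous extension at $z=0$. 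Note also that taking $\gamma=\tfrac{2(\alpha+1)}{\alpha}>0$ in the same statement covers the case $p(z)=q(z)^{2}$ used in Theorems \ref{rectthm}, \ref{polylogconveximprovethm} and \ref{ellipsethm}.
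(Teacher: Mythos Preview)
Your proof is correct and follows the same approach as the paper: both simply compute $R''$ directly and check its sign near $0$. The paper works with the quotient $z/q$ via the formula $R''=(-q''q^2z-2q^2q'+2(q')^2zq)/q^4$ and then substitutes $q,q',q''$ for $q(z)=\ln(z^{-1})^{-\gamma}$, arriving at the equivalent expression $\tfrac{\gamma}{z}\ln(z^{-1})^{-2\gamma-1}(-1+(\gamma-1)\ln(z^{-1})^{-1})$; your choice to write $R(z)=zL^{\gamma}$ and differentiate as a product is slightly cleaner bookkeeping but amounts to the same computation.
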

Since $\gamma>0$ is arbitrary, this shows that $z \ln(z^{-1})^{\left(\frac{\alpha+1}{\alpha}\right)}$ and $z \ln(z^{-1})^{2\left(\frac{\alpha+1}{\alpha}\right)}$ are both concave on some interval $(0,\e)$. 
\begin{proof}
Since 
$$
R''(z) = \frac{-q'' q^2 z - 2 q^2 q' + 2(q')^2 z q}{q^4},
$$
it is enough to see that the numerator is negative. Well $q'(z) = \frac{\gamma}{z} \ln(z^{-1})^{-\gamma-1}$ and $q''(z) = z^{-2} \ln(z^{-1})^{-\gamma-1} \left(-\gamma+(\gamma+1)\gamma \ln(z^{-1})^{-1} \right).$ Therefore
\begin{align*}
-q'' qz -2 q q' + 2(q')^2 z = \frac{\gamma}{z} \ln(z^{-1})^{-2\gamma-1}(-1+(\gamma-1) \ln(z^{-1})^{-1}).
\end{align*}
For $z$ small enough $(\gamma-1) \ln(z^{-1}) < \frac{1}{2}$, so indeed $R''(z)<0$. 
\end{proof}

\bibliographystyle{alpha}
\bibliography{mybib}

\end{document}